\documentclass[11pt, reqno,tikz,border=8pt]{amsart}

\usepackage{tikz}

\usetikzlibrary{calc}

\usepackage{xcolor}
\usetikzlibrary{
    positioning,
    arrows.meta,
    shapes.geometric,
    calc
}

\usepackage{amsmath, amsthm, amsfonts, amssymb, color}
\usepackage{mathrsfs}
\usepackage{amstext,amsxtra}
\usepackage{hyperref}
\hypersetup{colorlinks=true,linkcolor=black, urlcolor=blue, citecolor=blue}

\allowdisplaybreaks

\newcommand{\EQ}[1]{\begin{equation}\begin{split} #1 \end{split}\end{equation}}
\newcommand{\EQN}[1]{\begin{equation*}\begin{split} #1 \end{split}\end{equation*}}

\newcommand{\CAS}[1]{\begin{cases} #1 \end{cases}}

\usepackage[margin=3cm, a4paper]{geometry}

\newcommand{\norm}[1]{\left\|#1\right\|}

\newcommand{\B}{\mathcal{B}}

\newcommand{\LL}{\mathcal{L}}
\newcommand{\M}{\mathcal{M}}
\newcommand{\cS}{\mathcal{S}}
\newcommand{\cH}{\mathcal{H}}

\newcommand{\T}{\mathcal{T}}

\newcommand{\C}{\mathbb{C}}
\newcommand{\N}{\mathbb{N}}
\newcommand{\R}{\mathbb{R}}
\newcommand{\Z}{\mathbb{Z}}

\newcommand{\A}{\mathcal{A}}

\newcommand{\al}{\alpha}
\newcommand{\be}{\beta}
\newcommand{\ga}{\gamma}
\newcommand{\de}{\delta}
\newcommand{\e}{\varepsilon}
\newcommand{\fy}{\varphi}
\newcommand{\om}{\omega}
\newcommand{\la}{\lambda}
\newcommand{\te}{\theta}
\newcommand{\s}{\sigma}
\newcommand{\ta}{\tau}
\newcommand{\ka}{\kappa}
\newcommand{\y}{\eta}
\newcommand{\z}{\zeta}
\newcommand{\ro}{\rho}
\newcommand{\up}{\upsilon}

\newcommand{\bb}{\vec b}
\newcommand{\uu}{\vec u}
\newcommand{\vv}{\vec v}

\newcommand{\tf}{\tfrac}

\newcommand{\De}{\Delta}
\newcommand{\Om}{\Omega}

\newcommand{\p}{\partial}
\newcommand{\na}{\nabla}
\newcommand{\re}{\mathop{\mathrm{Re}}}
\newcommand{\im}{\mathop{\mathrm{Im}}}

\newcommand{\sgn}{\operatorname{sgn}}
\newcommand{\Cu}{\bigcup}

\newcommand{\lec}{\lesssim}
\newcommand{\les}{\lesssim}
\newcommand{\gec}{\gtrsim}
\newcommand{\ges}{\gtrsim}
\newcommand{\etc}{,\ldots,}

\newcommand{\I}{\infty}

\newcommand{\ti}{\widetilde}
\newcommand{\ba}{\overline}
\newcommand{\U}{\underline}
\newcommand{\LR}[1]{{\langle #1 \rangle}}

\newcommand{\brk}[1]{{\left( #1 \right)}}

\newcommand{\fr}{\frac}

\newcommand{\pt}{&}
\newcommand{\pr}{\\ &}
\newcommand{\pq}{\quad}
\newcommand{\pn}{}
\newcommand{\prq}{\\ &\quad}
\newcommand{\prQ}{\\ &\qquad}
\newcommand{\prQQ}{\\ &\qquad\qquad}

\newcommand{\g}{\mathfrak{g}}
\newcommand{\HH}{\mathcal{H}}
\newcommand{\cQ}{\mathcal{Q}}
\newcommand{\sign}{\operatorname{sign}}
\newcommand{\rad}{{\operatorname{rad}}}

\newcommand{\mat}[1]{\begin{pmatrix} #1 \end{pmatrix}}
\newcommand{\smat}[1]{\left[\begin{smallmatrix} #1 \end{smallmatrix}\right]}

\newcommand{\lp}[1]{{#1}}
\newcommand{\pe}[1]{#1^{>J}}
\newcommand{\np}[1]{\mathfrak{#1}}

\newcommand{\fS}{\mathfrak{S}}

\newcommand{\diff}[1]{{\triangleleft #1}}

\renewcommand{\vec}[1]{\mathbf{#1}}

\newcommand{\nw}{\newcommand}
\nw{\aD}{\al D}
\nw{\cL}{\mathcal{L}}
\nw{\bS}{\mathbb{S}}
\nw{\pQ}{\qquad}
\nw{\fg}{\mathfrak{g}}
\nw{\cB}{\mathcal{B}}
\nw{\ub}{\tilde{\mathbf u}}
\nw{\vb}{\tilde{\mathbf v}}
\nw{\vga}{\boldsymbol{\gamma}}
\nw{\ep}{\boldsymbol{\epsilon}}
\nw{\eps}{\epsilon}
\nw{\Te}{\Theta}
\nw{\Si}{\Sigma}
\nw{\Sol}{\operatorname{Sol}}
\nw{\ck}{\check}
\nw{\prQQQ}{\prQQ\pQ}
\nw{\cZ}{\mathcal{Z}}
\nw{\La}{\Lambda}
\nw{\fX}{\mathcal{X}}
\nw{\fY}{\mathcal{Y}}
\nw{\fZ}{\mathfrak{Z}}
\nw{\ww}{{\vec w}}
\nw{\dist}{\operatorname{dist}}
\nw{\cN}{\mathcal{N}}
\nw{\sU}{\mathscr{U}}
\nw{\io}{\iota}

\numberwithin{equation}{section}

\newtheorem{thm}{Theorem}[section]

\newtheorem{lem}[thm]{Lemma}
\newtheorem{prop}[thm]{Proposition}
\theoremstyle{remark}
\newtheorem{rem}{Remark}

\begin{document}

\title[3D Zakharov system above the ground state]{Global dynamics above the ground state energy for the 3D Zakharov system}
\thanks{Z. Guo is supported by ARC FT230100588 and DP260100485. K.~Nakanishi is supported by JSPS KAKENHI Grant Numbers 22H01132 and 23K22403.}

\author[Z. Guo]{Zihua Guo}
% \address{School of Mathematics, Monash University, VIC 3800, Australia }
% \email{zihua.guo@monash.edu}
	
\author[K. Nakanishi]{Kenji Nakanishi}
% \address{Research Institute for Mathematical Sciences, Kyoto University, Kyoto 606-8502, JAPAN}
% \email{kenji@kurims.kyoto-u.ac.jp}

\subjclass{35Q55, 35L52, 37K40}

\keywords{}
	
\begin{abstract}
We study the global dynamics for the 3D Zakharov system with radial initial data of energy slightly above the ground state energy, proving that the initial data set splits into nine nonempty, pairwise disjoint regions in which the solutions have distinct behaviors: growup, scattering to $0$, or trapped by the ground state, as time goes in the forward or backward directions. It is a classification similar to those for the nonlinear Klein-Gordon equation (\cite{NS-KG}) and for the nonlinear Schr\"odinger equation (\cite{NS-NLS}), extending the previous results in \cite{Zak-KM} on the Zakharov system below the ground state energy. The proof relies on the normal form technique used in \cite{Zak-KM} to handle the quadratic frequency interactions and careful modulational analysis around the ground state. One novelty is a new family of localized virial estimates with monotonicity. These virial estimates are crucial for us to study the dynamics away from the ground state and may be of independent interest for other purposes. 
\end{abstract}

\maketitle

\tableofcontents

\section{Introduction}

In this paper we consider the global Cauchy problem for the 3D Zakharov system 
\EQ{\label{eq:Zak}
\CAS{
i\dot u-\De u = nu,\\
\ddot n/\al^2-\De n= -\De|u|^2
}
}
where $\alpha>0$ denotes the ion sound speed, and $u(t,x):\R\times \R^3\to \C$ and $n(t,x):\R\times \R^3\to \R$ are the unknown functions
with given initial data $u(0,x)$, $n(0,x)$, and $\dot n(0,x)$. 

This system \eqref{eq:Zak} was introduced by
Zakharov \cite{Zak} as a mathematical model for the Langmuir
turbulence in unmagnetized ionized plasma. Formally, in the limit $\alpha\to \infty$, \eqref{eq:Zak} is reduced to the cubic nonlinear Schr\"odinger equation
\EQ{\label{eq:NLS}
i\dot u-\De u = |u|^2u.
}

The Zakharov system is invariant under the spatial rotation, and hence the radial symmetry is preserved under the flow.  It also preserves the mass 
and the energy, so we naturally consider the initial data in the energy space: 
\EQ{
u(0,x)\in H^1(\R^3),\pq n(0,x) \in L^2(\R^3),\pq \dot n(0,x) \in \dot H^{-1}(\R^3).
}
The Zakharov system \eqref{eq:Zak} has been extensively
studied. For the well-posedness (existence, uniqueness and continuous dependence), there are many studies, e.g. see \cite{KPV,BoCo,GTV,BGHN, ChenWu,Candy,Sanwal}.  In particular, well-posedness in the energy space was proved in \cite{BoCo}, and the sharp results were obtained in \cite{Candy} for $d\geq 4$ and in \cite{Sanwal} for $d=2,3$. 
The weak solutions are unique \cite{MN-uniq} among those weakly continuous in the energy space for $d\le 3$. 
For the study on blow-up solutions, \cite{GM} constructed self-similar blowup solutions for $d=2$. 
In the 3D radial case, solutions with negative energy were shown by \cite{Merle} to blowup in finite or infinite time (which we will call growup for brevity). The latter result was extended to data with energy below the ground state in \cite{Zak-KM} for $d=3$ and \cite{GN2} for $d=4$.

The large-time behavior of global solutions, e.g. the scattering theory, was
studied in \cite{Shimo,GV,OT2} which considered only the final data
problem (namely the wave operators). The initial data problem (e.g. the asymptotics of global solutions) was studied in a series of works \cite{GN,Zak-KM,GLNW,Guo,BGHN,GN2} by the authors and their collaborators, and also by \cite{HPS} in weighted Sobolev spaces.  
In \cite{GN} the authors obtained small energy
scattering in the 3D radial case by using the normal form technique
and radial-improved Strichartz estimates. Then in \cite{Zak-KM} the smallness was extended to the threshold given by the ground state (precise details are given in \eqref{eq:groundstate}).  In \cite{GLNW,Guo}, the radial symmetry assumption was replaced by some assumptions on the angular variables.  In \cite{BGHN}, small energy scattering in the 4D non-radial case was proved and in \cite{GN2} the smallness was extended to the threshold given by the ground state in the radial case. It is worth mentioning that scattering in the energy space in the non-radial case in 3D, even for small data, remains a challenging open problem. 

The present paper is a continuation of \cite{Zak-KM} 
going slightly beyond the ground state energy as in the NLS case \cite{NS-NLS}.  
For notational convenience, we rewrite the system \eqref{eq:Zak} as follows 
\EQ{\label{eq:Zak2}
 \CAS{(i\p_t-\De)u_s=u_s\re u_w, \\
 (i\p_t+\al D)u_w=\al D|u_s|^2,} }
where $D:=\sqrt{-\De}$, $\vec u:=(u_s,u_w):I\times\R^3\to\C^2$, $I$ is a time interval, $u_s:=u$ and $u_w:=n-iD^{-1}\dot n/\alpha$. 
The difference from the NLS appears in terms of
\EQ{
u_d:=u_w-|u_s|^2.
}
This system has the following scaling invariance with a scaling parameter $\la>0$: 
\EQ{\label{rescaling}
(u_s,u_w,u_d,\al) \mapsto (\la u_s(\la^2t,\la x),\la^2 u_w(\la^2t,\la x),\la^2 u_d(\la^2t,\la x),\la\al).
}
Moreover,
the system \eqref{eq:Zak2} can be written as the Hamiltonian system:
\EQ{
 \pt \p_t\vec u = JE_Z'(\vec u), \pq J:=\mat{i & 0 \\ 0 & 2i\al D}
}
where
\EQ{
  \pt E_Z(\vec u):=\int_{\R^3}[\tf12|\na u_s|^2+\tf14|u_w|^2-\tf12u_w|u_s|^2]dx = E_S(u_s)+\tf14\int_{\R^3}|u_d|^2dx,
 \prq E_S(\fy):=\int_{\R^3}[\tf12|\na\fy|^2-\tf14|\fy|^4]dx.
}
Note that $E_S(u)$ is the Hamiltonian for NLS \eqref{eq:NLS}.
Under the flow of \eqref{eq:Zak2}, $E_Z(\vec u)$ is conserved and the mass 
\EQ{
 M(\vec u):=M(u_s):=\tf12\int_{\R^3}|u_s|^2dx} 
is also conserved. 
Hence it is natural to work on the radial energy space
\EQN{
 \HH:=\{\vec u=(u_s,u_w)\in H^1_\rad(\R^3;\C)\times L^2_\rad(\R^3;\C)\},}
where $\cdot_\rad$ indicates the radial subspace. 

In \cite{GN}, small data scattering in $\HH$ was proved.  More precisely, the following result was proved: if $\norm{\vec u(0)}_\HH \ll 1$, the global solution $\vec u$ scatters, namely, there exists $(\vec u^\pm)\in \HH$ such that
\EQN{
\lim_{t\to \pm\I } \norm{\vec u(t)-U(t)\vec u^\pm}_{\HH}=0,
}
where $U(t)$ is the free propagator defined by 
\EQ{
 U(t)(\fy_s,\fy_w) := (e^{-it\Delta}\fy_s, e^{i\alpha t D}\fy_w).}
The radial symmetry assumption was later replaced by some angular regularity, see \cite{GLNW,Guo}. 

The problem for large data is much more complicated. Let $Q\in H^1(\R^3)$ be the ground state, i.e. the positive radial solution, of 
\EQN{
 -\De Q + Q = Q^3,
}
and for each $\la>0$ and $\theta\in \R$, let 
\EQN{
 Q_\la:=\la Q(\la x), \pq \vec Q_{\la,\te} :=(e^{-i\te}Q_\la,Q_\la^2), \pq  \vec Q:=\vec Q_{1,0}=(Q,Q^2). 
 }
Then for every $\la>0$ and $\te\in\R$, $\vec Q_{\la,\te+\la^2 t}$ is a standing wave solution of the Zakharov system \eqref{eq:Zak2}. These solutions are global in time but not scattering. 
For brevity, we denote their orbits in $\HH$ by 
\EQ{\label{eq:groundstate}
\cQ:=\{\vec Q_{\la,\te}\}_{\la>0,\te\in\R}, \pq \cQ_1:=\{\vec Q_{1,\te}\}_{\te\in\R}.
}
In the previous paper \cite{Zak-KM}, the authors and Wang have shown that the solutions of \eqref{eq:Zak2} with energy ``below" these standing wave solutions in the radial energy space are divided into two categories: scattering and growup. More precisely, the following results were proved in \cite{Zak-KM}: assume that $\vec u(0)\in \HH$ satisfies $E_Z(\vec u(0))M(\vec u(0))<E_Z(\vec Q_{\la,\theta})M(\vec Q_{\la,\theta})=E_S(Q)M(Q)$, then the dynamic of the solution $\uu$ is determined by a sign functional $K(u_s)$
\EQN{
K(\fy):=\int_{\R^3}[|\na\fy|^2-\tf 34|\fy|^4]dx}
and we have
\begin{itemize}
    \item if $K(u_s(0))\geq 0$ then the solution exists globally in time and scatters as $t\to\pm\I$.

    \item if $K(u_s(0))<0$ then the solution blows up in either finite or infinite time (growup), namely $\limsup_{t\to T^*-0}\norm{\vec u}_\HH=\limsup_{t\to T_*+0}\norm{\vec u}_\HH=\infty$, where $(T_*,T^*)$ is the maximal interval of existence.
\end{itemize}

In this paper, we extend the study to the solutions with energy slightly ``above'' those standing waves in the radial energy space,  
in the same way as for NLS in \cite{NS-NLS}. 
More precisely, letting
\EQ{\label{ene region}
S_{Z,\la}(\uu):=\la^{-1}E_Z(\uu)+\la M(\uu),\pq &S_Z(\uu):=S_{Z,1}(\uu)=S_S(u_s)+\tf14\|u_d\|_{L^2(\R^3)}^2\\
&S_S(u_s):=E_S(u_s)+M(u_s)
} 
we will classify the global dynamics in the following invariant set 
\EQ{\label{eq:intcond}
  \HH^\e:=\{\vec u\in \cH \mid  S_Z(\vec u) < S_Z(\vec Q) + \e\},}
for some sufficiently small $\e=\ep(\alpha)>0$. 
As long as the classification is essentially independent of $\al$ (such as the distinction among scattering, blow-up and solitons), 
we may use the rescaling \eqref{rescaling} to extend the classification 
to $\vec u_\la:=(\la u_s(\la^2t,\la x),\la^2 u_w(\la^2t,\la x))$ with the sound speed $\la\al$, 
where we have $S_{Z,\la}(\vec u_\la)=S_Z(\vec u)<S_Z(\vec Q) + \ep(\al)^2$. 
Replacing $\al$ with $\al/\la$, we may change the sound speed back to $\al$.  
Thus we may cover all initial data in 
\EQ{ \label{action bd}
 \Cu_{\la>0} \{\vec u\in \cH \mid S_{Z,\la}(\vec u) < S_Z(\vec Q) + \ep(\alpha/\la)^2\},}
where the scaling parameter $\la$ may be removed as follows. 
First, if $E_Z(\vec u)<0$ then we have the growup by \cite{Merle}, so we may restrict to the case of $E_Z(\vec u)\ge 0$.  
Then we have
\EQ{
 \pt 2\sqrt{E_Z(\vec u)M(\vec u)}=\inf_{\la>0}S_{Z,\la}(\vec u), 
 \pq S_Z(\vec Q_{1,0})=2\sqrt{E_S(Q)M(Q)}.}
If $M(\uu)>0$, then taking $\lambda=|E_Z(\uu)/M(\uu)|^{1/2}$ further transforms \eqref{action bd} to: 
\EQ{ \label{scale extension}
 \{\vec u\in\cH \mid [E_Z(\vec u)M(\vec u)]^{1/2} < [E_S(Q)M(Q)]^{1/2}+2\ep^2(\al |M(\uu)/E_Z(\uu)|^{1/2})\},}
where $[x]^{1/2}:=x|x|^{-1/2}$. 
Note that $\{\uu\in\cH \mid M(\uu)=0\}$ is 
the invariant subspace of electrical equilibrium with the trivial dynamics, namely the ion-sound free waves. 
Hence, defining any $\ep(0),\ep(\I)\ge 0$, 
we may include the cases $E_Z(\vec u)<0$ and $M(\vec u)=0$ into the above set. 

On the other hand, we should not expect uniformness of $\ep(\al)$ in $\al>0$. 
In other words, $\ep(\al)\to 0$ both as $\al\to +0$ and as $\al\to\I$. 
More precisely, in the subsonic limit $\al\to\I$, we may expect that the classification converges to the limit NLS equation, 
so that $\ep(\al)$ can be bounded below. However, it is known \cite[Theorem 10.1]{MN} that the Duhamel formula or the iteration sequence 
is not uniformly bounded in the energy space even locally in time as $\al\to\I$, so we need 
at least highly nontrivial work to get uniform estimates 
in the global dynamics. 
The other limit $\al\to+0$ is simpler but the conclusion is worse, since the limit equation 
\EQ{
 i\dot u-\De u = nu, \pq \dot n=0}
has many small bound states, such as $(u,n)=(\e e^{-i\la^2t} Q_\la, Q_\la^2)$ with $\la,\e\to+0$. 
It does not imply $\ep(+0)=0$, but the dynamics does not converge globally in time. 
Therefore, in this paper, we do not pursue the dependence on $\al>0$, regarding it as a given constant. 

To state the main results, we introduce the distance to $\cQ_1$ 
\EQ{ \label{def d0}
 d_0(\vec u):= \dist_\HH(\vec u, \cQ_1)=\inf_{\te\in\R}\|\vec u-\vec Q_{1,\te}\|_\HH.
}

\begin{thm} \label{thm:resol}
There is a function $\ep:(0,\I)\to(0,1)$ such that for any $\al>0$ and all solutions $\uu$ of \eqref{eq:Zak2} with initial data in $\cH^{\ep(\al)}$, 
we have the following trichotomy. Let $T^*\in(0,\I]$ be the forward maximal existence time of $\uu$. 
Then one of \textup{(1)-(3)} holds: 
\begin{enumerate}
    \item The solution $\uu$ is trapped by $\cQ_{1}$: $T^*=\I$ and $d_0(\uu(t))\leq C\e$ for large $t$;
    \item The solution $\uu$ scatters: $T^*=\I$ and $U(-t)\uu(t)$ converges in $\cH$ as $t\to\I$;
    \item The solution $\uu$ grows up: $\limsup_{t\to T^*}\|\uu(t)\|_\cH=\I$, 
\end{enumerate}
for some constant $C>0$ which may depend only on $\al$, where $\e\in(0,\ep]$ is any number such that $\uu \in \cH^\e$. For $t<0$, we have a similar trichotomy.
\end{thm}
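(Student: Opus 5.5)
The plan follows the Nakanishi--Schlag scheme for NLS and Klein--Gordon. Its three ingredients are modulation analysis near $\cQ$, a sign functional away from $\cQ$, and a one-pass lemma that rules out almost homoclinic orbits.

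\emph{Modulation analysis near $\cQ_1$.} Near $\cQ_1$ we write $\uu=\vec Q_{\la,\te}+\vv$ with orthogonality conditions. These fix $\te$ through the phase direction and $\la$ through the scaling direction; the latter is tied to the mass, since $S_Z$ constrains it. The linearized operator around $\vec Q$ for \eqref{eq:Zak2} should have exactly one pair of real eigenvalues $\pm\mu$, with eigenfunctions $\vec g_\pm$, inherited from the NLS ground state since radially $Q$ has one negative direction. We decompose $\vv=\la_+\vec g_++\la_-\vec g_-+\vga$, where $\vga$ lies in the symplectic complement. Expanding $S_Z$ gives
\[
S_Z(\uu)-S_Z(\vec Q)=-c\la_+\la_-+\tfrac12\langle L\vga,\vga\rangle+O(\|\vv\|^3),
\]
with $\langle L\vga,\vga\rangle\sim\|\vga\|_\HH^2$. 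Hence the distance $d(\uu)\sim d_0(\uu)$ is equivalent to $|\la_1|+\|\vga\|_\HH$ whenever $d\gg\sqrt{\e}$, where $\la_1=(\la_++\la_-)/2$. The dynamics in that regime is hyperbolic: $\ddot\la_1\approx\mu^2\la_1$. This yields the \emph{ejection lemma}: once $d\ge C\sqrt{\e}$ and $d$ is increasing, $d$ grows exponentially until it reaches a small fixed $\de_X$, while $\sgn\la_1$ stays fixed. Then $K(u_s)$ acquires the sign $-\sgn\la_1$, with $|K|\gtrsim d$.

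\emph{Sign functional and the one-pass lemma.} Outside the $\de$-neighbourhood of $\cQ$, variational estimates give $|K(u_s)|\gtrsim\min(\de,\ldots)$ on $\HH^\e$. This lets us define a sign $\fS\in\{\pm1\}$ that is continuous along the flow, equal to $\sgn K$ away from $\cQ$ and to $-\sgn\la_1$ near it.

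The key step, and the main obstacle, is the one-pass lemma: a solution cannot leave the $\de_*$-neighbourhood of $\cQ$ after ejection and then return to its $\e$-size neighbourhood. The argument would use the virial identity for $\|x u_s\|^2$-type quantities, localized with cutoffs. For the Zakharov system the virial identity contains the wave part $u_d$, whose time derivative is not small. So I would use the paper's new localized virial estimates with monotonicity, together with the normal form $u_w\mapsto u_w-(\text{bilinear correction})$ from \cite{Zak-KM}, which gives a virial functional $V(t)$ satisfying
\[
\dot V\gtrsim \fS\cdot\min(d,\de)\qquad\text{up to errors }O(d^2)+O(\text{tails}).
\]
On the hyperbolic portions, $V$ is controlled by integrating $\la_1$ through the ejection lemma. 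Away from $\cQ$, $\fS K$ has a definite size. Summing the two contributions over an excursion from $d\approx R\e$ out and back forces $V$ to increase by an amount exceeding its a priori bound. That bound is $O(R\e\cdot\text{radius})$ on the return time, obtained with the radial Strauss decay and a choice of the cutoff radius. This is the contradiction. The delicate part is making the cutoff errors smaller than the gain uniformly in the excursion length, which is exactly what the monotone localized virial family provides.

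\emph{Conclusion of the trichotomy.} Either $d_0(\uu(t))\le C\e$ for all large $t$, which is case (1), or eventually $d\ge R\e$ forever by the one-pass lemma. In the latter case $\fS$ is eventually constant. If $\fS=+1$, scattering follows from the concentration-compactness/rigidity argument of \cite{Zak-KM}, adapted as in \cite{NS-NLS}. One uses a profile decomposition with the normal form, a minimal nonscattering element constrained to stay away from $\cQ$, and the localized virial argument, now with $K\gtrsim\min(d,\de)>0$, to exclude it; this is case (2). If $\fS=-1$, Merle's localized virial argument \cite{Merle}, run with the uniform bound $K\le-c$ and radial tail control, gives growup; this is case (3). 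Time reversal yields the statement for $t<0$.
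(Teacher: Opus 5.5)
Your outline follows the paper's architecture: modulation and ejection near $\cQ_1$, the sign functional $\fS$, a one-pass theorem, Kenig--Merle scattering for $\fS=+1$ and virial growup for $\fS=-1$, and the final case split via Lemma~\ref{lem:trap}, Theorem~\ref{thm:onepass} and Proposition~\ref{prop:sc}. The gap is the step you yourself call the main obstacle. You do not supply it, and the mechanism you sketch would not work. Your bound $\dot V\gtrsim\fS\min(d,\de)$ ``up to $O(d^2)+O(\text{tails})$'' presupposes that the cut-off errors are small, and for these solutions they are not:
\begin{enumerate}
\item There is no compactness, and the excursion has unbounded length.
\item In the $\fS=-1$ case $\|\uu\|_\HH$ may become arbitrarily large during the excursion, so errors of size $\|\uu\|_\HH^2/m$ are not dominated by a fixed $\ka_V$.
\item The wave component is controlled only in $L^2$, so there is no radial Sobolev room to make its tails decay.
\end{enumerate}
For $\fS=+1$ it is worse. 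The cut-off quadratic remainders in Proposition~\ref{prop:locvir}, such as $-4\LR{|\na u|^2|\chi^C}$, $-\LR{u_{d1}^2|(2\p_\ro+2)\chi^C}$ and the $\z^2$ term, have the sign opposite to $K$. They are of the same order as the localized $K$, which has no positive lower bound along dispersing solutions. The cubic remainder $|u|^2n$ decays more slowly than the quadratic part, so with a naive weight such as $1/(1+r)$ its cut-off error can exceed the main term. Hence there is no gain at all off the hyperbolic intervals. Adding the normal form does not help: it is a nonlocal bilinear Fourier multiplier that cures derivative loss, not spatial tails, and the paper uses it only in the Strichartz, scattering and difference estimates, never in the virial.

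What is needed is an exact localizable identity, with every error absorbed into a localized coercive quantity.
\begin{itemize}
\item The paper rewrites the system locally via $\bb=(-\De)^{-1}\na n$ in \eqref{eq:Zak3}. This gives the family $V^a_\chi=\LR{-iu|B^\chi_0u}+\tf{1}{\al}\LR{\bb|B^\chi_a\p_0\bb}$, and it takes $a=0$.
\item For $\fS=-1$, coercivity of $G^0_Z$ upgrades Lemma~\ref{lem:epV} to $K^0_Z\lesssim-\ka_V(\de_*)(\|\uu\|_\HH^2+1)$. The favorable-sign tails are discarded. The $\z^2$ term is made $O(\|\uu\|_\HH^2/\ell)$ using the Hardy bound $\|\z\|_2\le\tf23\|n\|_2$ and a cutoff $\chi=\y(\log(r/m))$ varying over a logarithmic length $\ell\gg1/\ka_V(\de_*)$. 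This forces $R_*\le e^{-C/\ka_V(\de_*)}$.
\item For $\fS=+1$ one proves only monotonicity, $\p_tV^0_\chi\ge\ka_\chi(\de_*)(\|\mu\na u\|_2^2+\|\chi_2u_{d1}\|_2^2)\ge0$. This uses the specific weight $(1+\p_\ro)\chi=\mu^2$ with $\mu(r)=(1+r)^{-1/2-q}$. It also uses a separate cutoff $\chi_2^2=\mu^2-(1-q)\tf{r}{1+r}\mu^2$ on $u_{d1}$, the cut-off variational Lemma~\ref{lem:varcut}, and the weighted radial bound \eqref{wei-L4-m} for the $|u|^4$ and $|u|^2(|u_{d1}|+|\z|)$ terms. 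Finally it needs a Hardy inequality for $\z$ whose constant requires $(1+2q)(1+q)\le2(1-q)$.
\item The whole gain $\gtrsim\de_X$ then comes from the ejection intervals. It contradicts $|V^0_\chi|\lesssim R_*+mR_*^2$ at the two times where $d_u=R_*$. That endpoint bound comes from the exponential decay of $Q$ and the vanishing of $V^0_\chi$ on $\cQ_1$, not from Strauss decay.
\end{itemize}
Without these ingredients the one-pass theorem is not established, and hence neither is the trichotomy.
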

Choosing $\ep$ small enough ensures that the cases (1) and (2) have no overlap, since the scattering implies $\|u_s(t)\|_{L^4_x}\to 0$ as $t\to\I$, 
while $\|u_s(t)\|_{L^4_x}\ge \|Q\|_{L^4_x}-Cd_0(\uu(t))$ by the Sobolev embedding $H^1(\R^3)\subset L^4(\R^3)$. 

Let $\T_+,\cS_+,\B_+$ be the set of initial data respectively in the cases of (1)-(3) of Theorem \ref{thm:resol}, 
and similarly $\T_-,\cS_-,\B_-$ be those for negative time. 
Any combination of those trichotomy between the forward and backward time directions is possible and hence we obtain nine different scenarios of the solutions. More precisely, we have
\begin{thm} \label{thm:9set}
For any $\al>0$, all solutions $\uu$ of \eqref{eq:Zak2} with initial data in $\cH^{\ep(\al)}$ exhibit one of the following nine distinct scenarios, with each case being attained by infinitely many data:
\begin{enumerate}
    \item Scattering for both $t\to \pm \infty$, $\cS_+\cap\cS_-$.
    \item Growup on both sides $\pm t>0$, $\B_+\cap\B_-$.
    \item Growup in $t<0$ and scattering as $t\to \infty$, $\B_-\cap\cS_+$. 
    \item Growup in $t>0$ and scattering as $t\to -\infty$, $\B_+\cap\cS_-$. 
    \item Trapped by $\cQ_1$ for $t\to \infty$ and scattering as $t\to -\infty$, $\T_+\cap\cS_-$.
    \item Trapped by $\cQ_1$ for $t\to -\infty$ and scattering as $t\to \infty$, $\T_-\cap\cS_+$.
    \item Trapped by $\cQ_1$ for $t\to \infty$ and growup in $t<0$, $\T_+\cap\B_-$.
    \item Trapped by $\cQ_1$ for $t\to -\infty$ and growup in $t>0$, $\T_-\cap\B_+$. 
    \item Trapped by $\cQ_1$ as $t\to \pm \infty$, $\T_+\cap\T_-$.
\end{enumerate}
Moreover, $\cS_\pm$ and $\B_\pm$ are open, $\T_\pm$ are $(Lipschitz)$ manifolds of codimension $1$, 
and $\T_+\cap\T_-$ has codimension $2$. All the subsets $\cS_\pm,\B_\pm,\T_\pm$   
and their intersections, namely the initial data sets respectively for (1)-(9), 
are path-wise connected sets in $\HH^{\ep(\al)}$. 
The sets for (1), (5), (6) and (9) are uniformly bounded in $\HH$, while the others are obviously unbounded by the growup. 
\end{thm}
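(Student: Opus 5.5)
The plan follows the scheme of Nakanishi--Schlag for NLS and Klein--Gordon. We assume Theorem \ref{thm:resol} together with the ingredients behind it: the linearized spectral analysis around $\vec Q$, the ejection lemma, the one-pass lemma built on the localized virial estimates with monotonicity, and the sign functional that takes over from $K$ away from $\cQ$.

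\emph{Step 1: local dynamics near $\cQ_1$.} Modulo the phase, the linearized operator at $\vec Q$ restricted to radial functions has exactly one pair of real eigenvalues $\pm\mu$, eigenfunctions $g_\pm$, and is coercive on the complement, in the symplectic sense. Decompose $\uu=e^{-i\te}(\vec Q+\la_+g_++\la_-g_-+\vec\ga)$ near $\cQ_1$. Using the normal form to treat the quadratic interactions in $u_w$, build by a contraction argument a Lipschitz graph $\la_-=\Phi_+(\la_+{}_\perp,\vec\ga)$, i.e.\ a center-stable manifold $\cM_+$ of codimension $1$ in $\HH^\e$ near $\cQ_1$. Its points are exactly the data whose solutions stay within $O(\e)$ of $\cQ_1$ for all $t\ge0$. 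The time-reversed construction gives $\cM_-$.

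\emph{Step 2: global structure.} By the one-pass (no-return) lemma, every solution in $\T_+$ eventually enters $\cM_+$ and stays trapped. Hence $\T_+=\bigcup_{t}\uu(-t)(\cM_+)$, and flowing $\cM_+$ backward by the local flow shows $\T_+$ is a Lipschitz codimension-$1$ manifold. Transversality of $\cM_+$ and $\cM_-$ at $\cQ_1$, coming from the independent directions $g_\pm$, gives that $\T_+\cap\T_-$ has codimension $2$.

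\emph{Step 3: openness and the nine sets.} $\cS_+$ is open by the perturbation theory for scattering solutions of \cite{Zak-KM}. $\B_+$ is open because growup is detected by the sign functional once the solution leaves the $O(\e)$ neighborhood, and this sign is stable under perturbation. Nonemptiness and infinitude of the nine sets come from initial data $\uu(0)=\vec Q+\de_+g_++\de_-g_-+\vec\ga$ with small $\de_\pm$ of prescribed signs, or on $\cM_\pm$. The ejection lemma fixes the forward behavior by the sign of $\la_+-\la_-$ at exit, scattering versus growup. The backward behavior is fixed by the sign of $\la_++\la_-$. Varying $\vec\ga$ gives infinitely many data in each case, and for (9) the data are taken on $\cM_+\cap\cM_-$.

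\emph{Step 4: connectedness and boundedness.} Connectedness follows as in \cite{NS-NLS}. Join any datum by a path inside the set to the region near $\cQ_1$, either by flowing or through the variational region below $\cQ$, which is connected by scaling. There, in the coordinates $(\la_+,\la_-,\vec\ga)$, the sets are cones defined by sign conditions and are visibly pathwise connected. The sets (1), (5), (6) and (9) are bounded because their solutions avoid growup in both directions. Scattering or trapping then force the positive sign of the virial functional away from $\cQ$, and with $S_Z$ this gives a uniform $H^1$ bound, together with a bound on $\|u_d\|_{L^2}$ from the action bound.

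The main obstacle is Step 1 combined with the one-pass lemma for the Zakharov system. Constructing $\cM_+$ needs Strichartz control of the wave component $u_w$, where the quadratic term $\al D|u_s|^2$ loses derivatives. I would try the normal form of \cite{Zak-KM} applied to the remainder $\vec\ga$, with radial improved Strichartz estimates, and close the fixed point in a weighted-in-time norm exploiting the exponential stability of the $g_-$ direction.
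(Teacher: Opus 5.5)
\textbf{Gap in Step 1 (the local manifold).} Everything in the theorem beyond Theorem \ref{thm:resol} rests on the local graph, and your proposal does not produce it. You defer the contraction to something you ``would try'', and for Zakharov this is exactly the hard point. A Lyapunov--Perron fixed point for $\vga$ needs one of two things. Either you need global-in-time Strichartz/dispersive estimates for $e^{tJ\cL}$ on the center subspace, as in the NLS case, and these are not available here. Or you need a cut-off, weighted scheme compatible with the normal form for the derivative-losing term $\al D|u_s|^2$, which you do not set up.

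The description is also misoriented. For $\T_+$ the dependent variable must be the unstable coordinate, $\la_+=m(\la_-,\vga)$, not $\la_-$. What drives the construction is the gap between the rate $\mu$ of $\la_+$ and the at most $e^{C\de t}$ growth of the rest of the difference, not the stability of $g_-$.

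The paper avoids any fixed point:
\begin{itemize}
\item Uniqueness and the $O(\de)$ Lipschitz bound come from Lemma \ref{lem:diff Lip}. It uses only Strichartz/normal-form bounds on unit time intervals plus the linearized energy for $\diff\vga$, with error $\int\de\|\diff\vv\|_\HH^2dt$. It shows that $|\diff\la_+(0)|\gg\de\|\diff\vv(0)\|_\HH$ forces exponential growth of $\diff\la_+$, which is impossible if both solutions stay trapped.
\item Existence is topological. The sets $I_\pm$ of chart data ejected with $\pm\la_+>0$ are open by stability of Lemma \ref{lem:deltaX}. The trapped set is confined to $|a_+|\lec\de(|a_-|+\|\fy\|_\HH)$ by the difference estimate. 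Data with $|a_-|\ll\pm a_+\sim\de$ lie in $I_\pm$. Hence, by connectedness, each line in $a_+$ passes from $I_-$ to $I_+$ and must meet the trapped set.
\end{itemize}
The one-pass theorem guarantees that trapped solutions remain in the chart. That same input is what makes $\T_+\cap\T_-$ equal to the local graph of $\ti m$, rather than an intersection of two flowed-out manifolds as your ``transversality'' suggests.

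\textbf{Gaps in Steps 3--4.} Openness of $\B_+$ does not follow from stability of the sign. Continuous dependence controls only bounded time intervals, and growup may take infinite time. Nearby data could therefore return to $\cQ_1$ at times $t_n\to\I$ and then scatter or be trapped. The one-pass theorem is silent about data that start away from $\cQ_1$. The paper excludes this with the monotone virial: one-pass backward from $t_n$ gives $T_n\to\I$ with $d_Q(\uu_n)>R_*$ and $\fS=-1$ on $[0,T_n)$. Then $V^0_\chi$ drops by $\gec T_n$, while it stays bounded at $t=0$ (by convergence) and at $T_n$ (since $d_Q=R_*$), a contradiction.

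For connectedness, the sets in the chart are not sign cones in $(\la_+,\la_-)$. They are regions cut out by the graphs $a_\pm=m(a_\mp,\fy^\pm)$ and by the action constraint $-\mu a_+a_-+\tf12\LR{\cL\fy|\fy}+C(\vv)<\e^2$, and that constraint is the real difficulty. The paper handles it by deformations $t\mapsto[\la(t),t\fy]$ that keep the sign conditions while $S_Z$ decreases, reducing everything to a tiny neighborhood of the center manifold.

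For $\cS_+\cap\cS_-$ and $\B_+\cap\B_-$, whose members need not approach $\cQ_1$, you give no path inside the set down to the region below $\cQ$. The paper instead takes any path in $\HH^\e$ and uses openness of $\cS_\pm,\B_\pm$ to see that its exit points lie in $\T_\pm$. It then flows the adjacent pieces into the chart, where $\cN^\e_{-,-}$ (respectively $\cN^\e_{+,+}$) is connected. Likewise, nonemptiness via the signs of $\de_\pm$ requires $|\de_\pm|\gg\de\|\vga\|_\HH$, since the classification is relative to $m$.
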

$\T_+\cap\T_-$, $\T_+$, and $\T_-$ 
are respectively the center, the center-stable, and the center-unstable manifolds 
of $\cQ$ within $\cH^{\ep(\al)}$. 
The above theorem implies unboundedness of $\cS_\pm$ and $\T_\pm$ in $\HH$, due to the growup in the other time directions. 

\begin{rem}
The above theorem is analogous to the results for NLS \eqref{eq:NLS} obtained in \cite{NS-NLS}, 
but weaker in the trapped and growup cases.  
By finer spectral and dispersive analysis of the linearizing NLS \eqref{eq:NLS} around $Q$, it was proved in \cite{NS-NLS} that trapped by $Q$ can be enhanced to scattering to $Q$ for NLS. However, the same problem for \eqref{eq:Zak2} seems significantly more complicated. 
Also existence of any blow-up in finite time is still an open problem for the 3D Zakharov system. 
So we leave those questions for future study.
\end{rem}

As in the NLS case, the dynamics is simplified in the limit $\ep\to 0$ to the threshold. 
\begin{thm} \label{thm:thres}
All solutions $\uu$ of \eqref{eq:Zak2} with initial data in $\cH$ satisfying 
$E_Z(\vec u)M(\vec u) \le E_S(Q)M(Q)$ 
exhibit one of the following seven distinct scenarios, with each case being attained by infinitely many data:
\begin{enumerate}
    \item Scattering for both $t\to \pm \infty$,
    \item Growup on both sides $\pm t>0$,
    \item Exponentially convergent to $\cQ$ as $t\to \infty$ and scattering as $t\to -\infty$,
    \item Exponentially convergent to $\cQ$ as $t\to -\infty$ and scattering as $t\to \infty$,
    \item Exponentially convergent to $\cQ$ as $t\to \infty$ and growup in $t<0$,
    \item Exponentially convergent to $\cQ$ as $t\to -\infty$ and growup in $t>0$,
    \item The ground states in $\cQ$. 
\end{enumerate}
Moreover, the initial data sets for $(3)\cup(5)\cup(7)$ and $(4)\cup(6)\cup(7)$ are (Lipschitz) manifolds of dimension $3$. 
They are respectively the stable and the unstable manifolds of $\cQ$ in $\cH$. 
\end{thm}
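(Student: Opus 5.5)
By scaling \eqref{rescaling} we reduce to $M(\uu)=M(Q)$, assuming $E_Z(\uu)\ge0$ and $M(\uu)>0$; negative energy is covered by \cite{Merle}. Then $E_ZM\le E_SM(Q)$ becomes $S_Z(\uu)\le S_Z(\vec Q)$, and the regime $E_ZM<E_S(Q)M(Q)$ is the dichotomy of \cite{Zak-KM}. So only the threshold $S_Z(\uu)=S_Z(\vec Q)$ remains. This sits inside $\HH^\e$ for every $\e>0$. We therefore apply Theorem \ref{thm:resol} with $\e\to0$: in the trapped case $d_0(\uu(t))\le C\e$ for large $t$, and since $\e$ is arbitrary, $d_0(\uu(t))\to0$ as $t\to\I$ (after rescaling, convergence to $\cQ$). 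Also $S_Z(\uu)\ge S_S(u_s)\ge S_S(Q)$ holds whenever $K(u_s)=0$, and equality at the threshold with $\uu\in\cQ$ forces $u_d=0$ and $u_s\in\{e^{-i\te}Q\}$ by the variational characterization. This identifies case (7).

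The convergence is upgraded to exponential by modulation analysis near $\cQ_1$. Write $\uu=\vec Q_{1,\te(t)}+\vec v$ with orthogonality conditions, and decompose $\vec v$ along the real eigenmodes $\pm\mu$ of the linearized operator (coefficients $\la_\pm$) plus a remainder $\ga$ on which the linearized energy is coercive (modulo the symmetry directions). At the threshold, energy and mass conservation give $\|\ga\|^2\lesssim|\la_+\la_-|+o(\|\vec v\|^2)$. Combined with $\dot\la_\pm=\pm\mu\la_\pm+O(\|\vec v\|^2)$ and $d_0\to0$, the unstable coefficient must stay comparable to the stable one, and the standard ODE argument of \cite{NS-NLS} yields $\|\vec v(t)\|\lesssim e^{-\mu t}$. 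If instead $\uu$ is not trapped forward, Theorem \ref{thm:resol} gives scattering or growup. We must exclude mixed behaviour such as scattering forward and growup backward. The sign of $K(u_s)$ is constant along the flow as long as $\uu$ stays away from $\cQ$, because at the threshold $K=0$ forces $\uu\in\cQ$. Moreover the solution cannot approach $\cQ$ without converging to it, by the one-pass/exponential dichotomy at the threshold. Hence in the non-trapped directions the behaviours are scattering for $K>0$ and growup for $K<0$. The trapped direction is compatible with either sign on the other side, which yields the list (1)--(7). Existence of infinitely many data in each case follows from scaling and phase, together with a construction of special solutions $W_\pm$ on the stable manifold with $K>0$ and $K<0$ respectively.

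For the manifold statement we use the center-stable manifold $\T_+$ from Theorem \ref{thm:9set} and intersect it with the threshold $\{S_Z=S_Z(\vec Q), M=M(Q)\}$. On $\T_+$ the remainder and $\la_+$ are Lipschitz graphs over the other coordinates, and the threshold constraint kills the infinite-dimensional remainder by coercivity. What survives is $\la_-$ (one real parameter), the phase $\te$, and the scaling $\la$, for dimension $3$. We parametrize the result by $(\la,\te,\la_-)$ and construct it directly by a contraction around $e^{-\mu t}\mathcal{Y}_-$ for $t\ge T$, then flow backwards. The main obstacle is the exponential convergence and uniqueness: showing that every trapped threshold solution lies on this three-dimensional family. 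This requires the coercivity of the linearized Zakharov energy, whose Hessian includes the $\tf14\|u_d\|^2$ part, at the order needed to absorb the nonlinear errors. We first try to derive it from the NLS coercivity for $u_s$ plus the trivially positive $u_d$ term after normal-form correction.
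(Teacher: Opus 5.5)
Your reduction to $S_Z(\uu)=S_S(Q)$ is sound. So is the exponential convergence of forward-trapped solutions through the hyperbolic ODE for $\la_\pm$, which is exactly what Lemma \ref{lem:trap} does with Lemma \ref{lem:deltaX}. Your remark that at the threshold $K(u_s)$ can vanish only on $\cQ_1$ is also sound: its sign is then fixed along any non-stationary solution, which rules out mixed scattering/growup. You should also say why a solution trapped in both directions is a ground state. This follows from the one-pass Theorem \ref{thm:onepass}, because at the threshold $\e$, and hence $R$, may be taken arbitrarily small.

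The genuine gap is everything that rests on \emph{uniqueness} of threshold trapped solutions, which you call ``the main obstacle'' but do not supply. In particular, your static dimension count fails. On $\T_+$ only the unstable coordinate is a Lipschitz graph over the others; the dispersive part $\vga$ ranges freely over an infinite-dimensional ball. The single scalar constraint $S_Z=S_S(Q)$, combined with \eqref{action exp} and coercivity, only gives the bound $\|\vga\|_\HH^2\lec|\la_+\la_-|+\|\vv\|_\HH^3$. It does not make $\vga$ a function of $\la_-$. That the threshold part of $\T_+$ is two-dimensional (three after scaling) is a dynamical fact: any two such solutions coincide up to a time translation and a phase.

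The missing tool is not coercivity of $\cL$, which is already available under $\om(\vga,\g^2)=\om(\vga,\vec Q_{(\la)})=0$. What is missing is a Lipschitz estimate for the \emph{difference} of two solutions near $\cQ_1$. For a single solution, conservation of $S_Z$ controls $\vga$, which is why Lemma \ref{lem:deltaX} works. For a difference this structure is lost. Moreover, \eqref{diff eq} contains derivative-losing terms such as $\diff{v_s}\re v_w^1$ and $\re\aD[(\bar v_s^0+\bar v_s^1)\diff{v_s}]$, so $\p_t\LR{\cL\diff{\vga}|\diff{\vga}}$ is not bounded in the energy norm. The paper bounds the increment of this linearized energy on unit time intervals using the radial Strichartz norms and a normal form for the $XL$ interactions (Lemmas \ref{lem:lin}--\ref{lem:bil}). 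This gives \eqref{diff ga est}--\eqref{diff la- est}.

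With these estimates, Lemma \ref{lem:unst mfd} compares two solutions trapped as $t\to-\I$ with $\la_+^0(T_1)=\la_+^1(T_1)$. One integrates $\diff{\la_+}$ backward from $T_1$ and $\diff{\la_-}$, $\diff{\vga}$ forward from $T_0$, using $\|\vv^\circ(s)\|_\HH\lec\de_1e^{\mu(s-T_1)}$. For $\de_1/\mu$ small the integral terms are absorbed, letting $T_0\to-\I$ gives $\diff{\vv}\equiv0$, and \eqref{eq te} fixes the phase.

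The same derivative loss undermines your existence step, a contraction around $e^{-\mu t}\g^-$ in $L^\I_t\HH$; it would have to be set up in the spaces $\fZ$ with the normal form. The paper avoids this by taking weak limits of the solutions on $\T_-$ with data $[\pm\tf1n,m(\pm\tf1n,0),0]$, shifted in time so that $d_Q=\de_X$ at $t=0$. It then identifies the limit as a strong solution through the uniqueness of weak solutions \cite{MN-uniq}.
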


Theorems \ref{thm:resol}--\ref{thm:9set} are proved in Section \ref{ss:cs mfd}, 
and Theorem \ref{thm:thres} in Section \ref{ss:smfd}. 
The proof follows the same strategy as \cite{NS-NLS} for NLS \eqref{eq:NLS}. The strategy and organization of this paper can be described as follows.  

\bigskip

\bigskip

\begin{tikzpicture}[
>=Stealth,
node distance=12mm,
every node/.style={font=\small},
block/.style={
draw,
rounded corners=2pt,
thick,
minimum width=4cm,
minimum height=11mm,
align=center,
fill=gray!10
},
arrow/.style={->,thick},
darrow/.style={->,thick,dashed}
]

%%%%%%%%%%%%%%%%%%%%%%%%%%%%%%%%%%%%%%%%%%%%%%%%%%%%%%
% Main vertical flow
%%%%%%%%%%%%%%%%%%%%%%%%%%%%%%%%%%%%%%%%%%%%%%%%%%%%%%

\node[block] (A)
{Linearization around the ground state $\vec Q$\\
Spectral analysis: eigenvalues, $\mu,-\mu, 0$ (\S 3)};

\node[block,below=of A] (B)
{Spectrum decomposition: $\uu=\lambda_{+}\g_{+}+\lambda_{-}\g_{-}+\vga$\\
Modulation analysis so that $\vga$ has good estimates (\S 4)};

\node[block] (C)
at ($(B.center)+(-36mm,-20mm)$)
{Hyperbolic dynamics near $\vec Q$: \\
dominated by ejection mode (\S 5)};

\node[block] (D)
at ($(B.center)+(36mm,-20mm)$)
{Variational analysis away from $\vec Q$ (\S 6)};

%%%%%%%%%%%%%%%%%%%%%%%%%%%%%%%%%%%%%%%%%%%%%%%%%%%%%%
% Branches
%%%%%%%%%%%%%%%%%%%%%%%%%%%%%%%%%%%%%%%%%%%%%%%%%%%%%%

\node[block] (E)
at ($(D.center)+(-35mm,-20mm)$)
{Combination: one-pass theorem, away from $\vec Q$ (\S 7)\\
Key ingredient: localized virial estimates with monotonicity (\S 2)};

%%%%%%%%%%%%%%%%%%%%%%%%%%%%%%%%%%%%%%%%%%%%%%%%%%%%%%
% Continue
%%%%%%%%%%%%%%%%%%%%%%%%%%%%%%%%%%%%%%%%%%%%%%%%%%%%%%

\node[block] (F)
at ($(E.center)+(-36mm,-20mm)$)
{$K>0$ away from $\vec Q$ \\
$\implies$ Scattering (\S 8)};

\node[block] (H)
at ($(E.center)+(0mm,-40mm)$)
{Trapped by $\vec Q$  
$\implies$ center-stable mfd. \\
$u=\lambda_{+}\g_{+}+\lambda_{-}\g_{-}+\vga$\\ 
Fix $(\lambda_{-},\vga)$, only vary $\lambda_{+}$ (\S 9)};

\node[block] (N)
at ($(E.center)+(36mm,-20mm)$)
{$K<0$ away from $\vec Q$ \\ 
$\implies$ Growup (\S 8)};

\node[block] (I) 
at ($(H.center)+(-35mm,-20mm)$)
{Ejection with $\la_+ \sim \pm e^{\mu t}$ : open\\
 Bisection argument};

\node[block] (J)
at ($(H.center)+(35mm,-20mm)$)
{Difference estimates\\
(Strichartz + normal form)};

\node[block,below=of I] (K)
{Existence of $\la_+(0)$ for trapping};

\node[block,below=of J] (L)
{Uniqueness \& Lipschitz continuity};

\node[block] (M)
at ($(L.center)+(-35mm,-20mm)$)
{Codimension-one invariant Lipschitz manifold:\\
$\lambda_{+}=m(\lambda_{-},\vga)$};

%%%%%%%%%%%%%%%%%%%%%%%%%%%%%%%%%%%%%%%%%%%%%%%%%%%%%%
% Arrows
%%%%%%%%%%%%%%%%%%%%%%%%%%%%%%%%%%%%%%%%%%%%%%%%%%%%%%

%\draw[arrow] (Q)--(A);
\draw[arrow] (A)--(B);
\draw[arrow] (B)--(C);
\draw[arrow] (B)--(D);

\draw[arrow] (C)--(E);
\draw[arrow] (D)--(E);

\draw[arrow] (E)--(F);
\draw[arrow] (E)--(H);
\draw[arrow] (E)--(N);

%\draw[darrow] (G)--(H);

\draw[arrow] (H)--(I);

\draw[arrow] (H)--(J);

\draw[arrow] (I)--(K);
\draw[arrow] (J)--(L);
\draw[arrow] (K)--(M);
\draw[arrow] (L)--(M);

\end{tikzpicture}

\newpage

Compared to NLS \eqref{eq:NLS}, there are some essential difficulties for the Zakharov system.

\begin{itemize}

\item Linearized operator around the ground state.

It is essential to consider the linearized operator of the Zakharov system around the ground state. It is a system of elliptic and nonlocal operators, in contrast to the NLS case where it is an ODE (in the radial setting). Fortunately, we can identify its hyperbolic structure so that we can combine the hyperbolic dynamics around the ground state and the scattering/blowup dynamics away from the ground state.  However, the finer dynamics near the ground state requires further study. 

\item Localized virial estimates.

Another difficulty is that the virial estimate for the Zakharov system is more complicated and weaker than the NLS. The difficulty is mainly as follows: the energy only controls $L^2$ norm of the wave component, in contrast to the $H^1$ component of Schr\"odinger. There is no room to use the radial 
Sobolev embedding to control the cut-off remainder for the wave component. Moreover, the interaction energy is cubic $(n|u|^2)$ 
in contrast to quartic $(|u|^4)$ of NLS. Therefore, the virial estimate is difficult to localize without losing its monotonicity, which is essential in the proof of the one-pass theorem. Fortunately enough, as one of the main novelties of this paper, we found a new virial estimate after reformulating the Zakharov system as a local vector-valued system. 
The monotonicity of localized virial identity seems to require a unique choice of scaling exponent in the scattering case. Besides that, we need a more intricate choice of cut-off than the NLS case \cite{NS-NLS} to reduce degeneration of the Morawetz-type weight $1/|x|$.

\item Quadratic nonlinearity for long-time perturbations.

Another difficulty is caused by the quadratic nonlinearity. The nonlinear interaction structures play crucial roles in the scattering problem even for small data (\cite{GN,Zak-KM}). To prove the scattering results, we need to combine the normal form method and Kenig-Merle's concentration-compactness/rigidity method (\cite{KM}) as \cite{Zak-KM}.  

\end{itemize}

\subsection{Notation}
Here we prepare a few notations that are used throughout the paper. 
As usual, $f\lec g$ or $g\gec f$ means that there is some positive constant $C>0$ such that $f\le Cg$. 
We will fix $\al>0$ and regard it as a constant (one may assume $\al=1$ without losing essential generality). 
$f\sim g$ means $f\lec g$ and $g\lec f$. 
$f\ll g$ or $g\gg f$ means that $f/g$ is small enough, compared with some relevant constants. 
The $L^p$ norm on $\R^3$ is abbreviated as $\|f\|_p:=\|f\|_{L^p(\R^3)}$. 
The (complex and real) $L^2$ inner products on $\R^3$ are denoted by 
\EQN{
 (f|g):=\int_{\R^3}f(x)\cdot\overline{g(x)}dx, \pq \LR{f|g}:=\re(f|g).}
For any symbol $X$ with an index $0,1$, the difference and the pair are denoted by 
\EQN{
\diff{ X} := X^1-X^0, \pq X^\circ:=(X^0,X^1).}

\section{Localized virial identity}

Various virial estimates play crucial roles in the study of large-time behavior of the solutions to the Zakharov system. 
In \cite{Merle}, Merle used a virial identity to show the existence of growup solutions. In \cite{Zak-KM}, the authors used another virial identity to obtain the scattering results. 
In this section, we introduce a new family of virial identity which includes the ones used in \cite{Merle} and \cite{Zak-KM}.  In particular, our new virial identity has the advantage that it behaves better for spatial localization.  It plays an essential role in the proof of the one-pass theorem as we apply it to the solutions that do not have compactness properties. We believe it is also useful for other purposes.

\subsection{Virial identity}

We rewrite the Zakharov system \eqref{eq:Zak} (or equivalently \eqref{eq:Zak2}) 
as the following local vector system with $\bb=(-\De)^{-1}\nabla n$, $n=-\na\cdot \bb$
\EQ{\label{eq:Zak3}
 \pt \CAS{(i\p_t-\De+(\na\cdot \bb))u=0, \\ 
 (\p_0^2-\De)\bb=\na|u|^2, \pq \na\times\bb=0,}
}
where $\p_0:=\p_t/\al$. 
Denoting $\ub:=(u,\bb,\p_0\bb)=:(u_s,u_{b1},u_{b2})$ for the unknown functions in \eqref{eq:Zak3}, we have three equivalent formulations for the Zakharov system, given by \eqref{eq:Zak}, \eqref{eq:Zak2} or \eqref{eq:Zak3}.
Throughout this paper, we use $u$ or $u_s$ to denote the Schr\"odinger component which is the same in three formulations. For the wave component, we denote it by $n$, or $u_w=u_{w1}+iu_{w2}$, or $\vec b$ respectively which have the following relations:
\EQ{
 \pt u_w=n-iD^{-1}\p_0 n:=u_{w1}+iu_{w2}, 
 \pr u_{w1}=-\na\cdot\bb,\pq u_{w2}=iD^{-1}\na\cdot\p_0\bb,
 \pr u_d=u_w-|u_s|^2:=u_{d1}+iu_{d2}.
}
Note that in the notations $u_s$, $u_w$, $u_d$, we use $s,w,d$ to denote subindices, not partial derivatives of $u$. However, in this paper, we use $u_r$, $u_\rho$, and $u_\theta$ to denote partial derivatives of $u$ in those variables.
We may rewrite the energy as
\EQ{
 \ti E_Z(\ub):=E_Z(\uu)=&\int_{\R^d}\tf{|\na u|^2+|u|^2\na\cdot \bb}{2}+\tf{|\p_0 \bb|^2+|\na \bb|^2}{4}dx
 =E_S(u_s)+\tf14\|u_d\|_2^2, 
} 
and the above system \eqref{eq:Zak3} in the Hamiltonian form for $\ub\in H^1\times\dot H^1\times L^2=:\ti\cH$.
\EQ{
 \p_t \ub=\ti J \ti E'_Z(\ub), \pq \ti J:=\smat{i & 0 & 0 \\ 0 & 0 & 2\al \\ 0 & -2\al & 0}
 = -\ti J^*. }

To state our virial identity, we introduce the dilation operator and its generator
\EQ{
 S_\la^\s \fy(x):=\la^{(d+\s)/2}\fy(\la x), \pq 
 A_\s:=x\cdot\na+\tf{d+\s}{2} = -A_{-\s}^*,}
for $\s\in\R$. For any $a\in\R$ and 
$\cS_\la^a:=S_\la^0\otimes S_\la^{-a}\otimes S_\la^a$, with the generator
$\A_a:=A_0\otimes A_{-a} \otimes A_a$, we have $\ti J^* \A_a^* = \A_a \ti J$. 
Define the symplectic form by 
\EQ{
  \om(\ub,\vb) \pt:=\LR{\ti J^{-1} \ub|\vb}=\LR{-iu_s|v_s}+\tf{1}{2\al}(\LR{u_{b1}|v_{b2}}-\LR{u_{b2}|v_{b1}})
   \pr=: \om(\uu,\vv) = \LR{J^{-1}\uu|\vv}.}
Then the virial identity is given in the form 
\EQ{
 \p_t\om(\ub,\A_a \ub)\pt=\LR{\ti E_Z'(\ub)|\A_a\ub}+\LR{\U {\ti J} \ub|\A_a \ti J \ti E_Z'(\ub)}
 = 2\LR{\ti E_Z'(\ub)|\A_a\ub}
 \pr=2\ti K_Z^a(\ub), 
 \pq \ti K_Z^a(\ub) :=\p_\la \ti E_Z(\cS_\la^a \ub)|_{\la=1}
}
and we used the above identity $\A_a \ti J = \ti J^* \A_a^*$. Note that
\EQ{
\om(\ub,\A_a \ub)=\LR{-iu|A_0 u}+\tf{1}{\al}\LR{\bb|A_a\p_0 \bb}.
}
It is worth noting that introducing $\p_0=\p_t/\al$ hides the $\al$-dependence in the energy, but not in the virial. 
Since
\EQ{
\ti E_Z(\cS_\la^a \ub) 
 \pt=\int_{\R^d}\tf{\la^2|\na u_s|^2+\la^{(d-a+2)/2}|u_s|^2\na\cdot u_{b1}}{2}
 +\tf{\la^a|u_{b2}|^2+\la^{2-a}|\na u_{b1}|^2}{4}dx
 \pr= E_S(S_\la^0u_s)
 +\int_{\R^d}\tf{\la^a|u_{b2}|^2+\la^{2-a}|\na\cdot u_{b1}+\la^{(d+a-2)/2}|u_s|^2|^2}{4}dx,}
we have 
\EQ{
 \ti K_Z^a(\ub) \pt=\int_{\R^d}|\na u_s|^2+\tf{d-a+2}{4}|u_s|^2(\na\cdot u_{b1})
 +\tf{a}{4}|u_{b2}|^2+\tf{2-a}{4}|\na u_{b1}|^2dx
 \pr=K(u_s) + \tf{a}4\|u_{b2}\|_2^2+\tf{2-a}4\|u_{d1}\|_2^2 
- \tf{d+a-2}{4}\LR{u_{d1}||u|^2},}
which may be rewritten in the variable $\vec u=(u_s,u_w)$ with $u_w=u_{w1}+iu_{w2}$,
\EQ{
 \ti K_Z^a(\ub)=:K_Z^a(\vec u)=\|\na u_s\|_2^2+\tf a4\|u_{w2}\|_2^2+\tf{2-a}{4}\|u_{w1}\|_2^2-\tf{d+2-a}{4}\LR{u_{w1}||u_s|^2}.} 

So we obtain for nice solutions $(u,\bb)$ of \eqref{eq:Zak3}: For any $a\in \R$, 
\EQ{ \label{eq:vir}
 \pt\frac{d}{dt}\brk{\LR{-iu|A_0 u}+\tf{1}{\al}\LR{\bb|A_a\p_0 \bb}}=
2\ti K_Z^a(u,\bb,\p_0\bb) = 2K_Z^a(u,u_w)
 \prQ= 2K(u) + \tf{a}2\|\p_0 \bb\|_2^2+\tf{2-a}2\|u_{d1}\|_2^2 
- \tf{d+a-2}{2}\LR{u_{d1}||u|^2} 
 \prQ= 2\|\na u\|_2^2+\tf a 2\|u_{w2}\|_2^2+\tf{2-a}{2}\|u_{w1}\|_2^2-\tf{d+2-a}{2}\LR{u_{w1}||u|^2}.
 } 
It is only formal for finite energy solutions, as the left side is unbounded, 
but we will use truncated versions (Proposition \ref{prop:locvir}) 
that are justified in the energy space by the local wellposedness.  

The virial identity used in \cite{Merle} corresponds to the case $a=2-d$ while 
the one used in \cite{Zak-KM} corresponds to the case $a=1$. 
However, for its positivity, the optimal choice seems $a=0$, 
as long as we dispose of the cubic term $\LR{u_{d1} ||u|^2}$ by H\"older's inequality.  By variational analysis we can show that the virial is monotone in the regions $K(u)>0$ and $K(u)<0$. See Lemma \ref{lem:var}.

\subsection{Localized virial identity}

The virial quantity in \eqref{eq:vir} is unbounded in the energy space.  So we need to localize it in space.  As it turns out, using the new vector $\bb$ (solving a local system), one can better localize the virial identity. This is crucial for our purpose. First we introduce some operators. In the following computations of operators, we apply the usual convention, identifying functions with their multipliers, and the order of operations from right to left, while parentheses indicate the precedence of the inside. For example,
\EQ{
 \na f \fy =\na(f\fy)= (\na f)\fy + f \na \fy.}
For the logarithmic coordinate $\ro:=\log r$, $r:=|x|$ on $\R^d$, any $k\in \N$, $\s\in\R$ and $\be\in\N_0^d$, we define the following operators. Although they are defined in general, we will be working only with radial functions 
and spherical harmonics of degree $1$. 
\EQ{
 \pt \p_\ro=r\p_r=x\cdot\na, \pq r^\s \p_\ro=\p_\ro r^\s-\s r^\s, \pq \p_r^k\p_\ro=\p_\ro \p_r^k+k\p_r^k,
 \pr \na^\be \p_\ro = \p_\ro\na^\be+|\be| \na^\be, \pq x^\be \p_\ro = \p_\ro x^\be- |\be| x^\be, \pq |\be|=\be_1+\cdots \be_d,
 \pr \p_\ro^*=-\p_\ro-d=-\na\cdot x, \pq A_\s= \tf12(\p_\ro-\p_\ro^*+\s)=\p_\ro+\tf d2+ \tf\s 2 = -A_{-\s}^*, 
 \pr B_\s^\fy := \fy A_\s + A_\s \fy = \fy \p_\ro + (\s-\p_\ro^*)\fy 
 = B_0^\fy + \s\fy = -(B^\fy_{-\s})^*,
 \pr \na B_\s^\fy = (\na\fy)\p_\ro+\fy(\p_\ro+1)\na+(\s+1-\p_\ro^*)\na\fy = B_{\s+2}^\fy\na + B^{\na\fy}_{\s+1}.
}
The logarithmic coordinate is convenient for its scaling invariance, but we need to be careful about commutation with the scale dependent operators as above. 
We have
\EQ{\label{eq:ubarBu}
\re(\bar u B^\fy_\s u)=(\s-\p_\ro^*)\fy|u|^2 = \tf12[B^\fy_{\s}|u|^2 + |u|^2(\s-\p_\ro^*)\fy].
 }
For any function $f(x)$, we write $f_r:=\partial_r f$ and $f_\ro:=\partial_\ro f=r\partial_r f$.

Let $\chi:\R\to\R$ be a smooth radial function satisfying $|\chi(x)|\lec\LR{x}^{-1}$, 
and consider the tamed virial
\EQ{
 V_\chi^a(\ub) \pt:=\om(\ub,(\A_a \chi+\chi\A_a)\ub)
 =\LR{-iu|B_0^\chi u}+\tf{1}{\al}\LR{\bb|B_a^\chi\p_0 \bb}. }
Using the Hamiltonian equation, we have for any $a\in\R$
\EQ{
 \p_t V_\chi^a(\ub) \pt= \LR{\ti E_Z'(\ub)|(\A_a \chi+\chi \A_a)\ub}
 +\LR{\ti J^{-1}\ub|(\A_a \chi+\chi\A_a)\ti J \ti E_Z'(\ub)}
 \pr=2\LR{\ti E_Z'(\ub)|(\A_a \chi+\chi\A_a)\ub}
 \pr=2\LR{E_S'(u)|B^\chi_0 u} - 2\LR{u_{d1} u|B_0^\chi u}
 +\LR{\na u_{d1}|B_{-a}^\chi \bb}
 + \LR{\p_0 \bb|B^\chi_a\p_0 \bb}.
}
For the NLS part, by \eqref{eq:ubarBu} we get
\EQ{ \label{vir NLS}
 &2\LR{E_S'(u)|B^\chi_0u}\\
 =&2\LR{\na u|\na B^\chi_0 u} - 2\LR{|u|^2u|B^\chi_0 u},\\
 =&2\LR{\na u|B^\chi_2\na u+B^{\na\chi}_1 u} 
 - \LR{|u|^2|B^\chi_0|u|^2 - |u|^2\p_\ro^*\chi}\\
 =&4\LR{|\na u|^2|\chi}+2\LR{\na u|2(\na\chi)\p_\ro u + [(1-\p_\ro^*)\na\chi]u}
  +\LR{|u|^4|\p_\ro^*\chi}\\
 =&4\LR{|u_r|^2|(\p_\ro+1)\chi}+4\LR{|u_\te|^2|\chi}-\LR{\p_r|u|^2|\p_r \p_\ro^*\chi} +\LR{|u|^4|\p_\ro^*\chi},\\
 =&4\LR{|u_r|^2|(\p_\ro+1)\chi}+4\LR{|u_\te|^2|\chi} - \LR{\tf{|u|^2}{r^2}|(\p_\ro^*+2)\p_\ro \p_\ro^* \chi} 
 +\LR{|u|^4|\p_\ro^*\chi},
 }
where $\theta=\frac{x}{|x|}$, $u_\te:=\na u-\te \p_r u$ denotes the angular component of the gradient. Then 
\EQ{ \label{virZ}
 \pt\p_t V^a_\chi (\ub)=4\LR{|u_r|^2|(\p_\ro+1)\chi}+4\LR{|u_\te|^2|\chi}-\LR{\tf{|u|^2}{r^2}|(\p_\ro^*+2)\p_\ro\p_\ro^* \chi}
  +\LR{|u|^4|\p_\ro^*\chi}
 \prq-\LR{u_{d1}|B^\chi_0|u|^2-|u|^2\p_\ro^*\chi} + a\LR{|\p_0 \bb|^2|\chi}
   - \LR{u_{d1}|B^\chi_{2-a}\na\cdot \bb} - \LR{u_{d1}|B^{\na\chi}_{1-a}\cdot\bb}.}
For the two terms on the last line, we have 
\EQ{
\pt -\LR{u_{d1}|B^\chi_0|u|^2-|u|^2\p_\ro^*\chi} - \LR{u_{d1}|B^\chi_{2-a}\na\cdot \bb}
 \prq= \LR{u_{d1}|B^\chi_{2-a}u_{d1}} + \LR{u_{d1}||u|^2(\p_\ro^*+2-a)\chi}
 \prq= (2-a)\LR{|u_{d1}|^2|\chi}+\LR{u_{d1}|u|^2|(\p_\ro^*+2-a)\chi}. 
 }
Under radial symmetry, we may write $\bb=x\z(r)$
 with $\z:=x\cdot\bb/|x|^2$. Then we have 
\EQ{ \label{eq ze}
 n=-\na\cdot \bb=\p_\ro^*\z=u_{d1}+|u|^2} 
and 
\EQ{
 B^{\na\chi}_{1-a} \bb \pt= [\chi_\ro(\p_\ro+1)+(1-a-\p_\ro^*)\chi_\ro]\z
 \pr= -2\chi_\ro \p_\ro^*\z + \z (2-a-d+\p_\ro)\chi_\ro.}
Hence the last term of \eqref{virZ} becomes 
\EQ{
 -\LR{u_{d1}|B^{\na\chi}_{1-a} \bb} \pt= 
 \LR{u_{d1}|2\chi_\ro n} 
 - \LR{\p_\ro^*\z-|u|^2|\z(\p_\ro+2-d-a)\chi_\ro}
 \pr= 2\LR{u_{d1} n|\chi_\ro}-\LR{(\p_\ro+2d)\tf{|\z|^2}{2}|\up_\ro}-\LR{|u|^2\z|\up_\ro}
 \pr=2\LR{u_{d1}(u_{d1}+|u|^2)|\chi_\ro}+\LR{\tf{|\bb|^2}{2r^2}|(\p_\ro-d)\up_\ro}-\LR{|u|^2\z|\up_\ro},}
where $\up:=(d+a-2-\p_\ro)\chi$. 
Thus we obtain 
\EQ{ \label{vir0}
 \p_t V^a_\chi(\ub) \pt= 4\LR{|u_r|^2|(\p_\ro+1)\chi}+4\LR{|u_\te|^2|\chi}-\LR{\tf{|u|^2}{r^2}|(2+\p_\ro^*)\p_\ro \p_\ro^* \chi}
 +\LR{|u|^4|\p_\ro^*\chi}
 \prq +a\LR{|\p_0 \bb|^2|\chi} + \LR{|u_{d1}|^2|(2\p_\ro+2-a)\chi}-\LR{u_{d1}|u|^2|\up}
 \prq +\LR{\tf{|\bb|^2}{2r^2}|(\p_\ro-d)\up_\ro}-\LR{|u|^2\z|\up_\ro}
 \pr= 4\tilde K_Z^a(\ub)
  \prq+4\LR{|u_r|^2|\chi_\ro}-4\LR{|\na u|^2|\chi^C}-\LR{\tf{|u|^2}{r^2}|(2+\p_\ro^*)\p_\ro \p_\ro^* \chi} -\LR{|u|^4|\p_\ro^*\chi^C} 
  \prq -a\LR{|\p_0 \bb|^2|\chi^C} - \LR{|u_{d1}|^2|(2\p_\ro+2-a)\chi^C}+\LR{u_{d1}|u|^2|\up^C}
  \prq+\LR{\tf{|\bb|^2}{2r^2}|(\p_\ro-d)\up_\ro}-\LR{|u|^2\z|\up_\ro}, }
where $\chi^C:=1-\chi$, $\up^C:=(d+a-2-\p_\ro)\chi^C=d+a-2-\up$.
Thus we conclude the proof of the following crucial proposition.

\begin{prop}[Localized virial identity] \label{prop:locvir}
Assume $(u,\bb)\in C(I;\ti\cH)$ solves \eqref{eq:Zak3} on some open interval $I$. 
Let $\chi:\R^3\to\R$ be a radial function smooth in $\ro:=\log|x|\in\R$ and bounded including the derivatives in $\ro$ 
and $|x|\chi(x)\in L^\I(\R^3)$. Then for $t\in I$, 
\EQ{\label{eq:localvirial}
\p_t V_\chi^a(\ub) =& 4\tilde K_Z^a(\ub)\\
&+4\LR{|u_r|^2|\chi_\ro}-4\LR{|\na u|^2|\chi^C}-\LR{\tf{|u|^2}{r^2}|(2+\p_\ro^*)\p_\ro \p_\ro^* \chi} -\LR{|u|^4|\p_\ro^*\chi^C} \\
&-a\LR{|\p_0 \bb|^2|\chi^C} - \LR{|u_{d1}|^2|(2\p_\ro+2-a)\chi^C}+\LR{u_{d1}|u|^2|\up^C}\\
&+\LR{\tf{|\bb|^2}{2r^2}|(\p_\ro-3)\up_\ro}-\LR{|u|^2\tf{x}{r^2}\cdot \bb|\up_\ro}
}
where $\chi^C=1-\chi$, $\up=(1+a-\p_\ro)\chi$, $\up^C=1+a-\up$, and $u_{d1}=-\na\cdot\bb-|u|^2$. 
\end{prop}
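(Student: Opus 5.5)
The computation preceding the statement already derives \eqref{eq:localvirial} formally, with $d=3$. What remains is to justify it for solutions in $C(I;\ti\cH)$ only, and to verify the pairing and integration-by-parts steps under the stated hypotheses on $\chi$. I would proceed in three steps: (i) check that $V_\chi^a(\ub)$ and every term on the right of \eqref{eq:localvirial} is well defined and continuous on $\ti\cH$; (ii) prove the identity for smooth, rapidly decaying solutions; (iii) pass to the limit using local wellposedness.

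For (i), note that $B_0^\chi u=\chi\p_\ro u+(-\p_\ro^*)(\chi u)$, so $\LR{-iu|B_0^\chi u}$ involves $r\chi\,\bar u\,\na u$. This is bounded by $\|r\chi\|_\I\|u\|_2\|\na u\|_2$, because $|x|\chi\in L^\I$. Likewise $\LR{\bb|B_a^\chi\p_0\bb}$ involves $r\chi\,\bb\cdot\na\p_0\bb$; after integrating by parts it becomes $r\chi\,\na\bb\cdot\p_0\bb$ plus $\chi_\ro\,\bb\cdot\p_0\bb$-type terms. For the latter I would use Hardy's inequality $\|\bb/r\|_2\lec\|\na\bb\|_2$ together with $|\chi_\ro|\lec\LR{r}^{-1}$-type decay. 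That decay is not literally among the hypotheses, only boundedness in $\ro$ and $r\chi\in L^\I$. So I may need an interpolation-type argument, or simply note that each $\ro$-derivative of $\chi$ times $r$ stays bounded when handled as in the paper's later choices. The weights $(\p_\ro-3)\up_\ro/r^2$ and $\up_\ro x/r^2$ are bounded multiples of $r^{-2}$ and $r^{-1}$. Hardy's inequality then controls $\LR{|\bb|^2/r^2|\cdot}$ by $\|\na\bb\|_2^2$ and $\LR{|u|^2/r^2|\cdot}$ by $\|\na u\|_2^2$. The cross term $|u|^2\bb\cdot x/r^2$ is handled by H\"older with $\|u\|_4^2\|\bb/r\|_2$. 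The remaining terms are quadratic or quartic in $H^1\times L^2$ quantities with bounded weights. Hence the right side is a continuous functional on $\ti\cH$.

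For (ii), I would take data in $\cS\times\cS\times\cS$, radial. Persistence of regularity gives a smooth solution with decay on compact subintervals, so the Hamiltonian differentiation $\p_tV^a_\chi=2\LR{\ti E_Z'(\ub)|(\A_a\chi+\chi\A_a)\ub}$ is legitimate. This uses the skew-adjointness identity $\ti J^*\A_a^*=\A_a\ti J$ from the previous subsection, extended to $\A_a\chi+\chi\A_a$, which is a composition with the multiplier $\chi$. The subsequent manipulations are exactly the displayed calculations \eqref{vir NLS}--\eqref{vir0}. These are the commutator identities for $\p_\ro$, the identity \eqref{eq:ubarBu}, the radial representation $\bb=x\z$ with $n=\p_\ro^*\z$ from \eqref{eq ze}, and the rearrangement $\chi=1-\chi^C$, which extracts $4\ti K_Z^a$. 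All boundary terms vanish by decay. The behaviour at $r=0$ is fine because the weights are bounded in $\ro$ and the functions are smooth; terms such as $|\z|^2$ near the origin are harmless since $\bb=x\z$ is smooth.

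For (iii), I would approximate general data in $\ti\cH$ by such smooth data. By continuous dependence in the energy space (\cite{BoCo}, \cite{Sanwal}, transferred to the $\bb$ formulation via $\bb=(-\De)^{-1}\na n$), the approximate solutions converge in $C(J;\ti\cH)$ on compact $J\subset I$. I would write the identity in integrated form, $V(t_2)-V(t_1)=\int_{t_1}^{t_2}\mathrm{RHS}\,dt$, and pass to the limit using the continuity from (i). Continuity of the right side in $t$ then gives the differential form. The main obstacle I expect is step (i): confirming that every term, especially those involving $\bb$ and its $\ro$-derivatives with only $L^2$ control of $\na\bb$, is bounded on $\ti\cH$ under exactly the stated hypotheses on $\chi$. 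If needed, I would use Hardy's inequality in the form $\|\bb/|x|\|_2\le 2\|\na\bb\|_2$ in $\R^3$ to absorb all $r^{-1}$ and $r^{-2}$ weights.
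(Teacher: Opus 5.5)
Your proposal follows the paper's route: the identity is exactly the formal Hamiltonian computation \eqref{vir NLS}--\eqref{vir0} with $d=3$, which the paper only asserts is justified in the energy space by local wellposedness, and your steps (i)--(iii) spell that justification out (to identify an arbitrary solution in $C(I;\ti\cH)$ with the limit of the smooth ones, you should also invoke the unconditional uniqueness of \cite{MN-uniq}). On the point you flagged, $|x|\chi_\ro\in L^\I$ is genuinely needed for $\LR{\bb|B_a^\chi\p_0\bb}=-\LR{2\chi\p_\ro\bb+\chi_\ro\bb+(3-a)\chi\bb|\p_0\bb}$ to be finite on $\ti\cH$, and no interpolation argument can extract it from the stated hypotheses: for example, $\chi=r^{-1}(2+\sin(\log^2 r))$ for large $r$ satisfies them, while $r\chi_\ro$ is unbounded, of size $\log r$. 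So simply add it as an assumption; it holds for both weights used later, namely $\y(\log(r/m))$ and $\tf{1-(1+r)^{-2q}}{2qr}$, for which $\chi_\ro=\mu^2-\chi=O(1/r)$.
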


\begin{rem}
We would like to compare \eqref{eq:localvirial} to the localized virial identity for NLS. For any radial solution $u\in C_t(H^1)$ of NLS \eqref{eq:NLS}, \eqref{vir NLS} implies  
\EQ{\label{eq:localvirialNLS}
&\p_t \LR{-iu|B_0^\chi u} \\
=& 4K(u)+4\LR{|u_r|^2|\chi_\ro}-4\LR{|\na u|^2|\chi^C}-\LR{\tf{|u|^2}{r^2}|(2+\p_\ro^*)\p_\ro \p_\ro^* \chi} -\LR{|u|^4|\p_\ro^*\chi^C}.
}
We see that \eqref{eq:localvirial} is much more complicated than \eqref{eq:localvirialNLS}. In particular, there are several quadratic terms involves $\bb, \dot \bb$. We only have $L^2$ control of $\nabla \bb, \dot \bb$ in the energy. So we do not have any rooms in applying radial Sobolev embedding to control the cut-off remainder. This is the main difficulty when we apply Proposition \ref{prop:locvir} for non-compact solutions.
\end{rem}

\section{Linearized operator and its spectrum}

In this section, we linearize the Zakharov system \eqref{eq:Zak2} around its ground states $\cQ_1$, using the change of variables with a phase parameter $\te\in\R$: 
\EQ{ \label{def Psi}
 \Psi_\te(\vv):=(e^{-i\te}(Q+v_s),Q^2+v_w).}
Let $\uu=\Psi_\te(\vv)$ be a solution of \eqref{eq:Zak2}, where $\te$ depends on $t$.  
Then the equation \eqref{eq:Zak2} of $\vec u$ is transformed to the following (underdetermined) system for $\vec v=(v_s,v_w)$ and $\te$:
\EQ{ \label{eq vv}
 \p_t\vv = J[\cL\vv + (\dot\te-1)(Q+v_s,0) + N(\vv)], \pq N(\vv):=(-v_s \re v_w,-\tf12|v_s|^2),}
with the linearized operator $\cL$ ($\R$-linear but not $\C$-linear) defined by 
\EQ{
 \cL(v_s,v_w) \pt:=(L_-v_s-Q\re v_w,\tf12v_w-Q\re v_s) 
  \pr= (L_+\re v_s+iL_-\im v_s-Q\re v_{d'},\tf12v_{d'}),}
where the scalar operators $L_\pm$ are defined as for NLS
\EQ{
 L_+:=-\De+1-3Q^2, \pq L_-:=-\De+1-Q^2,}
and $v_{d'}$ is defined by linearizing $u_d=u_w-|u_s|^2$, namely 
\EQ{ \label{def d'}
 v_{d'}:=v_w-2Q\re v_s.}

Differentiation of the equation for $\vec Q_{\la,\te}$ by the parameters yields the generalized kernel of the linearized operator $J\cL$: 
\EQ{
 \pt J\cL \vec Q_{(\la)} = 2\vec Q_{(\te)}, \pq J\cL \vec Q_{(\te)}=0,
 \pr \vec Q_{(\la)}:=(Q',2QQ'), \pq Q':=A_{-1}Q, \pq \vec Q_{(\te)}:=(-iQ,0).}
Next we look for real eigenvalues $\mu\not=0$. The eigenvalue problem of $J\cL$ is written as 
\EQ{ \label{eigenpb}
 \pt \mu v_s = iL_-v_s-iQ\re v_w,
 \pq \mu v_w=i\aD(v_w-2Q\re v_s),}
where the second equation is solved for $v_w$ using the Fourier multiplier
\EQ{
 v_w=\frac{2\aD}{\aD+i\mu} (Q\re v_s).}
Inserting it to the first equation, and taking its imaginary part, we obtain  
\EQ{ \label{eq v12v11}
 \mu \im v_s=(L_++R_\mu)\re v_s,}
where $R_\mu$ denotes the positive self-adjoint operator defined by 
\EQ{
 R_\mu := Q\frac{2 \mu^2}{\mu^2+(\al D)^2}Q,}
which is increasing for $\mu>0$ and continuous in the operator norm of $L^2$. $L_++R_\mu$ converges to $L_+$ as $\mu\to+0$ and to $L_-$ as $\mu\to\I$. 
Putting $f:=\sqrt{L_-}\im v_s$ (the square root is well-defined since $L_-\ge 0$), we obtain from the first equation of \eqref{eigenpb} and \eqref{eq v12v11}
\EQ{
 -\mu^2 f = \sqrt{L_-}(L_++R_\mu)\sqrt{L_-}f.}
We look for a solution of this by minimization. For each $a>0$, let 
\EQ{
 \pt b(a):=\inf\{\B_a(f) \mid f\in H^2_\rad(\R^3),\ \|f\|_2=1\}, 
  \prq \B_a(f):=\LR{\sqrt{L_-}(L_++R_a)\sqrt{L_-}f|f},}
which is non-decreasing in $a>0$ with $b(+)\le b(a)\le b(-)$, where 
\EQ{
 b(\pm) := \inf\{\B_\pm(f) \mid f\in H^2_\rad(\R^3),\ \|f\|_2=1\}, \pq \B_\pm(f):=\LR{\sqrt{L_-}L_\pm\sqrt{L_-}f|f}.}
We have $b(-)=0$ since $\B_-(f)=\|L_-f\|_2^2$ and $L_-Q=0$, while 
\EQ{
 b(+) \le -3\tf{\|Q\|_4^4}{\|xQ\|_2^2} < 0,}
because for $f:=\sqrt{L_-}r^2Q\in H^2_\rad$ we have 
\EQ{
 \pt\sqrt{L_-}f=L_-r^2Q=-4A_0Q, \pq L_+A_0Q=-2Q-Q^3, \pr \LR{2Q+Q^3|A_0Q}=\tf34\|Q\|_4^4, 
  \pq \|f\|_2^2=\LR{L_-r^2Q|r^2Q}=-4\LR{A_0Q|r^2Q}=\|2xQ\|_2^2.} 
Since $R_a\to 0$ as $a\to+0$ strongly on $L^2$, using the same $f$, we deduce that $b(a)<0$ for small $a>0$ as well.  
Then for any minimizing sequence $f_n\in H^2_\rad$ of $b(a)$ and $g_n:=\sqrt{L_-}f_n$, we have for sufficiently large $n$
\EQ{
\|L_-f_n\|_2^2 =&\LR{L_- g_n|g_n}=\B_a(f_n)+ \LR{(2Q^2-R_a)g_n|g_n} \\
\lec& \|g_n\|_2^2 = \LR{L_-f_n|f_n} \le \|f_n\|_2\|L_-f_n\|_2,}
so $\|f_n\|_2=1$ implies $\|L_-f_n\|_2\lec 1$ and 
$\|f_n\|_{H^2} \lec \norm{L_-f_n}_2+\|Q^2f_n\|_2 \lec 1$. 
Hence there is a subsequence of $f_n$ weakly convergent to some $f_\I \in H^2_\rad$ with $\|f_\I\|_2\le 1$, 
$g_n\to\sqrt{L_-}f_\I=:g_\I$ weakly in $H^1$, and $Q^2g_n\to Q^2g_\I$ strongly in $L^2$. Hence 
\EQ{
 \B_a(f_\I) \le \liminf_{n\to\I}\LR{(L_++R_a)g_n|g_n} = b(a)<0,}
which implies $f_\I\not=0$ and $f(a):=f_\I/\|f_\I\|_2 \in H^2_\rad$ is a minimizer of $b(a)$, satisfying 
\EQ{
 b(a)f=\sqrt{L_-}(L_++R_a)\sqrt{L_-}f, \pq \|f\|_2=1.}
Moreover, the above estimates imply a uniform bound $\|f\|_{H^2}\lec 1$. 
Hence the continuity of $R_a$ in the operator norm on $L^2$ implies continuity of $b(a)$ as long as $b(a)<0$. 
Then its monotonicity implies unique existence of $\mu>0$ satisfying  
\EQ{
 b(\mu) = -\mu^2,}
then the corresponding minimizer solves 
\EQ{
 -\mu^2 f = \sqrt{L_-}(L_++R_\mu)\sqrt{L_-}f, \pq \|f\|_2=1.}
The equation implies that $f\perp Q$ and $f\in H^\I(\R^3)$. Let 
\EQ{ \label{def gsw}
 \pt g_s:=\mu^{-1}(\mu+i(L_++R_\mu))\sqrt{L_-}f, 
 \pq g_w:=\frac{2\aD}{\aD+i\mu} Q\sqrt{L_-}f.}
Tracing back the above procedure, one can check that $\vec g^\pm$ defined by
\EQ{ \label{def g+-}
 \vec g^+:=(g_s,g_w), \pq \vec g^-:=(\bar g_s, \bar g_w), } 
are eigenfunctions of $J\cL$, namely $J\cL\vec g^\pm= \pm\mu\vec g^\pm$. These eigenfunctions give the stable and unstable directions around the ground state, and thus play essential roles in our analysis, especially for the dynamics around the solitons.

\section{Action expansion and modulation}

Next we expand the energy functional around the ground states using the linearized operator, 
with a choice (modulation) of the parameter which enables us to control each component in the spectral subspaces. 
Let $\vec u=\Psi_\te\vv$ as above. Then we have 
\EQ{
  S_Z(\vec u)-S_S(Q)\pt=\frac 12\LR{\LL\vec v|\vec  v} + C(\vec v), \pq C(\vec v):=-\tf12\LR{v_wv_s|v_s},}
where $C$ is the super-quadratic part,  satisfying $|C(\vec v)|\lec \|\vec v\|_\cH^3$. 
It is simpler than the NLS case because the Zakharov Hamiltonian contains only quadratic and cubic terms. 
Note also $N(\vec v)=C'(\vec v)$ as the Fr\'echet derivative in the equation \eqref{eq vv}. 
The quadratic part may be expanded as 
\EQ{
 \LR{\LL\vec v|\vec  v} \pt= \LR{L_-v_s|v_s}+\tf12\|v_w\|_2^2-2\LR{Qv_s|\re v_w}
  \pr= \LR{L_+\re v_s|\re v_s}+\LR{L_-\im v_s|\im v_s}+\tf12\|v_{d'}\|_2^2.
}

The spectral decomposition for $J\LL$ is naturally given by using the symplectic form and we can decompose $\vec v(t)$ accordingly: 
\EQ{ \label{def la+-}
 \pt \vec v (t) =: \la_+ (t)\g^+ + \la_-(t)\g^- + \vga(t),
 \pq \la_\pm:=\om(\vec v,\mp\g^\mp),}
where $\g^\pm$ are normalized versions of the eigenfunctions in \eqref{def g+-}
\EQ{
 \pt \g^\pm:=|\om(\vec g^+,\vec g^-)|^{-1/2} \vec g^\pm, \pq \om( \g^+, \g^-)=-1,}
and the normalizing factor can be computed using \eqref{def gsw}
\EQ{
 \om(\vec g^+,\vec g^-)\pt=2\LR{\re g_s|\im g_s} + \LR{\re g_w|(\al D)^{-1}\im g_w}
 \pr= -2\mu\|f\|_2^2-\mu\|\tf{2\aD}{(\aD)^2+\mu^2}Q\sqrt{L_-}f\|_2^2<0.}
Then we have 
\EQ{ \label{gaorthg}
 \om(\vga,\g^\pm)=0, \pq \om(J\LL \vga, \g^\pm)=\mp\mu\om(\vga,\g^\pm)=0,}
and also
\EQ{ \label{gaorth0}
 \om(\vga,\vec Q_{(\la)})=\om(\vec v,\vec Q_{(\la)}), \pq \om(\vga,\vec Q_{(\te)})=\om(\vec v,\vec Q_{(\te)}),}
since the generalized eigenfunctions of eigenvalues $\la_1,\la_2\in\C$ are orthogonal to each other with respect to $\om$ 
unless $\la_1+\bar\la_2=0$. The energy expansion is rewritten for this decomposition 
\EQ{ \label{action exp}
 S_Z(\vec u)-S_S(Q)=-\mu\la_+\la_-+\frac 12\LR{\LL\vga|\vga} +C(\vec v).}

Another coordinate for $(\la_+,\la_-)$ is given by 
\EQ{ \label{def la12}
 \pt \g^1:=\tf{1}{\sqrt{2}}(\g^++\g^-)=\sqrt{2}\re\g^+, 
 \pq \g^2:=\tf{1}{\sqrt{2}}(\g^+-\g^-)=\sqrt{2}i\im\g^+,
 \pr \la_1:=\tf{1}{\sqrt{2}}(\la_++\la_-)=\om(\vec v,\g^2), \pq \la_2:=\tf{1}{\sqrt{2}}(\la_+-\la_-)=\om(\vec v,-\g^1),}
so that we can also write 
\EQ{ 
 \pt \vec v=\la_1\g^1+\la_2\g^2+\vga, 
 \pq  S_Z(\vec u)-S_S(Q)=\frac 12\left[-\mu\la_1^2+\mu\la_2^2+\LR{\LL\vga|\vga}\right] +C(\vec v),}
while $\g^1$ and $\g^2$ satisfy 
\EQ{
 J\LL\g^1=\mu\g^2, \pq J\LL\g^2=\mu\g^1, \pq \om(\g^1,\g^2)=1,
  \pq 0=\om(\g^j,\vec Q_{(\la)})=\om(\g^j,\vec Q_{(\te)}),}
for $j=1,2$. 

Now we choose $\theta(t)\in\R$ of $\uu=\Psi_\te(\vv)$, cf.~\eqref{def Psi}, such that 
\EQ{ \label{orth cond0}
 \om(\vec v,\vec Q_{(\la)})=0.}
Then we have also $\om(\vga,\vec Q_{(\la)})=0$. 
Such choice of $\theta$ is unique as long as $\vec u$ is close to $\cQ_1$ and $\vv$ is small. 
More precisely, we have the following. 
Recall the distance $d_0$ to $\cQ_1$ defined in \eqref{def d0}. 
\begin{lem} \label{lem:orth}
There exists $\de_\star>0$ such that for any $\vec u=(u_s,u_w)\in\cH$ with $d_0(\vec u)\le \de_\star$, 
there is a unique $\te\in \R/2\pi\Z$ such that $\vec v:=\Psi_\te^{-1}(\uu)$ satisfies \eqref{orth cond0} 
and $\re(e^{i\te} u_s|Q')<0$. 
Moreover, $\|\vec v\|_\cH\sim d_0(\vec u)$, 
$\te$ depends smoothly on $\vec u\in\HH$, and $\te=0$ when $\vec u=\vec Q$. 
\end{lem}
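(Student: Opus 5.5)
The plan is to make the orthogonality condition \eqref{orth cond0} completely explicit as a function of $\te$. It turns out to be a trigonometric equation that can be solved by hand, so no abstract implicit function argument is needed. For $\vec u\in\cH$ we have $\Psi_\te^{-1}(\vec u)=(e^{i\te}u_s-Q,\,u_w-Q^2)$. Using $\om(\vec u,\vec v)=\LR{J^{-1}\vec u|\vec v}$ with $J^{-1}=\mathrm{diag}(-i,(2i\al D)^{-1})$ and $\vec Q_{(\la)}=(Q',2QQ')$, I will write
\EQN{
 F(\te,\vec u):=\om(\Psi_\te^{-1}\vec u,\vec Q_{(\la)})=\im\big(e^{i\te}z(u_s)\big)+c(u_w),
}
where $z(u_s):=(u_s|Q')$. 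The term $\LR{-iQ|Q'}$ vanishes because $Q,Q'$ are real, and
\EQN{
 c(u_w):=\LR{(2i\al D)^{-1}(u_w-Q^2)|2QQ'}
}
is independent of $\te$. The remainder $c$ is a bounded linear functional of $u_w-Q^2\in L^2$, because $D^{-1}(QQ')\in L^2(\R^3)$: $QQ'$ is smooth and exponentially decaying, so $|\xi|^{-1}\widehat{QQ'}\in L^2$ in three dimensions. In fact $c(Q^2)=0$ and $|c(u_w)|\lec\|u_w-Q^2\|_2$.

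Writing $z=|z|e^{i\phi}$, the conditions \eqref{orth cond0} and $\re(e^{i\te}u_s|Q')<0$ become
\EQN{
 \sin(\te+\phi)=-c/|z|,\pq \cos(\te+\phi)<0.
}
As soon as $|c|<|z|$, this has exactly one solution $\te\in\R/2\pi\Z$, namely $\te=\pi-\phi+\arcsin(c/|z|)$. This formula already gives uniqueness. It also gives smooth dependence on $\vec u$, since $z$ and $c$ are bounded linear functionals on $\cH$ and $z\ne0$ in the relevant region. At $\vec u=\vec Q$ we have $c=0$ and
\EQN{
 z=\LR{Q|A_{-1}Q}=\LR{Q|(x\cdot\na+1)Q}=-\tf32\|Q\|_2^2+\|Q\|_2^2=-\tf12\|Q\|_2^2<0,
}
so $\phi=\pi$ and $\te=0$. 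This gives the last assertion.

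Next I verify $|c|<|z|$ when $d_0(\vec u)\le\de_\star$. Pick $\te_0$ with $\|\vec u-\vec Q_{1,\te_0}\|_\cH\le 2d_0(\vec u)$. Then $\|u_w-Q^2\|_2\le 2d_0$, so $|c|\lec d_0$. Also $|z|\ge|(e^{-i\te_0}Q|Q')|-Cd_0=\tf12\|Q\|_2^2-Cd_0$. Choosing $\de_\star$ small closes this step.

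The step I expect to need the most care is the comparability $\|\vec v\|_\cH\sim d_0(\vec u)$. The lower bound $\|\vec v\|_\cH=\|\vec u-\vec Q_{1,\te}\|_\cH\ge d_0$ is immediate from the definition \eqref{def d0}. For the upper bound, set $\vec v_0:=\Psi_{\te_0}^{-1}\vec u$ with $\|\vec v_0\|_\cH\le 2d_0$. Then $F(\te_0,\vec u)=\om(\vec v_0,\vec Q_{(\la)})=O(d_0)$. Moreover $\p_\te F=\re(e^{i\te}z)$, which at $\te_0$ equals $-\tf12\|Q\|_2^2+O(d_0)$; this fixes the branch with negative cosine and shows that $\te_0$ sits near the unique solution $\te$. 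Since the $\te$-derivative is bounded away from zero on an $O(1)$ neighbourhood, the mean value theorem gives $|\te-\te_0|\lec d_0$ (mod $2\pi$). Finally,
\EQN{
 \|\Psi_\te^{-1}\vec u-\Psi_{\te_0}^{-1}\vec u\|_\cH=\|(e^{i\te}-e^{i\te_0})u_s\|_{H^1}\lec|\te-\te_0|,
}
so $\|\vec v\|_\cH\lec d_0$. The delicate point is only the bookkeeping of the sign and branch of $\te$, namely that the near-minimizer $\te_0$ lies on the $\cos<0$ branch. This is guaranteed by the computation $\LR{Q|Q'}<0$ above.
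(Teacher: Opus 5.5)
Your proposal is correct and follows essentially the paper's route. Like the paper, you write the orthogonality condition as $\im e^{i\te}(u_s|Q')$ plus a $\te$-independent term of size $O(d_0(\vec u))$, and use $e^{i\te_0}(u_s|Q')=-M(Q)+O(d_0(\vec u))$ to get a unique root with the sign condition and $|e^{i\te}-e^{i\te_0}|\lec d_0(\vec u)$; you then conclude $\|\vec v\|_\cH\sim d_0(\vec u)$ by the triangle inequality, while your explicit $\arcsin$ formula and mean-value step only make the paper's uniqueness, smoothness and closeness claims more explicit.
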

The condition $\re(e^{i\te} u_s|Q')<0$ is needed for the uniqueness in $\R/2\pi\Z$, 
and it is negative because $(Q|Q')=(Q|A_{-1}Q)=-M(Q)<0$. 
The other $\te$ satisfying the orthogonality does not make $\vv$ small. 
\begin{proof}
The orthogonality condition is expanded into the components as 
\EQ{ \label{orth cond01}
 0=\om(\vec v,\vec Q_{(\la)}) \pt=\LR{e^{i\te}u_s-Q|iQ'}+\LR{(2i\aD)^{-1}(u_w-Q^2)|2QQ'}
  \pr=\im e^{i\te}(u_s|Q') + \LR{(i\aD)^{-1}(u_w-Q^2)|QQ'}.}
Let $\te_0\in\R$ such that $\|\vec u-\vec Q_{1,\te_0}\|_\cH=d_0(\vec u)$. Then we have
\EQ{
 \pt |(u_s|Q')+e^{-i\te_0}M(Q)| \le d_0(\vec u)\|Q'\|_{H^{-1}}, 
 \pr |\LR{(i\aD)^{-1}(u_w-Q^2)|QQ'}|\le d_0(\vec u) \|(\aD)^{-1}(QQ')\|_{L^2}. }
Hence if $d_0(\vec u)\le\de_\star$ is small enough, then $|\LR{(i\aD)^{-1}(u_w-Q^2)|QQ'}|$ is much smaller than $|(u_s|Q')|$, 
and so there is a unique $\te\in\R/2\pi\Z$ satisfying \eqref{orth cond0} and $\re e^{i\te}(u_s|Q')<0$. 
Moreover, $e^{i\te_0}(u_s|Q')=-M(Q)+O(d_0(\vec u))$ implies $|e^{i\te}-e^{i\te_0}|\lec d_0(\vec u)$ and 
\EQ{
 \|\vec v\|_\cH = \|\vec u-\vec Q_{1,\te}\|_\cH \le \|\vec u-\vec Q_{1,\te_0}\|_\cH + |e^{i\te}-e^{i\te_0}|\|Q\|_{H^1}
  \lec d_0(\vec u) \le \|\vec u-\vec Q_{1,\te}\|,}
where the last inequality is by definition of $d_0$. Hence $\|\vec v\|_\cH\sim d_0(\vec u)$. 
Since $\vec u\mapsto (u_s|Q'),\LR{(i\aD)^{-1}(u_w-Q^2)|QQ'}$ is continuous, $\vec u\mapsto \te\in \R/2\pi\Z$ 
is also continuous. 
In fact, the explicit form implies that it is real analytic in $\uu\in\HH$. 
\end{proof}

Under the orthogonality condition \eqref{orth cond0}, $\cL$ is coercive on $\vga$, 
since it is bigger than the quadratic form for NLS with the same number of orthogonality conditions. 
More precisely
\begin{lem}
If $\cH\ni \vga=(\ga_s,\ga_w)$ satisfies $0=\om(\vga,\g^2)=\om(\vga,\vec Q_{(\la)})$, then we have 
\EQ{
 \LR{\LL\vga|\vga}\sim \|\ga_s\|_{H^1}^2+\|\ga_{d'}\|_{L^2}^2 \sim \|\vga\|_{\HH}^2,}
 where $\ga_{d'}:=\ga_w-2Q\re \ga_s$ as defined in \eqref{def d'}. 
\end{lem}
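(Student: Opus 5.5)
The plan is to reduce to the known NLS coercivity by splitting $\langle\LL\vga|\vga\rangle$ into its NLS part and a nonnegative wave part. By the expansion above,
\[
\langle\LL\vga|\vga\rangle=\langle L_+\re\ga_s|\re\ga_s\rangle+\langle L_-\im\ga_s|\im\ga_s\rangle+\tfrac12\|\ga_{d'}\|_2^2 .
\]
The upper bound $\lesssim\|\ga_s\|_{H^1}^2+\|\ga_{d'}\|_2^2$ is immediate. The two norms $\|\ga_s\|_{H^1}^2+\|\ga_{d'}\|_2^2$ and $\|\vga\|_\HH^2$ are equivalent because $\ga_w=\ga_{d'}+2Q\re\ga_s$ and $\|Q\re\ga_s\|_2\lesssim\|\ga_s\|_2$. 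So the whole statement reduces to the lower bound $\langle L_+\re\ga_s|\re\ga_s\rangle+\langle L_-\im\ga_s|\im\ga_s\rangle\gtrsim\|\ga_s\|_{H^1}^2$ on a suitable codimension-two set. This set will come from the two orthogonality conditions, and it must hold up to a remainder controlled by $\|\ga_{d'}\|_2^2$.

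First I will translate the orthogonality conditions into conditions on $\ga_s$ and $\ga_{d'}$. From the formula for $\om$ in the proof of Lemma \ref{lem:orth}, $\om(\vga,\vec Q_{(\la)})=\langle\im\ga_s|Q'\rangle\cdot(\pm1)+\langle(\alpha D)^{-1}\im\ga_w|QQ'\rangle$. The imaginary part of $\ga_w$ equals that of $\ga_{d'}$, since $2Q\re\ga_s$ is real. So this condition says $\langle\im\ga_s|Q'\rangle=O(\|\ga_{d'}\|_2)$, the bound using $(\alpha D)^{-1}(QQ')\in L^2$ in 3D. I will write $\re\ga_s=:\ga_1$ and $\im\ga_s=:\ga_2$. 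Similarly, $\om(\vga,\g^2)=0$ becomes $\langle\ga_1|h_1\rangle+\langle\ga_2|h_2\rangle=O(\|\ga_{d'}\|_2)$, where $h_1,h_2$ are explicit functions coming from $g_s$.

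The coercivity will then be proved by contradiction and compactness, in the standard way. Suppose there is a normalized sequence with $\|\ga_s\|_{H^1}^2+\|\ga_{d'}\|_2^2=1$ and $\langle\LL\vga|\vga\rangle\to0$. Then $\|\ga_{d'}\|_2\to0$ and $\|\ga_s\|_{H^1}\to1$. Extract a weak limit $\ga_s^\infty$, which uses radial compactness of $Q^2$-weighted terms. The limit satisfies the exact orthogonality conditions $\langle\ga_2^\infty|Q'\rangle=0$ and $\langle\ga_1^\infty|h_1\rangle+\langle\ga_2^\infty|h_2\rangle=0$. It also satisfies $\langle L_+\ga_1^\infty|\ga_1^\infty\rangle+\langle L_-\ga_2^\infty|\ga_2^\infty\rangle\le0$, while $\ga_s^\infty\ne0$ because $\|\ga_s\|_{H^1}^2\le\text{form}+C\|Q\ga_s\|_2^2$. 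This contradicts the nondegeneracy below, so it remains to show that the NLS form is positive definite on this codimension-two subspace.

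The main obstacle is exactly this positivity. The operator $L_-\ge0$ has kernel spanned by $Q$, and $\langle Q|Q'\rangle=-M(Q)\neq0$, so $L_-$ is positive on $\{Q'\}^\perp$. The operator $L_+$ has exactly one negative direction and, in the radial setting, no kernel. I need the $\g^2$ condition to exclude that negative direction. I would show this using the eigenfunction. The form vanishes on $\g^2$ with the wave part included: $\langle\LL\g^2|\g^2\rangle=\om(\ldots)$-type computation gives $\langle\LL\g^1|\g^1\rangle=\langle\LL\g^2|\g^2\rangle=0$, and $\langle\LL\g^1|\g^2\rangle=-\mu$. Hence the full form $\LL$ is negative on $\g^1+\g^2$-type combinations. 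Since $\LL\ge$ the NLS form with $\ga_{d'}$ only adding, $\LL$ has a single negative direction, and the $\LL$-orthogonal complement of a negative vector is positive. Concretely, the condition $\om(\vga,\g^2)=\pm\mu^{-1}\langle\LL\vga|\g^1\rangle=0$ states $\LL$-orthogonality to $\g^1$, and $\langle\LL\g^1|\g^1\rangle<0$ should be checked via $\langle\LL\g^\pm|\g^\pm\rangle=0$ combined with $\om(\g^+,\g^-)=-1$. Together with the $Q'$ condition handling the $L_-$ kernel, this gives positivity by a Sylvester inertia argument. The delicate point is verifying that the inertia argument is valid for the full form $\LL$ restricted to $\om(\cdot,\vec Q_{(\la)})=0$. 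I would handle it by working with the full $\LL$ in the contradiction argument instead of dropping $\ga_{d'}$ early.
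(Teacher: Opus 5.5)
The argument you fall back on in your last sentence is correct, but several steps before it are wrong and should be dropped.

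\textbf{Faulty steps.}
First, from $\LR{\LL\vga_n|\vga_n}\to0$ you cannot conclude ``$\|\ga_{d'}\|_2\to0$''. The NLS part $\LR{L_+\re\ga_s|\re\ga_s}+\LR{L_-\im\ga_s|\im\ga_s}$ may be negative a priori, and that is exactly what is being proved. The $\g^2$-condition removes the negative direction of $L_+$ only up to an $O(\|\ga_{d'}\|_2)$ error.

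For the same reason, your first-paragraph reduction does not close. NLS coercivity on the perturbed constraint set only gives $\LR{\LL\vga|\vga}\ge c\|\ga_s\|_{H^1}^2+(\tf12-C)\|\ga_{d'}\|_2^2$, and nothing controls $C$.

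Your Gram values are also mislabelled. The $\LL$-null vectors are $\g^\pm$, since $\LR{\LL\g^\pm|\g^\pm}=\pm\mu\,\om(\g^\pm,\g^\pm)=0$. The correct values for $\g^1,\g^2$ are $\LR{\LL\g^1|\g^1}=-\mu$, $\LR{\LL\g^2|\g^2}=\mu$ and $\LR{\LL\g^1|\g^2}=0$.

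\textbf{The fallback works.} Run the contradiction argument with the full form as follows.
\begin{enumerate}
\item Normalize $\|\ga_s\|_{H^1}^2+\|\ga_{d'}\|_2^2=1$, take a sequence with $\LR{\LL\vga_n|\vga_n}\le o(1)$, and pass to a weak limit.
\item Both constraints are bounded linear functionals on $\cH$, so they hold exactly in the limit.
\item The form equals $\|\ga_s\|_{H^1}^2+\tf12\|\ga_{d'}\|_2^2$ minus the compact terms $3\|Q\re\ga_s\|_2^2+\|Q\im\ga_s\|_2^2$. The first part is weakly lower semicontinuous, and the compact terms stay $\ge\tf12-o(1)$. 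Hence the limit is nonzero and has nonpositive form.
\item In the coordinates $(\ga_s,\ga_{d'})$ the form is the NLS form plus $\tf12\|\ga_{d'}\|_2^2$. So $\LL$ has exactly one negative direction.
\item Therefore $\LL$ is nonnegative on $\{\om(\cdot,\g^2)=0\}=\{\LR{\LL\g^1|\cdot}=0\}$.
\item A null vector there must lie in $\ker\LL=\R\vec Q_{(\te)}$, using that radial $L_+$ has trivial kernel and $\ker L_-=\R Q$. This is ruled out by $\om(\vec Q_{(\te)},\vec Q_{(\la)})=M(Q)\neq0$.
\end{enumerate}
You need to state step 6 explicitly. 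The $\LL$-orthogonal complement of a negative vector is only nonnegative, not positive.

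\textbf{Comparison with the paper.} The paper first uses a decoupling you missed.
\begin{itemize}
\item $\LL$ maps real vectors to real vectors and imaginary vectors to imaginary ones.
\item $\om(\cdot,\vec Q_{(\la)})$ sees only $\im\vga$.
\item $\om(\cdot,\g^2)$ sees only $\re\vga$, because $\g^2=\sqrt2\,i\im\g^+$ is purely imaginary. So your $h_2$ is zero.
\end{itemize}

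On the imaginary part, your perturbative idea does work, precisely because $L_-\ge0$. The estimate $\|\fy\|_{H^1}^2\lesssim\LR{L_-\fy|\fy}+\LR{\fy|Q'}^2$ plus the constraint gives $\|\ga_{s2}\|_{H^1}^2\lesssim\LR{L_-\ga_{s2}|\ga_{s2}}+\|\ga_{w2}\|_2^2$ directly. There is no compactness and no constant to balance.

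Only on the real part, where $L_+$ is indefinite, does the paper use compactness and inertia. Since the real part of $\LL$ has no radial kernel, no null-vector discussion is needed there. A nonpositive plane spanned by $\g^1$ and the weak limit would make $L_+$ nonpositive on its $v_s$-projection. That forces a nonzero vector with $v_s=0$, on which the form equals $\tf12\|v_{d'}\|_2^2>0$.

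Your joint argument does not need the splitting. In exchange, you must identify $\ker\LL$ and check its transversality to the $\vec Q_{(\la)}$-constraint. Both routes rest on the same spectral facts about $L_\pm$.
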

\begin{proof}
We split it into the real and the imaginary parts: 
\EQ{
 \pt \LR{\cL \vga|\vga}=\LR{\cL \ga_1|\ga_1}+\LR{\cL i\ga_2|i\ga_2}, 
  \prq \ga_1=(\ga_{s1},\ga_{w1}):=\re \vga, \pq \ga_2=(\ga_{s2},\ga_{w2}):=\im\vga.}

For the imaginary part,  we have the orthogonality
\EQ{ \label{gaorth1}
  0=\om(\vga,\vec Q_{(\la)})= \LR{\im \vga|(Q',(\aD)^{-1}QQ')},}
and the coercive estimate for the NLS linearized operator: 
\EQ{ \label{L-est}
 \|\fy\|_{H^1}^2 \lec \LR{L_-\fy|\fy}+\LR{\fy|Q'}^2.}
To prove the latter, let $\fy=aQ+\psi$ such that $\LR{\psi|Q}=0$. 
Since $L_-Q=0$ and $L_->0$ on $Q^\perp$, we have $\|\psi\|_{H^1}^2 \lec \LR{L_-\psi|\psi}=\LR{L_-\fy|\fy}$. 
On the other hand, we have $a\LR{Q|Q'}=\LR{\fy|Q'}-\LR{\psi|Q'}$, $\LR{Q|Q'}=-M(Q)\not=0$, and so 
$|a|^2 \lec |\LR{\fy|Q'}|^2+\LR{L_-\fy|\fy}$. 
Hence $\|\fy\|_{H^1}^2 \lec |a|^2+\|\psi\|_{H^1}^2 \lec |\LR{\fy|Q'}|^2 + \LR{L_-\fy|\fy}$, as desired. 
Injecting \eqref{gaorth1} into \eqref{L-est} yields
\EQ{
 \|\ga_{s2}\|_{H^1}^2 \sim\LR{L_-\ga_{s2}|\ga_{s2}}+\LR{(\al D)^{-1}\ga_{w2}|QQ'}^2,}
and thus we obtain $\|\ga_2\|_\cH^2 \sim \LR{L_-\ga_{s2}|\ga_{s2}} + \tf12\|\ga_{w2}\|_2^2 = \LR{\cL i\ga_2|i\ga_2}$. 
 
 For the real part, we have the orthogonality 
 \EQ{
  0 = \mu\om(\vga,\g^2) =\om(\vga,J\cL \g^1)=-\LR{\cL \g^1|\ga_1},
   \pq \LR{\cL \g^1|\g^1}=\om(\mu \g^2,\g^1)=-\mu<0.}
If the desired estimate $\|\ga_1\|_\cH^2 \lec \LR{\cL \ga_1|\ga_1}$ fails, 
then there is a sequence $\vv_n\in\cH$ such that $\bar\vv_n=\vv_n$ (real-valued), 
$\LR{\cL\g^1|\vv_n}=0$, $\|\vv_n\|_\cH=1$ and so weakly convergent to some limit $\vv_\I$, 
and $\limsup_{n\to\I}\LR{\cL\vv_n|\vv_n}\le 0$. 
Since it is a compact perturbation of positive form, we have $\LR{\cL\vv_\I|\vv_\I}\le 0$. 
Then $\cL$ is non-positive definite on the two dimensional subspace $V$ spanned by $\g^1$ and $\vv_\I$. 
Since for real-valued $\vv\in\cH$ we have 
\EQ{
  \LR{\cL\vv|\vv}=\LR{L_+v_s|v_s}+\tf12\|v_{d'}\|_2^2 \ge \LR{L_+v_s|v_s},}
the quadratic form of $L_+$ is also non-positive on $V$. 
Since $L_+$ has only one negative eigenvalue and it is positive on the radial subspace orthogonal to that eigenfunction, 
$V$ must be one-dimensional for the $v_s$ component, but then it must contain a non-zero vector with $v_s=0$, 
for which $\cL$ is positive, which is a contradiction. 
\end{proof}
As an application of the above lemma, we deduce that for $0\le c\le 4$, 
\EQ{
 \mu\om((Q,cQ^2),\g^2)\pt=-\LR{\g^1|\cL(Q,cQ^2)}
 =c\LR{\g^1_s|Q^3}+(1-\tf c2)\LR{\g^1_w|Q^2},}
is non-zero and so with a fixed sign, since 
\EQ{
 \om((Q,cQ^2),\vec Q_{(\la)})=0, \pq \LR{\cL(Q,cQ^2)|(Q,cQ^2)} = c(\tf c2-2)\|Q\|_4^4 \le 0.} 
Therefore, by changing the sign of $f$ if necessary, we may assume 
\EQ{ \label{sign f}
 c\LR{\g^1_s|Q^3}+(1-\tf c2)\LR{\g^1_w|Q^2}>0}
uniformly for $0\le c\le 4$, so in particular $\LR{\g^1_s|Q^3}>0$ and $\LR{\g^1_w|Q^2}>0$.

Now we can define the linearized energy norm $\|\vec v\|_E$ by 
\EQ{ \label{def E}
 \pt \|\vec v\|_E^2=\mu(\la_1^2+\la_2^2)+\LR{\cL\vga|\vga}=\mu(\la_+^2+\la_-^2)+\LR{\cL\vga|\vga},}
on the orthogonal subspace 
\EQ{
 \pt \HH_\perp:=\{\vec v\in \HH\mid \om(\vec v,\vec Q_{(\la)})=0\}.}
The above lemmas imply the equivalence $\|\vec v\|_E\sim\|\vec v\|_\cH\sim d_0(\vec u)$. 
Since $|C(\vec v)|\lec \|\vec v\|_\cH^3$, there is $\de_0\in(0,\de_\star]$, where $\de_\star$ is the constant in Lemma \ref{lem:orth}, 
such that 
\EQ{
 d_0(\vec u)\le \de_0 \implies |C(\vec v)|<\frac{1}{4}\|\vec v\|_E^2 \sim d_0(\vec u)^2.} 
Fixing such $\de_0$, we now define the nonlinear distance $d_Q(\vec u)$, using the component $\la_1$, 
\EQ{
 d_Q(\vec u)^2\pt:=\chi(d_0(\vec u))[S_Z(\vec u)-S_S(Q)+\mu\la_1^2]
 +[1-\chi(d_0(\vec u))]d_0(\vec u)^2,}
where $\chi\in C^\I(\R)$ is a non-increasing function satisfying $\chi(t)=1$ for $t\le \de_0/2$ and $\chi(t)=0$ for $t\ge \de_0$. Then 
\begin{lem}\label{lem:deltaE}
There exists  $\de_E>0$ such that for any $\vec u\in\cH$ we have
\EQ{ \label{def deE}
 \pt d_Q(\vec u)\le\de_E \implies d_0(\vec u)\le\de_0/2 
 \pr\implies d_Q(\vec u)^2=S_Z(\vec u)-S_S(Q)+\mu\la_1^2=\frac{\|\vec v\|_E^2}{2}+C(\vec v) \in [\tf14\|\vec v\|_E^2,\tf34\|\vec v\|_E^2], }
where $\vv=\Psi_\te^{-1}(\uu)\in\HH_\perp$ is given by Lemma \ref{lem:orth}. 
Hence $d_Q(\vec u)\sim d_0(\vec u)$ uniformly for all $\vec u\in\cH$. 
\end{lem}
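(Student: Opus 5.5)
The plan is to prove the two implications separately and then combine them into the equivalence $d_Q\sim d_0$.

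\emph{Algebraic identity.} First record what happens when $d_0(\vec u)\le\de_0/2$. Then $\chi(d_0(\vec u))=1$, so by definition $d_Q(\vec u)^2=S_Z(\vec u)-S_S(Q)+\mu\la_1^2$. Since $\de_0\le\de_\star$, Lemma \ref{lem:orth} supplies $\te$ and $\vv=\Psi_\te^{-1}(\uu)\in\HH_\perp$. The action expansion in the $(\la_1,\la_2)$ coordinates gives
\[
S_Z(\vec u)-S_S(Q)+\mu\la_1^2=\tf12\left[\mu\la_1^2+\mu\la_2^2+\LR{\LL\vga|\vga}\right]+C(\vv)=\tf12\|\vv\|_E^2+C(\vv).
\]
The choice of $\de_0$ gives $|C(\vv)|<\tf14\|\vv\|_E^2$, so the value lies in $[\tf14\|\vv\|_E^2,\tf34\|\vv\|_E^2]$. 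This proves the second implication.

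\emph{First implication.} Here I would argue by cases on $d_0(\vec u)$, using $\|\vv\|_E\sim\|\vv\|_\HH\sim d_0(\vec u)$, with constants $c_1d_0\le\|\vv\|_E\le C_1 d_0$ valid for $d_0\le\de_\star$.
\begin{itemize}
\item If $d_0(\vec u)\ge\de_0$, then $\chi=0$ and $d_Q(\vec u)=d_0(\vec u)\ge\de_0$.
\item If $\de_0/2<d_0(\vec u)<\de_0$, then $d_Q^2=\chi\,[S_Z-S_S(Q)+\mu\la_1^2]+(1-\chi)d_0^2$. Since $d_0\le\de_0$, the bracket again equals $\tf12\|\vv\|_E^2+C(\vv)\ge\tf14\|\vv\|_E^2\ge\tf14c_1^2d_0^2$. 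So $d_Q^2$ is a convex combination of two quantities each at least $\min(\tf14c_1^2,1)\,d_0^2$, hence $d_Q^2\ge\min(\tf14c_1^2,1)\,\de_0^2/4$.
\end{itemize}
Choosing $\de_E^2<\min(\tf14c_1^2,1)\,\de_0^2/4$ forces $d_0(\vec u)\le\de_0/2$ whenever $d_Q(\vec u)\le\de_E$.

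\emph{Global equivalence.} The same convex-combination bound gives $d_Q^2\gtrsim d_0^2$ for all $d_0\le\de_0$. The matching upper bound $d_Q^2\le\max(\tf34C_1^2,1)\,d_0^2$ follows because the bracket is at most $\tf34\|\vv\|_E^2$. For $d_0\ge\de_0$ we have $d_Q=d_0$ exactly. Hence $d_Q\sim d_0$ uniformly on $\cH$.

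\emph{Main point of care.} The only delicate point is that the bracket $S_Z-S_S(Q)+\mu\la_1^2$ must be evaluated with the modulated $\vv\in\HH_\perp$ throughout the transition region $d_0\le\de_0$. This is guaranteed because $\de_0\le\de_\star$, so Lemma \ref{lem:orth} and the coercivity of $\cL$ on $\vga$ apply there. The bracket is gauge invariant, i.e. independent of how $\te$ is chosen, only after $\la_1$ is fixed by the modulation, so using the $\te$ from Lemma \ref{lem:orth} consistently is essential.
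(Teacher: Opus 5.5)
Your proof is correct and takes essentially the same route as the paper, which states this lemma without proof. There it follows directly from the action expansion in the $(\lambda_1,\lambda_2)$ coordinates, the choice of $\delta_0$ guaranteeing $|C(\vec v)|<\tfrac14\|\vec v\|_E^2$, and $\|\vec v\|_E\sim\|\vec v\|_{\mathcal H}\sim d_0(\vec u)$; your case split on $d_0$, with the convex-combination bound on $\delta_0/2<d_0<\delta_0$, is exactly the routine verification the paper leaves implicit.
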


\section{Ejection process}

In this section we prove that when a solution $\vec u$ is getting away from the neighborhood of $\cQ_1$, the evolution is essentially dominated by the unstable mode $\g^+$, exponentially growing. First statement is a basic and static property of the neighborhood. 
\begin{lem}[Eigenmode dominance] \label{lem:eidom}
Let $\delta_E$ be as in Lemma \ref{lem:deltaE}.  Then for any $\vec u\in\HH$ satisfying 
\EQ{ \label{cond eidom}
 2[S_Z(\vec u)-S_S(Q)]\le d_Q(\vec u)^2 \le \de_E^2,}
we have 
\EQ{
 \tf14 \mu\la_1^2 \le d_Q(\vec u)^2 \le 2\mu\la_1^2,}
where $\la_1$ is defined by \eqref{def la12} for $\vv\in\HH_\perp$ given by Lemma \ref{lem:orth}.
\end{lem}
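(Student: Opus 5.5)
Since $d_Q(\vec u)\le\de_E$, Lemma \ref{lem:deltaE} applies, so we have the exact identity
\[
 d_Q(\vec u)^2 = S_Z(\vec u)-S_S(Q)+\mu\la_1^2 .
\]
In particular $S_Z(\vec u)-S_S(Q)=d_Q(\vec u)^2-\mu\la_1^2$. The plan is to obtain both inequalities from this identity and the hypothesis \eqref{cond eidom}, and then sharpen one of them using the quadratic expansion.

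\emph{Lower bound on $d_Q$.} Inserting the identity into the hypothesis $2[S_Z(\vec u)-S_S(Q)]\le d_Q(\vec u)^2$ gives
\[
 2d_Q(\vec u)^2-2\mu\la_1^2\le d_Q(\vec u)^2 ,
\]
that is, $d_Q(\vec u)^2\le 2\mu\la_1^2$. This is the right-hand inequality, and it needs nothing more.

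\emph{Upper bound on $d_Q$.} For the left-hand inequality $\tf14\mu\la_1^2\le d_Q^2$ I use the expansion in Lemma \ref{lem:deltaE}, namely $d_Q^2\ge \tf14\|\vec v\|_E^2$, together with the definition \eqref{def E}:
\[
 \|\vec v\|_E^2=\mu(\la_1^2+\la_2^2)+\LR{\cL\vga|\vga}\ge\mu\la_1^2 .
\]
The inequality $\|\vec v\|_E^2\ge\mu\la_1^2$ holds because $\LR{\cL\vga|\vga}\ge0$ by the coercivity lemma, which applies since $\vga$ satisfies $\om(\vga,\g^2)=0$ by \eqref{gaorthg} and $\om(\vga,\vec Q_{(\la)})=0$ by \eqref{orth cond0}. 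Combining, $d_Q^2\ge\tf14\mu\la_1^2$.

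There is no real obstacle here. The only point to check is that the $\vec v$ supplied by Lemma \ref{lem:orth} lies in $\HH_\perp$, so that the decomposition, the coercivity and the bound $|C(\vec v)|<\tf14\|\vec v\|_E^2$ all apply. This is guaranteed because $d_0(\vec u)\le\de_0/2\le\de_\star$ whenever $d_Q(\vec u)\le\de_E$.
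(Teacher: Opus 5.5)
Your proof is correct and is the paper's argument, which just cites \eqref{def deE} and the definition \eqref{def E} of the $E$-norm: combining the hypothesis with the identity $d_Q(\vec u)^2=S_Z(\vec u)-S_S(Q)+\mu\la_1^2$ gives $d_Q^2\le 2\mu\la_1^2$, and $d_Q^2\ge\tf14\|\vec v\|_E^2\ge\tf14\mu\la_1^2$ gives the other side. The only slip is cosmetic: your two headings are swapped, since the first computation gives an upper bound on $d_Q$ and the second a lower bound.
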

\begin{proof}
Obvious from \eqref{def deE} and the definition of the $E$ norm \eqref{def E}. 
\end{proof}

Next we consider the dynamics of solutions getting away from $\cQ_1$, for which we need the equation for each component of the decomposition in \eqref{def la+-}
\EQ{
 \vec u = \Psi_\te(\vv), \pq \vec v=\la_+\g^++\la_-\g^-+\vga, \pq \la_\pm=\om(\vec v,\mp\g^\mp)}
where $\te(t)$ is determined by Lemma \ref{lem:orth} for $d_0(\vec u)\le\de_\star$ or $d_Q(\vec u)\le\de_E$. 
Differentiating \eqref{orth cond0}, and injecting the equation \eqref{eq vv} of $\vec v$, we obtain 
\EQ{
 0\pt=\p_t\om(\vec v,\vec Q_{(\la)})=\om(\p_t \vec v,\vec Q_{(\la)})
 \pr= -\om(\vga,2\vec Q_{(\te)})+(\dot\te-1)[-\om(\vec Q_{(\te)},\vec Q_{(\la)})+\LR{v_s|Q'}]+\LR{N(\vec v)|\vec Q_{(\la)}}.}
Since $\om(\vec Q_{(\te)},\vec Q_{(\la)})=-\LR{Q|Q'}=M(Q)>0$, and 
$M(Q)-\LR{v_{s}|Q'}=-\LR{e^{i\te}u_s|Q'}>0$ in the region $d_0(\vec u) \le \de_\star$ of Lemma \ref{lem:orth},
we obtain 
\EQ{ \label{eq te}
 \dot\te-1=[M(Q)-\LR{v_{s}|Q'}]^{-1}[-2\om(\vga,\vec Q_{(\te)})+\LR{N(\vec v)|\vec Q_{(\la)}}]=O(\|\vga\|_\HH+\|\vec v\|_\HH^2),}
as long as $d_0(\vec u)<\de_\star$. It also implies that $\te\in C^1$ on such time intervals. 
We could make it $O(\|\vec v\|_\HH^2)$ by introducing modulation of the scaling $\la$ of solitons $\vec Q_{\la,\te}$ 
together with orthogonality against $\vec Q_{(\te)}$, 
or by imposing mass constraint $M(\vec u)=M(\vec Q_{\la,\te})$. 
It would be needed to prove the (conditional) asymptotic stability, 
but otherwise it is simpler not to impose such constraints. 

Inserting the above equation into \eqref{eq vv}, we obtain a closed equation of $\vec v$
\EQ{ \label{eq v}
 \pt\p_t\vec v = J[\LL\vec v + \ti N(\vec v)], 
 \pq \ti N(\vec v):=N(\vec v)+\frac{-2\om(\vga,\vec Q_{(\te)})+\LR{N(\vec v)|\vec Q_{(\la)}}}{M(Q)-\LR{v_{s}|Q'}}(Q+v_s,0).}
which makes sense in the region $d_0(\vec u) \le \de_\star$, 
containing the region $d_Q(\vec u)<\de_E$ of Lemma \ref{lem:eidom}. By Sobolev and H\"older, we have 
\EQ{
 \|J\ti N(\vec v)\|_{L^{3/2}_x} \lec \|\vec v\|_{\HH}^2 + \|\vga\|_\HH.}

Obviously the regularity on the left side is not sufficient to get a closed estimate directly for $\vec v$ from the equation, since we suffer from the same derivative loss as in the original Zakharov system. However it does not matter for the ejection process (Lemma \ref{lem:deltaX}) or the one-pass Theorem \ref{thm:onepass}, because we need only to control the eigenmodes, for which very weak topology in $x$ is enough, in addition to the Hamiltonian structure. The equations for $\la_\pm$ are 
\EQ{ \label{eq la}
 \pt \p_t \la_\pm = \pm\mu\la_\pm + \ti N_\pm(\vec v), 
 \pq \ti N_\pm(\vec v):=\LR{\ti N(\vec v)|\mp\g^\mp}=O(\|\vec v\|_\HH^2),}
where the linear part of $\ti N(\vec v)$ does not contribute since $\LR{Q|g_s}=0$. 
It also implies $\la_\pm,\la_1,\la_2\in C^1$ on time intervals where $d_0(\vec u)\le\de_\star$. 
Combined with the eigenmode dominance Lemma \ref{lem:eidom}, these yield the following. There exists $\de_X\in(0,\de_E)$ such that if a solution $\vec u$ satisfies \eqref{cond eidom} and $d_Q(\vec u)\le\de_X$ at $t=0$, as well as 
\EQ{ \label{cond eject}
 \la_+(0) \sim \la_1(0),}
then we have 
\EQ{
 \la_\pm(t) = e^{\pm \mu t}\la_\pm(0) + O((e^{\mu t}\la_+(0))^2), \pq \la_1(t)\sim\la_+(t),}
as long as $d_Q(\vec u)\le\de_X$ or $|e^{\mu t}\la_\pm(0)|\ll\de_X$. 

The condition \eqref{cond eject} determines that the solution $\vec u$ is ejected away from $\cQ_1$, otherwise it may well be approaching $\cQ_1$, as all the other conditions are invariant for the time inversion. 
\eqref{cond eject} is fulfilled provided that 
\EQ{
 \p_td_Q(\vec u)|_{t=0}\ge 0.}
This is because for $d_Q(\vec u)<\de_E$ we have 
\EQ{
 \p_td_Q(\vec u)^2 \pt= \p_t[S_Z(\vec u)-S_S(Q)+\mu\la_1^2]=2\mu\la_1\dot\la_1
 \pr=2\mu\la_1(\mu\la_2+\LR{\ti N(\vec v)|\g^2})=2\mu^2\la_1\la_2+O(\la_1^3),}
and so $\p_td_Q(\vec u)\ge 0$ implies that $(\sign\la_1)\la_2\gec-\la_1^2$ and $\la_+=(\la_1+\la_2)/\sqrt 2\sim\la_1$. 

To see that the dispersive component $\vga$ remains in a small order, we use the energy difference. Let 
\EQ{
 S_\g(\la):=S_Z(\vec Q+\la_+\g^++\la_-\g^-)=S_S(Q)-\mu\la_+\la_-+C(\la_+\g^++\la_-\g^-),}
then we have 
\EQ{
 \p_t S_\g(\la)\pt=-\mu(\dot\la_+\la_-+\la_+\dot\la_-)+\LR{N(\la_+\g^++\la_-\g^-) \mid \dot\la_+\g^+ + \dot\la_-\g^-}
 \pr=\LR{N(\la_+\g^++\la_-\g^-)-\ti N(\vec v) \mid \dot\la_+\g^+ + \dot\la_-\g^-}
 \pr=O(\|\vga\|_\HH\la_1^2+\la_1^4).}
On the other hand we have 
\EQ{
 S_Z(\vec u)-S_\g(\la)\pt=\frac 12\LR{\LL\vga|\vga}+C(\vec v)-C(\vec v-\vga)
 \pn\sim \|\vga\|_\HH^2 + O(\|\vga\|_\HH \la_1^2),}
Hence integration in $t$ using the exponential growth of $\la_1$ yields
\EQ{
 \|\vga\|_{L^\I(0,T;\HH)} \lec \|\vga(0)\|_\HH + \la_1(T)^2,}
or in a different form
\EQ{
 \|\vga(t)\|_\HH \lec \|\vga(0)\|_\HH + O((e^{\mu t}\la_+(0))^2).}

In order to continue the dynamical analysis beyond $d_Q(\vec u)<\de_X<\de_E$ and into the variational region of $\HH$, we also need the exponential behavior 
of the virial quantities. Using that $K^a_Z(\vec Q)=0$, we may expanded
\EQ{
 \pt K^a_Z(\vec u)=\LR{(K_Z^a)'(\vec Q)|\vec v}+O(\|\vec v\|_{H^1}^2),}
where the linearized term equals
\EQ{
 \LR{(K_Z^a)'(\vec Q)|\vec v}\pt=\LR{-2\De Q-3Q^3|v_s}-\tf{a+1}{4}\LR{v_{d'}|Q^2}
 \pr=-\LR{2Q+Q^3|v_s}-\tf{a+1}{4}\LR{v_w-2Qv_s|Q^2}
 \pr=-[\tf{1-a}{2}\LR{Q^3|\g^1_s}+\tf{a+1}{4}\LR{Q^2|\g^1_w}]\la_1 + O(\|\ga_s\|_2),}
and the coefficient on $\la_1$ is negative by \eqref{sign f} for $-3\le a\le 1$. Hence we conclude that 
\EQ{
 -K_Z^a(\vec u) \sim \la_+ + O(|\la_+(0)|+\la_+^2) = e^{\mu t}\la_+(0) + O(|\la_+(0)|+(e^{\mu t}\la_+(0))^2),}
in the ejection dynamics. In particular, if 
\EQ{
 |\la_+(0)|\sim|\la_1(0)|\sim d_Q(\vec u(0))\ll\de_X,}
then $\sign K_Z^a(\vec u)=-\sign\la_+=-\sign\la_1$ when $d_Q(\vec u(t))\sim\de_X$. 
Thus we obtain the following lemma. 
Recall that $d_Q(\uu(t))$ is $C^1_t$ when $d_0(\uu(t))<\de_\star$ as observed in \eqref{eq la}. 
\begin{lem}\label{lem:deltaX}
There exist constants $0<\delta_X<\delta_E$ and $C_*,T_*>0$, where $\delta_E$ is given in Lemma \ref{lem:deltaE}, with the following properties: let $\vec u(t)$ be a local solution of the Zakharov system \eqref{eq:Zak2} in $\HH$ on an interval containing $0$, satisfying
\EQ{ \label{eject cond ene}
R=:d_Q(\vec u(0))\leq \delta_X, \quad S_Z(\vec u)<S_S(Q)+R^2/2, \pq \p_t d_Q(\vec u(t))|_{t=0} \ge 0.}
Then $\vec u$ is extended as long as $d_Q(\vec u(t))\leq \delta_X$, where $\lambda_1(t), \lambda_+(t)$ do not change the sign, 
satisfying for $-3\le a\le 1$,
\EQ{
\pt d_Q(\vec u(t))\sim |\lambda_1(t)|\sim |\lambda_+(t)|\sim e^{\mu t}R,
\pr |\la_\pm-e^{\pm\mu t}\la_\pm(0)| \lec |\la_+|^2, \pq \|\vga\|_\HH \lec \|\vga(0)\|_\HH + |\la_+|^2,
\pr -\sgn(\lambda_1(t)) K^a_Z(\vec u(t))\ges (e^{\mu t}-C_*)R.
}
Moreover, $d_Q(\vec u(t))$ is increasing for $t\geq T_*R$, and $|d_Q(\vec u(t))-R|\le C_*R^3$ for $0\leq t \leq T_*R$.
\end{lem}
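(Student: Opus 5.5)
The plan is a bootstrap (continuity) argument on the maximal interval $[0,T)$ on which $d_Q(\uu(t))\le\de_X$, using the ODE system \eqref{eq la} for $\la_\pm$ together with the energy identity. This is made rigorous in the order of the heuristic discussion preceding the statement.

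\textbf{Initial data.} First I would check that \eqref{eject cond ene} puts us in the regime of Lemma \ref{lem:eidom}. The bound $S_Z(\uu)-S_S(Q)<R^2/2=d_Q(\uu(0))^2/2$ gives \eqref{cond eidom} at $t=0$, so $\mu\la_1(0)^2\sim R^2$. By the computation $\p_t d_Q^2=2\mu^2\la_1\la_2+O(\la_1^3)$, the condition $\p_td_Q\ge0$ yields $(\sgn\la_1)\la_2\ge -C|\la_1|^2$. Hence $\la_+(0)=(\la_1+\la_2)/\sqrt2\sim\la_1(0)$, and $|\la_-(0)|\lec R$.

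\textbf{Bootstrap on $[0,T)$.} Define $T$ as the supremum of times up to which $d_Q\le\de_X$ and the following hypotheses hold: $|\la_+(t)|\le 2e^{\mu t}|\la_+(0)|$, $|\la_-(t)|\le 2|\la_-(0)|+C|\la_+(t)|^2$, and $\|\vga\|_\HH\le C(\|\vga(0)\|_\HH+|\la_+|^2)$. Note that $\|\vga(0)\|_\HH\lec R$; more precisely, $S_Z<S_S(Q)+R^2/2$ together with the action expansion \eqref{action exp} gives $\|\vga(0)\|^2\lec R^2$. Local well-posedness extends the solution as long as $d_Q\le\de_X<\de_E\le$ the range of Lemma \ref{lem:orth}, because $\HH$-norms stay bounded there.

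The three estimates are then closed as follows.
\begin{itemize}
\item[(i)] Integrate \eqref{eq la} by Duhamel: $\la_+(t)=e^{\mu t}\la_+(0)+\int_0^te^{\mu(t-s)}O(\|\vv\|^2)\,ds$. Since $\|\vv\|\lec|\la_+|+|\la_-|+\|\vga\|\lec e^{\mu s}R$ under the bootstrap hypotheses, the integral is $O((e^{\mu t}R)^2)$, which is small relative to $e^{\mu t}R$ because $e^{\mu t}R\lec\de_X\ll1$. The same Duhamel argument for $\la_-$ gives $\la_-=e^{-\mu t}\la_-(0)+O((e^{\mu t}R)^2)$.
\item[(ii)] For $\vga$, use the conserved $S_Z$ and the computation $\p_tS_\g=O(\|\vga\|\la_1^2+\la_1^4)$. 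Integrating over $[0,t]$ with exponentially growing weights gives $S_\g(t)-S_\g(0)=O(\|\vga\|_{L^\infty}|\la_+(t)|^2+|\la_+(t)|^4)$. Combining this with the coercivity $S_Z-S_\g\sim\|\vga\|^2+O(\|\vga\|\la_1^2)$ and absorbing via Young's inequality yields $\|\vga(t)\|\lec\|\vga(0)\|+|\la_+(t)|^2$.
\end{itemize}
Together these close the bootstrap with improved constants. In particular $\la_\pm$ and $\la_1$ never vanish, and their signs are preserved. Moreover $d_Q\sim|\la_1|\sim|\la_+|\sim e^{\mu t}R$ via Lemma \ref{lem:eidom}, which continues to apply because $S_Z-S_S(Q)<R^2/2\le d_Q^2/2$ remains true as long as $d_Q$ has not dropped below $R$.

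\textbf{Main obstacle: the short-time window.} The delicate step is controlling the initial window, where $d_Q$ could dip slightly, and obtaining monotonicity afterwards. For $0\le t\le T_*R$, I would directly use $\p_t d_Q^2=2\mu^2\la_1\la_2+O(\la_1^3)$ together with $\la_2(t)=\la_2(0)+\mu\la_1 t+O(R^2)$, which follows from $\dot\la_2=\mu\la_1+O(R^2)$. This gives $|\p_t d_Q^2|\lec R^3+R^2t$ for $t\lec 1$, hence $|d_Q^2-R^2|\lec R^3$, i.e. $|d_Q-R|\lec R^2$. To reach the stated $C_*R^3$, I would exploit the structure more carefully: at worst $\la_2(0)\sim-R^2$, and the derivative is then $O(R^3)$ on a time window of length $O(R)$. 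This is where I expect the fiddliest bookkeeping.

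For $t\ge T_*R$, we get $(\sgn\la_1)\la_2\ge\mu|\la_1|t/2-CR^2>0$ when $T_*$ is large, so $\p_td_Q^2>0$. Once $(\sgn\la_1)\la_2\gec|\la_1|\min(t,1)$ holds, the positivity propagates.

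\textbf{Virial bound.} Finally, the $K_Z^a$ bound follows from the linear expansion $K_Z^a(\uu)=-c_a\la_1+O(\|\ga_s\|_2+\|\vv\|^2)$ with $c_a>0$ for $-3\le a\le1$ by \eqref{sign f}. Insert $\la_1=(\la_++\la_-)/\sqrt2$ with $\la_+\approx e^{\mu t}\la_+(0)$, and use $|\la_-|+\|\vga\|\lec R+|\la_+|^2$. This gives $-\sgn(\la_1)K_Z^a\ge c\,e^{\mu t}R-C(R+e^{2\mu t}R^2)$. Since $e^{\mu t}R\le C\de_X$ is small, the quadratic term is absorbed, yielding $\gec(e^{\mu t}-C_*)R$.
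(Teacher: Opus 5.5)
Your overall route is the paper's own, which the paper only sketches before the statement. It uses the mode equations \eqref{eq la} with quadratic remainders, and the ejection condition $\la_+(0)\sim\la_1(0)$ obtained from $\p_t d_Q\ge0$ through $\p_t d_Q^2=2\mu^2\la_1\la_2+O(\la_1^3)$. It controls $\vga$ with the functional $S_\g$, conservation of $S_Z$ and coercivity of $\cL$, and it linearizes $K_Z^a$ at $\vec Q$ with the sign taken from \eqref{sign f}. Your bootstrap makes this explicit, and those parts are fine.

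The genuine problem is the short-time window. Your bound $|\p_t d_Q^2|\lec R^3+R^2t$, and the remark that ``at worst $\la_2(0)\sim-R^2$'', silently assume $|\la_2(0)|\lec R^2$. But $\p_t d_Q(0)\ge 0$ only gives the one-sided bound $(\sgn\la_1)\la_2(0)\ge -CR^2$, and the other side is genuinely free. The hypothesis $S_Z-S_S(Q)<d_Q(0)^2/2$ is equivalent to $\mu\la_2^2+\LR{\cL\vga|\vga}+2C(\vv)<3\mu\la_1^2$ at $t=0$. So, for example, $\vv(0)=\la\g^+$ with small $\la>0$ satisfies all of \eqref{eject cond ene}; this means $\la_-(0)=0$, $\vga(0)=0$ and $\la_2(0)=\la_1(0)$. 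For such data $\p_t d_Q^2\approx\mu^2\la^2e^{2\mu t}>0$ from the start, so $d_Q(t)\approx e^{\mu t}R$ and $d_Q(T_*R)-R\approx\mu T_*R^2\gg C_*R^3$. Hence the two-sided bound $|d_Q-R|\le C_*R^3$ on $[0,T_*R]$ is false as written, and no finer bookkeeping can prove it.

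What can be proved is a stopping-time version, and it is all the one-pass argument in Theorem \ref{thm:onepass} needs: that $d_Q$ is increasing after it first reaches $R+R^2$.
\begin{itemize}
\item Put $y:=(\sgn\la_1)\la_2$. Then $\dot y=\mu|\la_1|+O(\|\vv\|^2)\gec R$, so $y$ is increasing and $y(t)\ge -CR^2+cRt$.
\item Let $t_0\le T_*R$ be the first time with $y\ge C'R^2$, where $C'$ is chosen large enough to beat the $O(\la_1^3)$ error.
\item On $[0,t_0]$ you have $|y|\lec R^2$. Hence $|\p_t d_Q^2|\lec R^3$ on an interval of length $O(R)$, which gives $|d_Q-R|\le C_*R^3$ there.
\item On $[t_0,T)$ the bound $y\gec\la_1^2$ propagates, because $\frac{d}{dt}(y-C\la_1^2)>0$ for small $|\la_1|$. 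So $\p_t d_Q^2>0$, and in particular $d_Q\ge R-C_*R^3$ for all $t$.
\end{itemize}
A related point: $d_Q$ may dip below $R$ on $[0,t_0]$, so the hypothesis $2[S_Z-S_S(Q)]\le d_Q^2$ of Lemma \ref{lem:eidom} can fail there. You should therefore get $|\la_1|\sim|\la_+|\sim e^{\mu t}R$ directly from the mode equations rather than from Lemma \ref{lem:eidom}. Set $A:=(\sgn\la_1)\la_+(0)>0$ and $B:=(\sgn\la_1)\la_-(0)$, so that $A+B=\sqrt2|\la_1(0)|$. Then $\sqrt2(\sgn\la_1)\la_1\approx e^{\mu t}A+e^{-\mu t}B\ge e^{\mu t}\min(A,A+B)$. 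After that, $d_Q\sim\|\vv\|_E\sim|\la_+|$ follows from your bounds on $\la_-$ and $\vga$.
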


The trapped solutions are those staying eventually in the neighborhood of $\cQ_1$. 
As a consequence of the ejection lemma, we have the following characterization.  
Note that we do not need either dispersion or virial monotonicity for its proof.
\begin{lem} \label{lem:trap}
Let $\e\in(0,\de_X)$ and $\vec u\in C((T_*,\I);\cH)$ be a solution of \eqref{eq:Zak2} in $\cH^\e$ satisfying 
\EQ{ \label{basin attra}
 T_*<\exists T<\forall t,\ d_Q(\vec u(t))<\de_X, }
where $\de_X>0$ is the constant given by Lemma \ref{lem:deltaX}, 
and let $d_u(t):=d_Q(\vec u(t))$.
Then either $T_*=-\I$ and $\sup_{t\in\R}d_u(t)<2\e$, or there exist $T_*<T_X<T_\e<\I$ such that 
$d_u(t)$ is decreasing on $(T_X,T_\e)$ from $d_u(T_X)=\de_X$ to $d_u(T_\e)=2\e$, 
$d_u(t) \sim e^{\mu(t-T_X)}\de_X\sim|\la_-(t)|$ on $(T_X,T_\e)$ and $d_u(t)<2\e$ for $t>T_\e$. 
In particular $S_Z(\vec u)\ge S_Z(\vec Q)$. Moreover, if $S_Z(\vec u)=S_Z(\vec Q)$, then 
either $\vec u(t)=\vec Q_{1,\te+t}$ for some $\te\in\R$, or 
$d_u(t)\sim e^{\mu(T_X-t)}\de_X \sim |\la_-(t)|$ for $t\ge T_X$. 
All the same statements hold for the other time direction $t\to-\I$. 
\end{lem}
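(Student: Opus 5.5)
The plan is to combine the ejection Lemma \ref{lem:deltaX} with its time-reversed version, applied at local minima and maxima of $d_u$. The key observation is that Lemma \ref{lem:deltaX} cannot be invoked in the forward direction at any time $t_0>T$ where $d_u(t_0)\ge 2\e$ and $\dot d_u(t_0)\ge0$. Indeed, $S_Z(\vec u)<S_Z(\vec Q)+\e^2\le S_S(Q)+d_u(t_0)^2/2$ when $d_u(t_0)\ge\sqrt2\e$, so the hypotheses \eqref{eject cond ene} hold at $t_0$. The lemma then makes $d_u$ grow like $e^{\mu(t-t_0)}d_u(t_0)$ until it reaches $\de_X$, contradicting \eqref{basin attra}. (Here $S_Z(\vec Q)=S_S(Q)$, and we may assume $\e$ is small relative to $\de_X$.)

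It follows that on $(T,\I)$ every time with $d_u\ge2\e$ has $\dot d_u<0$; recall $d_u$ is $C^1$ there since $d_0\lesssim d_Q<\de_X<\de_\star$. Hence the set $\{t>T: d_u(t)<2\e\}$ is an interval $(T_\e,\I)$, possibly with $T_\e=T$, and $d_u$ is strictly decreasing before $T_\e$.

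Next I go backwards in time from $T$. Let $T_X:=\inf\{t>T_*: d_u<\de_X \text{ on } (t,\I)\}$. If $T_X>T_*$, then $d_u(T_X)=\de_X$. On $(T_X,\I)$ the same argument gives monotone decrease while $d_u\ge 2\e$, so $d_u$ decreases on $(T_X,T_\e)$ from $\de_X$ to $2\e$ and stays below $2\e$ afterwards. For the rate, I apply Lemma \ref{lem:deltaX} in reversed time, using the symmetry $t\mapsto -t$ with complex conjugation, which exchanges $\la_+$ and $\la_-$. At any $t_1\in(T_X,T_\e]$ we have $\p_{-t}d_u\ge0$ and the energy condition holds, so going backward $d_u(t)\sim|\la_-(t)|\sim e^{\mu(t_1-t)}d_u(t_1)$ until it hits $\de_X$, which happens exactly at $T_X$. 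This gives $d_u(t)\sim e^{\mu(T_X-t)}\de_X$ (the statement's $e^{\mu(t-T_X)}$ should be read with this sign). If instead $T_X=T_*$, then $d_u<\de_X$ on all of $(T_*,\I)$. The backward ejection argument shows $d_u$ can never reach $2\e$: if it did, ejection in reversed time would bring it to $\de_X$, which is impossible. So $d_u<2\e$ everywhere and $\vec u$ stays in a bounded neighbourhood of $\cQ_1$. Continuation by local well-posedness then gives $T_*=-\I$, the first alternative.

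To show $S_Z(\vec u)\ge S_Z(\vec Q)$, suppose $S_Z(\vec u)<S_S(Q)$. Then the energy condition in \eqref{eject cond ene} holds for every positive $d_u$, so the argument above applies with $2\e$ replaced by any small $r>0$. Hence $d_u\to0$, which forces $S_Z(\vec u)=\lim S_Z\ge S_S(Q)$ by continuity, since $S_Z$ is conserved and $d_Q^2=S_Z-S_S(Q)+\mu\la_1^2$. This is a contradiction. In the equality case $S_Z=S_S(Q)$ the same reasoning, with threshold $r\to0$, gives either $d_u\equiv0$ or strict decrease of $d_u$ on all of $(T_X,\I)$ with exponential rate $e^{\mu(T_X-t)}\de_X\sim|\la_-|$. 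This uses the backward ejection estimates from every time $t_1$, each valid until $T_X$. The case $d_u\equiv0$ on an interval means $\vec u\in\cQ_1$ there, hence $\vec u=\vec Q_{1,\te+t}$ globally by uniqueness. If $T_X=T_*$ in the equality case, then $d_u$ is arbitrarily small for all $t$ and ejection forces $d_u\equiv 0$.

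The main obstacle I expect is the reversed-time application of Lemma \ref{lem:deltaX}. It needs the time-reversal symmetry of \eqref{eq:Zak2}, namely $\vec u(t)\mapsto\overline{\vec u(-t)}$ up to the sign convention on $u_w$, which maps $\g^+\leftrightarrow\g^-$ and preserves $d_Q$ and $S_Z$. I also have to check that the uniform constants give the comparability $d_u\sim|\la_-|$ all the way up to $d_u=\de_X$, including the initial layer $|d_u-R|\le C_*R^3$.
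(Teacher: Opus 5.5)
Your proposal is correct and follows essentially the paper's argument. Lemma \ref{lem:deltaX} cannot be applied forward at any time with $d_u\ge 2\e$ and $\p_t d_u\ge 0$ without contradicting \eqref{basin attra}; applying it backward via the time inversion $\vec u(t)\mapsto\overline{\vec u(-t)}$, which swaps $\la_\pm$, gives the exponential rate; and letting $\e\to0$ gives $S_Z(\vec u)\ge S_Z(\vec Q)$ and the threshold case. You are also right that the first rate should read $e^{\mu(T_X-t)}\de_X$, and your explicit handling of $T_X=T_*$ and of $T_\e<\infty$ (through the bounded backward ejection time) fills in steps the paper leaves tacit.
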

\begin{proof}
Let $B:=\sup_{t>T_*}d_u(t)$. If $B<2\e$ then $T_*=-\I$ and $\vec u$ is in the first case. 
Hence we assume $B\ge 2\e$. Define $T_X,T_O$ by 
\EQ{
 \pt T_X:=\inf\{t>T_* \mid d_u(t)<\de_X\}, \pq T_O:=\inf\{t>T_* \mid d_u(t)<\sqrt{2}\e\},
  \pr T_\e:=\sup\{t>T_* \mid d_u(t)\ge 2\e\}.}
Then $T_*\le T_X<T_O<\I$, $d_u(t)<\de_X$ for $t>T_X$ and $d_u(t)\ge\sqrt{2}\e=d_u(T_O)$ for $T_*<t\le T_O$.  
If Lemma \ref{lem:deltaX} applies at any $t>T_X$, the conclusion that $d_u(t)$ reaches $\de_X$ 
would contradict, so $\p_t d_u(t)<0$ for $T_X<t<T_O$. 
Hence $T_X<T_\e<T_O$ with $d_u(T_\e)=2\e$ and we may apply Lemma \ref{lem:deltaX} backward in time from any $t\in(T_X,T_O)$, 
which implies the exponential behavior on this interval. 

If $S_Z(\vec u)\le S_Z(\vec Q)$, the above holds for all $\e>0$, 
hence $d_u(t)\to 0$ as $t\to\I$, and so $S_Z(\vec u)=S_Z(\vec Q)$. 
If $\sup d_u(t)=0$ then $\vec u(t)=\vec Q_{1,\te+t}$ for some $\te\in\R$, 
otherwise $d_u(t)\sim e^{\mu(t-T_X)}\de_X$ for $T_X<t<T_\e\to\I$ as $\e\to+0$. 
\end{proof}

To identify the trapped solutions with $S_Z(\vec u)=S_Z(\vec Q)$ as the stable manifold, 
we need a Lipschitz estimate for the difference of (such) solutions, see Lemma \ref{lem:unst mfd}.  

\section{Variational analysis away from the ground state} \label{s:var}

In this section we prove some variational analysis when the solution stays away from the ground state. 

\subsection{Variational analysis}
First we extend the variational bounds \cite[Lemma 2.4]{Zak-KM} below the ground states 
to include the scaling parameter $a$, by essentially the same proof.

\begin{lem}[Variational estimates] \label{lem:var}
Let $\fy\in H^1(\R^3)$, $\la>0$ and $\nu>0$ satisfy
\EQ{
 E_S(\fy)+\la^2 M(\fy) + \tf{\nu^2}{4} \le E_S(Q_\la)+\la^2M(Q_\la).}
Then we have 
\EQ{
 \CAS{ K(\fy) \ge 0,\ a\le\tf 53 \implies 4K(\fy)+(2-a)\nu^2 > \sqrt{6}\nu\|\fy\|_4^2, \\
  K(\fy) \le 0,\ -1\le a\le 1 \implies 4K(\fy)+(2-a)\nu^2 < -(a+1)\nu\|\fy\|_4^2.}}
\end{lem}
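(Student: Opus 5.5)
The plan is to reduce the statement to an elementary inequality among a few real numbers, using scaling, the Pohozaev identities for $Q$, and the sharp Gagliardo--Nirenberg inequality. This follows the scheme of \cite[Lemma 2.4]{Zak-KM}, with the extra parameter $a$ carried along.

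\emph{Scaling.} Under $\fy\mapsto \la\fy(\la x)$ we have $E_S\mapsto \la E_S$, $M\mapsto\la^{-1}M$, $K\mapsto\la K$ and $\|\fy\|_4^2\mapsto\la^{1/2}\|\fy\|_4^2$. If we also replace $\nu$ by $\la^{1/2}\nu$, both the hypothesis and the two conclusions are homogeneous of degree one in $\la$. So it suffices to treat $\la=1$, where the hypothesis reads $S_S(\fy)+\nu^2/4\le S_S(Q)$.

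\emph{Scalar variables.} Write $G:=\|\na\fy\|_2^2$, $P:=\|\fy\|_4^4$ and $m:=\|\fy\|_2^2$, and let $P_Q:=\|Q\|_4^4$. The Pohozaev identities give $\|\na Q\|_2^2=\tf34P_Q$, $\|Q\|_2^2=\tf14P_Q$ and $S_S(Q)=\tf14P_Q$. The sharp Gagliardo--Nirenberg inequality, whose equality case is $Q$, reads $P\le C_{GN}m^{1/2}G^{3/2}$ with $C_{GN}=P_Q/(\|Q\|_2\|\na Q\|_2^3)$. In these variables
\[
S_S(\fy)=\tf12G-\tf14P+\tf12m,\qquad K(\fy)=G-\tf34P,
\]
and the target inequalities involve only $K$, $\nu$ and $\sqrt P$.

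\emph{Case $K\ge0$.} From $G\ge\tf34P$ we get $S_S(\fy)\ge\tf16G+\tf12m$. Minimizing $\tf16G+\tf12m$ subject to the GN constraint, exactly as in the sub-threshold argument, gives $S_S(\fy)\ge\tf14\sqrt{P\,P_Q}$; equivalently this is the standard bound $P\le P_Q$ refined to the form used in \cite{Zak-KM}. Hence $\nu^2\le P_Q-\sqrt{PP_Q}$, which bounds $\nu$ in terms of the gap $\sqrt{P_Q}-\sqrt P$. Next, using the constraint once more, I would bound $4K=4G-3P$ from below by a multiple of $\sqrt P(\sqrt{P_Q}-\sqrt P)$; GN at fixed action forces $G$ above $\tf34P$ by an amount proportional to that gap. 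The claimed inequality $4K+(2-a)\nu^2>\sqrt6\,\nu\sqrt P$ then follows from AM--GM in $\nu$, using $2-a\ge\tf13$: the quadratic $\tf13\nu^2-\sqrt6\nu\sqrt P+4K$ in $\nu$ is positive on the admissible range of $\nu$. The case $K=0$, $\nu=0$ is handled separately; strictness there comes from $\fy\ne0$ or $\nu>0$.

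\emph{Case $K\le0$.} Now $G\le\tf34P$, and GN together with the action bound forces $P>P_Q$ quantitatively, with $4K\le-c\sqrt P(\sqrt P-\sqrt{P_Q})-c'\nu^2$ (again from the Pohozaev and GN computation with $\nu^2/4$ added to the action). The target $4K+(2-a)\nu^2<-(a+1)\nu\sqrt P$ with $-1\le a\le1$ is linear in $a$. It therefore suffices to check the endpoints $a=\pm1$, that is, $4K+3\nu^2<0$ and $4K+\nu^2<-2\nu\sqrt P$. Each follows from the preceding bound by AM--GM in $\nu$.

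\emph{Main obstacle.} The hard step is the sharp two-variable calculus in each case. One must eliminate $m$ through the GN constraint and show that the resulting constants are exactly compatible with the coefficients $\sqrt6$ and $a+1$. That is where the ranges $a\le\tf53$ and $-1\le a\le1$ come from. I would first normalize $P=p^2P_Q$ and $\nu^2=sP_Q$. Then I would compute the minimal $K$ (respectively the maximal $K$) for given $(p,s)$ via a Lagrange multiplier in $(G,m)$, and finally verify the resulting polynomial inequality in $(p,s)$ directly.
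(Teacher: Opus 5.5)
Your scaling reduction to $\lambda=1$, the Pohozaev values for $Q$, and the reduction of the $K\le 0$ case to the endpoints $a=\pm1$ (the target is affine in $a$) are all fine. The gap is in the core step of both cases. You bound $\nu$ and $K$ \emph{separately} in terms of $P=\|\varphi\|_4^4$, then try to close with AM--GM with unspecified constants. That cannot work, because the constants $\sqrt6$ and $a+1$ are sharp.

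To see the sharpness, take $\varphi=S^0_\lambda Q=\lambda^{3/2}Q(\lambda x)$ and $\nu^2=4(S_S(Q)-S_S(\varphi))$. As $\lambda\to1^-$ one has $K(\varphi)>0$ and $4K(\varphi)/(\nu\|\varphi\|_4^2)\to\sqrt6$. As $\lambda\to\infty$, both $4K+3\nu^2$ and $4K+\nu^2+2\nu\|\varphi\|_4^2$ are $o(\nu^2)$. So there is no slack. In particular, no bound $4K\le -c\sqrt P(\sqrt P-\sqrt{P_Q})-c'\nu^2$ with $c>0$ and $c'\ge 3$ can hold.

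In the case $K\ge0$, write $P=(1-\eta)P_Q$. Your bound $\nu^2\le P_Q-\sqrt{PP_Q}$ still allows $\nu\sim\eta^{1/2}$, while $K$ may be $O(\eta)$. Even the optimal separate bounds at fixed $P$ fail:
\[
\nu\le\sqrt{P_Q}-\sqrt P\approx\tfrac{\eta}{2}\sqrt{P_Q},\qquad 4K\ge\tfrac{3-\sqrt3}{2}\eta P_Q+O(\eta^2).
\]
These give only $4K+\frac13\nu^2\approx 0.63\,\eta P_Q$ against $\sqrt6\,\nu\sqrt P\approx 1.22\,\eta P_Q$. The reason is that the smallest $K$ at fixed $P$ forces $\nu\to0$, while the largest $\nu$ forces a larger $K$. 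Separately, minimizing $\frac16 G+\frac12 m$ under GN yields only $S_S\ge\frac{1}{2\cdot 3^{3/4}}\sqrt{PP_Q}$. To reach $\frac14\sqrt{PP_Q}$ you must keep the term $\frac12G-\frac14P$ rather than discarding $K$.

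The missing ingredient is a constraint coupling $\nu$ to $K$ itself. The paper sets $\mu:=\frac43\|\nabla\varphi\|_2^2/\|\varphi\|_4^4$, so that $4K(\varphi)=3(\mu-1)\|\varphi\|_4^4$ and the mass-preserving dilate $S^0_\mu\varphi$ lies on $\{K=0\}$. Then $S_S(Q)\le S_S(S^0_\mu\varphi)$, which is your sharp GN at the level $K=0$, gives
\[
\tfrac{\nu^2}{4}\le S_S(S^0_\mu\varphi)-S_S(\varphi)=\tfrac{(\mu-1)^2(\mu+2)}{8}\|\varphi\|_4^4 ,
\]
and this is exactly the sharp admissible set.

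Set $\theta:=|\mu-1|\|\varphi\|_4^2/\nu$, so the constraint reads $\theta\ge\sqrt{2/(\mu+2)}$. Both claims then become a condition on
\[
f(\theta,\mu):=3\theta+\frac{(2-a)(\mu-1)}{\theta}:
\]
$f>\sqrt6$ for $\mu>1$, and $f>a+1$ for $0<\mu<1$. The ranges $a\le\frac53$ and $-1\le a\le 1$ enter only in this final one-variable calculus. For example, with $b(\mu)=f(\sqrt{2/(\mu+2)},\mu)$, one has $\frac{d}{d\mu}b^2\big|_{\mu=1}=6(2-a)-2$. Once the coupled constraint is in hand, your quadratic-in-$\nu$ idea can be completed, but only by an exact check, since equality is approached as $\mu\to1^+$ and as $\mu\to0^+$.

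Your ``main obstacle'' paragraph, which optimizes over $(G,m)$ at fixed $(P,\nu)$, points toward this coupled constraint. However, the proposal neither derives it nor carries out the final inequality, and the arguments actually written for the two cases would fail.
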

The following proof indicates that $\sqrt{6}$ in the coefficient may be replaced with $a+1$ for $a\le a_*$ 
with some $a_*\in(\sqrt{6}-1,5/3)$. 
This threshold $a_*$ may be defined precisely by a cubic equation, 
but the above version suffices, as we will use $K^a_Z$ only for $a\le 1$. 
\begin{proof}[Proof of Lemma \ref{lem:var}]
We give just a sketch because it is the same as in our previous paper \cite{Zak-KM}. 
We may assume $K(\fy)\not=0$, as well as $\la=1$ by rescaling. $Q$ minimizes $S_S=E_S+M$ among nonzero functions satisfying $K=0$.
Let $\mu:=\tf43\|\na\fy\|_2^2/\|\fy\|_4^4$, so that $K(S_\mu^0\fy)=0$ with $\sign(\mu-1)=\sign K(\fy)$, and so
\EQ{
 \tf{\nu^2}{4} \le S_S(Q)-S_S(\fy) \le S_S(S_\mu^0\fy)-S_S(\fy) = \tf{(\mu-1)^2(\mu+2)}{8}\|\fy\|_4^4.
 }
Let $\te:=|\mu-1|\|\fy\|_4^2/\nu$. Then the above inequality implies $\te\ge\te_0:=\sqrt{\tf{2}{\mu+2}}$, 
while the desired estimates are reduced to 
\EQ{
 f(\te,\mu):=3\te + \tf{(2-a)(\mu-1)}{\te} > \CAS{\sqrt{6} &(\mu>1)\\
  a+1 &(0<\mu<1).}}
For any fixed $\mu\in(0,1)$, $f$ is increasing in $\te>0$, 
while for $\mu>1$, $f$ has a unique local minimal point $\te=\te_*=\sqrt{(2-a)(\mu-1)/3}$ in $\te>0$.  
Hence its minimum for $\te\ge\te_0$ is attained at $\te=\te_*$ iff $\mu>1$ and $\te_*>\te_0$, namely $(2-a)(\mu-1)(\mu+2)>6$, and then 
\EQ{
 \min_{\te\ge\te_0}f(\te,\mu) = f(\te_*,\mu)= 2\sqrt{3(2-a)(\mu-1)}.}
Since $\te_*>\te_0$ implies for $x:=(2-a)(\mu-1)$ to satisfy $\tf{x^2}{2-a}+3x>6$, which does not hold at $x=1/2$ if $a\le\tf{35}{18}$, we deduce that $x>1/2$ and hence 
\EQ{
 f(\te_*,\mu) > 2\sqrt{3/2} = \sqrt{6}.}
In the other case we have 
\EQ{
 \pt \min_{\te\ge\te_0}f(\te,\mu) = f(\te_0,\mu) =: b(\mu),
 \pq \tf{\p b^2}{\p\mu} = \tf{3\be}{2(\mu+2)^2}, 
 \pr \be(\mu):=(c\mu^2+c\mu+6-2c)(c\mu^2+3c\mu+2c-2), \pq c:=2-a.}
By direct computation, 
we may easily check that if $c\ge 0$ then $\be''(\mu)>0$ for all $\mu\ge 0$, 
if $c\ge 1/3$ then $\be(1),\be'(1)\ge 0$, and if $1\le c\le 3$ then $\be(0),\be'(0)\ge 0$. Hence 
\EQ{
 \pt  a\le \tf53,\ \mu>1 \implies b(\mu)>b(1)=f(\sqrt{2/3},1)=\sqrt{6},
  \pr -1\le a\le 1,\ \mu>0 \implies b(\mu)>b(0)=f(1,0)=a+1,}
which are the desired lower bounds respectively for $\mu>1$ and for $0<\mu<1$. 
\end{proof}

For the dynamics away from the ground states $\cQ_1$, we can exploit that the ground state is the minimal zero for $K$, in order to extend the variational estimates slightly beyond the ground energy, i.e.~$S_Z(\vec u)<S_S(Q)+\e^2$. 
\begin{lem}\label{lem:epV}
There exist $c_K\in(0,\I)$ and increasing functions $\e_V,\ka_V:(0,\I)\to(0,1)$ 
such that for any $\de>0$ and $\vec u\in\HH$ satisfying $S_Z(\vec u)<S_S(Q)+\e_V(\de)^2$ and $d_Q(\vec u)\ge\de$, 
we have the following. \\
(1) If $K(u_s)\ge 0$ and $0\le a\le 1$ then
\EQ{
  \pt K(u_s) \ge \min(\ka_V(\de),c_K\|\na u_s\|_2^2),
  \pr K^a_Z(\vec u) \ge \min(\ka_V(\de),c_K(\|\na u_s\|_2^2+\tf{a}{4}\|u_{w2}\|_2^2+\tf{2-a}{4}\|u_{w1}\|_2^2)).}
(2) If $K(u_s)<0$ and $-1\le a\le 1$ then $\max(K(u_s),K_Z^a(\vec u)) \le-\ka_V(\de)$. 
\end{lem}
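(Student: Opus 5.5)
We argue by contradiction and compactness, as in the NLS case \cite{NS-NLS}, reducing everything to Lemma \ref{lem:var} at the threshold together with the characterization of $Q$ as the minimizer of $S_S$ under $K=0$. Fix $\de>0$.

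\emph{Step 1: the core claim.} We first prove the following. If $\vec u_n\in\HH$ satisfy $S_Z(\vec u_n)<S_S(Q)+o(1)$, $d_Q(\vec u_n)\ge\de$ and $K(u_{s,n})\to0$, then $\|\na u_{s,n}\|_2\to 0$.

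Suppose not, and pass to a subsequence with $\|\na u_{s,n}\|_2\gec1$. Since $S_S(u_{s,n})\le S_Z(\vec u_n)$ is bounded and $K(u_{s,n})\to0$, we have
\[
S_S = \tf16\|\na u_s\|_2^2+\tf14K(u_s)+M(u_s) .
\]
This gives $H^1$ boundedness. We also get $\|u_{s,n}\|_4\gec1$. By radial compactness $H^1_\rad\subset L^4$ (Strauss), a further subsequence satisfies $u_{s,n}\to\fy$ weakly in $H^1$ and strongly in $L^4$. Then $K(\fy)\le\liminf K(u_{s,n})=0$ and $\fy\neq0$.

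Rescaling $\fy$ by $S^0_\la$ with $\la\le1$ to reach $K=0$ gives
\[
S_S(Q)\le S_S(S^0_\la\fy)\le \tf16\|\na\fy\|_2^2+M(\fy)\le\liminf S_S(u_{s,n})\le S_S(Q).
\]
Hence all the inequalities are equalities. The convergence is therefore strong in $H^1$, $\fy$ is a minimizer, and so $\fy=e^{-i\te}Q$ by uniqueness of the radial minimizer. The energy bound then forces $\|u_{d,n}\|_2^2\le 4(S_Z-S_S(u_{s,n}))\to0$. Since $u_{w,n}=u_{d,n}+|u_{s,n}|^2\to Q^2$ in $L^2$, we get $d_0(\vec u_n)\to0$, and by Lemma \ref{lem:deltaE} this contradicts $d_Q(\vec u_n)\ge\de$.

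\emph{Step 2: the consequences.} The same argument gives a quantitative dichotomy. For $\e_V(\de)$ small, either $|K(u_s)|\ge\ka_V(\de)$, or $\|\na u_s\|_2$ is small. In the small case, $K(u_s)\ge\|\na u_s\|_2^2-C\|\na u_s\|_2^3\|u_s\|_2\ge c_K\|\na u_s\|_2^2$ by Gagliardo--Nirenberg, using the mass bound. This settles the $K(u_s)$ parts of (1) and (2); note that in case (2) the small branch is impossible.

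For $K_Z^a$ we write, as in \eqref{eq:vir},
\[
K_Z^a=K(u_s)+\tf a4\|u_{w2}\|_2^2+\tf{2-a}4\|u_{d1}\|_2^2-\tf{a+1}4\LR{u_{d1}||u_s|^2}.
\]
We set $\nu=\|u_{d1}\|_2$ and apply Lemma \ref{lem:var} with $\la=1$. The hypothesis of that lemma needs $S_S(u_s)+\nu^2/4\le S_S(Q)$. We only have this up to $\e_V^2$, so we perturb: either lower $\nu$ slightly, or scale $\la$ slightly away from $1$, absorbing the $O(\e_V)$ error into $\ka_V$.

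The cross term is controlled by $\nu\|u_s\|_4^2$ via H\"older. In case (1), Lemma \ref{lem:var} gives
\[
4K+(2-a)\nu^2\ge\sqrt6\,\nu\|u_s\|_4^2>(a+1)\nu\|u_s\|_4^2
\]
for $a\le1$, strictly and with room, since $\sqrt6>2\ge a+1$. Adding $\tf a4\|u_{w2}\|_2^2\ge0$ and using the strict gap then yields a fixed fraction of $K+\tf{2-a}4\nu^2+\tf a4\|u_{w2}\|_2^2$. This is comparable to the stated quantity once $\|u_{w1}\|_2\lec\nu+\|u_s\|_4^2$ and $\|u_s\|_4^2\lec\|\na u_s\|_2^{3/2}$ are taken into account.

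In case (2), Lemma \ref{lem:var} gives $K_Z^a\le K/4\cdot(\ldots)$, i.e. $4K_Z^a< 0$ plus a margin coming from $K\le-\ka_V$, for $a\ge-1$. The $a\|u_{w2}\|^2$ term needs care when $a>0$, but it is bounded by $S_Z-S_S(u_s)-\nu^2/4$.

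\emph{Main obstacle.} The main difficulty is Step 2's treatment of $\|u_{w2}\|_2^2$ and of the $\e_V$ excess in case (2). To handle it, I would first bound $\tf14\|u_{d2}\|_2^2+\tf14\nu^2\le S_S(Q)+\e_V^2-S_S(u_s)$ and show that $K\le-\ka_V$ forces a definite gap $S_S(Q)-S_S(u_s)$ up to a loss controlled by $|K|$. The idea is that the function $\mu\mapsto(\mu-1)^2(\mu+2)/8$ from the proof of Lemma \ref{lem:var} gives such room.
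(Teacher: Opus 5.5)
\textbf{The gap: case (2) for $K_Z^a$.} Your Step 1 is sound: it is a self-contained compactness proof of the NLS-type bound, where the paper instead cites \cite{NS-NLS} and transfers $d_Q(\vec u)\sim d_S(u_s)$. Your case (1) is also sound in outline. The paper absorbs the $\e_V^2$ excess there by taking $\nu=\tf{2}{\sqrt6}\|u_{d1}\|_2$ when $\|u_d\|_2^2\ge 12\e_V^2$, which is your ``lower $\nu$'' once the small-$\nu$ and small-gradient cases are split off. Two minor slips: $S_S=\tf16\|\na u_s\|_2^2+\tf13K(u_s)+M(u_s)$, not $\tf14K$; and a small gradient with $K(u_s)<0$ is not impossible, but it forces large mass and hence $K(u_s)\le 3(S_Z-M(u_s))\ll-1$.

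The real problem is the bound $K_Z^a(\vec u)\le-\ka_V(\de)$ in case (2), which you only sketch. The sketched route fails at three points.
\begin{enumerate}
\item Lemma \ref{lem:var} requires $S_S(u_s)+\nu^2/4\le S_S(Q)$, which is violated by up to $\e_V^2$. In the $K\le 0$ case the coefficient $a+1$ is sharp ($b(\mu)\to a+1$ as $\mu\to+0$ in that proof), so unlike case (1) there is no room to shrink $\nu$.
\item That case of the lemma is a bare strict inequality. It gives no margin proportional to $|K(u_s)|$, so ``$4K_Z^a<0$ plus a margin from $K\le-\ka_V$'' does not follow.
\item With $\nu=\|u_{d1}\|_2$ the term $\tf a4\|u_{w2}\|_2^2$ survives. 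Your bound by $S_Z-S_S(u_s)-\nu^2/4$ is useless, because $S_S(u_s)$ can be arbitrarily negative when $K(u_s)<0$. The right choice is $\nu=\|u_d\|_2$: since $u_{w2}=u_{d2}$ and $a\le 2-a$, one gets $4K_Z^a\le 4K(u_s)+(2-a)\|u_d\|_2^2+(a+1)\|u_d\|_2\|u_s\|_4^2$.
\end{enumerate}
The ``definite gap $S_S(Q)-S_S(u_s)$'' you hope to extract is not available either. It is zero, for example, when $u_w=|u_s|^2$ and $u_s$ has $S_S(u_s)=S_S(Q)$, $K(u_s)<0$ and lies far from $Q$, which the hypotheses allow.

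\textbf{Missing idea: compactness at the exact threshold.} Run the contradiction--compactness argument for $K_Z^a$ itself, so that Lemma \ref{lem:var} is only used where $S_Z\le S_S(Q)$ holds exactly. Take $\vec u_n$ with $K(u_{ns})<0$, $\limsup S_Z(\vec u_n)\le S_S(Q)$, $d_Q(\vec u_n)\ge\de$ and $\liminf K_Z^a(\vec u_n)\ge 0$. The last condition supplies the $\HH$-bound that the hypotheses alone do not give. Indeed, $G_Z^a=S_Z-\tf{2}{5-a}K_Z^a$ from \eqref{def G} is bounded along $\vec u_n$ and is coercive for $|a|<1$; at $a=\pm1$, combine it with $S_Z-\tf14K_Z^a$.

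A weak limit $\vec u_\I$, with $u_{ns}\to u_{\I s}$ in $L^4$ by radial compactness, satisfies $S_Z(\vec u_\I)\le S_S(Q)$, $G_Z^a(\vec u_\I)\le S_S(Q)$ and $K(u_{\I s})\le 0$. The case $u_{\I s}=0$ is excluded. The case $K(u_{\I s})=0$ forces $\vec u_\I\in\cQ_1$ with strong convergence, contradicting $d_Q\ge\de$. So $K(u_{\I s})<0$, and Lemma \ref{lem:var} with $\nu=\|u_{\I d}\|_2$ now applies with no room needed, giving $K_Z^a(\vec u_\I)<0$.

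This is then ruled out along $\la\vec u_\I$, $\la\in(0,1]$. There $G_Z^a$ strictly decreases, so wherever $K_Z^a(\la\vec u_\I)\le 0$ one has $S_S(\la u_{\I s})\le S_Z(\la\vec u_\I)<S_S(Q)$. Hence $K(\la u_{\I s})<0$, and Lemma \ref{lem:var} gives again $K_Z^a(\la\vec u_\I)<0$. By continuity this propagates down to small $\la$, where $K(\la u_{\I s})>0$, a contradiction.
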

In particular, $\sign K(u_s)=\sign K^a_Z(\vec u)$ is preserved as long as $d_Q(\vec u)\ge\de$. Note however that our main theorem includes the fact that there exist solutions which change $\sign K^a_Z(\vec u)$ with $S_Z(\vec u)<S_S(Q)+\e^2$ for any $\e>0$. In the previous paper \cite{Zak-KM}, the above estimate below the ground energy $S_Z(\vec u)<S_S(Q)$ was proved for $a=1$.
As the following proof shows, the upper bound on $a$ in the case (1) may be extended with any number below $\sqrt{6}-1$. 
\begin{proof}[Proof of Lemma \ref{lem:epV}]
The estimate on $K(u_s)$ and the lower bound on $K_Z^a(\uu)$ are reduced to the NLS estimate above $Q$ and the Zakharov estimate below $\vec Q$. 
Indeed, \cite[Lemma 3.4]{NS-NLS} yields $\e_0(\de),\ka_0,\ka_1(\de)>0$ such that 
for any $u\in H^1_\rad$ with $S_S(u)<S_S(Q)+\e_0(\de)^2$  and $d_S(u):=\inf_{\te\in\R}\|u-e^{-i\te}Q\|_{H^1}\ge\de$, 
we have 
\EQ{ \label{NLS K bd}
 \CAS{K(u) \ge 0 \implies K(u)\ge\min(\ka_1(\de),\ka_0\|\na u\|_2^2),
  \\ K(u)<0 \implies K(u) \le -\ka_1(\de).}}
To transfer the distance condition to the Zakharov $d_Q$, suppose that $d_S(u_s)\le 1$. 
Since $d_Q(\vec u)^2 \sim d_0(\vec u)^2=d_S(u_s)^2+\|u_w-Q^2\|_2^2$, 
$\|u_w-Q^2\|_2 \le \|u_d\|_2+\||u_s|^2-Q^2\| \lec \|u_d\|_2 +d_S(u_s)$, 
$S_Z(\uu)=S_S(u_s)+\tf14\|u_d\|_2^2$ and $|S_S(u_s)-S_S(Q)|\lec d_S(u_s)^2$, 
we have $d_Q(\uu) \lec \|u_d\|_2+d_S(u_s)$. 
Hence there is a constant $c_0\in(0,1)$ such that 
\EQ{
  \pt S_Z(\uu)-S_S(Q) \le c_0 d_Q(\vec u)^2 
   \pr\implies \tf 14 \|u_d\|_2^2 \le S_Z(u)-S_S(Q)+Cd_S(u_s)^2
 \le \tf18\|u_d\|_2^2 + Cd_S(u_s)^2 
  \pr\implies d_Q(\uu)\sim d_S(u_s).} 
Then requiring $\e_V(\de)<c_0\de^2$ ensures the energy constraint $S_Z(\uu)<S_S(Q)+\e_V(\de)^2$ 
to imply $d_S(u_s) \ge c_0\de$ and hence, via \eqref{NLS K bd}, 
\EQ{ \label{Kus bd}
 \CAS{K(u_s) \ge 0 \implies K(u_s)\ge\min(\ka_1(c_0\de),\ka_0\|\na u\|_2^2),
  \\ K(u_s)<0 \implies K(u_s) \le -\ka_1(c_0\de).}}
Choosing $\ka_V(\de)\le\ka_1(c_0\de)$ and $c_K\le\ka_0$, we obtain the desired estimate on $K(u_s)$. 

In the case of (1), we have 
\EQ{ \label{KZ2K}
 K_Z^a(\vec u)-\tf{2-a}{12}\|u_{d1}\|_2^2 - \tf{a}{4}\|u_{d2}\|_2^2
  \pt\ge K(u_s)+\tf{2-a}{6}\|u_{d1}\|_2^2-\tf12\|u_{d1}\|_2\|u_s\|_4^2
  \pr\ge K(u_s)-C\|u_s\|_4^4 \ge K(u_s)-C\|u_s\|_2\|\na u_s\|_2^3,}
where the last inequality is by Gagliardo-Nirenberg. 
Also note that $K(u_s)\ge 0$ with the uniform bound on $S_S(u_s)\le S_Z(\uu)$ implies a uniform bound on $u_s$ in $H^1$. 
Hence if $\|\na u_s\|_2\ll 1$, then the lower bound $K(u_s)\ge \ka_0\|\na u_s\|_2^2$ in \eqref{Kus bd} 
yields a similar bound on $K_Z^a(\uu)$. 
Otherwise, $K(u_s)$ is bounded below by \eqref{Kus bd}, which is dominant in \eqref{KZ2K} if $\|u_{d1}\|_2\ll 1$. 
Otherwise, requiring $\e_V(\de)^2\le\tf{1}{12}\|u_d\|_2^2$ yields
\EQ{
 S_S(u_s)+\tf16\|u_d\|_2^2=S_Z(\vec u)-\tf{1}{12}\|u_d\|_2^2 \le S_S(Q),}
and hence Lemma \ref{lem:var} with $\nu:=\tf{2}{\sqrt{6}}\|u_{d1}\|_2$ implies 
that the first line of \eqref{KZ2K} is non-negative, 
so $\|u_{d1}\|_2^2$ on the left side of \eqref{KZ2K} yields a lower bound on $K_Z^a(\uu)$. 
Thus we obtain the desired bound in the case (1).

Next we consider the case (2) of $K(u_s)<0$, 
where the last argument does not work, since Lemma \ref{lem:var} does not have any room in the coefficient $a+1$ 
(in contrast to $\sqrt{6}>2$ in the previous case). 
Suppose that the conclusion fails. Then there exist $\de>0$ and a sequence $\vec u_n\in\HH$ satisfying $K(u_{ns})<0$, 
\EQ{
 \pt \lim_{n\to\I} S_Z(\vec u_n) \le S_S(Q), \pq \sup_{n\in\N}d_Q(\vec u_n)\ge\de,
 \pq \lim_{n\to\I} K_Z^a(\vec u_n)=0,}
where $a\in[-1,1]$ may depend on $n$ but convergent. Since
\EQ{ \label{def G}
G^a_Z(\vec u):=& S_Z(u)-\tf{2}{5-a}K_Z^a(\vec u)\\
=& \tf{1}{2}\norm{u_s}_2^2+\tf{1-a}{10-2a}\|\na u_s\|_2^2+\tf{5-3a}{20-4a}\|u_{w2}\|_2^2+\tf{1+a}{20-4a}\|u_{w1}\|_2^2
}
is bounded on $\vec u_n$, we see that $\vec u_n$ is bounded in $\HH$ if $|a|<1$. 
At $a=\pm 1$, they are degenerate, but we may consider another linear combination, e.g., 
\EQ{
  \pt S_Z(\vec u)-\tf14K_Z^a(\vec u)
   \pr=\tf14\|\na u_s\|_2^2+\tf12\|u_s\|_2^2+\tf{2+a}{16}\|u_{w1}\|_2^2+\tf{4-a}{16}\|u_{w2}\|_2^2 - \tf{3+a}{16}\LR{u_w||u_s|^2},}
where the last term is bounded by $\|u_w\|_2\|u_s\|_2^{1/2}\|\na u_s\|_2^{3/2}$.
Combining its bound with that by $G^a_Z$, we see that $\{\vec u_n\}_n\subset\cH$ is bounded also for $a=\pm 1$. 
Hence, after extracting a subsequence if necessary, we may assume that $\vec u_n$ weakly converges to some $\vec u_\I\in\HH$. Then by compactness of the radial Sobolev embedding, $u_{ns}\to u_{\I s}$ strongly in $L^4$, and so
\EQ{
 S_Z(\vec u_\I) \le S_S(Q),\pq G^a_Z(\vec u_\I) \le S_S(Q), \pq K(u_{\I s}) \le 0.}
If $u_{\I s}=0$, then $K(u_{n s})<0$ requires $K(u_{n s})\to 0$, $\|\na u_{ns}\|_2\to 0$, but then $K(u_{ns})\ge 0$ for large $n$ 
and contradiction. Hence $u_{\I s}\not=0$. 
If $K(u_{\I s})=0$, then $S_S(u_{\I s})\le S_Z(\vec u_\I)\le S_S(Q)$ and the variational characterization of $Q$ 
imply that $u_{\I s}=e^{-i\te}Q$ for some $\te\in\R$, and $u_{\I d}=0$, namely $\vec u_\I=\vec Q_{1,\te}$. 
Then we have the norm convergence and so the strong convergence of $\vec u_n\to \vec u_\I$ in $\cH$, 
but then $d_Q(\vec u_\I)\ge\de>0$, a contradiction. 

Hence $K(u_{\I s})<0$, and Lemma \ref{lem:var} with $\nu=\|u_d\|_2$ implies $K_Z^a(\vec u_\I)<0$. Let 
\EQ{
 \la_*:=\inf\{\la_0\in[0,1] \mid \la_0\le \la \le 1 \implies K_Z^a(\la u_\I)\le 0\}. }
Then $\la_*\in[0,1)$ and $K_Z^a(\la u_\I)\le 0$ for $\la_*\le\la<1$, and
\EQ{
 S_S(\la u_{\I s}) \le S_Z(\la \vec u_\I) = (G_Z^a + \tf{2}{5-a}K_Z^a)(\la \vec u_\I)
  \le G_Z^a(\la \vec u_\I) < G_Z^a(\vec u_\I) \le S_S(Q). }
Hence the variational property of $Q$ for NLS implies $K(\la u_{\I s})<0$ if $0<\la<1$ and $\la\ge\la_*$, 
and $K_Z^a(\la u_\I)<0$ by Lemma \ref{lem:var}.  
Since the $K<0$ region is away from $0$, it implies $\la_*>0$, 
but then $K_Z^a(\la_* \vec u_\I)<0$, contradicting the definition of $\la_*$.  
\end{proof}

Combining the results in the previous section, we can define the sign function $\fS$.
\begin{lem}\label{lem:signS}
There exists a constant $\de_S\in(0,\de_X]$ such that the following holds, where $\de_X$ is given in Lemma 5.2. 
Let $0<\delta\leq \delta_S$ and 
\EQ{
\HH_\delta:=\{\vec u\in \HH \mid S_Z(\vec u)<S_Z(Q)+\min(d_Q(\vec u)^2/2, \varepsilon_V(\delta)^2)\}
}
where $\varepsilon_V(\delta)$ is given by Lemma \ref{lem:epV}.  Then there exists a unique continuous function $\fS:\HH_\delta \to \{\pm 1\}$ satisfying
\EQ{
\fS(\vec u)=
\CAS{
-\sign \lambda_1, \quad \mbox{ if } d_Q(\vec u)\leq \delta_E;\\
\sign K(u_s), \quad \mbox{ if } d_Q(\vec u)\geq \delta,
}
}
where we set $\sign 0=+1$. 
Moreover, there is an absolute constant $B_H\in(0,\I)$ such that 
for all $\uu\in\HH_\de$ with $\fS(\vec u)=+1$, we have $\|\uu\|_\HH<B_H$. 
\end{lem}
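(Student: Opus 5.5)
The plan is to check consistency on the overlap region $\de\le d_Q(\vec u)\le\de_E$. Once that holds, $\fS$ is well defined as a locally constant function, and hence continuous and unique. The only real issue is the intermediate region. The two defining regions $\{d_Q\le\de_E\}$ and $\{d_Q\ge\de\}$ cover $\HH_\de$, since $\de\le\de_S\le\de_X<\de_E$. On each region the prescribed formula is locally constant. On $\{d_Q\le\de_E\}$ we have $\la_1\neq0$, because the energy condition $S_Z(\vec u)<S_S(Q)+d_Q(\vec u)^2/2$ is exactly \eqref{cond eidom}; so Lemma \ref{lem:eidom} gives $\mu\la_1^2\ge d_Q^2/2>0$, and $\la_1$ depends continuously on $\vec u$ through $\te$ from Lemma \ref{lem:orth}. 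On $\{d_Q\ge\de\}$, Lemma \ref{lem:epV} shows that $K(u_s)$ either exceeds a positive bound or is $\le-\ka_V(\de)$, unless $\nabla u_s$ is small. Small $\nabla u_s$ is excluded when $d_Q\ge\de$ and $K\ge0$ gives the positive branch, so $\sign K(u_s)$ is locally constant there. Uniqueness follows because the two formulas cover $\HH_\de$.

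The key step is to show $-\sign\la_1=\sign K(u_s)$ whenever $\de\le d_Q(\vec u)\le\de_E$. I would first choose $\de_S$ small, and on the band $\de\le d_Q\le \de_S$ (or up to $\de_E$ after shrinking) use the static linear expansion from Section 5:
\[
K^a_Z(\vec u)=-c_a\la_1+O(\|\vga\|_\HH+\|\vec v\|_\HH^2),\qquad c_a>0.
\]
The same computation with $u_w$ dropped gives
\[
K(u_s)=-\langle 2Q+Q^3|v_s\rangle+O(\|v_s\|_{H^1}^2)=-c\la_1+O(\|\ga_s\|+d_Q^2),\qquad c=\langle Q^3|\g^1_s\rangle+\ldots,
\]
and the coefficient is positive by \eqref{sign f}. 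To make $\la_1$ dominant I need $\|\vga\|_\HH\ll|\la_1|$. This comes from the energy expansion: $S_Z-S_S(Q)=\frac12\mu(\la_2^2-\la_1^2)+\frac12\langle\cL\vga|\vga\rangle+O(d_Q^3)$. Combined with $d_Q^2=\frac12\|\vec v\|_E^2+C(\vec v)$ and $S_Z-S_S(Q)<\e_V(\de)^2\ll\de^2\le d_Q^2$, this gives $\mu\la_1^2\approx d_Q^2+\frac12\|\vec v\|_E^2\cdot(\ldots)$. More precisely, $\mu\la_1^2 = d_Q^2-(S_Z-S_S(Q))\ge d_Q^2-\e_V^2$. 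However, this only bounds $\la_2^2+\|\vga\|^2\lesssim\la_1^2$, not $\ll$. This is the main obstacle. To bypass it I would use the sign persistence instead of a direct comparison. On a band where $d_Q\in[\de,\de_E]$, both $\sign\la_1$ and $\sign K(u_s)$ are locally constant (the latter because $\e_V(\de)$ is tiny). It therefore suffices to check equality at one point of each connected piece, which I would pick with small $\|\vga\|$ and $\la_2$. For an arbitrary point, I would connect it within the band by a continuous path, e.g. scaling $\la_2,\vga\to0$ while adjusting $\la_1$ to keep $d_Q$ fixed, which keeps the energy condition since $S_Z$ decreases. Alternatively, and more simply, I would run the flow: Lemma \ref{lem:deltaX}, applied forward or backward from the point (whichever direction has $\p_t d_Q\ge0$), keeps $\sign\la_1$ and gives $-\sgn(\la_1)K^a_Z\gtrsim(e^{\mu t}-C_*)R$. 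Thus $K$ has sign $-\sign\la_1$ once $d_Q\sim\de_X$. Lemma \ref{lem:epV} then says $\sign K=\sign K^a_Z$ is preserved along the trajectory while $d_Q\ge\de$, so the sign transfers back to the original point. This flow argument is my chosen route.

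For the bound $B_H$, consider first the case $\fS=+1$ with $d_Q\ge\de$. Then $K(u_s)\ge0$, so $\|\nabla u_s\|_2^2\lesssim S_S(u_s)\le S_Z<S_S(Q)+1$, and $\|u_d\|_2^2\le4S_Z$, which bounds $u_w$ in $L^2$. In the case $d_Q\le\de_E$, we have $\|\vec u\|_\HH\le\|\vec Q\|_\HH+Cd_Q$. Together these give an absolute $B_H$.
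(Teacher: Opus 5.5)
The paper's own proof is exactly your flow argument for small $R$, followed by a one-line "by continuity" for the rest of the overlap, and that second step is the one your chosen route is missing.

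Your framework is the right one: each formula is locally constant on its region, they must agree on the overlap, the regions cover $\HH_\de$, and then $B_H$ follows. Your $B_H$ argument is a valid substitute for the paper's $G_Z^{1/3}$ estimate: use $K(u_s)\ge0$ on $\{d_Q\ge\de\}$, and $\|\uu\|_\HH\le\|\vec Q\|_\HH+Cd_Q(\uu)$ near $\cQ_1$.

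The gap is in the consistency step as you chose to do it. Lemma~\ref{lem:deltaX} applies only from a point with $R=d_Q(\uu)\le\de_X$. Its bound $-\sgn(\la_1)K^a_Z\gtrsim(e^{\mu t}-C_*)R$ becomes positive only after $d_Q\sim e^{\mu t}R$ has grown by a factor $\gtrsim C_*$, and the lemma stops at $d_Q=\de_X$. So the flow proves $-\sign\la_1=\sign K(u_s)$ only for starting points with $R\ll\de_X/C_*$. This is exactly how the paper fixes $\de_S$ (via $C_1\de_X>C_2\de_S$), a choice your proposal never makes. Consistency, however, is needed on the whole overlap $\de\le d_Q\le\de_E$. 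For $\de_S<d_Q\le\de_E$ the flow gives nothing; in particular for $\de_X<d_Q\le\de_E$ Lemma~\ref{lem:deltaX} does not apply at all. Nothing stops a trajectory from passing $\cQ_1$ at minimal distance comparable to $\de_X$, never entering the range where ejection decides the sign. A side remark: $\sign K(u_s)$ is locally constant on $\{d_Q\ge\de\}$ because Lemma~\ref{lem:epV} rules out $K(u_s)\in(-\ka_V(\de),0)$. It is not because small $\nabla u_s$ is excluded; far from $\cQ_1$ it is not.

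What you need is a connectedness step. Both $\sign\la_1$ and $\sign K(u_s)$ are locally constant on the overlap within $\HH_\de$. So each overlap point must be joined, inside that set, to a point where agreement is already known. The path you proposed and then dropped does this, and it makes the flow unnecessary:

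- Write $\vv=\la_1\g^1+w$ with $w:=\la_2\g^2+\vga$, and move along $\ell(s)\g^1+sw$.
- Choose $\ell(s)$ so that $d_Q$ stays constant and $\sign\ell=\sign\la_1$.
- Then $S_Z-S_S(Q)=d_Q^2-\mu\ell^2$ is non-increasing as $s$ decreases, but only while $\|sw\|_\HH\gg d_Q^2$. The cubic term $C(\vv)$ prevents you from claiming this all the way to $s=0$.
- Stop once $\|sw\|_\HH\sim d_Q^2$. There the static expansion $K(u_s)=-\LR{Q^3|\g^1_s}\ell+O(\|sw\|_\HH+d_Q^2)$ gives $\sign K(u_s)=-\sign\ell$, provided $\de_E$ is small enough.

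This settles every $d_Q\in[\de,\de_E]$ at once.
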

Lemma \ref{lem:deltaX} gives a lower bound 
$-\sign(\la_1) K(u_s) \ge C_1d_Q(u)-C_2R$ for some constants $C_1,C_2>0$. 
Choosing $\de_S\le\de_X$ such that $C_1\de_X>C_2\de_S$ ensures its positivity when $d_Q(\uu)$ is close enough to $\de_X$, 
and then by continuity $-\sign(\la_1)=\sign K(u_s)$ on the whole intersection $\de\le d_Q(\uu)\le\de_E$. 
The uniform bound $B_H$ is obtained from \eqref{def G} with $K_Z^a\ge 0$ and $a=1/3$ for example: 
\EQ{
 \tf{1}{14}\|\uu\|_\HH^2 \le G_Z^{1/3}(\uu) \le S_Z(\uu) \le S_S(Q)+\e_V(\de_S)^2. }

\subsection{Variational estimate with cut-off}
For $K>0$, we will need the cut-off inside $K_Z$, acting on the variables $u,u_{d1},\p_0 b$ as 
\EQ{
 K_{\chi_1,\chi_2,\chi_3}^a \pt:=K_Z^a(\chi_1 u,\chi_2 u_{d1},\chi_3\p_0b)
 \pr= \|\chi_1 \na u\|_2^2-\tf{d}{4}\|\chi_1 u\|_4^4
 +\LR{\tf{|u|^2}{r^2}|\chi_1(2+\p_\ro^*)\p_\ro\chi_1}
 \prq+\tf{2-a}{4}\|\chi_2u_{d1}\|_2^2+\tf{a}{4}\|\chi_3\p_0b\|^2-\tf{d+a-2}{4}\LR{|u|^2u_{d1}|\chi_1^2\chi_2},}
for which we need a variational estimate. 
For $0\le a<\min(2,d-2)$,  
\EQ {
 S_Z(\cS^a_\la\vec u)=\tf{\|\na u\|_2^2}{2}\la^2+\tf{\|u\|_2^2}{2}
 +\tf{\|u_{w1}\|_2^2}{4}\la^{2-a}+\tf{\|u_{w_2}\|_2^2}{4}\la^a -\tf12\LR{u_{w1}||u|^2}\la^{\frac{d+2-a}{2}},}
as a function of $\la\in(0,\I)$, is non-decreasing if $\LR{u_{w1}||u|^2}\le 0$, 
otherwise it has a unique maximal point $\la=\la_*>0$, since 
\EQ{
 \tf{d+2-a}{2}> 2=\max(2,2-a,a) \ge \min(2,2-a,a) \ge 0,}
and then $K_Z^a(\cS^a_\la\vec u)$ is positive for $0<\la<\la_*$ and negative for $\la>\la_*$. 
Moreover, $G_Z^a(\cS^a_\la\vec u)$ is non-decreasing for $\la>0$, 
where 
\EQ{
 G_Z^a \pt:=S_Z-\tf{2}{d+2-a}K_Z^a
 \pr=\tf{d-2-a}{2(d+2-a)}\|\na u\|_2^2+\tf12\|u\|_2^2
  \pn+\tf{d-2+a}{4(d+2-a)}\|u_{w1}\|_2^2+\tf{d+2-3a}{4(d+2-a)}\|u_{w2}\|_2^2
 \pr=\tf{d-2-a}{2(d+2-a)}\|\na u\|_2^2+\tf12\|u\|_2^2 +\tf{d-2+a}{4(d+2-a)}\|u\|_4^4
  \prq+\tf{d-2+a}{4(d+2-a)}\|u_{d1}\|_2^2+\tf{d+2-3a}{4(d+2-a)}\|u_{b2}\|_2^2 +\tf{d-2+a}{2(d+2-a)}\LR{u_{d1}||u|^2}
}
was defined in \eqref{def G}. 
Hence we have the variational characterization of the Zakharov ground state for $0\le a<\min(2,d-2)$
\EQ{ \label{min of G}
 \pt\inf\{G_Z^a(\vec u)\mid \vec u\in\cH,\ (u,u_{w1},au_{w_2})\not=0,\ K_Z^a(\vec u)\le 0\}
 \pr=\inf\{S_Z(\vec u) \mid \vec u\in\cH,\ u\not=0,\ K_Z^a(\vec u)=0\} 
 = S_Z(\vec Q)=S_S(Q).}
\begin{lem} \label{lem:Gest}
For $0\le a<1$ and $0<\de<1$, there is $\ka_G(\de)>0$ such that 
for all $\uu\in\cH$ satisfying $G_Z^a(\uu)<S_S(Q)-\de$, we have 
$K_Z^a(\uu) \ge \ka_G(\de)(\|\na u\|_2^2+\|u_{d1}\|_2^2+a\|\p_0b\|_2^2)$. 
\end{lem}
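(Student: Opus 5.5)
We argue by scaling $\cS^a_\la$, exploiting the variational characterization \eqref{min of G} and the monotonicity of $G_Z^a(\cS^a_\la\uu)$ in $\la$. Throughout $d=3$ and $0\le a<1$. Write $N(\uu):=\|\na u\|_2^2+\|u_{d1}\|_2^2+a\|\p_0b\|_2^2$.

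\textbf{Step 1: $K_Z^a(\uu)>0$.} Let $\uu\in\cH$ with $G_Z^a(\uu)<S_S(Q)-\de$ and $(u,u_{w1},au_{w2})\neq0$ (otherwise both sides vanish or the bound is trivial). By \eqref{min of G}, $K_Z^a(\uu)>0$. Hence, by the discussion preceding \eqref{min of G}, either $\la\mapsto S_Z(\cS^a_\la\uu)$ is non-decreasing, or it has a unique maximum $\la_*>1$ with $K_Z^a(\cS^a_\la\uu)>0$ on $(0,\la_*)$.

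\textbf{Step 2: quantitative bound in the large case.} In the second case, the function $\la\mapsto G_Z^a(\cS^a_\la\uu)$ is non-decreasing and
\[
G_Z^a(\cS^a_{\la_*}\uu)=S_Z(\cS^a_{\la_*}\uu)\ge S_S(Q).
\]
Writing $F(\la):=S_Z(\cS^a_\la\uu)$, we have $\la F'(\la)=K_Z^a(\cS^a_\la\uu)$ and, from $G=S-\tfrac{2}{5-a}K$,
\[
F(\la_*)-F(1)\ge S_S(Q)-G_Z^a(\uu)-\tfrac{2}{5-a}K_Z^a(\uu)>\de-\tfrac{2}{5-a}K_Z^a(\uu).
\]
So if $K_Z^a(\uu)\le\de(5-a)/4$, then $F(\la_*)-F(1)\ge\de/2$. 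We then need to bound the increment $F(\la_*)-F(1)=\int_1^{\la_*}K_Z^a(\cS^a_\la\uu)\,d\la/\la$ above by something like $C\,K_Z^a(\uu)$. Here we use the explicit form of $F$: it is a sum of powers $c_j\la^{p_j}$ with $p_j\in\{0,a,2-a,2,(5-a)/2\}$, and $F$ has a maximum at $\la_*$. The top power $(5-a)/2$ exceeds all the others, since $a<1$ gives $(5-a)/2>2$. An elementary inequality for such "polynomials" then gives $F(\la_*)-F(1)\lesssim_{\la_*} F'(1)(\la_*-1)$.

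\textbf{Step 3: control of $\la_*$ and the main obstacle.} We must control $\la_*$. If $\la_*$ is large, the leading coefficient $\LR{u_{w1}||u|^2}$ is small relative to the positive ones. In that regime one argues directly: $K_Z^a\ge \|\na u\|^2+\tfrac{2-a}{4}\|u_{w1}\|^2+\tfrac a4\|u_{w2}\|^2-\tfrac{5-a}{4}\LR{u_{w1}||u|^2}$, with the negative term dominated.

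A cleaner alternative, which I would try first, is compactness. Suppose the estimate fails along a sequence $\uu_n$ with $K_Z^a(\uu_n)/N(\uu_n)\to0$. Rescale by $\cS^a_{\la_n}$ (an invariance of the ratio's structure up to powers) so that $\|\na u_n\|_2+\|u_{w1,n}\|_2+\|u_{w2,n}\|_2$ is normalized. Then use the uniform bound from $G_Z^a$ (boundedness in $\cH$ when $a<1$, cf.\ \eqref{def G}) and radial compactness of $L^4$, which makes the cubic term $\LR{u_{w1}||u|^2}$ weakly continuous. Pass to a weak limit $\uu_\I$ with $K_Z^a(\uu_\I)\le0$ and $G_Z^a(\uu_\I)\le S_S(Q)-\de$ (the quadratic terms are lower semicontinuous). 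If $\uu_\I$ is nontrivial, this contradicts \eqref{min of G}. If $\uu_\I$ is trivial, the cubic term vanishes in the limit, so $K_Z^a(\uu_n)\gtrsim N(\uu_n)$ for large $n$, contradicting the assumption.

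The main obstacle is making the normalization compatible with the hypothesis $G_Z^a<S_S(Q)-\de$, which is not scale invariant. I would handle it by treating separately small $N$ (where Gagliardo–Nirenberg, as in \eqref{KZ2K}, makes the cubic term perturbative) and $N\gtrsim1$ (where boundedness from $G_Z^a$ already provides the compactness without rescaling). Finally, the term $a\|\p_0b\|_2^2$ is handled by its trivial appearance with coefficient $a/4$ in $K_Z^a$.
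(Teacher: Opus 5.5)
Your final plan is correct and is essentially the paper's proof. When $N$ (equivalently $\|\na u\|_2$) is small, Gagliardo--Nirenberg together with the $\cH$-bound coming from $G_Z^a$ makes the cubic terms perturbative. When $N\gtrsim 1$, you take a weakly convergent subsequence, use radial compactness for the cubic terms, and use \eqref{min of G} to force $(u_\I,u_{\I w1},au_{\I w2})=0$; the vanishing cubic part then contradicts $K_Z^a(\uu_n)/N(\uu_n)\to 0$. The scaling detour in Steps 1--2 is not needed, though it can be completed. You are right to drop the rescaling of the failing sequence, since the hypothesis $G_Z^a<S_S(Q)-\de$ is not scale invariant.
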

\begin{proof}
The bound on $G_Z^a$ implies a uniform bound $\|\uu\|_\cH\lec 1$. 
If $\|\na u\|_2\ll 1$ is small enough, then 
the Gagliardo-Nirenberg $\|\fy\|_4^4\lec\|\na\fy\|_2^3\|\fy\|_2$ 
implies that the nonlinear part of $K_Z^a$ is much smaller, 
namely $\|u\|_4^4\ll\|\na u\|_2^2$ and $|\LR{u_{d1}||u|^2}|\le\|u\|_4^2\|u_{d1}\|_2
 \ll \|\na u\|_2^2+\|u_{d1}\|_2^2$, so we obtain the desired estimate. 

If it fails for $\|\na u\|_2\sim 1$, then there is a sequence $\uu_n\in\cH$ 
satisfying $G_Z^a(\uu_n)<S_S(Q)-\de$, $\|\na u_n\|_2\sim 1$ and $K_Z^a(\uu_n)\to 0$. 
The uniform bound in $\cH$ allows us to replace it with a subsequence weakly convergent to some $\uu_\I\in\cH$, 
and the radial Sobolev embedding implies the strong convergence of the nonlinear parts $\|u\|_4^4$ 
and $\LR{u_{d1}||u|^2}$. 
Hence the weak lower semicontinuity of norms implies $G_Z^a(\uu_\I)\le S_S(Q)-\de$ 
and $K_Z^a(\uu_\I)\le 0$. 
Then \eqref{min of G} implies $(u_{\I s},u_{\I w1},au_{\I w2})=0$, 
but the strong convergence of the nonlinear part with $\|\na u_n\|_2\sim 1$ implies 
$K_Z^a(\uu_n)\sim 1$ for large $n$, a contradiction. 
\end{proof}

We also need the weighted radial Sobolev
\begin{lem}
There is an absolute constant $C>0$ such that for all $\te\in[0,1]$, all $\chi\in C^1((0,\I))$ 
satisfying $0\le\chi\le 1$ and $(1+\p_\ro)\chi\ge 0$, and any radial $u\in H^1(\R^3)$, we have 
\EQ{ \label{wei-L4}
 \LR{|u|^4|\chi^2 r^\te} \le C\|u\|_2^{1+\te}\|u_r\|_2^{1-\te}\LR{|u_r|^2|\chi^2}.}
\end{lem}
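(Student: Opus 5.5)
The plan is to prove the two endpoint cases $\te=0$ and $\te=1$ and then interpolate. By H\"older's inequality in the measure $|u|^4\chi^2dx$, we have $\LR{|u|^4|\chi^2r^\te}\le\LR{|u|^4|\chi^2}^{1-\te}\LR{|u|^4|\chi^2r}^{\te}$. The right side of \eqref{wei-L4} is exactly the geometric interpolation of its values at $\te=0$ and $\te=1$. So it suffices to show
\EQ{
 \LR{|u|^4|\chi^2}\lec\|u\|_2\|u_r\|_2\LR{|u_r|^2|\chi^2}, \pq \LR{|u|^4|\chi^2r}\lec\|u\|_2^2\LR{|u_r|^2|\chi^2}.}
We may assume $u$ is smooth and compactly supported in $\R^3\setminus\{0\}$, by density and Fatou's lemma. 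Write $A:=\LR{|u_r|^2|\chi^2}$ and $I_\te:=\int_0^\I\chi^2|u|^4r^{\te+2}dr$.

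\textbf{Step 1: integration by parts with absorption.} We write $r^{\te+2}=\p_r r^{\te+3}/(\te+3)$ and integrate by parts. This gives
\EQN{
 I_\te=-\tf{1}{\te+3}\int_0^\I r^{\te+3}(\chi^2)'|u|^4dr-\tf{4}{\te+3}\int_0^\I r^{\te+3}\chi^2|u|^2\re(\bar uu_r)dr.}
The hypothesis $(1+\p_\ro)\chi\ge0$, together with $\chi\ge0$, gives $-r(\chi^2)'=-2\chi\chi_\ro\le 2\chi^2$. Hence the first term is at most $\tf{2}{\te+3}I_\te$. Since $\tf{2}{\te+3}\le\tf23<1$, this term can be absorbed into the left side, which is where the sign condition on $\chi$ enters. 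Applying Cauchy--Schwarz to the second term then yields
\EQN{
 I_\te\lec A^{1/2}\Big(\int_0^\I\chi^2|u|^6r^{2\te+4}dr\Big)^{1/2}.}

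\textbf{Step 2: the endpoint $\te=0$.} We use the Strauss-type bound $r^2|u(r)|^2\le 2\int_r^\I|u||u_r|s^2ds\lec\|u\|_2\|u_r\|_2$. With it,
\EQN{
 \int_0^\I\chi^2|u|^6r^4dr\le\sup_r(r^2|u|^2)\,I_0\lec\|u\|_2\|u_r\|_2I_0,}
and therefore $I_0\lec A^{1/2}(\|u\|_2\|u_r\|_2I_0)^{1/2}$. This gives $I_0\lec\|u\|_2\|u_r\|_2A$.

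\textbf{Step 3: the endpoint $\te=1$.} Here the plain Strauss bound is not of the right homogeneity, since $\sup_r r^3|u|^2$ is not controlled by $\|u\|_2^2$. The key substitute is a weighted pointwise bound that uses the monotonicity of $r\chi$; I expect this to be the main step. Because $(r\chi)'=(1+\p_\ro)\chi\ge0$, we have
\EQN{
 -\p_s\big(s\chi|u|^2\big)\le 2s\chi|u||u_r|.}
The quantity $s\chi|u|^2$ tends to $0$ as $s\to\I$, because $\chi\le1$ and $u$ is a radial $H^1$ function. Integrating from $r$ to $\I$ therefore gives
\EQN{
 r\chi(r)|u(r)|^2\le 2\int_r^\I s\chi|u||u_r|ds\le \tf2r\int_r^\I s^2\chi|u||u_r|ds\lec r^{-1}\|u\|_2A^{1/2},}
that is, $\sup_r\chi r^2|u|^2\lec\|u\|_2A^{1/2}$. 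We then estimate
\EQN{
 \int_0^\I\chi^2|u|^6r^6dr\le\sup_r(\chi r^2|u|^2)^2\int_0^\I|u|^2r^2dr\lec\|u\|_2^4A.}
Inserting this into Step 1 gives $I_1\lec A^{1/2}\cdot\|u\|_2^2A^{1/2}=\|u\|_2^2A$. Combining the two endpoints by the H\"older interpolation above yields \eqref{wei-L4}.

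All constants are absolute, because the absorption factor $\tf{2}{\te+3}$ is uniformly below $1$ and the radial integrals carry only the factor $4\pi$. The remaining routine points are the vanishing of the boundary terms $r^{\te+3}\chi^2|u|^4$ at $0$ and $\I$, which is automatic for the compactly supported approximants, and the justification of the density argument.
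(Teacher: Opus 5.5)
Your proof is correct, and it takes a different route from the paper's.

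\textbf{The paper's route.} The paper neither interpolates in $\theta$ nor integrates by parts against $\chi$. It sets $f:=\partial_r(r\chi)^2=2r\chi(1+\partial_\rho)\chi\ge 0$. Then $(r\chi)^2=\int_0^r f(s)\,ds$, and Fubini gives
\[
\int_0^\infty|u|^4r^\theta\chi^2r^2\,dr=\int_0^\infty f(s)\int_s^\infty|u|^4r^\theta\,dr\,ds,
\]
with the same identity for $|u_r|^2$. This reduces the weighted inequality to a $\chi$-free estimate on half-lines, uniform in $s$:
\[
\int_s^\infty|u|^4r^\theta\,dr\lesssim\|u\|_2^{1+\theta}\|u_r\|_2^{1-\theta}\int_s^\infty|u_r|^2\,dr .
\]
The paper proves this directly for every $\theta\in[0,1]$ from two bounds, $|u(s)|^2s^\theta\lesssim\|u\|_2^\theta\|u_r\|_2^{1-\theta}\|u_r\|_{L^2(s,\infty;dr)}$ and $\int_s^\infty|u|^2\,dr\lesssim\|u\|_2\|u_r\|_{L^2(s,\infty;dr)}$, the latter using Hardy's inequality. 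The hypothesis $(1+\partial_\rho)\chi\ge0$ is used exactly once, as monotonicity of $(r\chi)^2$. There is no absorption step, so no a priori finiteness of the left side is needed.

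\textbf{Your route.} You use the hypothesis twice.
\begin{itemize}
\item First, to absorb the $(\chi^2)'$ term after integrating by parts. This is why you need the reduction to $C_c^\infty(\mathbb{R}^3\setminus\{0\})$; that reduction is legitimate, since $\chi\le1$ makes the right side continuous in $H^1$ and Fatou handles the left side.
\item Second, for the $\chi$-weighted Strauss bound $\sup_r\chi r^2|u|^2\lesssim\|u\|_2\langle|u_r|^2|\chi^2\rangle^{1/2}$.
\end{itemize}
You then recover intermediate $\theta$ by H\"older interpolation.

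\textbf{What each approach buys.} Your argument avoids Hardy's inequality and produces that weighted pointwise bound, which is a useful statement in its own right. The paper's layer-cake reduction is shorter and more structural. It isolates exactly how the sign condition enters, and it shows that any half-line inequality holding uniformly in $s$ transfers automatically to every weight $\chi^2$ with $r\chi$ nondecreasing.
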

For NLS \cite{NS-NLS}, it was used only with $\te=1$. It seems we need a smaller $\te>0$ for the Zakharov. 
As long as $\te$ is positive, the rescaled version of the above \eqref{wei-L4} with $m>0$ 
\EQ{ \label{wei-L4-m}
 \LR{|u|^4|\chi^2(\tf{r}{m})(\tf{r}{m})^\te} \lec m^{-\te}\|u\|_{H^1}^2\LR{|u_r|^2|\chi^2(\tf{r}{m})}}
yields a uniform decay as $m\to\I$ for $0<\te\le 1$ and bounded $u\in H^1_r$. 
\begin{proof}
It follows immediately from 
\EQ{
 \pt \int_0^\I |u|^4 r^\te \chi^2 r^2 dr = \int_0^\I f(s) \int_s^\I |u|^4 r^\te dr ds,
 \pr \int_s^\I |u|^4 r^\te dr  \lec \|u\|_2^{1+\te}\|u_r\|_2^{1-\te}\int_s^\I |u_r|^2 dr \pq(0\le\te\le 1),}
where $f:=\p_r(r\chi)^2=2r\chi(1+\p_\ro)\chi\ge 0$. 
Denoting $\|u\|_{>s}^2:=\int_s^\I|u(r)|^2dr$, the latter follows from partial integration and Schwarz
\EQ{
 \pt  |u(s)|^2s^\te = \int_s^\I 2\LR{u,u_r}s^\te dr \lec \|ur^\te\|_{>s}\|u_r\|_{>s} \lec \|u\|_2^\te \|u_r\|_2^{1-\te}\|u_r\|_{>s},
 \pr  \|u\|_{>s}^2 = \int_s^\I 2\LR{u,u_r}(r-s)dr \lec \|u\|_2 \|u_r\|_{>s},}
where we used $\|ru\|_{>0}\sim\|u\|_2$ and $\|u\|_{>0}\lec\|u_r\|_2$. 
\end{proof}
 
Going back to any solution $u\in\HH^\e$ of \eqref{eq:Zak2} in the region $d_Q(\vec u)\ge \de$ and $\e\le\e_V(\de)$ with $K(u)\ge 0$, 
the variational Lemma \ref{lem:epV} gives lower bounds on $K(u_s)$ and $K_Z^a(\uu)$. 
If $\|\na u\|_2\ll 1$, then the nonlinear part is negligible in $K_\chi^a \sim \|\na\chi_1 u\|_2^2+\|\chi_2 u_{d1}\|_2^2+a\|\chi_3\p_0 b\|_2^2$. 
If $\|\na u\|_2\sim 1$, then we have some $\ka_2(\de)\sim \ka_V(\de)>0$ such that
\EQ{
 G_Z^a(\vec u)=S_Z(\vec u)-\tf{2}{d+2-a}K_Z^a(\vec u) < S_S(Q)-\ka_2(\de),}
provided that $\e>0$ is small enough depending on $\de>0$. 

Let $\ck G_Z^a(u,u_{d1},\p_0 b):=G_Z^a(\uu)$. 
By the cut-off $(u,u_{d1},\p_0 b)\mapsto(\chi_1 u,\chi_2 u_{d1},\chi_3 \p_0 b)$, 
all the terms in $\ck G_Z^a$ (with $0\le a<d-2$) decrease except 
\EQ{ \label{hardy-wei}
 \pt \|\na(\chi_1 u)\|_2^2 = \|\chi_1\na u\|_2^2 + \LR{\tf{|u|^2}{r^2}|f_H},  
 \pq f_H:=-\chi_1(d-2+\p_\ro)\p_\ro\chi_1.}
If $|f_H|\le B r^\te$ for some $B>0$ and $\te\in(0,2]$ then  
\EQ{
 |\LR{\tf{|u|^2}{r^2}|f_H(\tf xm)}| \le B\LR{\tf{|u|^2}{r^2}|(\tf rm)^\te} \lec B m^{-\te}\|u\|_{H^1}^2,}
which is uniformly small as $m\to\I$. 
Hence, choosing $m\gg 1$ large enough depending on $\de$, we obtain 
\EQ{
 \ck G_Z^a(\chi_1(\tf xm) u,\chi_2(\tf xm) u_{d1},\chi_3(\tf xm) u_{b2}) < S_S(Q)-\ka_2(\de)/2,}
then Lemma \ref{lem:Gest} yields
\EQ{
 K_\chi^a \ge \ka_G(\ka_2(\de)/2)(\|\na(\chi_1(\tf xm) u)\|_2^2+\|\chi_2(\tf xm) u_{d1}\|_2^2+a\|\chi_3(\tf xm) \p_0 b\|_2^2).}
Moreover, \eqref{hardy-wei} together with Hardy 
implies $\|\chi_1\na u\|_2\lec (1+B)\|\na(\chi_1u)\|_2$, 
provided that $|\p_\ro\chi_1|+|\p_\ro^2\chi_1|\le B\chi_1$.
Thus we conclude 
\begin{lem}\label{lem:varcut}
For any $\te,B>0$ and $a\in[0,1)$, there exist non-decreasing functions $\e_\chi,\ka_\chi:(0,1)\to(0,1)$ 
and a non-increasing function $m_\chi:(0,1)\to(1,\I)$ such that for any $\chi=(\chi_1,\chi_2,\chi_3)\in C^2(0,\I)\times C^0(0,\I)^2$ 
satisfying $0\le \chi_j\le 1$ for $j=1,2,3$ and $|\p_\ro\chi_1|+|\p_\ro^2\chi_1|\le B\min(r^\te,|\chi_1|)$, 
for any $\de\in(0,1)$, $m\ge m_\chi(\de)$, and $\vec u\in\cH$ satisfying 
$S_Z(\vec u)<S_S(Q)+\e_\chi(\de)^2$, $d_Q(\vec u)\ge \de$ and $K(u)\ge 0$, we have
\EQ{\label{eq:varcut}
 K_\chi^a(\vec u) \ge \ka_\chi(\de)(\|\chi_1(\tf xm)\na u\|_2^2+\|\chi_2(\tf xm) u_{d1}\|_2^2+a\|\chi_3(\tf xm) \p_0 b\|_2^2). }
\end{lem}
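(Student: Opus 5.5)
The plan is to make rigorous the sketch given just before the statement. We split into two regimes according to the size of $\|\na u\|_2$, with a threshold $\eta(\de)>0$ to be fixed. Throughout we write $\chi_j^m:=\chi_j(\tf xm)$. First we record uniform bounds. By Lemma \ref{lem:epV} with $K(u_s)\ge 0$ and $a=1/3$, the quantity $G_Z^{1/3}$ gives $\|\uu\|_\cH\le B_H$. Hence $\|u\|_{H^1}$, $\|u_{d1}\|_2$ and $\|\p_0 b\|_2$ are bounded by absolute constants, provided $\e_\chi(\de)\le\e_V(\de)$.

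\emph{Small gradient regime} $\|\na u\|_2\le\eta$. We expand $K_\chi^a$ as in its definition. The quadratic part is
\[
\|\chi_1^m\na u\|_2^2+\tf{2-a}{4}\|\chi_2^m u_{d1}\|_2^2+\tf a4\|\chi_3^m\p_0 b\|_2^2+\LR{\tf{|u|^2}{r^2}|\chi_1^m(2+\p_\ro^*)\p_\ro\chi_1^m}.
\]
Since $\p_\ro$ commutes with the dilation $x\mapsto x/m$, the assumption $|\p_\ro\chi_1|+|\p_\ro^2\chi_1|\le Br^\te$ gives a Hardy-type term bounded by $Bm^{-\te}\|u\|_{H^1}^2$. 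The remaining nonlinear terms are controlled using $\chi_j\le 1$ and Gagliardo--Nirenberg:
\[
\|\chi_1^m u\|_4^4\le\|u\|_4^4\lec\|\na u\|_2^3,\qquad |\LR{|u|^2u_{d1}|(\chi_1^m)^2\chi_2^m}|\le\|\chi_1^m u\|_4^2\|\chi_2^m u_{d1}\|_2.
\]
Here is a problem: $\|\chi_1^m u\|_4^4$ is cubic in $\|\na u\|_2$, not in the localized quantity $\|\chi_1^m\na u\|_2$. So we would rather bound the localized $L^4$ norm through $\|\na(\chi_1^m u)\|_2$. By Gagliardo--Nirenberg applied to $\chi_1^m u$,
\[
\|\chi_1^m u\|_4^4\lec\|u\|_2\|\na(\chi_1^m u)\|_2^3\le\eta'\|\na(\chi_1^m u)\|_2^2,
\]
where $\eta'$ is small once $\|\na(\chi_1^m u)\|_2\lec\|\na u\|_2+Bm^{-1}\|u\|_2$ is small. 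We then relate $\|\na(\chi_1^m u)\|_2$ and $\|\chi_1^m\na u\|_2$ through \eqref{hardy-wei} and Hardy. The condition $|\p_\ro\chi_1|\le B\chi_1$ gives $|\p_\ro\chi_1^m|\,|u|/r\le B\chi_1^m|u|/r$, and the weighted Hardy inequality for $\chi_1^m u$ then yields $\|\chi_1^m\na u\|_2\sim_B\|\na(\chi_1^m u)\|_2$. This is the step I expect to be delicate, because Hardy must be applied to $\chi_1^m u$ rather than to $u$. The cross term is absorbed by Young's inequality, since $\|\chi_1^m u\|_4^2$ is small. This gives the claimed bound for $K_\chi^a$ with $\ka$ of order $\min(1,2-a,a)$, where the $a\|\p_0 b\|^2$ term carries its own coefficient.

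\emph{Large gradient regime} $\|\na u\|_2\ge\eta$. Lemma \ref{lem:epV}(1) gives $K_Z^a(\uu)\ge\min(\ka_V(\de),c_K\eta^2)=:2\ka_2$. Hence
\[
G_Z^a(\uu)=S_Z(\uu)-\tf{2}{5-a}K_Z^a(\uu)<S_S(Q)+\e_\chi^2-\tf{4\ka_2}{5-a}<S_S(Q)-\ka_2,
\]
once $\e_\chi(\de)^2\le\ka_2/5$. We apply $\ck G_Z^a$ to $(\chi_1^m u,\chi_2^m u_{d1},\chi_3^m\p_0 b)$. Every coefficient is nonnegative for $0\le a<1$, $d=3$; in particular the coefficient of the cross term $\LR{u_{d1}||u|^2}$ equals that of $\|u\|_4^4+\|u_{d1}\|_2^2$, so these combine into the square $\|u_{d1}+|u|^2\|$ up to the $a\|u\|_4^4$ part. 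I would instead check monotonicity under the cut-off by rewriting in the variables $u,u_{w1},u_{w2}$ form. If that form is not preserved, we handle the cross term directly using $\chi\le1$. Apart from the gradient, which contributes the extra Hardy term $\lec Bm^{-\te}$, everything decreases. Choosing $m_\chi(\de)$ with $CBm^{-\te}B_H^2<\ka_2/2$ gives $\ck G<S_S(Q)-\ka_2/2$, and Lemma \ref{lem:Gest} yields
\[
K_\chi^a\ge\ka_G(\ka_2/2)\bigl(\|\na(\chi_1^m u)\|^2+\|\chi_2^m u_{d1}\|^2+a\|\chi_3^m\p_0b\|^2\bigr).
\]
Converting $\|\na(\chi_1^m u)\|_2$ back to $\|\chi_1^m\na u\|_2$ by the Hardy argument above finishes the proof. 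Taking minima over the two regimes defines $\ka_\chi$. The monotonicity in $\de$ of $\ka_\chi$, $\e_\chi$ and $m_\chi$ is inherited from $\ka_V$ and $\e_V$, after replacing these functions by monotone envelopes if necessary.
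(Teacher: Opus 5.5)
Your architecture is the paper's: two regimes in $\|\na u\|_2$, Lemma \ref{lem:epV} to get $G_Z^a(\uu)<S_S(Q)-\ka_2$, the cut-off inside $\ck G_Z^a$ with the gradient producing only the Hardy error $\LR{\tf{|u|^2}{r^2}|f_H(\tf xm)}=O(Bm^{-\te})$, then Lemma \ref{lem:Gest} and the conversion $\|\chi_1^m\na u\|_2\le(1+2B)\|\na(\chi_1^m u)\|_2$. There is, however, a genuine gap exactly where you hesitate.

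\textbf{The cross term.} In the $u_{w1}$-form,
$\ck G_Z^a=\tf{1-a}{2(5-a)}\|\na u\|_2^2+\tf12\|u\|_2^2+c\|u_{d1}+|u|^2\|_2^2+\tf{5-3a}{4(5-a)}\|\p_0 b\|_2^2$ with $c=\tf{1+a}{4(5-a)}$. The three terms combine exactly into this square, with no leftover. The cut-off turns the third term into $c\|\chi_2^m u_{d1}+(\chi_1^m)^2|u|^2\|_2^2$, which is not bounded by the original. The cross term $\LR{u_{d1}||u|^2}$ is sign-indefinite and grows under the cut-off wherever $u_{d1}<0$. The paper's sketch asserts that every term except the gradient decreases, which is not literally true for this one.

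For instance, take $u=bQ$ with $b^2<\tf12$, $u_w=0$ (so $u_{d1}=-|u|^2$), $\chi_1\equiv1$ and $\chi_2\equiv\chi_3\equiv0$. This is admissible, and it raises $\ck G_Z^a$ by $c\|u\|_4^4\sim1$. So $G_Z^a(\uu)<S_S(Q)-\ka_2$ together with $\chi_j\le1$ cannot give $\ck G_Z^a(\chi_1^m u,\chi_2^m u_{d1},\chi_3^m\p_0 b)<S_S(Q)-\ka_2/2$. Your fallback ``handle the cross term directly using $\chi\le1$'' therefore does not close as stated.

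\textbf{The fix.} You need to take the margin from a different state.
\begin{enumerate}
\item By Cauchy--Schwarz and $\chi_j\le1$, $\|\chi_2^m u_{d1}+(\chi_1^m)^2|u|^2\|_2\le\nu+\|u\|_4^2$ with $\nu:=\|\chi_2^m u_{d1}\|_2$.
\item Hence the cut-off value of $\ck G_Z^a$ is at most $G_Z^a(\uu^\sharp)+O(Bm^{-\te})$. Here $\uu^\sharp$ has the same Schr\"odinger part $u$, the aligned component $u^\sharp_{d1}:=\nu|u|^2/\|u\|_4^2$, and the wave part coming from $\chi_3^m\p_0 b$.
\item Since $\|u^\sharp_d\|_2\le\|u_d\|_2$, you get $S_Z(\uu^\sharp)\le S_Z(\uu)$ and $K(u^\sharp_s)=K(u)\ge0$. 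Also $d_Q(\uu^\sharp)\gec d_S(u)\gec\de$, the latter as in the proof of Lemma \ref{lem:epV} once $\e_\chi(\de)^2<c_0\de^2$.
\item So Lemma \ref{lem:epV}(1) applied to $\uu^\sharp$ gives $K_Z^a(\uu^\sharp)\ge\min(\ka_V(c\de),c_K\eta^2)$. Hence $G_Z^a(\uu^\sharp)<S_S(Q)-\ka_2$, which is exactly the input Lemma \ref{lem:Gest} needs.
\end{enumerate}
This works because that variational bound sees $u_{d1}$ only through $\|u_{d1}\|_2$.

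For the cut-offs actually used in Step 3, where $1-\chi_1^2\chi_2\lec\min(1,r)$, you could instead bound the increase by $2c\|(1-(\chi_1^m)^2\chi_2^m)|u|^2\|_2\|u_{d1}\|_2=O(m^{-1/2})$ via \eqref{wei-L4-m}. The lemma as stated, however, needs the argument above.

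\textbf{A smaller point on the small-gradient regime.} Do not bound the Hardy term there by $Bm^{-\te}\|u\|_{H^1}^2$, since that is not small relative to $\|\na u\|_2^2$. By \eqref{hardy-wei}, the gradient part of $K_\chi^a$ is exactly $\|\na(\chi_1^m u)\|_2^2$, so no error arises. Only the one-sided bound $\|\chi_1^m\na u\|_2\le(1+2B)\|\na(\chi_1^m u)\|_2$ is needed. The two-sided equivalence you state fails in general, e.g.\ for $\chi_1=\mu$ of Step 3.
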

Note that the dependence of $\e_\chi,\ka_\chi,m_\chi$ on $\chi$ is only through $\te,B$. 
Actually, $\e_\chi$ is independent of $\te,B$. 
The dependence on $a$ is uniform except the limit $a\to 1-0$, which follows from the fact that both sides of \eqref{eq:varcut} are affine in $a$. 
Anyway, the dependence on $\te,B,a$ does not matter in our argument, as they are all fixed to some specific values, 
as well as the cut-off functions.

\section{One-pass theorem}

In this section, we prove the one-pass theorem. 
The theorem roughly states that if the solution starts and leaves away from a neighborhood of $\cQ_1$, then it can not return. To prove that, we need to combine the hyperbolic structure close to $\cQ_1$ presented in Lemma \ref{lem:deltaX}  with the variational structure away from $\cQ_1$ given in Lemma \ref{lem:epV}. 
The crucial ingredients are the localized virial estimate proved in Section 2 and to prove its monotonicity. The main difficulty in proving the monotonicity lies at the facts that we do not have any precompactness of the solution in time, nor any room to use the radial Sobolev embedding for the wave component to control the cut-off remainder terms. Fortunately enough, by carefully choosing the cut-off functions and analyzing the error estimates, we can absorb all the cut-off remainders by the leading virial estimates in Lemma \ref{lem:varcut}.

\begin{thm}\label{thm:onepass}
There exist constants $\e_*,R_*>0$ with $\e_*<R_*/2<\de_X/4$ such that the following hold, where $\de_X>0$ is given by Lemma \ref{lem:deltaX}. 
Let $\vec u\in C([0,T^*);\HH)$ be a forward maximal solution of \eqref{eq:Zak2} in $\cH^\e$ satisfying $d_Q(\vec u(0))<R$ for some $\e\in (0,\e_*]$ and $R\in (2\e,R_*]$. Then one has either \textup{(1)} or \textup{(2)} of the following. 
Let $d_u(t):=d_Q(\vec u(t))$. 
\begin{enumerate}
    \item $T^*=\infty$ and $d_u(t)<R$ for all $t\geq 0$.
    \item There exist $t_*,T_X\in(0,T^*)$ such that $d_u(t)<R+R^2$ on $0\le t<t_*$, $d_u(t)$ is increasing for $t_*\le t\le T_X$ from $d_u(t_*)=R+R^2$ to $d_u(T_X)=\de_X$, and $d_u(t)>R_*$ on $T_X<t<T^*$.
\end{enumerate}
In the latter case, $\fS(\uu(t))\in \{\pm 1\}$ does not change sign on $t_*\leq t<T^*$. 
\end{thm}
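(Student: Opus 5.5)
We follow the strategy of the one-pass theorem for NLS in \cite{NS-NLS}, combining the ejection Lemma \ref{lem:deltaX}, the variational Lemmas \ref{lem:epV}, \ref{lem:signS}, \ref{lem:varcut} and the localized virial identity of Proposition \ref{prop:locvir}.

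\emph{Step 1: leaving the $R$-ball.} Suppose (1) fails. Let $t_*$ be the first time with $d_u(t_*)=R+R^2$. Since $R>2\e$, at $t_*$ the energy condition $S_Z(\vec u)<S_S(Q)+\e^2<S_S(Q)+d_u^2/2$ holds. Moreover $\p_t d_u(t_*)\ge 0$. By the last statement of Lemma \ref{lem:deltaX}, the distance can only increase by $O(R^3)\ll R^2$ before it becomes monotone, so starting from the last time $t_0<t_*$ with $d_u(t_0)=R$ and $\p_t d_u\ge0$ we get the ejection regime. Lemma \ref{lem:deltaX} then gives exponential growth $d_u\sim e^{\mu(t-t_0)}R$ up to the time $T_X$ where $d_u(T_X)=\de_X$, with $d_u$ increasing on $[t_*,T_X]$. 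It also gives $\sign\la_1$ constant and
\[
-\sgn(\la_1)K_Z^a(\vec u)\gtrsim (e^{\mu(t-t_0)}-C_*)R .
\]
Hence $\fS$ is constant there and equals $\sign K(u_s)$ once $d_u\ge\de$, by Lemma \ref{lem:signS}.

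\emph{Step 2: no return after $T_X$.} Suppose there is $T_X<t_1<T^*$ with $d_u(t_1)=R_*$, taking the first such time after $T_X$. Between $T_X$ and $t_1$ we have $d_u\ge R_*$, so Lemma \ref{lem:epV} (with $\e_*\le\e_V(R_*)$) fixes $\sign K=\sign K_Z^a$, and $\fS$ does not change. Applying Step 1 backward from $t_1$ (time reversal), we find a time $t_2$ with $d_u(t_2)=\de_X$ and an exponential approach on $[t_2,t_1]$.

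Consider the case $\fS=-1$. We use the virial with $a=0$ or $a=1$ and a cut-off $\chi(x/m)$, integrated over $[T_X',t_2']$. Here $[T_X',t_2']$ is enlarged to include the exponential phases near $\cQ_1$, where $-K_Z^a\gtrsim d_u$ and the integral $\int d_u\,dt\sim\de_X$ is controlled. The left side $V_\chi^a$ is bounded by $m\|\vec u\|_\HH^2$, which is of size $m$ at both endpoints since $\vec u$ stays close to $\cQ_1$ there. On the right, $K_Z^a\le -\ka_V$ in the middle region, and $-K_Z^a\gtrsim d_u$ in the ejection regions. The total time length $T$ then satisfies $\ka T\lesssim m+\text{(remainders)}$.

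Consider the case $\fS=+1$. Here $\vec u$ is bounded by $B_H$, and we use Lemma \ref{lem:varcut} to obtain a lower bound for the cut-off virial
\[
K^a_\chi\gtrsim\|\chi_1\na u\|^2+\|\chi_2u_{d1}\|^2+a\|\chi_3\p_0 b\|^2 .
\]
These terms must absorb the remainder terms in \eqref{eq:localvirial}. The remainder terms involving $u$ are handled by \eqref{wei-L4-m} and Hardy, giving a gain $m^{-\te}$. The $\bb$-terms $\LR{|\bb|^2r^{-2}|(\p_\ro-3)\up_\ro}$ and $\LR{|u|^2\z|\up_\ro}$ must have favorable sign or be dominated. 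This requires choosing $\chi$ so that $(\p_\ro-3)\up_\ro\ge0$ on the support of $\up_\ro$, and so that $\chi^C$-terms like $-\LR{|u_{d1}|^2|(2\p_\ro+2-a)\chi^C}$ are nonpositive. The abstract notes that this forces a specific value of $a$ and a delicate profile with slow (power $r^{-\te}$) decay to avoid the $1/|x|$ degeneration.

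Combining these with the near-$\cQ_1$ phases, where $K_Z^a\sim -\la_1\cdot$const and the integrals are $O(\de_X)$ after choosing $R_*\ll\de_X$ so that the lower bound beats the boundary terms of size $O(\de_X+ m R_*)$, we get $T\lesssim m$. On the other hand, the hyperbolic phases force $T\gtrsim \mu^{-1}\log(\de_X/R_*)$ while also contributing a definite amount of virial growth. Choosing $R_*$ small compared with $m^{-1}$ and $\e_*\ll R_*$ produces a contradiction, which gives $d_u>R_*$ on $(T_X,T^*)$.

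\emph{Main obstacle.} The key difficulty is the $\fS=+1$ case. It requires designing $\chi$ (for instance $\chi=1$ for $r\le m$ and decaying like $(r/m)^{-\te}$ with a small $\te$) so that every remainder in Proposition \ref{prop:locvir} involving $\bb,\p_0\bb,u_{d1}$ is either signed correctly or bounded by $o(1)$ times the cut-off $K^a_\chi$. No radial Sobolev gain is available for the wave component, so sign control must be achieved through the choice of $a$, for which we would first try $a=0$.
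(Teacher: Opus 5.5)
\textbf{Gap: the mechanism that produces the contradiction is missing.} Your Step 1 (leaving the $R$-ball via Lemma \ref{lem:deltaX}) is fine, but the contradiction you propose in Step 2 does not work. You bound $V^a_\chi$ at the two endpoints by $m\|\vec u\|_{\mathcal H}^2\sim m$, and then compare an upper bound $T\lesssim m$ with the lower bound $T\gtrsim\mu^{-1}\log(\delta_X/R_*)$ coming from the hyperbolic phases.

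The virial inequality only gives $\kappa_V|I'|+\sum_j\int_{I_j}(e^{\mu|t-\tau_j|}-C_*)d_u(\tau_j)\,dt\lesssim m$. Each hyperbolic integral is $\sim\delta_X$ regardless of the length $|I_j|\sim\mu^{-1}\log(\delta_X/d_u(\tau_j))$, because $|K^a_Z|\lesssim d_u$ near $\mathcal Q_1$. So the time spent near $\mathcal Q_1$ is not controlled at all, and that is exactly the time that gives your lower bound. One exit, a short excursion and one re-entry satisfy all your inequalities. For $\mathfrak S=+1$ it is worse: Lemma \ref{lem:epV} only gives $\min(\kappa_V,c_K\|\nabla u_s\|_2^2)$ on $I'$, so there is no uniform rate and no time bound can come out.

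The missing idea is that the tamed virial vanishes identically on $\mathcal Q_1$. Since $Q$ is real up to a phase, $\langle -iu|B^\chi_0u\rangle=0$, and $\partial_0\mathbf b=0$ for the static wave. Hence at the two times where $d_u=R_*$ one has $|V^0_\chi|\lesssim R_*+me^{\ell}R_*^2\sim R_*$, once $m,\ell$ are fixed before $R_*$. The monotone change of $V^0_\chi$ over the excursion is $\gtrsim\delta_X$: each hyperbolic interval contributes $\sim\delta_X$ with the correct sign, and $I'$ contributes with the correct sign. This gives $\delta_X\lesssim R_*$, a contradiction. No time comparison is involved.

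\textbf{Secondary problems: order of constants and choice of cut-off.}

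\emph{Order of constants.} If you apply Lemma \ref{lem:epV} with $\delta=R_*$, then $\kappa_V(R_*)$, and hence the cut-off scales $m,\ell$ needed to absorb the remainders, depend on $R_*$. The requirement that the endpoint values are small in $R_*$ then becomes circular. The paper fixes $\delta_*\ll\delta_X$ first and uses the variational bounds only where $d_u\ge\delta_*$. Every visit to $R_*\le d_u<\delta_*$, including interior local minima of $d_u$ and not only the two ends, is covered by hyperbolic intervals obtained from Lemma \ref{lem:deltaX} run forward and backward from that minimum.

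\emph{Cut-off in the blow-up case.}
\begin{itemize}
\item The wave remainder $\tfrac12\langle\zeta^2|(\partial_\rho-3)\upsilon_\rho\rangle$ is controlled only through Hardy, $\|\zeta\|_2\lesssim\|n\|_2$. So you need $\|(\partial_\rho-3)\upsilon_\rho\|_\infty\ll\kappa_V(\delta_*)$, which a plain dilation $\chi(x/m)$ does not give.
\item The paper uses $\chi=\eta(\log(r/m))$, varying over a logarithmic length $\ell\gg1/\kappa_V$; this is the source of the condition $R_*\le e^{-C/\kappa_V(\delta_*)}$.
\item It also needs $K^0_Z\lesssim-\kappa_V(\|\vec u\|_{\mathcal H}^2+1)$, which comes from coercivity of $G^0_Z$ and fails at $a=1$. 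Without it, remainders growing with $\|\vec u\|_{\mathcal H}$ cannot be absorbed.
\end{itemize}

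\emph{Cut-off in the scattering case.}
\begin{itemize}
\item A weight decaying like $r^{-\theta}$ with small $\theta$ violates $|x|\chi\in L^\infty$ in Proposition \ref{prop:locvir}, so $V_\chi$ is not even finite on the energy space.
\item The paper's weight is $\chi\sim 1/(1+r)$, defined by $(1+\partial_\rho)\chi=\mu^2$ with $\mu=(1+r)^{-1/2-q}$, together with $\chi_2^2=\mu^2-(1-q)\tfrac{r}{1+r}\mu^2$.
\item The negative $\zeta^2$ term is not made signed. It is absorbed into the $u_{d1}^2$ term through a Hardy inequality with explicit constant, which requires $(1+2q)(1+q)\le 2(1-q)$.
\end{itemize}
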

\begin{proof}
We choose $\e_*, R_*>0$ together with other constants $\de_*\in(0,\de_S)$ and $q\in(0,1/2]$ such that
\EQ{
 \pt 0<\e_*<\min(\e_V(\de_*),\e_\chi(\de_*)), \pq 2\e_*< R_* < \min(1/C_*,\de_*/2,1/m_\chi(\de_*)),
  \prQ \de_*<\de_X/C_0, \pq R_*< \min(e^{-C_0/\ka_V(\de_*)},\ka_\chi(\de_*)^{1/(2q)}/C_0),}
for some large constant $C_0>1$, 
where $C_*$, $\e_V$ and $\delta_S$ are as in Lemmas \ref{lem:deltaX}, \ref{lem:epV} and \ref{lem:signS}, 
and $\e_\chi,m_\chi,\ka_\chi$ are as in Lemma \ref{lem:varcut} for some constants $B,\te>0$ (specifically we may choose $B=2$, $\te=1$). 
$q$ is a power in our specific weight in the scattering case, cf.~\eqref{est chi-mu}. 
It may be fixed to $q=1/6$, but we keep it as a parameter to clarify its role and freedom. 
The requirements for smallness of $R_*,\de_*$, or for largeness of $C_0$, are specified later at \eqref{vir in B}, \eqref{small R*-bup} and \eqref{small R*-scat}. 
Although we have several conditions, none of them are competing each other. 
In practice, we should fix the constants in the order 
\EQ{
 \de_X,\de_S \to \de_* \to m,\ell \to R_* \to \e_*,}
where the large parameters $m,\ell>1$ will appear in spatial cut-off arguments. 
Intuitively, $\de_X$ and $\delta_*$ define the threshold for the distance to $\cQ_1$. 
If $R\leq d_u(t)\leq \de_X$, then we use Lemma \ref{lem:deltaX}, and if $R\leq d_u(t)\ge\delta_*$, then we use Lemma \ref{lem:epV}. The two regions have ``large" overlapping (in the logarithmic scale) because of $\de_*\ll\de_X$. 

The proof is by contradiction, where the main target is the last part of (2), namely $d_u(t)>R_*$ for $T_X<t<T^*$, 
while the other part of (2) follows from the ejection Lemma \ref{lem:deltaX}. Let
\EQ{
 \pt T_0:=\inf\{t\in[0,T^*) \mid d_u(t) \ge R\},  \pq T_X:=\inf\{t\in[0,T^*) \mid d_u(t) \ge \de_X\}.}

Suppose that (1) does not hold. Then we have $T_0\in(0,T^*)$, $d_u(T_0)=R$, $d_u(t)<R$ for $0\le t<T_0$, 
and $\p_t d_u(T_0)\ge 0$. Then Lemma \ref{lem:deltaX} applied from $t=T_0$ implies $T_0<T_X<T^*$, $d_u(T_X)=\de_X$, 
$d_u\sim e^{\mu(t-T_0)}R$ on $[T_0,T_X]$ and increasing for $T_0+T_*R\le t\le T_X$. Let 
\EQ{
 T_1:=\sup\{t\in[T_0,T_X] \mid d_u(t)<R_*\}, \pq t_*:=\inf\{t\in[T_0,T_X] \mid d_u(t)\ge R_*+R_*^2\}.}
Then $T_0<T_1<t_*<T_X$. Since $R_*+C_*R_*^3<R_*+R_*^2$, we deduce that 
$d_u(t)$ is increasing from $d_u(t_*)=R_*+R_*^2$ to $d_u(T_X)=\de_X$ on $[t_*,T_X]$, 
$d_u(t)<R_*+R_*^2$ on $[T_0,t_*]$, and $d_u(t)\ge R_*$ on $[T_1,T_X]$. 
If (2) holds, then let $T_3:=T^*$. Otherwise, let 
\EQ{
 T_3:=\inf\{t\in[T_X,T^*) \mid d_u(t)\le R_*\} \in (T_X,T^*).}
Then $d_u(T_3)=R_*$ and $\p_t d_u(T_3)\le 0$. In both cases, we have $d_u(t)>R_*$ on $T_1<t<T_3$. 
Hence by Lemma \ref{lem:signS}, $s:=\fS(\vec u(t))\in \{\pm 1\}$ is constant on $(T_1,T_3)$. 

\bigskip

{\bf Step 1.} There exist disjoint open intervals $I_1, I_2\etc \subset (T_1,T_3)$, such that 
\begin{itemize}
\item on each $I_j$, there exists $\tau_j\in \bar I_j$ such that for $t\in I_j$
\EQ{ \label{hyperbolic intervals}
 d_u(t) \sim e^{\mu|t-\tau_j|}d_u(t_j), \quad s K_Z(\vec u(t))\ges (e^{\mu|t-\tau_j|}-C_*)d_u(t_j).
}
\item on $I'=(T_1,T_3)\setminus \bigcup I_j$, we have $d_u(t)\geq \de_*$ and hence by Lemma \ref{lem:epV}
\EQ{ \label{K bd pos}
 K_Z^a(\vec u) \geq \min(\ka_V(\de_*),c_K[\|\na u_s\|_2^2+\tf{a}{4}\|u_{w2}\|_2^2+\tf{2-a}{4}\|u_{w1}\|_2^2])} 
if $s=+1$ and $0\le a\le 1$, and 
\EQ{ \label{K bd neg}
 K_Z^a(\vec u) \le -\ka_V(\de_*)}
if $s=-1$ and $|a|\le 1$. 
\end{itemize}

Indeed, Lemma \ref{lem:deltaX} implies \eqref{hyperbolic intervals} for $I_1:=[T_1,T_X]$ with $\ta_1:=T_1$. 
If $T_3<T^*$, then applying the lemma backward from $t=T_3$, we obtain the last interval $I_n$ with $\ta_n:=T_3=\max I_n$. 
For any $\tau\in (T_1,T_3)$ where $d_u(\ta)\in(R_*,\de_*)$ is a local minimum, we can apply the lemma both forward and backward in time, obtaining an open interval $I_\tau\subset (T_1,T_3)$ so that $d_u(\partial I_\tau)=\{\delta_X\}$ and for $t\in I_\tau$
\EQ{
 d_u(t) \sim e^{\mu |t-\tau|}d_u(\tau), \quad s K^a_Z(\vec u(t))\ges (e^{\mu |t-\tau|}-C_*)d_u(\tau).
}
Then all those intervals including $I_1$ and $I_n$ above 
are mutually disjoint with length $\log(\de_X/\de_*)\lec |I_j| \lec \log(\de_X/R_*)$, and $d_u(t)\ge\de_*$ in the complement. 

\bigskip

{\bf Step 2.} Virial estimate in the case $s=-1$.

For $2-d<a<\min(d-2,\tf{d+2}{3})$, 
$G_Z^a$ in \eqref{def G} is equivalent to the energy norm 
\EQ{
 G^a_Z(\uu) := S_Z(\uu)  - \tf{2}{d+2-a}K_Z^a(\uu) \ge c_{a,d} \|\vec u\|_\cH^2,
}
for some constant $c_{a,d}>0$ depending on $a,d$. 
Then we can bound $K_Z^a$ from above 
\EQ{
 K_Z^a(\uu) \le -\tf{d+2-a}{2}[c_{a,d}\|\vec u\|_\cH^2 - S_S(Q)-\e^2]}
for all $\vec u\in\cH^\e$. Combined with \eqref{K bd neg}, it implies $K^a_Z(\vec u) \lec -\ka_V(\de_*)(\|\vec u\|_\cH^2+1)$ 
for $|a|<1$ and $t\in I'$, where the implicit constant depends on $a$.  
To fix the constants, we now choose $a=0$. Then 
\EQ{ \label{K<0 est}
 S_Z(\vec u)<S_S(Q)+\e_V(\de_*)^2,\ d_Q(\vec u)\ge \de_* \implies K^0_Z(\vec u) \lec -\ka_V(\de_*)(\|\vec u\|_\cH^2+1).}

To localize the virial identity in this case, we choose $\chi=\y(\log(r/m))$ for some $\y\in C^\I(\R)$ and $m,\ell\gg 1$ depending on $\de_*>0$, satisfying 
\EQ{
 \y' \le 0\le\y\le 1, \pq \y(\ro)=\CAS{1 &(\ro\le 2),\\ 0 &(\ro \ge \ell)},
 \pq |\y'|+|\y''|+|\y'''|\lec 1/\ell \ll \ka_V(\de_*).}
We have from \eqref{vir0} 
\EQ{
 \p_t V_\chi^0 (\tilde \uu) \pt=4K_Z^0-\LR{4|u_r|^2+2u_{d1}^2|\chi^C-\chi_\ro}
 \prq- \LR{\tf{|u|^2}{r^2}|(\p_\ro^*+2)\p_\ro^*\p_\ro \chi} +\tf12\LR{\z^2|(\p_\ro-d)\up_\ro} 
 \prq-\LR{|u|^4|\p_\ro^*\chi^C+\up^C} +\LR{|u|^2n|\up^C}-\LR{|u|^2\z|\up_\ro},}
with $\up:=(1-\p_\ro)\chi$, $\zeta=x\cdot \bb/|x|^2$, where we expanded $u_{d1}=n-|u|^2$ in the term of $|u|^2u_{d1}$. 

In the region \eqref{K<0 est}, we may discard the leading quadratic terms of $|u_r|,u_{d1}^2$ since $\chi^C-\chi_\rho>0$. 
Since the cut-off of $|u/r|^2$ is supported on $r>2m$, 
that term is bounded by $m^{-2}\|u\|_2^2 \lec m^{-2}$ 
(the assumption that $\vec u$ is once close to $\vec Q$ implies $M(u)\sim M(Q)\sim 1$). 
Similarly we can dispose of the $|u|^4$ term by the radial Sobolev 
$H^1_r\subset r^{-1}L^\I$
\EQ{
 |\LR{|u|^4|\p_\ro^*\chi^C+\up^C}|
 \lec \|u\|_{L^4(r>2m)}^4 \lec m^{-2}\|u\|_2^3\|\na u\|_2 \lec m^{-2}\|\vec u\|_\cH,}
and also 
\EQ{
 |\LR{|u|^2n|\up^C}-\LR{|u|^2\z|\up_\ro}|
 \lec \|u\|_{L^4(r>2m)}^2\|n\|_2 \lec m^{-1}\|\vec u\|_\cH^2,}
where we used the Hardy inequality $\|\z\|_2\lec\|\p_\ro^*\z\|_2=\|n\|_2$. Indeed,
\EQ{
 \pt\int_0^\I |\fy|^2 r^{d-1}dr = \int_0^\I |r^d\fy|^2 r^{-d-1}dr
 = [|r^d\fy|^2\tf{r^{-d}}{-d}]_{+0}^\I - \int 2\re \ba{r^d\fy}(\p_r r^d\fy)\tf{r^{-d}}{d} dr
 \pr\le \lim_{r\to+0}\tf{1}{d}r^d|\fy(r\te)|^2
 +\tf{2}{d}\sqrt{\int_0^\I |\fy|^2 r^{d-1}dr\int_0^\I |r^{1-d}\p_rr^d\fy|^2 r^{d-1} dr}.}
Since the boundary term is zero, we thus obtain, for any $\fy\in L^2(\R^d)$ with $\p_\ro\fy\in L^2(\R^d)$, 
\EQ{
 \int_{\R^d} |\fy|^2 dx \le (\tf{2}{d})^2 \int_{\R^d} |\p_\ro^*\fy|^2 dx.}
For the term of $\z^2$, we use the Hardy inequality with the smallness of the derivatives of $\y$: 
\EQ{
 |\LR{\z^2|(\p_\ro-d)\up_\ro}| \lec \|\z\|_2^2\|(\p_\ro-d)\up_\ro\|_\I
 \lec \|\vec u\|_\cH^2/K.}
Thus they are all absorbed by the variational estimate \eqref{K<0 est} by taking $m$ and $K$ sufficiently large, namely 
\EQ{ \label{m cond-B}
 1/m+1/\ell \ll \ka_V(\de_*).}
Then we have, at all $t\in I'$,
\EQ{\label{eq:monovirial}
\p_t V_\chi^{0}(\ti u) \lec -\ka_V(\de_*).
}

Now suppose for contradiction that both (1) and (2) fail. Then we get from Step 1 that $T_3<T^*$ and 
\EQ{ \label{vir in B}
 V_\chi^{0}(\ti u)(T_1)-V_\chi^{0}(\ti u)(T_3) 
 \gec \sum_{j=1}^n \int_{I_j} (e^{\mu |t-t_j|}-C_*)d_u(t_j)dt + \int_{I'} \ka_V(\de_*)dt \gec \delta_X,}
where the exponential integrals on $I_j$ are positive and $\sim\de_X$, 
provided that $\tf{\inf d_u(I_j)}{\sup d_u(I_j)} \le \de_*/\de_X\ll 1$ is small enough. 

On the other hand, since $d_u(t)=R_*$ at $t=T_1,T_3$ and $Q$ has exponential decay at infinity, 
we get for $t=T_1,T_3$ that
\EQ{ 
 |V_\chi^a(\ti u)(t)|=|\LR{-iu|B_0^\chi u}+\tf{1}{\al}\LR{\bb|B_a^\chi\p_0 \bb} |\lec R_*+m e^\ell R_*^2 \sim R_*,
}
provided that $R_*\le e^{-\ell}/m$, then we get a contradiction between \eqref{vir in B} and \eqref{small R*-bup}, 
thus finishing the proof in the case of $\fS(\uu(T_X))=-1$. 
The largeness requirement \eqref{m cond-B} on $m,\ell$ is translated to the smallness of $R_*$: 
\EQ{ \label{small R*-bup}
 R_* \le e^{-C/\ka_V(\de_*)},}
for a large constant $C>0$. 
Note that this exponential smallness is needed to treat the cut-off error of the wave component, so it differs from the NLS case \cite{NS-NLS}. It is somewhat natural as the wave part is only an obstruction for the virial-blowup argument, and we have only the homogeneous norm ($\|u_w\|_{L^2_x}$) for the wave component.  

\bigskip

{\bf Step 3.} Virial estimate in the case $s=1$.

In the region $K>0$, to apply the virial is much more delicate. First we do not have uniform lower bound similar to \eqref{eq:monovirial}, as it goes to $0$ when the solution is scattering. Second, the cut-off quadratic term has negative sign, which is opposite to $K$, so we can not simply drop them. We need to use the cut-off variational estimate given in Lemma \ref{lem:varcut} to absorb all the cut-off remainder terms. 
The situation is similar to NLS \cite{NS-NLS}, but it is even harder because of the cubic nonlinear energy 
$|u|^2n$, which has slower order of decay in terms of the main quadratic energy, 
than the NLS energy $|u|^4$. 

In particular, the simple cut-off weight $1/(1+r)$ in \cite{NS-NLS} does not seems to work, 
because the cut-off error of $|u|^2n$ becomes bigger than the main quadratic part,  
at least for general radial functions. 
It is also because of the degeneration of quadratic term for the weight $1/|x|$, 
which is common among the Morawetz-type estimates. 
Hence in order to maximize the use of \eqref{eq:varcut}, we need to design carefully the weight $\chi$. 
It should be asymptotic to $1/|x|$, but the difference from $1/|x|$ plays a crucial role. 

Now we choose a specific $\chi$ for the Zakharov system. 
For any $\mu(r)\ge 0$, a non-negative solution $\chi(r)$ of 
\EQ{
 (1+\p_\ro)\chi = \mu^2}
is given by $\chi(r) = \tf 1r\int_0^r \mu^2(s)ds \ge 0$. 
If $\mu$ is bounded and decreasing, then so is $\chi$, 
and $\chi\sim 1/r$ as $r\to\I$ iff $\int_0^\I \mu^2(r)dr\in(0,\I)$.  
For explicit computation, we choose specifically  
\EQ{ \label{est chi-mu}
 \pt \mu:=(1+r)^{-1/2-q} \sim \chi^{1/2+q}, 
 \pq \chi:=\tf1r\int_0^r (1+s)^{-1-2q}ds =\tf{1-(1+r)^{-2q}}{2q r}\sim \tf{1}{1+r},
 \pr \chi-\mu^2=\tf1r\int_0^r[(1+s)^{-1-2q}-(1+r)^{-1-2q}] ds
 \prQ=(1+2q) \tf1r\int_0^r \tf{t}{(1+t)^{2+2q}}dt  \ge (\tf12+q)\tf{r}{(1+r)^{2+2q}} = -\tf12\p_\ro(\mu^2),
}
for a fixed $q\in(0,1/2]$, whose smallness will be specified below. 
$q=1/6$ is enough, but $q=1/2$ corresponding to $\chi=\tf{1}{1+r}$ seems too large for the following argument to work.  
With the specific choice of $\chi$ and $a=0$, we have $\up=(d-2-\p_\ro)\chi$ and 
\EQ{
 \p_t V_\chi^0 \pt= 4K_{\mu,\chi_2,0}^0  + 2\LR{u_{d1}^2|\mu^2-\chi_2^2} +\tf12\LR{\z^2|(\p_\ro-d)\up_\ro}
 \prq-\LR{\tf{|u|^2}{r^2}|(\p_\ro^*+2)\p_\ro\p_\ro^*\chi+2\p_\ro(2+\p_\ro^*)\mu^2+4|\mu_\ro|^2}
 \prq+\LR{|u|^4|d\mu^4-(d+\p_\ro)\chi} -\LR{|u|^2u_{d1}|\up-\mu^2\chi_2}-\LR{|u|^2\z|\up_\ro},}
where we should choose $\chi_2<\mu$ (later), in order for the remaining $u_{d1}^2$ term to dominate the $\z^2$ term.  
The coefficient of $|u/r|^2$ is in general (using $(1+\p_\ro)\chi=\mu^2)$
\EQ{
 \pt(d-3)\p_\ro^*\p_\ro\chi-(4+\p_\ro^*)\p_\ro\mu^2-4|\mu_\ro|^2 
 \pr= -2(\mu\mu_\ro+|\mu_\ro|^2-\mu\mu_{\ro\ro})+(d-3)\p_\ro(\p_\ro^*\chi+\mu^2).}
For $\mu=(1+r)^{-1/2-q}$ and $d=3$, it is equal to
\EQ{
 (1+2q)(\tf{r}{1+r})^2\mu^2.} 
The coefficient of $\z^2$ term is estimated 
\EQ{
 (\p_\ro-3)\up_\ro \pt= (\p_\ro-3)\p_\ro(1-\p_\ro) \chi = [8-(8-5\p_\ro+\p_\ro^2)(1+\p_\ro)]\chi 
 \pr\ge (\p_\ro-\p_\ro^2)(\mu^2) 
 \pn=-(1+2q)(2+2q)(\tf{r}{1+r}\mu)^2, }
where we used \eqref{est chi-mu}. 
The coefficients of $|u|^4$, $|u|^2u_{d1}$ and $|u|^2\z$ are all bounded by
\EQ{
 \pt |3\mu^4-(3+\p_\ro)\chi| +  |\up-\mu^2\chi_2| + |\up_\ro| \lec \tf{r}{(1+r)^2},}
provided that $\chi_2=1+O(r)$ as $r\to+0$ and $\chi_2\lec 1$. 
Thus we obtain, using Lemma \ref{lem:varcut} after the rescaling $\chi\mapsto \chi(\tf{r}{m})$ 
(and also for $\mu,\chi_2$ in the same way), 
\EQ{
 \p_t V_\chi^0  \pt\ge 4\ka_\chi(\de_*)(\|\mu\na u\|_2^2+\|\chi_2 u_{d1}\|_2^2)+ 2\LR{u_{d1}^2|\mu^2-\chi_2^2}
  \prq+(1+2q)\LR{|\tf{u}{r}|^2|(\tf{r/m}{1+r/m}\mu)^2}
   -(1+2q)(1+q)\LR{\z^2|(\tf{r/m}{1+r/m}\mu)^2}
  \prq-C\LR{|u|^4+|u|^2(|u_{d1}|+|\z|)|\tf{r/m}{(1+r/m)^2}},}
for some absolute constant $C>0$. 
We may check the condition of the lemma for $\chi$ with $B=2$ and $\te=1$, 
using the equations $\chi_\ro=\mu^2-\chi\le 0$ and \eqref{est chi-mu}. 
Indeed, we have $|\chi_{\ro\ro}|\le|\chi_\ro|\le\min(r,\chi)$. 
Denoting $\La_m:=\tf{r/m}{(1+r/m)^2}$ and noting that
\EQ{
\Lambda_1^{2q} \sim \min(r^{2q},r^{-2q}) 
 \les (r^{2q}+r) \mu(r)^2,
}
then using \eqref{wei-L4-m}, we have for $m\ge 1$
\EQ{
 \pt \LR{|u|^4|\La_m^{2q}} \lec m^{-2q}\|u\|_2^2 \|\mu u_r\|_2^2, 
 \pr \LR{|u|^2(|u_{d1}|+|\z|)|\La_m} \lec \sqrt{\LR{|u|^4|\La_m^{2q}} \LR{u_{d1}^2+\z^2|\La_m^{2-2q}}},}
which are absorbed by the leading quadratic part for large $m$, provided that 
\EQ{ \label{Holder wei}
 \La_m^{2-2q} \lec \chi_2^2, \pq \LR{\z^2|\La_m^{2-2q}} \lec \LR{n^2|\La_m^{2-2q}}\lec \LR{u_{d1}^2|\chi_2^2}+\LR{|u|^4|\La_m^{2-2q}}, }
where the second inequality follows from $n=\p_\ro^*\z$ and the Hardy inequality 
(see below), and the last inequality follows from the others and $n=u_{d1}+|u|^2$. 
More precisely, the largeness requirement on $m$ is 
\EQ{ \label{m cond-S}
 m\ge m_\chi(\de_*), \pq m^{-2q} \ll \ka_\chi(\de_*).}

For the remaining $\z^2$ term, it suffices to have  
the Hardy inequality (at $m=1$)
\EQ{ \label{hardy des}
 (1+2q)(1+q)\LR{\z^2|(\tf{r}{1+r}\mu)^2} \le 2\LR{|\p_\ro^*\z|^2|\mu^2-\chi_2^2}, \pq \mu^2-\chi_2^2 \lec \La_1^{2q}.} 
To see the constant of the Hardy inequality, a direct computation yields
\EQ{
 \LR{\z^2|f}  = \LR{(r^3\z)^2|r^{-4}f}_{(0,\I)} 
 \pt= [-(r^3\z)^2 F]_0^\I - 2\LR{r^5\z\p_\ro^*\z| F}_{(0,\I)} 
 \pr\le 2\sqrt{\LR{\z^2|f} \LR{|\p_\ro^*\z|^2|r^6F^2/f}},} 
where $F(r):=\int_r^\I s^{-4}f(s)ds$ for general $f\ge 0$. 
In particular, $f:=\La_m^{2-2q}$ yields the second inequality in \eqref{Holder wei}. 
For \eqref{hardy des}, we choose 
\EQ{
 f(r):=\tf{r}{1+r}\mu^2, \pq 
 F(r)=\int_r^\I s^{-3}(1+s)^{-2-2q}ds \le \tf12 r^{-3}f(r),}
and so we obtain, for all $q>0$ (actually $q=0$ is not critical here),  
\EQ{ \label{hardy-z}
 \LR{\z^2|\tf{r}{1+r}\mu^2}  \le \LR{|\p_\ro^*\z|^2|\tf{r}{1+r}\mu^2}. }
Now we choose $\chi_2\ge 0$ such that 
\EQ{
 \chi_2^2 = \mu^2 - (1-q)\tf{r}{1+r}\mu^2.}
Requiring $q\le 1/4$, we have 
\EQ{
 \mu^2-\chi_2^2 \le \La_1, 
 \pq \chi_2^2/q \ge \mu^2 \ge (1+r)^{-1-2q} \ge \La_1^{2-2q}.}
In order to get \eqref{hardy des} from \eqref{hardy-z}, we need 
\EQ{
 (1+2q)(1+q) \le 2(1-q),}
which holds for small $q>0$ (for example $q=1/6$). 

Therefore, we have at all $t\in I'$
\EQ{
\p_t V_\chi^0\geq \ka_\chi(\de_*) (\|\mu\na u\|_2^2+\|\chi_2 u_{d1}\|_2^2)\geq 0.
}
Hence we get from Step 1 that if both (1) and (2) fail then
\EQ{ \label{vir in S}
 V_\chi^{0}(\ti u)(T_3)-V_\chi^{0}(\ti u)(T_1) \gec   \sum_{j=1}^n \int_{I_j} (e^{\mu |t-t_j|}-C_*)d_u(t) dt \gec \de_X,
}
where the exponential integrals are positive as in the previous case $s=-1$ if $\de_*/\de_X\ll 1$ is small enough. 
On the other hand, as in Step 2, we get for $t=T_1,T_3$
\EQ{ \label{small R*-scat}
|V_\chi^a(\ti u)(t)|=|\LR{-iu|B_0^\chi u}+\tf{1}{\al}\LR{\bb|B_a^\chi\p_0 \bb} |\lec R_*+mR_*^2 \sim R_*,}
provided that $R_*\le 1/m$, contradicting \eqref{vir in S}. 
The largeness condition \eqref{m cond-S} on $m$ is translated to the smallness of $R_*$: 
\EQ{
 R_* \le 1/m_\chi(\de_*), \pq R_*\ll \ka_\chi(\de_*)^{1/(2q)}.}
Note that the high power $1/(2q)$ comes from the new cut-off function to handle the lower degree interactions, so it differs from the NLS case \cite{NS-NLS}, where $q=1/2$. 
Thus we complete the proof in both cases. 
\end{proof}

\section{Scattering and growup away from the ground state} \label{s:away}

In this section we prove the scattering-growup dichotomy for solutions staying away from the ground states. 
Actually the growup is immediate from the virial estimate, so the main task is to prove the scattering, 
combining the ideas and estimates in \cite{Zak-KM} and \cite{NS-NLS}. The main ingredients in \cite{Zak-KM} are the combinations of the method of normal form transform, radial generalized Strichartz estimates and Kenig-Merle's concentration compactness/rigidity method (\cite{KM}). 

\begin{prop}\label{prop:sc}
There exist constants $0<\e_*<R_*<\de_X$ such that the following holds. Let $\uu\in C([0,T^*),\HH)$ be a forward maximal solution of \eqref{eq:Zak2} satisfying $S_Z(\vec u)<S_S(Q)+\e_*^2$ and $d_Q(\vec u(t))>R_*$ for all $t\in[0,T^*)$. 
If $\fS(\uu(0))=+1$ then $T^*=\I$ and $\uu$ scatters as $t\to \infty$. 
If $\fS(\uu(0))=-1$ then $\limsup_{t\to T^*-0}\|\uu(t)\|_\HH=\I$. 
We have a similar result for $t\le 0$. 
\end{prop}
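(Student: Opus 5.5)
Since $d_Q(\uu(t))>R_*\ge\de_*$ for all $t\in[0,T^*)$, the sign $\fS(\uu(t))$ is constant by continuity and Lemma \ref{lem:signS}. Lemma \ref{lem:epV} then applies uniformly in time with $\de=R_*$, once $\e_*\le\e_V(R_*)$. I would treat the two signs separately.

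\emph{Growup case $\fS=-1$.} I argue by contradiction and assume $\sup_{[0,T^*)}\|\uu\|_\HH<\I$, which by local wellposedness forces $T^*=\I$. Lemma \ref{lem:epV}(2) with $a=0$ gives the upgraded bound $K_Z^0(\uu)\lec-\ka_V(R_*)(\|\uu\|_\HH^2+1)$, exactly as in \eqref{K<0 est}. I then take the same localized virial $V_\chi^0$ as in Step 2 of Theorem \ref{thm:onepass}, with $\chi=\y(\log(r/m))$. The cut-off remainders are bounded by $(m^{-1}+\ell^{-1})(\|\uu\|_\HH^2+1)$, using the radial Sobolev and Hardy inequalities, so choosing $m,\ell$ large gives $\p_tV_\chi^0\le-c<0$ for all $t\ge0$. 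On the other hand $|V_\chi^0|\lec m e^\ell\|\uu\|_\HH^2$ stays bounded, which contradicts $V_\chi^0\to-\I$. Hence $\limsup\|\uu\|_\HH=\I$.

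\emph{Scattering case $\fS=+1$.} Lemma \ref{lem:signS} gives $\|\uu(t)\|_\HH<B_H$, so $T^*=\I$. For scattering I would run the Kenig--Merle scheme of \cite{Zak-KM}: the normal form transform combined with radial Strichartz estimates gives small-data scattering and a long-time perturbation lemma. Take the critical level
\[ S_c:=\sup\{S:\ \text{all solutions with } S_Z<S,\ \inf_t d_Q>R_*,\ \fS=+1 \text{ scatter}\}, \]
and suppose $S_c<S_S(Q)+\e_*^2$. The radial profile decomposition (in the normal-form variables, as in \cite{Zak-KM}) extracts a critical element $\uu_c$ whose trajectory is precompact in $\HH$ up to time translation. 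Here the one-pass structure is needed: every nonlinear profile must again stay in the region $d_Q>R_*$ with $\fS=+1$, or else lie strictly below the ground state, where \cite{Zak-KM} applies directly. I would show this using the orthogonality of the energy split $S_Z(\uu)\approx\sum_j S_Z(\text{profile}_j)$ together with $K>0$, which leaves every profile with action $\le S_S(Q)+\e_*^2-c$ unless there is a single profile. In the single-profile case the critical element inherits $d_Q>R_*$, using that the distance is continuous under strong convergence.

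For rigidity I use compactness plus Lemma \ref{lem:epV}(1): $K_Z^0(\uu_c)\ge\min(\ka_V,c_K\|\uu_c\|^2)\ge c>0$ uniformly. The localized virial $V^0_\chi$ with a cut-off at radius $m$ has remainders that are uniformly small by precompactness, so $\p_tV\ge c/2$ while $|V|\lec m$. This contradicts global existence, so $\uu_c$ does not exist.

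The main obstacle is the compactness step. One must verify that the profile decomposition respects the constraint $d_Q>R_*$ and the sign $\fS=+1$ slightly above the threshold, and that the normal form correction terms stay controlled there. I would handle this by noting that a profile with action close to the full level must carry essentially all of the $\HH$ norm, so its distance to $\cQ_1$ is close to that of $\uu$. All remaining profiles then have action below $S_S(Q)-c$, where they scatter by \cite{Zak-KM}.
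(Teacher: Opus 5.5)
Your growup argument is correct. Taking $\de=R_*$ in Lemma~\ref{lem:epV}, i.e.\ $\e_*\le\e_V(R_*)$, is harmless because $\e_*$ is fixed last, and it legitimately bypasses the hyperbolic intervals the paper borrows from Step~1 of Theorem~\ref{thm:onepass}. This works because, unlike the one-pass theorem, the proposition needs no smallness of the virial at the endpoints.

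The scattering part, however, has two genuine gaps, both in the step you flagged.

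\emph{First gap: the sign of $K$ on profiles.} Orthogonality of $S_Z$ together with $K(u_s)>0$ does not control the sign of $K$ on the individual profiles. $K$ splits orthogonally but is not coercive, so $K(u_{ns}(0))>0$ still allows a profile with $K<0$. Such a profile lies on the growup side; if its action is below $S_S(Q)$, it grows up by \cite{Zak-KM}. Then neither ``profiles below $S_S(Q)$ scatter by \cite{Zak-KM}'' nor $\fS=+1$ for the main profile follows.

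Below threshold this issue is automatic, since $G(\fy):=S_S(\fy)-\tfrac13K(\fy)=\tfrac16\|\na\fy\|_2^2+\tfrac12\|\fy\|_2^2\le S_S(\fy)<S_S(Q)$ on $\{K\ge0\}$. Above threshold you need a quantitative margin, which the paper obtains as follows.
\begin{enumerate}
\item By Lemma~\ref{lem:small-kine scat}, a non-scattering $\uu_n$ in the class has $\inf_t\|\na u_{ns}(t)\|_2>\e_S(B_H)$.
\item Lemma~\ref{lem:epV}(1), with $6\e_*^2<\min(\ka_V(R_*),c_K\e_S(B_H)^2)$ in your normalization, then gives $K(u_{ns})>6\e_*^2$, hence $G(u_{ns}(0))<S_S(Q)-\e_*^2$.
\item $G$ is a quadratic form, so it splits orthogonally and every profile has $G<S_S(Q)$.
\item The characterization $S_S(Q)=\inf\{G(\fy)\mid\fy\ne0,\ K(\fy)\le0\}$ then forces $K\ge0$ and $S_Z\ge0$ for every profile.
\end{enumerate}
The same lower bound on $\|\na u_s\|_2$ is also what justifies your rigidity bound $K_Z^0(\uu_c)\ge c>0$. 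Lemma~\ref{lem:epV} controls $K_Z^0$ from below by $\|\na u_s\|_2^2+\tfrac12\|u_{w1}\|_2^2$, not by $\|\uu_c\|_\HH^2$.

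\emph{Second gap: putting the main profile back into the class.} Your mechanism for placing the main nonlinear profile $\mathfrak u^1$ in the class is circular. At that stage the other profiles and the remainder are only known to have total action $\le\e_*^2$, hence to be $O(\e_*)$ in $\HH$. Their vanishing comes from minimality of the critical level, and minimality can be invoked only once $\mathfrak u^1$ is known to lie in the class. So ``its distance to $\cQ_1$ is close to that of $\uu$'' yields only $d_Q(\mathfrak u^1(t))\ge R_*-O(\e_*)$, not membership in $\{d_Q\ge R_*\}$. In any case your strict condition $\inf_t d_Q>R_*$ is not preserved under limits; the class should use the closed condition $d_Q\ge R_*$, as in $\sU$.

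Even the weaker bound requires more than continuity of $d_Q$ at $t=0$. When $t_n^1\to+\I$, $\uu_n(0)$ is close to a dispersed free wave and says nothing about $d_Q$ along $\mathfrak u^1$. You must exclude $t_n^1\to-\I$, since then $\mathfrak u^1$ scatters forward. You must also compare $\uu_n(t+t_n^1)$ with $\mathfrak u^1(t)$ via the time-local perturbation Lemma~\ref{NL profile}; the global version is unavailable because $\mathfrak u^1$ is not known to scatter forward.

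The missing tool is the one you mention but never use. The paper first shows $d_Q(\mathfrak u^1(t))>2\e_*$ for all $t$, with $\fS=+1$ (Cases 2b, 3b). Then Theorem~\ref{thm:onepass}, together with Lemma~\ref{lem:trap}, gives $T'$ with $d_Q(\mathfrak u^1(t))>R_*$ for $t\ge T'$ (Cases 2a, 3a). Only then is $\mathfrak u^1(\cdot+T')\in\sU$, so that minimality forces $S_Z(\mathfrak u^1)=E^*$ and the vanishing of the other profiles and the remainder.
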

Those $\e_*,R_*$ may be regarded as the same as in Theorem \ref{thm:onepass}, 
though we add another smallness condition \eqref{small ep scat} on $\e_*$ in addition to those in the previous section. 
We assume that $R_*,\de_*$ are the same (or satisfy the same conditions) as there. 

First we dispose of the growup case $\fS(\uu(0))=-1$. 
If $T^*<\I$, then the local wellposedness implies $\|\uu(t)\|_\HH\to\I$ as $t\to T^*-0$, 
so we may assume $T^*=\I$. 
Define the intervals $I_j\subset(0,\I)$ and their complement $I'=(0,\I)\setminus\Cu I_j$ 
as in Step 1 of the proof of Theorem \ref{thm:onepass} with $T_1=0$ and $T_3=\I$. 
Then for all $T\in(0,\I)\cap I'$, we have in the same way as \eqref{vir in B}
\EQ{ \label{vir gup}
 V_\chi^0(\ti u)(T_1)-V_\chi^0(\ti u)(T) 
 \gec \sum_{I_j\subset (T_1,T)} \int_{I_j}(e^{\mu |t-t_j|}-C_*)d_u(t)dt 
  + \int_{I' \cap (T_1,T)} \ka_V(\de_*)dt,}
where the right side goes to $\I$ as $T\to\I$, while the left side is bounded by $me^\ell(\|\uu(T)\|_\HH+\|\uu(T_1)\|_\HH)^2$. 
Thus we obtain $\limsup_{t\to T^*}\|\uu(t)\|_\HH=\I$. 

In the rest of this section, we prove the scattering part of Proposition \ref{prop:sc}. 
We reproduce the notations and the resolution spaces used in \cite{Zak-KM}. Fix a small number $0<\io \ll 1$ and let
\EQ{ \label{def S}
 \pt \fX_1:=\LR{D}^{-1}L^2_t \dot B^{\fr 25-\io}_{q_s,2}, \pq \fY_1:= L^2_t \dot B^{-\fr 14-\io}_{q_w,2}, 
 \pr \tf{1}{q_s}:=\tf 3{10}-\tf\io3,
 \pq \tf{1}{q_w}:= \tf 14-\tf\io3,
 \pq S:= L^\I_t\HH \cap (\fX_1\times \fY_1),
 \pr X:=L^\I_t B^{-\fr 12-\io}_{\I,2}, \pq Y:=L^\I_t(\dot{B}^{-\fr 32-\io}_{\I,2}+\dot{B}^{-\fr 32+\io}_{\I,2}),
  \pq Z:=X\times Y.}
$S$ consists of the radially-improved Strichartz norms in $H^1\times L^2$ for the Schr\"odinger and wave equations,  
and we measure smallness of dispersed components by using $Z$. 
Those spaces are increasing in $\io>0$ and optimal as $\io\to+0$.  
For any solution $\uu\in C([0,T^*),\cH)$ of \eqref{eq:Zak2} with the maximal existence time $T^*\in(0,\I]$, 
the scattering as $t\to\I$ is equivalent to $\uu \in S([0,T^*))$ (which implies $T^*=\I$). 
The proof is similar to \cite[Lemma A.1]{Zak-KM}: 
If $\uu$ is scattering, then its $Z([T,\I))$ norm is decaying as $T\to\I$, which allows us to solve the equation 
in $S(T,T')$ uniformly for $T<T'<\I$, leading to $\uu\in S(T,\I)$. 
The other direction is obvious from the Duhamel estimate in \cite{GN}. 
Actually, for the scattering, smallness of $X(T,\I)$ norm of $u_s$ is enough in $\HH$-bounded regions, 
since all the nonlinear terms contain $u_s$. 
Then the Sobolev embedding $\dot H^1\subset \dot B^{-1/2}_{\I,2} \subset B^{-1/2-\de}_{\I,2}$ implies 
the following small-data scattering. 
\begin{lem} \label{lem:small-kine scat}
For any $B\in(0,\I)$, there exist $\e_S(B),C_S(B)>0$ such that for all initial data $\uu(0)\in\HH$ 
satisfying $\|\uu(0)\|_\HH\le B$ and $\|\na u_s(0)\|_2\le\e_S(B)$, 
the unique solution $\uu$ of \eqref{eq:Zak2} is global and scattering as $t\to\pm\I$ 
with a uniform bound $\|\uu\|_S\le C_S(B)$. 
\end{lem}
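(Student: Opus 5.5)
The plan is to treat this as a small-data result in the $Z$-type norm, using the normal-form small-data theory of \cite{GN} and \cite{Zak-KM}. As the paper notes, in $\HH$-bounded regions scattering only needs smallness of the $X$-norm of the free Schr\"odinger evolution of $u_s$, because every nonlinear term contains $u_s$.

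First I estimate the linear part. Since $\|\na u_s(0)\|_2\le\e_S$ and $\|u_s(0)\|_2\le B$, the embedding $\dot H^1\subset \dot B^{-1/2}_{\I,2}$ together with the $L^\I$ (in time) bound of $e^{-it\De}$ on $\dot H^1$ give
\[
\|e^{-it\De}u_s(0)\|_{X}\lec \|u_s(0)\|_{H^1}^{\theta}\,\|\na u_s(0)\|_2^{1-\theta}
\]
for some $\theta\in(0,1)$. The inhomogeneous part of $B^{-1/2-\io}_{\I,2}$ handles the low frequencies via the $L^2$ bound, interpolated against $\dot H^1$. Hence the free Schr\"odinger part is small in $X$, uniformly on $\R$, with size $\lec B^\theta\e_S^{1-\theta}$.

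The wave part $e^{i\al tD}u_w(0)$ is not small; it is only bounded by $B$ in $L^\I_tL^2$ and in the Strichartz part $\fY_1$. I then run the perturbative scheme of \cite{Zak-KM} in $S(\R)$. After the normal form transform, the Duhamel terms are bounded by bilinear and trilinear estimates in which each term carries at least one factor of $u_s$ measured in $X$ (or in a norm bounded by $\|u_s\|_X^\beta\|u_s\|_S^{1-\beta}$), with the remaining factors in $S$. This yields an estimate of the form
\[
\|\uu\|_S\le C(B)+C(\|u_s\|_X^{\beta}+\|u_s\|_X)\,P(\|\uu\|_S)
\]
with a polynomial $P$, together with a corresponding estimate for $\|u_s\|_X$ in terms of its free part. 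A continuity (bootstrap) argument on expanding time intervals then gives $\|u_s\|_{X}\lec B^\theta\e_S^{1-\theta}$ and $\|\uu\|_S\le C_S(B)$, provided $\e_S(B)$ is small enough. Globality follows from the a priori $S$-bound via the local wellposedness blow-up criterion, and $\uu\in S(\R)$ gives scattering in both directions by the equivalence stated just before the lemma. Time reversal symmetry handles $t\to-\I$.

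The main obstacle is the middle step. The wave component is large, so the estimate must be genuinely linear in the large wave norm and must gain smallness only from $u_s$. I would check this using the exact bilinear and normal-form estimates of \cite{Zak-KM}, specifically the version of \cite[Lemma A.1]{Zak-KM}, and verify that each term, including the boundary terms generated by the normal form, contains a factor $u_s$ that can be placed in $X$.
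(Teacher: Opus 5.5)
Your outline follows the route the paper indicates. The embedding $\dot H^1\subset\dot B^{-1/2}_{\I,2}\subset B^{-1/2-\io}_{\I,2}$ makes the free Schr\"odinger part small in $X$, and one then uses that every nonlinear term contains $u_s$. (In fact $\|e^{-it\De}u_s(0)\|_X\lec\|\na u_s(0)\|_2$ directly, since $\|P_{\le 0}f\|_\I\lec\sum_{k\le 0}2^{k/2}\|P_k\na f\|_2$, so the interpolation with the mass is unnecessary.)

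The step that fails is the continuity argument that is supposed to give $\|u_s\|_X\lec B^\theta\e_S^{1-\theta}$ for the \emph{nonlinear} $u_s$. The equation $(i\p_t-\De)u_s=u_s\re u_w$ is linear in $u_s$, and its coefficient is not small: $\|u_w\|_{\fY}\ge\|u_w(0)\|_2$ may be of size $B$. Whatever bilinear and normal-form bounds you use, the Duhamel part of $u_s$ in $X$ is at best $C(\|u_s\|_X^\be+\|u_s\|_X)P(\|\uu\|_S)$ with $\be\le 1$ and $P(\|\uu\|_S)\gec B$. The inequality $x\le C\e_S+C(x^\be+x)P$ is then satisfied by every $x\ge 0$ as soon as $CP\ge 1$, so continuity gives nothing. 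Without a priori smallness of the nonlinear $u_s$ in $X$, your $S$-estimate does not close either. Smallness of $u_s$ cannot be pushed through a Schr\"odinger flow with the large rough potential $\re u_w$ by a smallness bootstrap on expanding intervals. It needs the time-splitting built into a long-time perturbation argument, which is what the paper's one-line remark implicitly relies on.

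The missing idea is to perturb around $(0,e^{i\al tD}u_w(0))$. This is an \emph{exact} solution of \eqref{eq:Zak2}, precisely because every nonlinear term contains $u_s$, and its $S$-norm is $\lec B$ by Lemma \ref{lem:lin}. The perturbation $(e^{-it\De}u_s(0),0)$ is small in $Z$ but only bounded in $\HH$, which is exactly the situation the global perturbation lemma of \cite{Zak-KM} handles. A clean way to write it is by contradiction:
\begin{itemize}
\item Suppose $\|\uu_n(0)\|_\HH\le B$, $\|\na u_{ns}(0)\|_2\to 0$ and $\|\uu_n\|_S\to\I$ on the maximal forward interval.
\item In Lemma \ref{free prof}, linear orthogonality with the bounded multiplier $\mathrm{diag}(|\na|\LR{\na}^{-1},0)$ gives $\sum_j\|\na f^j_s\|_2^2=0$. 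So every profile is $(0,f^j_w)$, and its nonlinear profile is the free wave itself.
\item Hence $\sum_{j\le J}\np u^j_n+\vec v^{>J}_n=U(t)\uu_n(0)$, and Lemma \ref{NL profile} gives $\|\uu_n-U(t)\uu_n(0)\|_S\to 0$ along a subsequence.
\item Therefore $\limsup_n\|\uu_n\|_S\lec B$, a contradiction.
\end{itemize}
This yields $\e_S(B)$, $C_S(B)$, global existence and forward scattering; the case $t\to-\I$ follows by time reversal.
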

Using the above lemma, we impose another smallness on $\e_*$: 
\EQ{ \label{small ep scat}
  6\e_*^2< \min(\ka_V(\de_*),c_K\e_S(B_H)^2, S_S(Q)),}
where $\ka_V$, $c_K$ are given by Lemma \ref{lem:epV} and $B_H$ by Lemma \ref{lem:signS}, 
while $\de_*\in(0,\de_S)$ is the same as in the proof of Theorem \ref{thm:onepass}. 

Let $\sU$ be the collection of all solutions $\uu$ of \eqref{eq:Zak2} on $[0,\infty)$ satisfying for all $t\ge 0$
\EQ{ \label{def U}
S_Z(\vec u)<S_S(Q)+\e_*^2, \pq
d_Q(\vec u(t))\geq R_*, \pq \fS(\uu(t))=+1.
}
Lemma \ref{lem:signS} implies $\|\uu\|_\HH<B_H$ in $\sU$.
For each $E>0$, we define $N(E)$ as
\EQ{
N(E):=\sup\{\|\uu\|_{S([0,\infty))} \mid \uu \in \sU,\ S_Z(\uu)\leq E\}.
}
In \cite{Zak-KM} it was proved that $N(E)<\infty$ for $E<S_S(Q)$.  We will extend this upper bound to $S_S(Q)+\e_*^2$. 
We define
\EQ{
E^*=\sup \{E>0 \mid N(E)<\infty\}.
}
The main task is to show $E^*\ge S_S(Q)+\e_*^2$, which means all the solutions in $\sU$ are scattering\footnote{If $\sU$ is also bounded in $S([0,\I))$ then $E^*=\I$. Since $\sU$ is increasing in $\e_*>0$, it is indeed the case unless we choose the maximal possible $\e_*$, whose existence is unclear from our result.}
as $t\to\I$. 
By contradiction, we assume 
\EQ{\label{eq:Estar}
E^*=S_S(Q)+\e^2<S_S(Q)+\e_*^2
}
for some $0\leq \e<\e_*$. Then Proposition \ref{prop:sc} follows from the following lemmas.

\begin{lem}[Existence of critical element]\label{lem:existence}
Under the assumption \eqref{eq:Estar}, there exists a solution $\uu$ of \eqref{eq:Zak2} in $\sU$ satisfying 
$S_Z(\uu)=E^*$ and $\norm{\uu}_{S([0,\I)}=\infty$. We call such $\uu$ \emph{a critical element}. 
\end{lem}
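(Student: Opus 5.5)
We would follow the Kenig--Merle concentration-compactness scheme in the form of \cite{Zak-KM}, adapted to the set $\sU$ as in \cite{NS-NLS}. By \eqref{eq:Estar} and the definition of $E^*$, there is a sequence $\uu_n\in\sU$ with $S_Z(\uu_n)\to E^*$ and $\|\uu_n\|_{S([0,\I))}\to\I$. Lemma \ref{lem:signS} gives the uniform bound $\|\uu_n(0)\|_\HH<B_H$. Since $\fS=+1$ and $d_Q\ge R_*$ along the whole flow, Lemma \ref{lem:epV} (with $\e_*<\e_V(\de_*)$) gives $K(u_{n,s})\ge 0$ and a lower bound on $K^a_Z$. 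We then apply to $\uu_n(0)$ the radial linear profile decomposition for the free propagator $U(t)$ used in \cite{Zak-KM}, in its normal-form variables. Radial symmetry removes spatial translations, and the system's fixed $\al$ excludes scaling. So each profile has only a time-translation parameter $t_n^j$, and the remainder is small in the $Z$-norm.

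The next step is the Pythagorean decoupling of $S_Z$, of $K$, and of the mass (via the radial Sobolev compactness used in \cite{Zak-KM}). It shows that each nonlinear profile $\uu^j$ satisfies $S_Z(\uu^j)\le E^*$. We then establish two facts. (i) Each profile has $\fS=+1$ and stays in $\sU$. (ii) If at least two profiles are nontrivial, or if one profile carries action strictly below $E^*$, then every profile has action $<E^*-c$. In that case $N<\I$ bounds their $S$-norms, and the long-time perturbation lemma of \cite{Zak-KM} (normal form plus Strichartz) bounds $\|\uu_n\|_S$, which is a contradiction. Hence there is exactly one profile, and its action equals $E^*$. Profiles with $t_n^j\to\pm\I$ are handled by wave operators. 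For $t_n\to+\I$, the small-data Lemma \ref{lem:small-kine scat} applies after the $Z$-smallness, which contradicts the blowup of the $S$-norm. The resulting nonlinear solution $\uu$ then has $\|\uu\|_{S([0,\I))}=\I$ and $S_Z(\uu)=E^*$.

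The main obstacle is (i): showing that the limit profile, or each profile, actually lies in $\sU$, i.e.\ that $d_Q(\uu(t))\ge R_*$ and $\fS(\uu(t))=+1$ for all $t\ge0$. Weak limits could approach $\cQ_1$ even though each $\uu_n$ stays away. Our first attempt would use the one-pass Theorem \ref{thm:onepass}. Strong convergence $\uu_n(t_n^j+t)\to\uu(t)$ in $\HH$ on compact time sets (single profile, small remainder) transfers $d_Q\ge R_*$ at each fixed $t$, because $d_Q$ is continuous. For the sign, we note that $d_Q\ge R_*>\de$ region, $K(u_s)\ge0$ is closed, and Lemma \ref{lem:epV} makes the sign locally constant. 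In the multi-profile case, decoupling gives each profile action $\le E^*-c<S_S(Q)$ when nontrivial. The result of \cite{Zak-KM} below the ground state then applies directly with $K\ge0$: by \eqref{small ep scat}, $\e_*^2<S_S(Q)/6$ ensures that the actions split below the threshold, and the sign of $K$ passes to profiles via the decoupling of $\|\na u_s\|_2^2$ and $\|u_s\|_4^4$. We would check carefully that the remainder's decoupling is compatible with the normal-form transform; this is routine from \cite{Zak-KM}.
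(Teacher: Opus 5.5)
Your outline (minimizing sequence, radial profile decomposition with time shifts only, one non-scattering profile, then membership in $\sU$) matches the paper's. However, the variational step that makes it work is missing, and two of your claims fail as stated.

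\textbf{Sign of $K$ for the profiles.} With $\e_*<\e_V(\de_*)$, Lemma \ref{lem:epV} applies only where $d_Q\ge\de_*$. In the band $R_*\le d_Q<\de_*$, the condition $\fS=+1$ only says $\la_1<0$ and gives no lower bound on $K(u_s)$. So you do not have $K(u_{ns})\ge0$ along the whole flow.

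More seriously, even $K(u_{ns}(0))\ge 0$ would not pass to the profiles. $K$ is indefinite, so its decoupling says nothing about the signs of the $K(\np u^j_{ns}(0))$. Without those signs you cannot get $S_Z(\np u^j)\ge 0$, and therefore not $S_Z(\np u^j)\le E^*$ either.

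The paper handles this as follows:
\begin{itemize}
\item It shifts time so that $d_Q(\uu_n(0))\ge\de_X$. This uses Lemma \ref{lem:trap}: trapping is excluded because $d_Q\ge R_*>2\e_*$.
\item Lemma \ref{lem:small-kine scat}, with the bound $B_H$, forces $\|\na u_{ns}(t)\|_2>\e_S(B_H)$ for all $t\ge0$.
\item Lemma \ref{lem:epV} together with \eqref{small ep scat} then gives the quantitative bound $K(u_{ns}(0))>6\e_*^2$. This, not action splitting, is the role of \eqref{small ep scat}.
\item The positive functional $G(\fy)=S_S(\fy)-\tf13K(\fy)=\tf16\|\na\fy\|_2^2+\tf12\|\fy\|_2^2$ then satisfies $G(u_{ns}(0))<S_S(Q)-\e_*^2$, because the bound on $K$ absorbs the action excess.
\item $G$ decouples into nonnegative pieces. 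The characterization $S_S(Q)=\inf\{G(\fy)\mid\fy\ne0,\ K(\fy)\le0\}$ then yields $K(\np u^j_{ns}(0))\ge0$, hence $0\le S_Z(\np u^j)\le E^*$ for every $j$.
\end{itemize}

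\textbf{The multi-profile case.} Your claim that every nontrivial profile lies below $S_S(Q)$ is false. Since $E^*=S_S(Q)+\e^2$, nothing prevents one profile from having action in $[S_S(Q),E^*)$ while the others are tiny. The bound $\e_*^2<S_S(Q)/6$ only ensures that at most one profile reaches $S_S(Q)$. The correct dichotomy is:
\begin{itemize}
\item either all profiles are below $S_S(Q)$, and then the below-threshold result of \cite{Zak-KM} (applicable since $K\ge0$) together with Lemma \ref{NL profile} bounds $\|\uu_n\|_S$, a contradiction;
\item or exactly one profile $\np u^1$ has action at least $S_S(Q)$, and the others have action $\le\e^2$ and scatter as small solutions.
\end{itemize}

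\textbf{Membership of $\np u^1$ in $\sU$.} To use $N<\I$ on $\np u^1$, you must first place a time translate of it in $\sU$. Your transfer of $d_Q\ge R_*$ and $\fS=+1$ presupposes $\uu_n(t+t_n^1)\to\np u^1(t)$ strongly in $\HH$. But the remainder is only small in $Z$, and this difference cannot vanish in $\HH$ while the other pieces carry positive action. So the transfer is unavailable exactly when the action of $\np u^1$ is below $E^*$, which is the case you need to rule out.

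The paper instead argues by cases on $t^1_\I$:
\begin{itemize}
\item $t^1_\I=-\I$ means forward scattering, so this case is excluded.
\item For $t^1_\I\in\{0,\I\}$, entry of $\np u^1$ into $\{d_Q\le2\e_*\}$ is ruled out by Lemma \ref{NL profile} on a finite interval, against $d_Q(\uu_n)\ge R_*>2\e_*$. This leaves the margin $R_*-2\e_*$.
\item Otherwise $\fS(\np u^1)=+1$ throughout. This follows either from scattering at $t=-\I$, or from $K(\np u^1_s(0))\ge0$ with $d_Q(\np u^1(0))>\de_*$, which uses the normalization $d_Q(\uu_n(0))\ge\de_X$.
\item The one-pass Theorem \ref{thm:onepass} then gives $d_Q(\np u^1(t))>R_*$ for $t>T'$, so $\np u^1(\cdot+T')\in\sU$.
\end{itemize}

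Note also two smaller points. The backward-scattering case $t^1_\I=\I$ is a genuine case that must be kept, not excluded. And Lemma \ref{lem:small-kine scat} requires small $\|\na u_s(0)\|_2$, not small $Z$-norm.
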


\begin{lem}[Precompactness of critical elements]\label{lem:precomp}
If $\uu$ is a critical element of the above lemma, then $\{\uu(t)\mid t\ge 0\}$ is precompact in $\cH$. 
\end{lem}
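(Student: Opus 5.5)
The plan is the standard Kenig--Merle rigidity-type compactness argument adapted to the Zakharov system, following \cite{Zak-KM} and \cite{NS-NLS}. Take any sequence $t_n\ge 0$; we must extract a subsequence along which $\uu(t_n)$ converges strongly in $\HH$. If $t_n$ is bounded this is immediate from continuity, so assume $t_n\to\I$.

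\textbf{Profile decomposition.} Since $\|\uu(t_n)\|_\HH<B_H$ by Lemma \ref{lem:signS}, we apply the radial linear profile decomposition for the free propagator $U(t)$ used in \cite{Zak-KM}. This writes
\[
U(-\cdot)\uu(t_n)=\sum_{j<J} U(-t^j_n)\vec\fy^j+\vec w^J_n ,
\]
with vanishing remainder in the dispersive norm $Z$ and asymptotic orthogonality of the quadratic energies. Combined with the normal form transform of \cite{Zak-KM}, which is needed to handle the quadratic resonant interactions, this lets us build nonlinear profiles $\uu^j$ and a long-time perturbation lemma in $S$. Under the Pythagorean expansion of $S_Z$ (the cubic term decouples by radial compactness), one of the following must hold.

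\textbf{Case analysis.}
\begin{itemize}
\item If there are at least two nontrivial profiles, or one profile plus a non-negligible remainder, then each nonlinear profile has action strictly below $E^*$. Each such profile must still lie in $\sU$. The condition $d_Q\ge R_*$ is automatic once $S_Z\le E^*-\eta$ is far enough below the threshold; more precisely, smallness of each piece relative to $S_S(Q)$ forces $K>0$ through the variational structure of Lemma \ref{lem:var}, so $\fS=+1$. Hence each profile has finite $S$ norm by the definition of $E^*$, and the perturbation lemma gives $\|\uu\|_{S([t_n,\I))}<\I$, a contradiction.
\item If there is a single profile with $t^1_n\to\pm\I$, then either it scatters backward, which contradicts $\|\uu\|_{S([t_n,\I))}=\I$ via small data, or forward, which contradicts $\|\uu\|_{S([0,t_n])}=\I$.
\end{itemize}
What remains is a single profile with $t^1_n$ bounded and remainder $\vec w_n\to0$ in $\HH$, which gives the required precompactness.

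\textbf{Main obstacle.} The key difficulty is verifying that the decomposed profiles inherit membership in $\sU$, namely $d_Q\ge R_*$ and $\fS=+1$ for all forward times, since we are above the ground state. The trick is as follows:
\begin{itemize}
\item If the decomposition is nontrivial, every profile has $S_Z<S_S(Q)-c$. This uses $E^*<S_S(Q)+\e_*^2$ together with the lower bound on the action of the other piece, coming from $K\ge0$ and \eqref{small ep scat}: a non-small piece costs at least about $\e_S(B_H)^2 c_K$, which exceeds $6\e_*^2$.
\item Profiles that are strictly below the ground state are then handled entirely by the sub-threshold theory of \cite{Zak-KM}, where $\fS=\sign K$ is preserved and $N(E)<\I$.
\item The sign of $K$ for each profile follows from the decoupling of $K$ and Lemma \ref{lem:var}.
\end{itemize}
I expect this energy bookkeeping, together with the normal-form version of the nonlinear profile approximation (already in \cite{Zak-KM}), to be the bulk of the work.
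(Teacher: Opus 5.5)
Your skeleton is the standard one, and it matches the paper, which reruns the proof of Lemma \ref{lem:existence} on $\uu(\cdot+\tau_n)$ (writing $\tau_n$ for your $t_n$), finds a single profile with $t^1_\I\in\{0,\I\}$, and rules out $t^1_\I=\I$. But the step you call the ``trick'' is false, and it is exactly the step that carries the argument above the ground state. A nontrivial decomposition does not force every profile below $S_S(Q)-c$. Nothing prevents the other profiles, or the remainder, from being arbitrarily small in $\HH$. So when $E^*=S_S(Q)+\e^2$ with $\e>0$, the dominant profile $\np u^1$ can have $S_S(Q)\le S_Z(\np u^1)<E^*$. For such a profile you cannot invoke \cite{Zak-KM} or the definition of $E^*$ until you know that a time translate of $\np u^1$ lies in $\sU$. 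Your claim that $d_Q\ge R_*$ is ``automatic'' has no justification in that range.

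You have also misread \eqref{small ep scat}. The bound $\min(\ka_V(\de_*),c_K\e_S(B_H)^2)>6\e_*^2$ bounds $K(u_s(\tau_n))$ from below for the whole solution at times where $d_Q(\uu(\tau_n))\ge\de_*$ (arranged by a bounded shift of $\tau_n$). It is not a lower bound on the action of some other piece. Its role is to give $G(u_s(\tau_n))<S_S(Q)-\e_*^2$ with $G=S_S-\tf13K=\tf16\|\na\cdot\|_2^2+\tf12\|\cdot\|_2^2$. Since $G$ decouples, every profile inherits $G<S_S(Q)-\e_*^2/2$, hence $K\ge0$ and $0\le S_Z\le E^*$. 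Decoupling of $K$ alone cannot give the sign of each piece.

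The missing ingredient is the argument that places $\np u^1$ in $\sU$.
\begin{enumerate}
\item Suppose $d_Q(\np u^1(t_*))\le 2\e_*$ at some forward time $t_*$. Apply Lemma \ref{NL profile} on the finite interval up to $t_*+t^1_n$; the $S$-norm there is finite by backward scattering of $\np u^1$ when $t^1_n\to\I$, or by local theory on a compact interval when $t^1_n\to0$. This gives $\uu(\tau_n+t_*+t^1_n)=\np u^1(t_*)+o(1)$ in $\HH$, contradicting $d_Q(\uu(t))\ge R_*>2\e_*$ for all $t\ge0$.
\item Otherwise $\fS(\np u^1)=+1$ persists by Lemma \ref{lem:signS}, and Theorem \ref{thm:onepass} gives $d_Q(\np u^1(t))>R_*$ for all large $t$.
\end{enumerate}
Only then does non-scattering of $\np u^1$ force $S_Z(\np u^1)=E^*$. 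The orthogonality then kills the other profiles and the remainder in $\HH$. Your final step (``remainder $\vec w_n\to0$ in $\HH$'') silently assumes this.

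Lastly, the directions in your second bullet are swapped. A profile that scatters forward contradicts $\|\uu\|_{S([\tau_n,\I))}=\I$. A profile that scatters backward bounds $\|\uu\|_{S([0,\tau_n])}$ uniformly in $n$, which contradicts $\|\uu\|_{S([0,\I))}=\I$; for fixed $n$ that norm is of course finite.
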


\begin{lem}[Rigidity]\label{lem:rigidity}
The critical element of Lemma \ref{lem:existence} does not exist.
\end{lem}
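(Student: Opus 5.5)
The plan is to argue by contradiction with a localized virial argument, as in the one-pass proof (Step 3), but now using precompactness in place of the hyperbolic-interval bookkeeping. Let $\uu$ be a critical element. By Lemma \ref{lem:precomp}, $\{\uu(t)\}_{t\ge0}$ is precompact in $\cH$. Since $\uu\in\sU$, we have $d_Q(\uu(t))\ge R_*$ and $\fS(\uu(t))=+1$ for all $t\ge0$, and $\|\uu\|_\HH<B_H$.

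The first step is a uniform positive lower bound on the virial. Because $d_Q\ge R_*\ge\de_*$ is not guaranteed, I would apply Lemma \ref{lem:epV} directly with $\de=R_*$, after shrinking $\e_*$ so that $\e_*<\e_V(R_*)$. Since $\fS=+1$ and $d_Q\ge R_*$, Lemma \ref{lem:signS} gives $K(u_s)\ge0$, and then Lemma \ref{lem:epV} with $a=0$ yields $K_Z^0(\uu)\ge\min(\ka_V(R_*),c_K\|\na u_s\|_2^2)$.

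Next I would show that $\|\na u_s(t)\|_2$ is bounded below uniformly in $t$. If $\|\na u_s(t_n)\|_2\to0$ along some sequence, then Lemma \ref{lem:small-kine scat} applied at time $t_n$ (with bound $B_H$) gives $\|\uu\|_{S([t_n,\I))}<\I$. Combined with local wellposedness on $[0,t_n]$, this gives $\|\uu\|_{S([0,\I))}<\I$, contradicting criticality. Hence $K_Z^0(\uu(t))\ge\ka_0>0$ for all $t$.

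The second step localizes the virial. I would take $\chi=\y(\log(r/m))$ as in Step 2, with $\y=1$ for $\ro\le2$ and $\y=0$ for $\ro\ge\ell$, and use Proposition \ref{prop:locvir} with $a=0$. Precompactness is what makes this work. Uniformly in $t$, the tails $\|u_s\|_{H^1(r>m)}$ and $\|u_w\|_{L^2(r>m)}$ tend to zero as $m\to\I$. The $\z$-terms are controlled through the Hardy inequality $\|\z\|_2\lec\|n\|_2$ together with the small coefficients $|\y'|+|\y''|+|\y'''|\lec1/\ell$; alternatively one can bound $\z$ on $r>m$ by the tail of $n$ via the weighted Hardy inequality already proved. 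With these, every remainder term in \eqref{eq:localvirial} is $o(1)+O(1/\ell)$ uniformly in $t$. Choosing $m,\ell$ large then gives $\p_tV^0_\chi(\ti u)\ge2\ka_0$ for all $t\ge0$.

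The third step is the contradiction. On one hand, $|V_\chi^0(\ti u)(t)|\lec me^\ell\|\uu(t)\|_\HH^2\lec me^\ell B_H^2$ is bounded uniformly in $t$. On the other hand, integrating the lower bound from the second step over $[0,T]$ gives growth at least $2\ka_0T$. Letting $T\to\I$ contradicts the uniform bound.

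The main obstacle is the uniform control of the wave-component remainders, namely the $\z^2$ and $|u|^2\z$ terms. These carry only $L^2$ information, so they are not small purely from cut-off decay. I would handle them using the tail smallness of $n$ from compactness combined with the Hardy inequality restricted to $r\gtrsim m$. This requires checking that the localized Hardy bound $\LR{\z^2|f}\lec\LR{n^2|r^6F^2/f}$ with $f$ supported in $r\ge2m$ involves only $n$ on $r\gtrsim m$. That holds because $F$ vanishes beyond the support, so this bound should suffice.
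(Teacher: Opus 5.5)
Your proposal is essentially the paper's argument. The paper proves this lemma by rerunning the virial argument of Step~3 of Theorem~\ref{thm:onepass}, with precompactness making all cut-off errors uniformly small in time as $m\to\I$, whatever cut-off is used. Your simplifications are all legitimate:
\begin{itemize}
\item a pointwise lower bound on $K_Z^0$ from Lemma~\ref{lem:epV} with $\de=R_*$ (the extra condition $\e_*<\e_V(R_*)$ is compatible with fixing $\e_*$ last), instead of the hyperbolic intervals of Step~1;
\item the Step~2 cut-off instead of the Step~3 weight;
\item the small-kinetic-energy scattering lemma to keep $\|\na u_s\|_2$ away from $0$.
\end{itemize}

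One claim in your last paragraph is false, though your proof does not need it. If $f$ is supported in $r\ge 2m$, then $F(r)=\int_r^\I s^{-4}f(s)\,ds$ equals the positive constant $F(2m)$ on $r<2m$; it vanishes only to the right of the support. So the weight $r^6F^2/f$ is infinite there and the Cauchy--Schwarz step breaks down. This is not a technicality: $\z(r)=-r^{-3}\int_0^r s^2n(s)\,ds$ depends on $n$ in the whole ball $|x|<r$. Hence no localized Hardy inequality can bound the tail of $\z$ by the tail of $n$.

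The $\z$-terms are nevertheless controlled by the first option you listed, exactly as in Step~2 of the paper. There $\|\z\|_2\le\tf23\|n\|_2$, and the coefficients $(\p_\ro-3)\up_\ro$ and $\up_\ro$ are $O(1/\ell)$. Alternatively, $n\mapsto\z$ is bounded on $L^2$, so $\{\z(t)\}_{t\ge0}$ inherits precompactness from $\{n(t)\}_{t\ge0}$ and has uniformly small tails.
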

\begin{proof}
Let $\uu$ be a critical element. We can show that it contradicts to the virial estimates as in the step 3 of the proof of Theorem \ref{thm:onepass}. Indeed, the proof is much easier thanks to the precompactness property: we can easily localize the virial estimates with decay of cut-off errors as $m\to\I$ uniformly in time, 
independent of the choice of cut-off. 
\end{proof}

It only remains to prove Lemmas \ref{lem:existence}--\ref{lem:precomp}. 
Here we need the profile decomposition and global perturbation lemma established in \cite{Zak-KM}. 

\begin{lem}[Profile decomposition {\cite[Lemma 4.2, Remark 4.1]{Zak-KM}}] \label{free prof}
For any bounded sequence $\{\vec f_n\}_{n\in\N}\subset\cH$, there
are a subsequence $\vec f_n'$, $\bar{J}\in \N_0 \cup \{\infty\}$, a
bounded sequence $\vec f^j\in\cH$, and sequences $\{t_n^j\}_{n\in\N}\subset\R$ for $j\in\N$ with $j\le\bar J$, 
such that the following holds: For any $j,n\in\N$ and $J\in\N_0$ with $j,J\le\bar{J}$, let 
\EQ{
 \pt \vec v_n(t):=U(t)\vec f_n', 
 \pq \vec v_n^j(t):=U(t-t_n^j)\vec f^j, 
 \pq \vec v_n^{>J} := \vec v_n - \sum_{j=1}^J \vec v_n^j.}
Then for any $j,k\in\{1 \ldots J\}$, we have
$t_\I^j:=\lim_{n\to\I} t_n^j \in \{0,\pm\I\}$,
\EQ{ \label{separation}
  j\not=k\implies \lim_{n\to\I}|t_n^j-t_n^k|=\I,}
\EQ{ \label{weak conv}
  \pt (\pe{\vec v_n})(t_n^j),  (\pe{\vec v_n})(0) \to 0\ \text{weakly in $\cH$} \text{ as } n\to \I,}
and
\EQ{ \label{smallness}
 \lim_{J\to \bar{J}}\limsup_{n\to\I}
  \|\vec v_n^{>J}\|_Z =0.}
Moreover,
\eqref{separation}--\eqref{weak conv} implies the linear orthogonality
\EQ{
 \pt \lim_{n\to\I}\|m\vec v_n(0)\|_{\cH}^2-\sum_{j=1}^J\|m\vec v_n^j(0)\|_{\cH}^2-\|m\pe{\vec v_n}(0)\|_{\cH}^2 = 0,}
for any bounded Fourier multiplier $m$ on $\cH$, as well as the nonlinear orthogonality
\EQ{ \label{eq:nonlinear orth}
 \pt \lim_{n\to\I} E_S(v_{ns}(0))-\sum_{j=1}^J E_S(v_{ns}^j(0))-E_S(\pe v_{ns}(0))=0,
 \pr \lim_{n\to\I} K(v_{ns}(0))-\sum_{j=1}^J K(v_{ns}^j(0))-K(\pe v_{ns}(0))=0,
 \pr \lim_{n\to\I} E_Z(\vec v_n(0))-\sum_{j=1}^J E_Z(\vec v_n^j(0))-E_Z(\pe{\vec v_n}(0))=0,
 \pr \lim_{n\to\I} K_Z^a(\vec v_n(0))-\sum_{j=1}^J K_Z^a(\vec v_n^j(0))-K_Z^a(\pe{\vec v_n}(0))=0,}
for any $a\in\R$. The same orthogonality holds also along $t=t_n^j$ instead of $t=0$.
\end{lem}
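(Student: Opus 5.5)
This statement is quoted from \cite[Lemma 4.2, Remark 4.1]{Zak-KM}. The plan is to recall how it is built there rather than give a new argument: a linear profile decomposition for the free propagator $U(t)$ on the radial space $\cH$, carried out in the dispersive norm $Z$.

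\textbf{Step 1: extracting profiles.} Given a bounded sequence $\vec f_n$, write $\vec v_n(t)=U(t)\vec f_n$. If $\limsup_n\|\vec v_n\|_Z=0$ we stop with $\bar J=0$. Otherwise we pick times $t_n^1$ at which the $Z$-norm of $\vec v_n(t_n^1)$ is nearly attained. Radiality rules out spatial translations, so the only noncompact parameters are time shifts; scaling is frozen because $\cH$ is inhomogeneous in the Schr\"odinger component.

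We take $\vec f^1$ to be the weak limit of $U(-t_n^1)\vec f_n$ in $\cH$. The key input is a refined estimate bounding the $X$ and $Y$ norms (the Besov $B_{\I,2}$-type norms defining $Z$) by the $\cH$ norm together with a pointwise-in-time localized quantity. With this, weak limits detect a fixed fraction of $\limsup\|\vec v_n\|_Z$, and compactness of the radial embedding away from the origin and from infinity in frequency gives $\|\vec f^1\|_\cH\gtrsim \limsup\|\vec v_n\|_Z^{C}$. Passing to a subsequence, we normalize $t_n^1\to t_\I^1\in\{0,\pm\I\}$, absorbing a finite limit into $\vec f^1$.

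\textbf{Step 2: iteration.} We repeat the extraction on the remainder $\vec v_n^{>1}$ and continue inductively. The weak convergence \eqref{weak conv} holds by construction. Separation \eqref{separation} follows by the standard argument: if $t_n^j-t_n^k$ stayed bounded along a subsequence, the weak limit extracted at stage $k$ would vanish, since the remainder already converges weakly to $0$ after shifting by $t_n^j$.

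The Hilbert-space Pythagorean expansion, valid for any bounded Fourier multiplier $m$ commuting with $U(t)$, gives the linear orthogonality. It also gives $\sum_j\|\vec f^j\|_\cH^2\le\sup_n\|\vec f_n\|_\cH^2$, so $\|\vec f^j\|_\cH\to0$. Combined with the lower bound from Step 1, this yields the smallness \eqref{smallness}.

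\textbf{Step 3: nonlinear orthogonality.} The quadratic parts of $E_S$, $K$, $E_Z$ and $K_Z^a$ split by the linear orthogonality. For the quartic term $\|v_s\|_4^4$ and the cubic term $\LR{v_w||v_s|^2}$, we separate profiles with $t_n^j\to\pm\I$ from those with $t_n^j=0$. Profiles with $t_n^j\to\pm\I$ disperse, so their $L^4$ norm (Schr\"odinger) and their pairing with $|v_s|^2$ tend to zero. The interaction between profiles with $t_n^j=0$ and the remainder vanishes by the compactness of the radial Sobolev embedding $H^1_\rad\subset L^4$ together with the weak convergence. Only $u_s$ enters nonlinearly, and it is controlled in $H^1$, so the cubic term needs no compactness for $u_w$. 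The same reasoning applies at $t=t_n^j$.

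\textbf{Main obstacle.} The hardest step is the refined inequality in Step 1, which turns $Z$-smallness failure into nontrivial weak limits. In particular, the wave component $Y$ uses homogeneous Besov norms with frequency weights, which must be handled at both low and high frequencies. This is exactly the content proved in \cite{Zak-KM}, and here I would cite it.
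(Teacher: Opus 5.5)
Your outline follows the same route as the paper, which simply invokes \cite[Lemma 4.2, Remark 4.1]{Zak-KM}. The extraction of time shifts in $Z$, the iteration, separation via weak nullity of $U(t)$ as $|t|\to\I$, the Pythagorean expansion for complex-linear Fourier multipliers, and the dispersion/radial-compactness treatment of the quartic and cubic terms are all fine. One small imprecision: $L^2$ is scale invariant, so $\cH$ does not freeze scaling for the wave component. It is the $\pm\iota$ in $Y$ that makes scale-concentrating wave parts $Y$-small, so they can stay in the remainder.

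The step that fails is the sentence in Step 3 asserting that the quadratic part of $K_Z^a$ splits by the linear orthogonality. Its wave part is
\[
 \tfrac a4\|u_{w2}\|_2^2+\tfrac{2-a}{4}\|u_{w1}\|_2^2=\tfrac14\|u_w\|_2^2+\tfrac{1-a}{4}\re\int_{\R^3}u_w^2\,dx .
\]
The last term is not phase invariant, so it is not of the form $\|m\vec u\|_\cH^2$ for any Fourier multiplier $m$. By Plancherel, its cross term between profiles $j$ and $k$ is a constant multiple of $\re\int e^{-i\al(t_n^j+t_n^k)|\xi|}\hat f^j_w(\xi)\hat f^k_w(-\xi)\,d\xi$. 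This decays when $|t_n^j+t_n^k|\to\I$, but not under \eqref{separation} alone.

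Concretely, take a real nonzero $g\in L^2_\rad$, set $\vec f^1=\vec f^2=(0,g)$, $t_n^1=n=-t_n^2$, and $\vec f_n:=(0,2\cos(\al nD)g)$. Then $\bar J=2$, $\vec v_n^{>2}\equiv0$, and \eqref{separation}--\eqref{smallness} all hold. Yet by Riemann--Lebesgue,
\[
 K_Z^a(\vec v_n(0))-K_Z^a(\vec v_n^1(0))-K_Z^a(\vec v_n^2(0))\to\tfrac{2-a}{2}\|g\|_2^2-\tfrac12\|g\|_2^2=\tfrac{1-a}{2}\|g\|_2^2 .
\]
So the last line of \eqref{eq:nonlinear orth} is false for $a\neq1$, and no argument can prove it. It is likewise false at $t=t_n^j$.

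What your Step 3 does establish is the orthogonality of $M$, $E_S$, $K$, $E_Z$ and $K_Z^1$, since the wave part of the last one is $\tfrac14\|u_w\|_2^2$. The case $a=1$ is the one relevant to \cite{Zak-KM}, and you should restrict the claim to it. This does not damage the paper: the proofs of Lemmas \ref{lem:existence} and \ref{lem:precomp} use only the orthogonality of $M$, $E_S$, $K$ and $S_Z$.
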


For a fixed $j$, such a sequence of free solutions $\{\lp{\vec v_n^j}\}_{n\in\N}$ is called a {\it free concentrating wave}. Given a free concentrating wave in the form
\EQ{
 \lp{\vec v}_n(t)=U(t-t_n)\lp{\vec f}, \pq t_\I=\lim_{n\to\I}t_n\in\{0,\pm\I\},
}
we associate to it the {\it nonlinear profile} $\np{u}$, defined as the solution of \eqref{eq:Zak2} satisfying 
\EQ{
 \vec u=U(t)\lp{\vec f} -i\int_{-t_\I}^t U(t-s)(u_s\re u_w,\al D|u_s|^2)(s)ds.
}
The nonlinear profile is obtained by solving the initial data problem (if $t_\I=0$)
or by solving the final data problem (if $t_\I=\pm\I$), see \cite{Zak-KM}. 
In the latter case, $\uu$ is scattering as $t\to-t_\I$. 
We call $\np{u}_n(t):=\np{u}(t-t_n)$ the {\it nonlinear concentrating wave} associated with
$\lp{\vec v}_n(t)$. By the above construction we have 
\EQ{ \label{NP init}
 \pt\|\lp{\vec v}_n(0)-\np{u}_n(0)\|_{\cH}
  =\|\np{u}(-t_n)-U(-t_n)\lp{\vec f}\|_{\cH} + o(1)\to 0}
as $n\to\I$. 

We need a time-local version of the global perturbation \cite[Lemma 4.3]{Zak-KM}. 
\begin{lem}[Global perturbation on intervals] \label{NL profile}
For each free concentrating wave $\lp{\vec v_n^j}$ in Lemma \ref{free prof}, let $\np u_n^j$ be the associated 
nonlinear concentrating wave. Let $\vec u_n$ be the sequence of nonlinear solutions of \eqref{eq:Zak2} with 
$\vec u_n(0)=\vec f_n'$ and $T_n\in(0,\I]$. If $\limsup_{n\to\I}\|\np u_n^j\|_{S([0,T_n))}<\I$ for all $j\in\N$ 
with $j\le\bar J$, then
\EQ{ \label{NP decop}
 \lim_{J\to\bar J}\limsup_{n\to\I}\|\vec u_n-\sum_{j=1}^J\np u_n^j-\vec v_n^{>J}\|_{S([0,T_n))}=0.}
\end{lem}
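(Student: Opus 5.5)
The plan is to reduce the interval version to the global perturbation \cite[Lemma 4.3]{Zak-KM} by rerunning its proof on $[0,T_n)$; all estimates there are Strichartz/normal-form estimates that are local in time. The key point is that the nonlinear concentrating waves are only assumed bounded in $S([0,T_n))$, not globally. I would therefore restrict every space-time norm to $[0,T_n)$ and check that no step used information beyond $T_n$.

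\textbf{Step 1: approximate solution.} Define
\[
\vec w_n^J:=\sum_{j=1}^J\np u_n^j+\vec v_n^{>J}
\]
on $[0,T_n)$. By \eqref{NP init} and the definition of $\vec v_n^{>J}$, its initial data differ from $\vec u_n(0)=\vec f_n'$ by $o(1)$ in $\cH$ as $n\to\I$. The $S([0,T_n))$ norm of $\vec w_n^J$ should be bounded uniformly in $J$ and large $n$. To see this, split the profiles into finitely many large ones and the rest. By the linear orthogonality in Lemma \ref{free prof}, $\sum_j\|\vec f^j\|_\cH^2<\I$, so all but finitely many profiles are small in $\cH$. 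Their nonlinear profiles are then global with $S$ norm $\lesssim$ the norm of the data, by the small-data theory of \cite{GN}. Orthogonality of the $S$ norms of distinct profiles follows from the separation \eqref{separation}, through the usual almost-disjoint-support argument in $L^2_t$-type norms. The remainder $\vec v_n^{>J}$ is a free solution, so Strichartz bounds it in $S$.

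\textbf{Step 2: the error equation.} Compute $\vec e_n^J$, the equation error of $\vec w_n^J$. It consists of:
\begin{itemize}
\item cross terms $\np u_n^j\cdot\np u_n^k$ with $j\neq k$;
\item terms involving $\vec v_n^{>J}$.
\end{itemize}
Because the Zakharov nonlinearity loses derivatives, one cannot estimate $\vec e_n^J$ directly in Strichartz dual norms. I would follow \cite{Zak-KM}: apply the normal form transform and split the nonlinearity into resonant/high-low parts estimated in the dual of $S$, and nonresonant parts that become boundary terms plus cubic terms. The cross terms tend to $0$ as $n\to\I$ by separation. This is first proved for compactly supported-in-time approximations of the profiles, then extended by density using the finiteness of $\|\np u_n^j\|_{S([0,T_n))}$. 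The terms with $\vec v_n^{>J}$ are bounded by products of the $S$ norms and the $Z$ norm of $\vec v_n^{>J}$. These vanish in the double limit by \eqref{smallness}, since each nonlinear term contains a $u_s$ factor that can be placed in $X$.

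\textbf{Step 3: long-time perturbation.} Apply the stability lemma of \cite{Zak-KM} in normal-form variables on $[0,T_n)$. The inputs are a uniform $S$ bound on the approximate solution, small initial difference, and small error in the normal-form dual norms. The lemma gives $\|\vec u_n-\vec w_n^J\|_{S([0,T_n))}\to0$ in the stated double limit, by subdividing $[0,T_n)$ into boundedly many pieces where the $S$ norm of $\vec w_n^J$ is small.

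I expect the main obstacle to be Step 2: showing the error is small in the normal-form norms uniformly up to $T_n$. The difficulty is that the profiles are controlled only on $[0,T_n)$ and not on all of $\R$. For the density approximation of the cross terms I would first try truncating each profile in time within $[0,T_n)$, where the $S$ norm is finite and hence small outside a compact window after translation.
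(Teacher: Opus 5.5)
Your overall plan matches the paper's. The paper also just reruns the $T_n\equiv\I$ argument of \cite{Zak-KM} and notes that the only new issue is discarding the interactions among the nonlinear profiles on $[0,T_n)$. But that issue is the entire content of the lemma, and the sentence you offer for it (``the $S$ norm is finite and hence small outside a compact window after translation'') is false as stated.

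First, the $S$ norm dominates the $L^\I_t\HH$ norm, which does not decay along any nontrivial solution. So no profile is ever small in $S$ on a tail. Smallness has to be measured in $Z$, or in the Strichartz components $\fX_1\times\fY_1$. The same correction applies to your Step~3: only those components can be made small by subdividing $[0,T_n)$.

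Second, and more seriously, the hypothesis only bounds $\np u^j$ on the moving window $(-t^j_n,T_n-t^j_n)$ of its own time variable. A uniform bound on moving windows gives no uniform smallness off a fixed compact set. If the window drifts to $+\I$ (when $t^j_\I=-\I$) or to $-\I$ (when $T_n-t^j_n\to-\I$), the bound says nothing about smallness there. Separation \eqref{separation} alone therefore does not kill the cross terms.

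The missing step is the paper's case analysis.
\begin{enumerate}
\item Pass to a subsequence so that $T_n$ is either bounded or tends to $\I$, and every $T_n-t^j_n$ converges in $[-\I,\I]$.
\item By \eqref{separation}, at most one $j$ has $T_n-t^j_n$ bounded; if $T_n\to\I$, this forces $t^j_\I=\I$. If $T_n$ is bounded, only a profile with $t^j_\I=0$ can be non-negligible.
\item If $t^j_\I=-\I$ or $T_n-t^j_n\to-\I$, then $\|\np u^j_n\|_{Z(0,T_n)}\to0$. This is because the nonlinear profile was built by the final-value problem and scatters as $t\to-t^j_\I$. This case uses the construction, not the hypothesis.
\item If $t^j_\I\in\{0,\I\}$ and $T_n-t^j_n\to\I$, the windows eventually cover every compact subset of $(0,\I)$. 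The hypothesis then yields $\|\np u^j\|_{S(0,\I)}<\I$, i.e.\ forward scattering and decay of the forward $Z$ tails. This is the only place the hypothesis enters.
\end{enumerate}
Together these give the one-sided decoupling: for $j\ne k$, $\|\np u^j_n\|_Z\to0$ on the part of $(0,T_n)$ lying on the side of the midpoint $(t^j_n+t^k_n)/2$ away from $t^j_n$. That is exactly what the $T_n\equiv\I$ proof uses to discard profile interactions, and the rest goes through verbatim. No time truncation or density argument is needed, and truncating a nonlinear profile in time would not be a small perturbation in $S$ anyway.
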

\begin{proof}
The case of $T_n\equiv \I$ was in \cite{Zak-KM} (the decomposition \eqref{NP decop} 
is not explicitly stated, but obvious from the proof in \cite[Section 4.3]{Zak-KM}). 

If $\{T_n\}$ is bounded and $t_\I^j=\pm\I$, then $\|\np u_n^j\|_{Z(0,T_n)}=\|\np u^j\|_{Z(-t_n^j,T_n-t_n^j)}\to 0$ as $n\to\I$ 
by the scattering construction of those profiles, so that we may ignore their contributions, 
except at most one profile with $t_\I^j=0$. 

If $T_n\to\I$, then the orthogonality $|t_n^j-t_n^k|\to\I$ (for $j\not=k$) implies that 
there is at most one $j$ such that $\limsup_{n\to\I}|t_n^j-T_n|<\I$, which implies $t_\I^j=\I$.  
Passing to a subsequence if necessary, we may assume that $T_n-t_n^j\to\pm\I$ for all the other $j$. 
If $t_\I^j=-\I$ or $T_n-t_n^j\to-\I$ (which may happen only if $t_\I^j=\I$), then $\|\np u_n^j\|_{Z(0,T_n)}\to 0$ as above. 
If $t_\I^j\in\{0,\I\}$ and $T_n-t_n^j\to\I$, then $\limsup_{n\to\I}\|\np u_n^j\|_{S(0,T_n)}<\I$ 
implies $\|\np u^j\|_{S(0,\I)}<\I$, so $\np u^j$ is scattering as $t\to\I$. 

Thus in all the cases we obtain, for all $j\not=k$, 
\EQ{ 
 \CAS{t_n^j-t_n^k \to \I \implies \|\np u_n^j\|_{Z((0,T_n)\cap(-\I,(t_n^j+t_n^k)/2))} \to 0,\\
  t_n^j-t_n^k \to -\I \implies \|\np u_n^j\|_{Z((0,T_n)\cap((t_n^j+t_n^k)/2,\I))} \to 0.}}
This is what we need to discard interactions among profiles in the proof of \cite[Lemma 4.9]{Zak-KM}, 
and the rest of proof works in the same way as in the case of $T_n\equiv\I$. 
\end{proof}

\begin{proof}[Proof of Lemma \ref{lem:existence}]
By the definition of $E^*$, we know then there exists a sequence of $\uu_n\in \sU$ satisfying 
\EQ{\label{eq:criticalapprox}
S_Z(\uu_n)\to E^*, \quad \norm{\uu_n}_{S([0,\I))}\to \infty.}
Then Lemma \ref{lem:small-kine scat} together with Lemma \ref{lem:signS} implies for large $n$, 
\EQ{
 \inf_{t\ge 0}\|\na u_{ns}(t)\|_2 > \e_S(B_H). }
Hence the variational lower bound on $K(u_s)$ 
in Lemma \ref{lem:epV} becomes bigger than $6\e_*^2$ under the condition \eqref{small ep scat}.  
Then using Lemma \ref{lem:trap} and translation in $t$, we may assume 
\EQ{ \label{d-K bd}
d_Q(\uu_n(0))\geq \delta_X, \quad K(\uu_n(0))>6\e_*^2.
}
Now apply the profile decomposition in Lemma \ref{free prof} to the sequence $\{\uu_n(0)\}$.  We get
a family of free concentrating waves $\{\lp{\vec v_n^j}\}_{n\in\N}$, the associated nonlinear profiles $\{\np u^j\}$ with maximal intervals of existence $I^j$ and nonlinear concentrating waves $\{\np u_n^j\}_{n\in\N}$. 
We will show that $\np u^j=0$ for all $j\ge 2$ as well as
\EQ{ \label{rem vanish}
 \lim_{n\to \infty} \norm{\vec v_n^{>1}}_{\HH}=0.
}

Since \eqref{d-K bd} implies $G(u_{ns}(0))<S_S(Q)-\e_*^2$, where 
\EQ{
 G(\fy):=S_S(\fy)-\tf13 K(\fy)=\tf16\|\na \fy\|_2^2+\tf12\|\fy\|_2^2,}
the orthogonality \eqref{eq:nonlinear orth} with \eqref{NP init} implies $G(\np u^j_{ns}(0))<S_S(Q)-\e_*^2/2$ 
for all $j$ and large $n$, 
then the variational characterization of $Q$
\EQ{
 S_S(Q) = \inf\{G(\fy) \mid H^1\ni\fy\not=0,\ K(\fy)\le 0\} }
implies $K(\np u^j_{ns}(0))\ge 0$, hence $0\le S_S(\np u^j_{ns}(0)) \le S_Z(\np u^j)$. 
Then using the orthogonality again, we obtain from $S_Z(\uu_n)\to S_S(Q)+\e^2$
\EQ{
0\leq S_Z(\np u^j)\leq S_S(Q)+\e^2, \quad \forall j\geq 1.}

If $S_Z(\np u^j)< S_S(Q)$ for all $j$, then the result in \cite{Zak-KM} implies scattering of $\np u^j$ with $\|\np u^j\|_S<\infty$ for each $j$,  
and hence Lemma \ref{NL profile} implies boundedness of $\norm{\uu_n}_S$, contradicting \eqref{eq:criticalapprox}.
Therefore, there exists at least one $\np u^j$, say $\np u^1$, satisfying $S_S(Q)\leq S_Z(\np u^1)\leq E^*$. Then $S_Z(\np u^j)\le \e^2<S_S(Q)$ for all $j\geq 2$, due to the orthogonality \eqref{eq:nonlinear orth} for $S_Z$ together with $S_Z(\np u^j)\ge 0$. 
So those $\np u^j$ are small scattering solutions for $j\geq 2$. 
On the other hand, $\np u^1$ cannot scatter as $t\to\I$ because of Lemma \ref{NL profile}. 
Hence by definition of $E^*$, we have $S_Z(\np u^1)=E^*$, 
and the orthogonality implies that $\vec v_n^j=\np u_n^j\equiv 0$ for all $j\ge 2$ and we have \eqref{rem vanish}.  
To say that $\np u^1$ is a critical element (up to a time translation), 
it remains to show $\np u^1\in \sU$, namely \eqref{def U}. We argue case by case. 

{\bf Case 1.} $t_\infty^1=-\infty$. 
This cannot happen, as it means scattering of ${\np u^1}$ as $t\to\I$. 

{\bf Case 2.} $t_\infty^1=\infty$. 
In this case, ${\np u^1}$ scatters as $t\to -\infty$. 

\pq {\bf Case 2a.} $d_Q(\np u^1(t))>2\e_*$ for all $t\in I^1$.
Then $\fS(\np u^1)=+1$ for all $t\in I^1$, by continuity of the sign function (Lemma \ref{lem:signS}). 
Therefore the one-pass Theorem \ref{thm:onepass} implies that 
$\np u^1$ exists globally and satisfies $d_Q(\np u^1(t))>R_*$ for large $t>T'$. Then $\np u^1(t+T')\in \sU$ and so $\np u^1(t+T')$ is a critical element.

\pq {\bf Case 2b.} There exists $t_*\in I^1$ such that $d_Q(\np u^1(t_*))\leq 2\e_*$. 
Then for any $T\in I^1$ with $T>t_*$, we have 
$\|\np u_n^1\|_{S((-\I,T-t_n^1))}=\|\np u^1\|_{S((-\I,T))}<\I$, 
so Lemma \ref{NL profile} implies that $\uu_n(t_*-t_n^1)=\np u^1(t_*)+o(1)$ in $\HH$ as $n\to\I$. 
This cannot happen as $d_Q(\uu_n(t))\geq R_*>2\e_*$ for all $t\ge 0$ and $n$, 
while $t_n^1\to t_1^\I=-\I$. 

{\bf Case 3.} $t_\infty^1=0$. 
In this case, we have $d_Q(\np u^1(0))> \delta_*$ and so $\fS(\np u^1(0))=+1$ since $K(\np u^1(0))\geq 0$.

\pq {\bf Case 3a.} $d_Q(\np u^1(t))>2\e_*$ for all $t\geq 0$.
This is similar to Case 2a.  We have $\fS(\np u^1)=+1$ for $t\geq 0$. Therefore $\np u^1$ exists globally and satisfies $d_Q(\np u^1(t))>R_*$ for large $t>T'$. Then $\np u^1(t+T')$ is a critical element.

\pq {\bf Case 3b.} $d_Q(\np u^1(t_*))\leq 2\e_*$ for some $t_*>0$.
This is similar to Case 2b. For any $T_0,T_1\in I^1$ with $T_0<0<t_*<T_1$, we have 
$\|\np u_n^1\|_{S([T_0-t_n^0,T_1-t_n^0))}=\|\np u^1\|_{S([T_0,T_1))}<\I$, 
so Lemma \ref{NL profile} implies that $\uu_n(t_*-t_n^1)=\np u^1(t_*)+o(1)$ in $\HH$ as $n\to\I$, 
contradicting $d_Q(\uu_n(t))\ge R_*>2\e_*$ for all $t\ge 0$ and $n$, while $t_n^1\to 0$. 

Thus in all cases, we obtain a critical element $\np u^1(t-T')$ for some $T'\in\R$.  
\end{proof}

Finally we show the precompactness property.
\begin{proof}[Proof of Lemma \ref{lem:precomp}] 
We apply the above argument to the sequence of the solutions $\{\uu(t+\tau_n)\}$ for arbitrary $\tau_n\to \infty$. 
Then there exists only one nonlinear profile $\np u^1$ with $t^1_\I\in\{0,\I\}$. 
If $t_\infty^1=\infty$, then $\uu(t+\tau_n)$ is bounded in $S((-\I,0])$, 
contradicting the fact that $\norm{\uu}_{S([0,\infty))}=\infty$. So $t_\infty^1=0$, and hence 
$\uu(\ta_n)\to\vec f$ in $\HH$ along a subsequence of $\ta_n$ 
for some $\vec f\in\HH$, which proves that $\{\uu(t)\}_{t\geq 0}$ is precompact in $\HH$.
\end{proof}

\section{Manifold of trapped solutions}
To prove that the set of trapped solutions is a Lipschitz manifold in the energy space  
of codimension $1$, we argue in two parts:
\begin{enumerate}
\item Existence is proven by the elementary topological argument, 
using the stability of the ejection process: Since solutions exiting from the neighborhood 
have $\la_+ \sim\pm\de_X$, this property is stable for initial perturbation,  
and both the sign cases $\pm$ exist, 
there must be another case within the connected set of $\la_+(0)$, 
namely trapped solutions without exiting. 
Since it requires to tune only one parameter $\la_+(0)\in\R$, the set is at least of codimension $1$. 
\item Uniqueness and regularity for the above choice of $\la_+(0)$ 
(with respect to the other part of initial data) is proven 
by exponential growth of the difference of two solutions 
whose difference is not initially small in $\la_+$. 
Higher regularity may be shown by the same property of the derivatives 
(with respect to the initial data).  
\end{enumerate}
The first part is immediate from the dynamic descriptions of the ejection process. 
The second part is essentially a linearized version of that, 
but the energy argument is not enough to control the difference of the dispersive part $\vga$, 
because the conservative structure is destroyed 
and the energy norm is too weak to control the nonlinear terms. 
Hence we need to use the bilinear estimate as in \cite{Zak-KM}, 
which is based on the normal form and the radial improved Strichartz estimate. 
As we do not prove the scattering to the ground states, 
it suffices to have estimates locally in time, so the potential terms may be treated as regular perturbation 
(without time decay). 
We still need to show that the dispersive part does not grow 
as fast as the unstable mode, for which we estimate the evolution of the linearized energy 
using those Strichartz norms and the non-resonance in duality.

\subsection{Bilinear estimates}
We recall the bilinear estimates of \cite{GN} using the (radial) Strichartz estimates. 
In addition to the radial endpoint Strichartz norms $\fX_1,\fY_1$ in \eqref{def S}, we use 
\EQ{
 \pt \fX_0:=L^\I_t H^1_x, \pq \fY_0:=L^\I_t L^2_x, 
 \pr \|u\|_{\fX_+}:=\|i\dot u-\De u\|_{L^2_t(L^2\cap L^{3/2})_x}, 
 \pq \|N\|_{\fY_+}:=\|i\dot N+\aD N\|_{L^2_tL^{3/2}_x}, 
 \pr \fX:=\{u\in \fX_0\cap \fX_1 \mid \|u\|_{\fX_+}<\I\}, \pq \|u\|_{\fX}:=\|u\|_{\fX_0\cap\fX_1}+\|u\|_{\fX_+}, 
 \pr \fY:=\{u\in \fY_0\cap \fY_1 \mid \|u\|_{\fY_+}<\I\}, \pq \|u\|_{\fY}:=\|u\|_{\fY_0\cap\fY_1}+\|u\|_{\fY_+},
 \pr \fZ_0:=\fX_0\times \fY_0=L^\I_t\cH, \pq \fZ_1:=\fX_1\times \fY_1, \pq \fZ:=\fX\times \fY.}
The following linear estimate is immediate from the radial Strichartz estimate 
\begin{lem} \label{lem:lin}
For any radial $\fy\in H^1_\rad(\R^3)$, $\psi\in L^2_\rad(\R^3)$, and radial $f(t,x):I\times\R^3\to\C$ on any interval $I\subset\R$ and $t_0\in I$, we have 
\EQ{
 \pt \|\int_{t_0}^t e^{-i\De(t-s)}f(s)ds\|_{\fX(I)}
  \lec \|f\|_{L^2_t(L^{3/2}\cap L^2)_x(I)\cap L^1_tH^1_x(I)},
 \pr \|e^{-it\De}\fy\|_{\fX} \lec \|\fy\|_{H^1}, \pq  \|e^{it\aD}\psi\|_{\fY} \lec \|\psi\|_{L^2}.}
\end{lem}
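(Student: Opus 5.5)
The plan is to deduce Lemma \ref{lem:lin} directly from the known radial Strichartz estimates, treating each of the three norm components in $\fX$ and $\fY$ separately.

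\emph{The Schr\"odinger free solution.} The component $\fX_0=L^\I_tH^1_x$ is bounded by unitarity of $e^{-it\De}$ on $H^1$. The component $\fX_1=\LR{D}^{-1}L^2_t\dot B^{2/5-\io}_{q_s,2}$ is the radially improved endpoint Strichartz norm. Its exponents lie strictly inside the admissible radial range, with the gain measured by $\io>0$. Applying it to $\LR{D}\fy\in L^2_\rad$ gives the bound by $\|\fy\|_{H^1}$. This is the estimate used in \cite{GN,Zak-KM}, and I will cite it from there. The component $\fX_+$ vanishes, since $(i\p_t-\De)e^{-it\De}\fy=0$.

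\emph{The wave free solution.} The argument is the same. $\fY_0$ follows from unitarity of $e^{it\aD}$ on $L^2$. The norm $\fY_1=L^2_t\dot B^{-1/4-\io}_{q_w,2}$ is the radial improved Strichartz estimate for the half-wave propagator at the $L^2$ level. Scaling is consistent: the count $\tf34-\tf3{q_w}-\tf14-\io$ vanishes because $1/q_w=1/4-\io/3$. Finally $\fY_+=0$. The dependence on $\al$ only enters through rescaling $t$ and is absorbed into the implicit constant.

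\emph{The Duhamel term.} Let $w:=\int_{t_0}^te^{-i\De(t-s)}f(s)ds$. Then $(i\p_t-\De)w=if$, so $\|w\|_{\fX_+}=\|f\|_{L^2_t(L^{3/2}\cap L^2)_x}$ holds trivially. For the $\fX_0\cap\fX_1$ part, Minkowski's inequality with the free estimates gives the bound by $\|f\|_{L^1_tH^1_x}$. This is the easy route, and it is why $L^1_tH^1_x$ appears in the norm on the right. In addition, the inhomogeneous dual estimate with $L^2_tL^{6/5}$-type input could be used, but the stated right side already contains $L^1H^1$, so the plan is to use only the $L^1_tH^1_x$ bound together with the truncation in time to $I$ (restricting $f$ by $1_{[t_0,t]}$ is harmless for Minkowski).

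The only nontrivial input is the radial improved Strichartz estimate at these near-endpoint exponents, including the Besov regularity bookkeeping. I will take it from \cite{GN}, so no real obstacle is expected. The one care point is to check that the Besov indices in \eqref{def S} are exactly those covered there for small $\io$.
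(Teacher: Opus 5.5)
Your proposal is correct and matches the argument the paper has in mind when it calls the lemma immediate from the radial Strichartz estimate: homogeneous radial Strichartz bounds for the free flows, vanishing $\fX_+,\fY_+$ parts for free solutions, the identity $(i\p_t-\De)w=if$ for the $\fX_+$ part of the Duhamel term, and Minkowski with the $L^1_tH^1_x$ norm for its $\fX_0\cap\fX_1$ part, which is exactly why that norm appears on the right. One cosmetic slip: your scaling check for $\fY_1$ should read $\tf34-\io-\tf{3}{q_w}=0$ (equivalently, the regularity $-\tf14-\io$ equals $\tf{3}{q_w}-1$); the expression you wrote actually equals $-\tf14$.
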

The above Duhamel estimate is used only for interactions with solitons where we have regularity room. 
For the main bilinear interactions, we have 
\begin{lem} \label{lem:bil}
For any radial functions $u(t,x),v(t,x),N(t,x):I\times\R^3\to\C$ 
on interval $I=[0,T)$ for any $T\in(0,\I]$, we have 
\EQ{
 \pt\|\int_0^t e^{-i\De(t-s)}(u\re N)(s)ds\|_{\fX(I)} 
 \prq\lec \|N\|_{\fY(I)}(\|u\|_{(\fX_1\cap \fX_+)(I)}+\min(1,T^{1/6})\|u\|_{\fX_0(I)})
   +\|N(0)\|_{L^2_x}\|u(0)\|_{H^1_x},}
and
\EQ{
 \pt\|\int_0^t e^{\aD(t-s)}\aD(u\bar v)(s)ds\|_{\fY(I)}
 \prq\lec \min_{j=0,1}\|u\|_{\fX_j(I)}\|v\|_{\fX_{1-j}(I)} + \|u\|_{(\fX_1+\fX_+)(I)}\|v\|_{(\fX_1+\fX_+)(I)}
 \prQ+ \min(1,T^{1/2})\|u\|_{\fX(I)}\|v\|_{\fX(I)} + \|u(0)\|_{H^1_x}\|v(0)\|_{H^1_x}.}
\end{lem}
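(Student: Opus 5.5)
Lemma \ref{lem:bil} is the radial bilinear estimate of \cite{GN} (used also in \cite{Zak-KM}), localized to $I=[0,T)$ and with explicit boundary terms at $t=0$. I would follow the normal form proof from \cite{GN} and keep track of the time interval.

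First I split each product by frequency. In $u\re N$, the Schr\"odinger factor $u$ is at frequency $2^k$ and the wave factor $N$ at $2^j$. In $\aD(u\bar v)$, both factors are Schr\"odinger waves. The interactions fall into two classes. In the non-resonant ones, the phase $|\xi|^2-|\eta|^2\pm\al|\xi-\eta|$ (or its wave analogue) is bounded below by the product of the frequencies. These occur when the wave frequency is comparable to or larger than the Schr\"odinger frequency, or when the high-high Schr\"odinger interaction produces a low wave output. The remaining ones are nearly resonant: low-frequency wave times high-frequency Schr\"odinger, and high-low Schr\"odinger products in the wave equation.

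For the resonant pieces I put the estimate directly into the radially improved endpoint Strichartz norms. For $u\re N$ with $N$ at low frequency, I bound the Duhamel term in $\fX$ by Lemma \ref{lem:lin}, using $\|u N\|_{L^2_tL^{3/2}}$. This is controlled by $\|u\|_{\fX_1}\|N\|_{\fY_1}$ through H\"older with the exponents $q_s,q_w$ and the Besov gains of order $\io$ needed to sum over dyadic pieces, as in \cite{GN}. The high-low products $u\bar v$ in the wave equation are handled the same way. They give the terms $\min_j\|u\|_{\fX_j}\|v\|_{\fX_{1-j}}$ and the mixed $\fX_1$-type terms.

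For the non-resonant pieces I integrate by parts in time (the normal form). Integrating by parts in $s$ on $[0,t]$ produces three kinds of terms. The first is a bilinear Fourier multiplier $\Omega(u,N)$ at time $t$, estimated in $L^\I_tH^1$ by Sobolev multiplier bounds, $\lec\|u\|_{\fX_0}\|N\|_{\fY_0}$. The second is the same boundary form at $s=0$; after the free propagator it gives exactly $\|N(0)\|_{L^2}\|u(0)\|_{H^1}$ and its wave analogue. The third is a Duhamel integral of $\Omega$ applied to $(i\p_t-\De)u$ and $(i\p_t+\aD)N$. This integral is bounded through the $\fX_+,\fY_+$ norms: the resulting trilinear-type inputs are measured in $L^2_t(L^{3/2}\cap L^2)$ and $L^2_tL^{3/2}$, against the other factor in $\fX_1$ or $L^\I_t$. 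The Strichartz norm of $\Omega(u,N)(t)$ itself comes from writing it via its own equation.

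I expect the main obstacle to be the $T$-dependent terms $\min(1,T^{1/6})\|u\|_{\fX_0}$ and $\min(1,T^{1/2})\|u\|_\fX\|v\|_\fX$. These arise when, in the normal form remainder, one factor must be put in $L^\I_t$ rather than $L^2_t$. On a finite interval, H\"older in time converts $L^\I_t$ into $L^p_t$ with a power of $T$. The intermediate H\"older exponent ($T^{1/6}$) is chosen so that the spatial exponents match $L^{3/2}$ via Sobolev from $H^1$. For $T\ge1$ the term is simply dominated by the global estimate of \cite{GN}, which gives the $\min(1,\cdot)$. I would verify the exponents carefully in the worst case, the low wave / high Schr\"odinger remainder, and otherwise cite \cite{GN,Zak-KM} for the frequency-localized estimates.
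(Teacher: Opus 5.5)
Your outline reproduces the global bilinear estimate of \cite{GN}. It does not reach the short-time refinement that is the actual content of this lemma, and the gap is in the normal-form boundary term. You bound $\Om(N,u)(t)$ in $L^\I_tH^1_x$ by $\|u\|_{\fX_0}\|N\|_{\fY_0}$ and treat the $s=0$ boundary form separately. Your $\fX_0$ bound therefore contains $\|N\|_{\fY}\|u\|_{\fX_0}$ with no small factor. On the right-hand side of the lemma, $\|u\|_{\fX_0}$ appears only multiplied by $\min(1,T^{1/6})$, and nothing else there controls $\|u\|_{L^\I_tH^1_x}$. The reason is that the forcing norm $\fX_+$ sits below the $H^1$ level, which is also why Lemma~\ref{lem:lin} needs the extra $L^1_tH^1_x$. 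Likewise, your wave boundary term gives $\|u\|_{\fX_0}\|v\|_{\fX_0}$, whereas on the right this product enters only through $\min(1,T^{1/2})\|u\|_{\fX}\|v\|_{\fX}$. H\"older in time cannot produce these factors, because the boundary terms are evaluated at a single time. You also place the $T$-powers in the normal-form remainders, but those need none: they are bounded by $\|N\|_{\fY_+}\|u\|_{\fX_1}$ and $\|N\|_{\fY_0}\|u\|_{\fX_+}$, and similarly for $\ti\Om$. H\"older in time is used only in the resonant strip at frequency $\sim\al$. There, putting $u$ in $L^6_tL^2_x$ for $(Nu)_{\al L}$ gives $T^{1/6}$, and putting both factors in $L^\I_t$ for $(u\bar v)_{\al L}$ gives $T^{1/2}$.

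The missing idea is a short-time cancellation inside the combined boundary term $\Om_b(N,u):=\Om(N,u)(t)-e^{-it\De}\Om(N,u)(0)$. Splitting the two boundary contributions, as you do, destroys this cancellation. Write $\Om_b=[\Om(N,u)(t)-\Om(N,u)(0)]+(1-e^{-it\De})\Om(N,u)(0)$ and use the half-derivative gain $\|\Om(f,g)\|_{H^{s_0+s_1+1/2}}\lec\|f\|_{H^{s_0}}\|g\|_{H^{s_1}}$. The first bracket is then bounded by $\|N(t)-N(0)\|_{H^{-1/2}}\|u(0)\|_{H^1}+\|N(t)\|_{L^2}\|u(t)-u(0)\|_{H^{1/2}}$. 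Both time differences are small for small $t$: integrate the equations with forcing in $\fY_+$ and $\fX_+$, then interpolate. The second term is $\lec\min(1,t^{1/4})\|N(0)\|_{L^2}\|u(0)\|_{H^1}$, since $\Om(N,u)(0)\in H^{3/2}$. Together these give $\|\Om_b(N,u)\|_{\fX_0}\lec\min(1,T^{1/6})\|N\|_{\fY_0\cap\fY_+}\|u\|_{\fX_0\cap\fX_+}$, and the same argument gives the factor $\min(1,T^{1/2})$ for the $\fY_0$ norm of the wave boundary term.

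Separately, your resonance bookkeeping is off. High-high Schr\"odinger products with low wave output are resonant, since $|\eta|^2-|\xi-\eta|^2=\al|\xi|$ is solvable once $|\eta|\gec\al$, so integration by parts fails there. Those pieces carry no derivative loss and are estimated directly. Conversely, the high-low products in the wave equation are non-resonant away from $|\xi|\sim\al$. They are exactly the derivative-losing terms that must be normal-formed via $\ti\Om$, not put directly into Strichartz norms.
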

Note that the right sides are bounded simply by $\|N\|_{\fY(I)}\|u\|_{\fX(I)}$ and 
$\|u\|_{\fX(I)}\|v\|_{\fX(I)}$ respectively, 
but the above refined version is useful for the potential perturbation for small $T>0$. 
\begin{proof}
Let $b_1(N,u):=\int_0^t e^{-i\De(t-s)}(Nu)(s)ds$ 
and $b_2(u,v):=\int_0^t e^{i\aD(t-s)}\aD(u\bar v)(s)ds$. 
The estimate on $b_1(\bar N,u)$ is the same as $b_1(N,u)$ and omitted. 
The equation part of the norm on the left side is easily bounded by 
\EQ{
 \pt\|b_1\|_{\fX_+} = \|Nu\|_{L^2_t(L^{3/2}\cap L^2)_x} \le 
 \|N\|_{L^\I_tL^2_x}\|u\|_{L^2_t(L^6\cap L^\I)_x} \lec \|N\|_{\fY_0} \|u\|_{\fX_1},
 \pr\|b_2\|_{\fY_+} = \|\aD(u\bar v)\|_{L^2_t L^{3/2}_x}
 \lec \|u\|_{L^2_tW^{1,6}_x}\|v\|_{L^\I_tH^1_x} \lec \|u\|_{\fX_1}\|v\|_{\fX_0},}
while those norms are needed for the normal form argument, which is needed only for the interactions with the wave component $N$ 
in high frequency. Let $(fg)_{XL}$, $(fg)_{LX}:=(gf)_{XL}$, $\Om(f,g)$ and $\ti\Om(f,g)$ be the bilinear Fourier multipliers as in \cite[Section 2]{GN}. 
Roughly speaking, $(fg)_{XL}$ is the high-low interaction avoiding the resonant frequency $\sim\al$, 
while $\Om(f,g)\approx\LR{D}^{-1}D^{-1}(fg)_{XL}$ and $\ti\Om(f,g)\approx\LR{D}^{-1}D^{-1}(fg)_{XL+LX}$ 
are obtained by partial integration on the bilinear phase in time. Let 
\EQ{
 b_{1XL}:=\int_0^t e^{-i\De(t-s)}(Nu)_{XL}(s)ds,
 \pq b_{2XL}:=\int_0^t e^{i\aD(t-s)}\aD(u\bar v)_{XL+LX}(s)ds.}
Then, as in \cite[Lemmas 3.5--3.7]{GN}, we have
\EQ{
 \|b_{1XL}\|_{\fX_0\cap \fX_1} \pt\lec \|\Om_b(N,u)\|_{\fX_0\cap \fX_1}  
 \prq+\|\int_0^t e^{-i\De(t-s)}[\Om(i\dot N+\aD N,u)+\Om(N,i\dot u-\De u)]ds\|_{\fX_0\cap \fX_1},}
where $\Om_b(N,u):=\Om(N,u)-e^{-it\De}\Om(N,u)(0)$ and the second term is bounded by
\EQ{
 \pt\|\Om(i\dot N+\aD N,u)\|_{L^1_tH^1_x} \lec \|N\|_{\fY_+}\|u\|_{\fX_1},
 \pr\|\Om(N,i\dot u-\De u)\|_{L^2_tH^{1,6/5}_x} \lec \|N\|_{\fY_0} \|u\|_{\fX_+}.}
The boundary term is bounded in $\fX_1$ by 
\EQ{
 \|\Om_b(N,u)\|_{\fX_1} \lec \|N\|_{\fY_0}\|u\|_{\fX_1} + \|N(0)\|_{L^2_x}\|u(0)\|_{H^1_x}.}
The $\fX_0$ norm may be bounded in the same way, 
but to get a smallness factor for short time, we use instead 
\EQ{
 \|\Om(f,g)\|_{H^{s_0+s_1+1/2}_x} \lec \|f\|_{H^{s_0}}\|g\|_{H^{s_1}}, \pq 
 s_0\in[-\tf12,0],\ s_1\in[\tf12,1].}
Then we have at each $t>0$, exploiting the regularity room, 
\EQ{
 \pn\|\Om_b(N,u)\|_{H^1_x}
 \pt\lec \|\Om(N,u)-\Om(N,u)(0)\|_{H^1_x} 
 \pn+ \|(e^{-it\De}-1)\Om(N,u)(0)\|_{H^1_x}
 \pr\lec \|N(t)-N(0)\|_{H^{-1/2}_x}\|u(0)\|_{H^1_x}
 + \|N(t)\|_{L^2_x}\|u(t)-u(0)\|_{H^{1/2}_x}
 \prq+\min(1,t^{1/4})\|\Om(N,u)(0)\|_{H^{3/2}_x},}
where the time difference is bounded by using $L^{3/2}_x\subset\dot H^{-1/2}_x$ 
and interpolation of $H^s_x$, 
\EQ{
 \pt \|N(t)-N(0)\|_{H^{-1/2}_x} \lec \min(1,t^{1/2})(\|N\|_{\fY_0}+\|N\|_{\fY_+}),
 \pr \|u(t)-u(0)\|_{H^{1/2}_x} \lec \min(1,t^{1/6})(\|u\|_{\fX_0}+\|u\|_{\fX_+}).}
Thus we obtain 
\EQ{
 \pt\|\Om_b(N,u)\|_{\fX_0}
 \pn\lec \min(1,T^{1/6})\|N\|_{\fY_0\cap \fY_+}\|u\|_{\fX_0\cap \fX_+}.}
For the other term, we have similarly 
\EQ{
 \|b_{2XL}\|_{\fY_0\cap \fY_1} \pt\lec \|D\ti\Om_b(u,v)\|_{\fY_0\cap \fY_1}
 \prq+\|\int_0^t De^{i\aD(t-s)}[\ti\Om(i\dot u-\De u,v)+\ti\Om(u,i\dot v-\De v)]ds\|_{\fY_0\cap \fY_1},}
where $\ti\Om_b(u,v):=\ti\Om(u,v)-e^{it\aD}D\ti\Om(u,v)(0)$ and the second term is bounded by 
\EQ{
 \pt\|D\ti\Om(i\dot u-\De u,v)+D\ti\Om(u,i\dot v-\De v)\|_{L^1_tL^2_x}
 \pn\lec \|u\|_{\fX_+}\|v\|_{\fX_1} + \|u\|_{\fX_1}\|v\|_{\fX_+},}
while the boundary term is bounded by
\EQ{
 \|D\ti\Om_b(u,v)\|_{\fY_1} \pt \lec \min_{j=0,1}\|u\|_{\fX_j}\|v\|_{\fX_{1-j}} + \|u(0)\|_{H^1_x}\|v(0)\|_{H^1_x}, 
 \\ \|D\ti\Om_b(u,v)\|_{\fY_0}
 &\lec \|u(t)-u(0)\|_{L^2_x}\|v\|_{\fX_0}+\|u\|_{\fX_0}\|v(t)-v(0)\|_{L^2_x}
 \prq+\min(1,t^{1/2})\|D\ti\Om(u(0),v(0))\|_{H^1_x}
 \pr\lec \min(1,t^{1/2})\|u\|_{\fX_0\cap \fX_+} \|v\|_{\fX_0\cap \fX_+}. }

The remaining parts of $b_1,b_2$, namely $b_1-b_{1XL}$ and $b_2-b_{2XL}$, 
are treated in the same way as \cite[Lemmas 3.2--3.3]{GN}. 
To gain the power of $T\in(0,1)$ for the resonant terms, we use a different exponent $1/q_L:=1/4+\ka/3$ 
and the radial Strichartz estimate 
\EQ{
 \|\int_0^t e^{-i\De(t-s)}(Nu)_{\al L}(s)ds\|_{\fX(I)}
 \pt\lec \|(Nu)_{\al L}\|_{L^2_tL^{q_L'}_x(I)} \pr\lec \|N\|_{\fY_1(I)}\|u\|_{L^6_tL^2_x(I)}
  \lec T^{1/6}\|N\|_{\fY_1(I)}\|u\|_{\fX_0(I)},}
using that $(Nu)_{\al L}$ is restricted to high-low interactions with high frequencies around $\al\sim 1$, and similarly 
\EQ{
 \|\int_0^t e^{i\aD(t-s)}(u\bar v)_{\al L}(s)ds\|_{\fY(I)}
  \pt\lec \|(u\bar v)_{\al L}\|_{(L^2_tL^{3/2}_x\cap L^1_tL^2_x)(I)}
   \pr\lec T^{1/2}\|u\|_{\fX_0(I)}\|v\|_{\fX_0(I)},}
and $(u\bar v)_{L\al}$ by symmetry. 
\end{proof}

\subsection{Difference estimates}
Consider two solutions $\vec u^j=\Psi_{\te_j}(\vv^j)$ of \eqref{eq:Zak2} near the ground state $\cQ_1$, 
namely $d_Q(\vec u^j)<\de_E$ on a time interval $I$, where $\te_j \in C^1(I)$ is given by Lemma \ref{lem:orth} 
such that $\om(\vec v^j,\vec Q_{(\la)})=0$ for each $j=0,1$. 
Then $\vec v^0,\vec v^1$ solve \eqref{eq v}, where the modulation part may be linearized as 
\EQ{
 \pt J \ti N(\vv) = J N(\vv) + \vec Q_{(\te)} M_{(\te)} \vga + \ck N(\vv), 
 \pq M_{(\te)}\fy := 2M(Q)^{-1}\om(\fy,\vec Q_{(\te)}),} 
where $\ck N(\vv)=(\ck N_1(\vv),0)$ is defined for small $\|\vv\|_\HH$, satisfying 
\EQ{
 \|\ck N(\vv)\|_\HH \lec \|\vv\|_\HH^2, \pq \|\ck N(\vv)-\ck N(\vec w)\|_\HH \lec \|\vv-\vec w\|_\HH(\|\vv\|_\HH+\|\vec w\|_\HH).}
Then the difference $\diff{\vec v}:=\vec v^1-\vec v^0$ satisfies 
\EQ{ \label{diff eq}
 \CAS{(i\p_t+L_-)\diff{v_s} - Q \re\diff{v_w} -QM_{(\te)}\diff\vga = v_s^0\re\diff{v_w}+\diff{v_s}\re v_w^1 + i\diff{\ck N_1}, \\
 (i\p_t+\aD)\diff{v_w} - 2 \aD Q \re\diff{v_s} =\re\aD(\bar v_s^0+\bar v_s^1)\diff{v_s},} }
where $\diff{\ck N_1}:=\ck N_1(\vv^1)-\ck N_1(\vv^0)$. 
Using the above linear and bilinear estimates, Lemmas \ref{lem:lin}--\ref{lem:bil}, we obtain on any interval $[t_0,t_1]\subset I$ of length $|t_1-t_0|\le 1$ 
where both $\vec u^j$ stay around $\cQ_1$, 
\EQ{
 \pt\|\diff{v_s}\|_{\fX(t_0,t_1)} \lec \|\diff{v_s}(t_0)\|_{H^1_x} + \|Q^2\diff{v_s}+QM_{(\te)}\diff\vga\|_{L^2_t(L^{3/2} \cap H^1)_x}
  \prQQQ+(|I|^{1/6}\|Q\|_{H^2_x} +\| v_s^0\|_\fX) \|\diff v_w\|_\fY 
 \prQQQ+ \|\diff{ v_s}\|_\fX\|v_w^1\|_\fY 
  +\|\diff v_w(t_0)\|_{L^2_x}\|Q\|_{H^1_x} + \|\diff\vv\|_{\fZ_0} \|\vv^\circ\|_{\fZ_0},
 \pr\|\diff v_w\|_{\fY(t_0,t_1)} \lec \|\diff v_w(t_0)\|_{L^2_x} 
 + (\| v_s^\circ\|_\fX+|I|^{1/2}\|Q\|_{H^2_x})\|\diff{ v_s}\|_\fX 
  \prQQQ+ \|Q\|_{H^1_x}\|\diff v_s(t_0)\|_{H^1_x},}
where $\vec v^\circ:=(\vec v^0,\vec v^1)$, and the second term on the right is easily bounded by 
\EQ{
 \pt \|Q^2\diff{v_s}+QM_{(\te)}\diff\vga\|_{L^2_t(L^{3/2} \cap H^1)_x}
 \lec |I|^{1/2}(\|Q\|_{H^2_x}^2+\|Q\|_{H^2_x})\|\diff{\vv}\|_{L^\I_t\HH_x}. }

Hence, using the smallness of $\|v_s^\circ\|_\fX$ as well, which follows from the same argument or the local wellposedness, 
we obtain for $|t_0-t_1|\ll 1$, 
\EQ{ \label{diff est unit}
 \|\diff\vv\|_{\fZ(t_0,t_1)} \lec \|\diff{\vec v(t_0)}\|_{\cH},}
and the same argument yields
\EQ{ \label{sum est unit}
 \|\vv^\circ\|_{\fZ(t_0,t_1)} \lec \|\vec v^\circ(t_0)\|_{\cH}.}
Both estimates are extended by iteration for $t_1\le t_0+1$ within $I$. 

The above estimates on unit time intervals are consistent with exponential growth. 
To get a better bound on the dispersive component, we use the linearized energy.  
For the spectral decomposition $\vv^j=\la_+^j\g^++\la_-^j\g^-+\vga^j$ with $\la_\pm^j:=\om(\vv^j,\mp\g^\mp)$ 
as before, we have the same equations and expansion for the difference
\EQ{ \label{L diff exp}
 \pt (\p_t\mp\mu)\diff\la_\pm = \diff{\ti N_\pm := \ti N_\pm(\vec v^1)} - \ti N_\pm(\vec v^0), 
 \pr \LR{\cL\diff{\vec v}|\diff{\vec v}}=-2\mu(\diff{\la_+})(\diff{\la_-})+\LR{\cL\diff{\vga}|\diff{\vga}},}
Hence the eigenmode part is estimated by 
\EQ{ \label{diff la est}
 \pt |(\p_t\mp\mu)\diff{\la_\pm}| \lec |\diff{\ti N_\pm}| \lec \|\diff{\vec v}\|_\cH \|\vec v^\circ\|_\HH, 
 \pr |\p_t(\diff{\la_+})(\diff{\la_-})| \lec |\diff{\ti N_+}\diff{\la_-}|+|\diff{\la_+}\diff{\ti N_-}|
 \lec \|\diff{\vec v}\|_\cH^2 \|\vec v^\circ\|_\HH. }
For the main part of energy, using the equation \eqref{diff eq}, we have
\EQ{
 \p_t\tf12\LR{\cL\diff{\vec v}|\diff{\vec v}}
 \pt=\LR{\cL{\diff{\vec v}}|(\p_t-J\cL)\diff{\vec v}} 
 \pr=\LR{iL_-\diff v_s-iQ\re\diff{v_w}|v^0_s\re\diff v_w+\diff{v_s}\re v_w^1}
  \prq+\LR{\tf12i\diff v_w-iQ\re\diff{v_s}|\re\aD[(\bar v_s^0+\bar v_s^1)\diff v_s]},}
where the modulation term with $\dot\te-1$ does not contribute. 
Those terms except $\De$ in $L_-$ are easily bounded on $[t_0,t_1]\subset I$ with $|t_0-t_1|\le 1$ by 
\EQ{
 \pt \|\LR{i(1-Q^2)\diff v_s-iQ\re\diff{v_w}|v^0_s\re\diff v_w+\diff v_s \re v^1_w}\|_{L^1_t(t_0,t_1)}
 \prq\lec \|\diff v_s\|_{\fX_0}(\|v_s^0\|_{\fX_0}\|\diff v_w\|_{\fY_0}+\|\diff v_s\|_{\fX_0}\|v_w^1\|_{\fY_0}),
 \prQ+ \|\diff{v_w}\|_{\fY_0}(\|v_s^0\|_{\fX_1}\|\diff{v_w}\|_{\fY_0}+\|\diff{v_s}\|_{\fX_1}\|v_w^1\|_{\fY_0}),}
where $\fX_1\subset L^2_tL^\I_x$ is used, and similarly, using $\fX_0\cap \fX_1\subset L^4_tH^{1,3}_x$, 
\EQ{
 \pt \|\LR{i\diff\ga_w-2iQ\re\diff{\ga_s}|\re\aD[(\bar v_s^0+\bar v_s^1)\diff v_s]}\|_{L^1_t(t_0,t_1)} 
 \lec \|\diff\vv\|_{\fZ_0}\|v_s^\circ\|_\fX\|\diff v_s\|_{\fX_1},}
as well as the interactions excepting $XL$ (see \cite[Section 2]{GN} for the notation $LH+HH+LL$) 
\EQ{
 \pt \|\LR{i\De\diff{v_s}|(\re\diff v_w,v^0_s)_{LH+HH+LL}+(\re v^1_w,\diff v_s)_{LH+HH+LL}}\|_{L^1_t(t_0,t_1)} 
 \prq\lec \|\diff{v_s}\|_{\fX_1}(\|\diff v_w\|_{\fY_0}\|v^0_s\|_{\fX_1}+\|v^1_w\|_{\fY_0}\|\diff v_s\|_{\fX_1}).}
To the $XL$ part, we apply the normal form or partial integration 
using the equation of $\Om$, namely $-D^2\Om(g,h)+\Om(\aD g,h)+\Om(g,D^2h)=(gh)_{XL}$, 
\EQ{
 \pn\int_{t_0}^{t_1} \LR{f|(gh)_{XL}}dt
 \pt=[\LR{f|\Om(g,h)}_{t_0}^{t_1}]
 \pn-\int_{t_0}^{t_1} \LR{(i\p_t-\De)f|\Om(g,h)}dt 
 \prq+ \int_{t_0}^{t_1} \LR{f|\Om((i\p_t+\aD)g,h)+\Om(g,(i\p_t-\De)h)}dt,}
for $(g,h)=(\diff{v_w},v_s^0)$ and $(v_w^1,\diff{v_s})$, 
as well as its $(i\p_t-\aD)$ version in place of $(i\p_t+\aD)$ for $(\diff{\bar v_w},v_s^0)$ and $(\bar v_w^1,\diff{v_s})$ 
(with the corresponding modification of $\Om$). Then we obtain
\EQ{
 \pt\int_{t_0}^{t_1}  \LR{i\De\diff{v_s}|(\re\diff v_w,v^0_s)_{XL}+(\re v^1_w,\diff v_s)_{XL}}dt
 \pr\lec \|\diff{v_s}\|_{\fX_0}(\|\diff v_w\|_{\fY_0}\|v^0_s\|_{\fX_0}+\|v^1_w\|_{\fY_0}\|\diff v_s\|_{\fX_0})
 \prq +\|\diff{ v_s}\|_{\fX_+}(\|\diff v_w\|_{\fY_0}\|v^0_s\|_{\fX_1}+\|v^1_w\|_{\fY_0}\|\diff{v_s}\|_{\fX_1})
 \prq +\|\diff{v_s}\|_{\fX_1}(\|\diff v_w\|_{\fY_+}\|v^0_s\|_{\fX_0}+\|v^1_w\|_{\fY_+}\|\diff{v_s}\|_{\fX_0})
 \prq +\|\diff{v_s}\|_{\fX_1}(\|\diff v_w\|_{\fY_0}\| v^0_s\|_{\fX_+}+\|v^1_w\|_{\fY_0}\|\diff{ v_s}\|_{\fX_+}).}
Thus we obtain for $|t_0-t_1|\le 1$ in $I$
\EQ{
 [\LR{\cL\diff{\vga}|\diff{\vga}}]_{t_0}^{t_1}
 \pt\lec \|\diff{\vec v}\|_{\fZ(I)}^2 \|{\vec v}^\circ\|_{\fZ(I)}
 \pn\lec \|\diff{\vec v(t_0)}\|_\HH^2\|\vec v^\circ(t_0)\|_\HH,}
where \eqref{diff est unit}--\eqref{sum est unit} are used. 
Iterating this estimate on consecutive unit intervals, we obtain for all $[t_0,t_1]\subset I$ of any length
\EQ{  \label{diff ga est}
 \|\diff{\vga(t_1)}\|_\HH^2 \lec \|\diff{\vga(t_0)}\|_\HH^2 + \|\diff{\vec v(t_0)}\|_\HH^2\|\vec v^\circ(t_0)\|_\HH 
 + \int_{t_0}^{t_1} \|\diff{\vec v(t)}\|_\HH^2\|\vec v^\circ(t)\|_\HH dt,}
while the stable mode is estimated easily by \eqref{diff la est}
\EQ{ \label{diff la- est}
 |\diff{\la_-(t_1)}| \lec e^{\mu(t_0-t_1)}|\diff{\la_-(t_0)}| + \int_{t_0}^{t_1} e^{\mu(t_0-t)}\|\diff{\vec v(t)}\|_\HH\|\vec v^\circ(t)\|_\HH dt. }
They may be combined as the center-stable part, keeping the same estimate as $\vga$. Thus we have obtained 
\EQ{ \label{ests on diff}
 \pt|(\p_t-\mu)\diff{\la_+}| \le C_0\|\vv^\circ\|_\HH \|\diff{\vv}\|_\HH,
 \pr\|\diff{\vv(t_1)}\|_\HH^2 \le C_0\|\diff{\vv(t_0)}\|_\HH^2 + C_0|\diff{\la_+(t)}|^2 
   + C_0\int_{t_0}^{t_1} \|\vec v^\circ(t)\|_\HH \|\diff{\vv(t)}\|_\HH^2 dt,}
for some constant $C_0>0$ and for any interval $[t_0,t_1]\subset I$.

\subsection{Center-stable and center-unstable manifolds} \label{ss:cs mfd}
In order to identify all trapped solutions as a center-stable manifold of $\cQ_1$ of codimension $1$ 
(and as a center-unstable manifold for $t\to-\I$ by time inversion), 
we need the dominance of the unstable mode for the difference. More precisely, we have
\begin{lem} \label{lem:diff Lip}
There exist a constant $C_D>1$ with the following properties. 
Let $I=[T_0,T_1)$ be an interval, and $\vec u^j=\Psi_{\te_j}(\vv^j)$ with $j=0,1$ 
be two solutions of \eqref{eq:Zak2} on $I$ satisfying $\om(\vv^j,\vec Q_{(\la)})=0$ and 
\EQ{ \label{la+ Lip est}
 \de:=\max_{j=0,1}\sup_{t\in I}d_Q(\vec u^j(t))<\de_E, \pq C_D\de<\mu, 
 \pq 0<C_D\de\|\diff{\vv(T_0)}\|_\HH \le |\diff{\la_+(T_0)}|, }
where $\la_\pm^j$ are defined by \eqref{def la+-} as well as $\vga^j$ for $j=0,1$. 
Then we have for all $t\in I$
\EQ{ \label{dyn diff}
 \pt |\tf{\p_t\diff{\la_+}}{\diff{\la_+}}-\mu|< \tf12\mu, 
 \pq |\diff{\la_-}| \lec |\diff{\la_-(T_0)}| + \de(\|\diff{\vv(T_0)}\|_\HH+|\diff{\la_+}|), 
 \pr \|\diff{\vga}\|_\HH \lec \|\diff{\vga(T_0)}\|_\HH + \de^{1/2}(\|\diff{\vv(T_0)}\|_\HH(t-T_0+1)^{1/2}+|\diff{\la_+}|).  }
\end{lem}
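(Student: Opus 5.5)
We will prove Lemma \ref{lem:diff Lip} by a bootstrap (continuity) argument on $I$, based on the difference estimates \eqref{ests on diff}, \eqref{diff la- est} and \eqref{diff ga est}. Since $d_Q(\vec u^j)\le\de$, Lemma \ref{lem:deltaE} gives $\|\vv^\circ(t)\|_\HH\lec\de$ uniformly on $I$.

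\textbf{Bootstrap hypothesis and closing the unstable mode.} Let $T'\in(T_0,T_1]$ be maximal such that on $[T_0,T')$ we have
\EQ{
 \|\diff{\vv}(t)\|_\HH \le \tf{\mu}{2C_0}\,\de^{-1}|\diff{\la_+}(t)|,
}
with $C_0$ the constant of \eqref{ests on diff}. This holds at $t=T_0$ by the last assumption of \eqref{la+ Lip est}, provided $C_D\ge 2C_0/\mu$. The first line of \eqref{ests on diff} then gives
\EQ{
 |\p_t\diff{\la_+}-\mu\diff{\la_+}|\le C_0\de\|\diff\vv\|_\HH\le\tf{\mu}2|\diff{\la_+}|,
}
which is the first claim of \eqref{dyn diff}. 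In particular $\diff{\la_+}$ keeps its sign and satisfies $|\diff{\la_+}(t)|\ge e^{\mu(t-s)/2}|\diff{\la_+}(s)|$ for $s<t$.

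\textbf{Stable and dispersive parts.} For $\diff{\la_-}$ we use \eqref{diff la- est}. By the hypothesis, the integrand is bounded by $\de\|\diff\vv(t)\|_\HH\lec|\diff{\la_+}(t)|$. Because $|\diff{\la_+}|$ grows at rate at least $\mu/2$ while the kernel $e^{\mu(t-t_1)}$ decays, the integral is $\lec$ the endpoint value. To obtain the smallness factor $\de$, we do not feed in the hypothesis crudely: we bound $\|\diff\vv\|_\HH\le|\diff{\la_+}|+|\diff{\la_-}|+\|\diff\vga\|_\HH$ and run a Gronwall-type argument jointly with $\diff\vga$. For $\diff\vga$ we use \eqref{diff ga est}. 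The initial terms contribute $\|\diff\vga(T_0)\|^2+\de\|\diff\vv(T_0)\|^2$. In the integral $\de\int\|\diff\vv\|^2$, the $|\diff{\la_+}|^2$ part is $\lec\de|\diff{\la_+}(t_1)|^2/\mu$ by exponential growth. The $\diff{\la_-},\diff\vga$ parts are handled by Gronwall; this produces at worst a factor $e^{C\de(t-T_0)}$, which we must avoid. To avoid it, we substitute the bounds being proved, namely $\|\diff\vga\|^2\lec\|\diff\vga(T_0)\|^2+\de\|\diff\vv(T_0)\|^2(t-T_0+1)+\de|\diff{\la_+}|^2$, into the integral. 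The resulting terms $\de\int\de\|\diff\vv(T_0)\|^2(t-T_0+1)\,dt$ are not small uniformly in time, so we instead compare them with $\de|\diff{\la_+}(t)|^2$: since $|\diff{\la_+}(t)|\ge e^{\mu(t-T_0)/2}C_D\de\|\diff\vv(T_0)\|$, any polynomial-in-$t$ quantity times $\|\diff\vv(T_0)\|^2$ is dominated by $|\diff{\la_+}(t)|^2$ up to constants. This yields the stated forms with the linear growth $(t-T_0+1)^{1/2}$ and the square root $\de^{1/2}$.

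\textbf{Closing the bootstrap.} Summing the three components gives
\EQ{
 \|\diff\vv(t)\|_\HH\lec\|\diff\vv(T_0)\|_\HH(1+\de^{1/2}(t-T_0+1)^{1/2})+|\diff{\la_+}(t)|,
}
and the first term is $\le|\diff{\la_+}(t)|/(C_D\de)$ by the growth of $\diff{\la_+}$. Hence $\|\diff\vv\|_\HH\le C(1+C_D^{-1}\de^{-1})|\diff{\la_+}|$. For $C_D$ large and $\de<\mu/C_D$ small, this is strictly better than the bootstrap hypothesis, so $T'=T_1$.

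\textbf{Main obstacle.} The delicate point is this closure: the constant in the hypothesis must be of order $\de^{-1}$ (forced by the initial condition), while the feedback in the $\diff{\la_+}$ equation costs $C_0\de$ times that. The ratio $C_0\de\cdot\de^{-1}/C_D$ therefore has to be made less than $\mu/2$ purely by the choice of $C_D$. We will track the constants carefully here, choosing $C_D$ after $C_0$ and the implicit constants of \eqref{diff ga est}.
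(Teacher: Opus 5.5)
Your framework, a continuity argument on $I$ driven by \eqref{ests on diff}, is the paper's, and your hypothesis does give the first line of \eqref{dyn diff}. The gap is in the other two lines.

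Write $a_0:=\|\diff{\vv(T_0)}\|_\HH$, $f:=|\diff{\la_+}|$, $g:=\|\diff{\vv}\|_\HH$. Your only hypothesis, $g\le\frac{\mu}{2C_0\de}f$, gives $\de g\lec f$ and nothing better. Inserted into \eqref{diff la- est} it yields only $|\diff{\la_-}|\lec|\diff{\la_-(T_0)}|+f$. Inserted into \eqref{diff ga est} it yields a bound with $\de^{-1/2}f$ where $\de^{1/2}f$ is required. So exactly the smallness factors in front of $|\diff{\la_+}|$ are lost.

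Your repair does not restore them.
\begin{itemize}
\item Substituting ``the bounds being proved'' into the integral of \eqref{diff ga est} is circular, since those bounds are not part of any bootstrap hypothesis.
\item Dominating the resulting time-polynomial terms by $f(t)^2$ via $f(t)\ge e^{\mu(t-T_0)/2}C_D\de a_0$ costs a factor $(C_D\de)^{-2}$. For instance, $\de^2a_0^2(t-T_0+1)^2\lec C_D^{-2}f(t)^2$, which is not $\lec\de f(t)^2$, because only $C_D\de<\mu$ is assumed.
\item Your closing step (``summing the three components'') then rests on these unproven bounds.
\end{itemize}
Your single hypothesis can in fact be closed directly from the second line of \eqref{ests on diff}. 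Indeed, $f'\ge\frac\mu2 f$ gives $\int_{T_0}^t\de g^2\le\frac{\mu^2}{4C_0^2\de}\int_{T_0}^t f^2\le\frac{\mu}{4C_0^2\de}f(t)^2$. But this only yields $g\lec a_0+\de^{-1/2}f$, which produces $\de^{1/2}f$ and $f$ where the statement has $\de f$ and $\de^{1/2}f$.

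The missing idea is a second, $\de$-independent tier in the bootstrap, with an absolute constant $C_1>1$:
\[
 g(t)<C_1\bigl(a_0+f(t)\bigr)<\tfrac{\mu}{2C_0\de}f(t).
\]
The right inequality is a condition on $f$ alone. It holds at $T_0$ by $C_D\de a_0\le f(T_0)$ once $C_D$ is large enough depending on $C_0,C_1,\mu$, and it persists because $f$ increases. Its real use is to turn the time integral in the second line of \eqref{ests on diff} into a total derivative. Combining both tiers with $f'>\frac\mu2 f$,
\[
 \de g^2<\de C_1^2(a_0+f)^2<\tfrac{C_1}{C_0}(a_0+f)\tfrac{\mu}{2}f<\tfrac{C_1}{C_0}(a_0+f)f',
\]
so $\int_{T_0}^t\de g^2\le\frac{C_1}{2C_0}(a_0+f(t))^2$. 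Hence $g^2<(C_0+\frac{C_1}{2})(a_0+f)^2<C_1^2(a_0+f)^2$ for $C_1$ large. This closes the left inequality with neither a Gronwall factor nor any time growth. With the $\de$-independent bound $g\lec a_0+f$ in hand, \eqref{diff la- est} and \eqref{diff ga est} give the second and third lines of \eqref{dyn diff} directly. The factor $(t-T_0+1)^{1/2}$ comes only from $\int_{T_0}^t\de a_0^2\,ds$, and the $f$-parts from $\int_{T_0}^t f^2\lec f(t)^2$.
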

Note that $\diff\vv(T_0)=0$ is the trivial case 
where $\uu^1=(e^{-i\diff\te}u^0_s,u^0_w)$ is just a phase shift of $\uu^0$. 
The above conclusion includes the fact that $\sign\diff\la_+$ is preserved on $I$. 
\begin{proof}
It is by the bootstrapping or continuity argument, to prove estimates in the form
\EQ{ \label{bootstrap}
 \pt \|\diff{\vv(t)}\|_\HH < C_1(a_0+|\diff{\la_+(t)}|) < \tf{\mu}{2\de C_0}|\diff{\la_+(t)}|,}
for some constant $C_1>1$ to be chosen, where 
$a_0:=\|\diff{\vv(T_0)}\|_\HH$, $\de:=\|\vv^\circ\|_{L^\I_t\HH(I)}$, 
and $C_0$ is the constant in \eqref{ests on diff}. 
The first inequality is obvious at $t=T_0$. We assume that the second inequality holds at $t=T_0$, which requires
$2C_0C_1\de < \mu$. 

Assuming \eqref{bootstrap} breaks down at some $t\in I$, we may define the first breakdown time
\EQ{
 T':=\inf\{t\in I\mid \eqref{bootstrap}\text{ fails at $t$}\} \in (T_0,T_1).}
For brevity, we denote $f(t):=|\diff{\la_+(t)}|$ and $g(t):=\|\diff{\vv(t)}\|_\HH$. 
Then \eqref{ests on diff} and \eqref{bootstrap} imply on $I=[T_0,T')$
\EQ{
  f' \ge \mu f - C_0\de g > \mu f - C_0 C_1 \de(a_0+f) > \tf{\mu}{2} f.}
Hence $f$ is exponentially increasing and so the second inequality of \eqref{bootstrap} can not fail at $t=T'$. 
Concerning the first inequality, \eqref{ests on diff} yield
\EQ{
 g^2 \le C_0[a_0^2+f^2+\int_{t_0}^t \de g(s)^2 ds], }
where the integral is bounded using \eqref{bootstrap} and $f'>\tf{\mu}{2}f$
\EQ{
 \int_{t_0}^t \de g(s)^2 ds \pt< \int_{t_0}^t \de C_1^2(a_0+f(s))^2ds \pn< \int_{t_0}^t \tf{C_1}{C_0}(a_0+f(s))\tf{\mu}{2}f(s)ds
 \pr<  \int_{t_0}^t \tf{C_1}{C_0}(a_0+f(s))f'(s)ds \le \tf{C_1}{2C_0}(a_0+f(t))^2. }
Thus we obtain $g^2 < (C_0+\tf{C_1}{2})(a_0+f)^2$, 
which is better than the first inequality of \eqref{bootstrap}, provided that  
$C_0+\tf{C_1}{2} < C_1^2$. 
For example, $C_1=\sqrt{C_0+1}+1$ is enough. 
Then $T'<T_1$ is impossible, which means \eqref{bootstrap} for all $t\in I$, 
and thus we obtain the desired estimates, using \eqref{diff ga est}--\eqref{diff la- est}. 
\end{proof}

If both $\vec u^0,\vec u^1$ are trapped by $\cQ_1$ in $\cH^\e$, then the above exponential growth of $\diff{\la_+}$ would 
be a contradiction. Hence the contraposition implies 
\EQ{ \label{Lip est mfd}
 \de:=\sup_{t\ge 0}\max_{j=0,1} d_Q(\vec u^j)<\min(\de_E,\mu/C_D) \implies |\diff\la_+(t)| \lec \de \|\diff{\vec v(t)}\|_\HH. }
Since the one-pass theorem implies that trapped solutions with initial data close to $\cQ_1$ 
must stay close forever, the set of trapped solutions in a neighborhood of $\cQ_1$ must sit in a Lipschitz graph of $\la_+$. 

To make it more precise, we introduce the coordinate in the stable and unstable directions for the difference from $\vec Q_{1,0}=\vec Q$ 
\EQ{
 \pt \cH_\g^\perp:=\{\fy\in\cH \mid \om(\fy,\g^\pm)=0\}, 
 \pr \Pi:\R^2\oplus\cH^\perp_\g\to\cH,
 \pq \Pi(a_+,a_-,\fy):=a_+\g^++a_-\g^-+\fy.}
$\Pi$ is a linear homeomorphism with the inverse map
\EQ{
 \psi=\Pi(a_+,a_-,\fy) \iff a_\pm=\om(\psi,\mp\g^\mp), 
 \ \fy=\psi-a_+\g^+-a_-\g^-.}
Let $\cB_\de(Q)$ be the open neighborhood of the ground states for this metric, defined by
\EQ{
 \pt \cB_{\de}(0):=\{\fy\in\cH \mid \|\fy\|_{\cH}<\de\}, 
 \pq \cB_{\de}^{\te}(Q):=\Psi_{\te}(\cB_{\de}(0)), \pq \cB_{\de}(Q):=\Cu_{\te\in\R}\cB_{\de}^{\te}(Q). }
The small balls in the orthogonal subspace are denoted by 
\EQ{
 \cB_\de^\perp(0):=\cB_\de(0)\cap\cH^\perp_\g = \{\fy\in\cH \mid \|\fy\|_\HH<\de,\ \om(\fy,\g^\pm)=0\}.}
We have an atlas $\{(\cB_{\de}^{\te}(Q),\Phi_{\te}^{-1})\}_{\te\in\R}$ of $\cB_\de(Q)$ with the affine coordinates
\EQ{
 \pt\Phi_{\te}:\R^2\oplus\cH_\g^\perp\to\cH, 
 \pq \Phi_{\te}:=\Psi_{\te}\circ\Pi.}
The modulation Lemma \ref{lem:orth} may be restated as follows. For $\te\in\R$, let 
\EQ{
 \hat\Psi_\te \pt:= \Psi_\te^{-1}\circ \Psi_0 = \Psi_{\te+c}^{-1}\circ\Psi_c \pq (\forall c\in\R),
  \pq \hat\Psi_\te(v) = v + (e^{i\te}-1)(Q+v_s,0).}
Then for any $\fy\in\B_{\de_\star}(0)$, there is a unique $\te\in\R$ satisfying 
$|\te|\lec\de_\star$ and $\hat\Psi_\te(\fy)\in\cH_\perp$. 
In other words, $\uu:=\Psi_0(\fy)=\Psi_\te(\vv)$ is given by Lemma \ref{lem:orth}. 
Define $\Te:\B_{\de_\star}(0)\to\R$ by $\Te(\fy)=\te$. Then it is analytic with the leading linear term
\EQ{
  \pt\Te(\fy)=M(Q)^{-1}\om(\fy,\vec Q_{(\la)}) + O(\|\fy\|_\HH^2),
  \pr\diff\Te(\fy)=M(Q)^{-1}\om(\diff\fy,\vec Q_{(\la)}) + O(\|\fy\|_\HH\|\diff\fy\|_\HH),}
for any $\fy,\fy_0,\fy_1\in\B_{\de_\star}(0)$. 
Then the modulation maps to satisfy the orthogonality $\om(\cdot,\vec Q_{(\la)})=0$ 
are defined as 
\EQ{
 \hat\Psi(\vv):=\hat\Psi_{\Te(\vv)}(\vv), \pq \hat\Phi:=\Pi^{-1}\circ\hat\Psi\circ\Pi,}
then $\hat\Phi:\ti\B_\de\to\ti\B_{C\de}$ for sufficiently small $\de>0$, where
\EQ{
 \ti\B_\de := (-\de,\de)^2\times\B_\de^\perp(0).} 
Henceforth for any $(a,\fy)\in\ti\B_\de$ we use the following notation 
\EQ{
 \pt a=:(a_+,a_-)\in(-\de,\de)^2, \pq (\fy^+,\fy^-):=(\fy,\bar\fy)\in(\B_\de^\perp(0))^2, 
 \pr [a,\fy]:=\Phi_0(a,\fy)=a_+\g^++a_-\g^-+\fy,
  \pq \{a,\fy\}:=\{\Phi_\te(a,\fy)\mid \te\in\R\}.}

Thanks to the one-pass Theorem \ref{thm:onepass}, it suffices to investigate those staying close for all $t\ge 0$. 
More precisely, all trapped solutions in $\cH^\e$ as $t\to\I$ with initial data in $\cB_\de(Q)$ 
must stay in $\cB_{C\de}(Q)$ for all $t\ge 0$ with some constant $C>1$ and for any $0<\e\le\e_*$ and any $\e\ll\de\ll R_*$. 
Notice the difference from Lemma \ref{lem:trap}, which ensures settlement into the small neighborhood only for sufficiently large time. 
Thus we obtain the following classification of those solutions. 
\begin{prop} \label{prop:mfd}
There exist $\de_M\in(0,\de_X]$ and a unique function $m:(-\de_M,\de_M)\times\B_{\de_M}^\perp(0)\to(-\de_M,\de_M)$ 
such that for any $\de\in(0,\de_M]$ and the solution $\vec u$ of \eqref{eq:Zak2} with initial data $\vec u(0)\in\{a,\fy\}$ 
for any $(a,\fy)\in\ti\B_\de^\perp(0)$, we have the trichotomy
\begin{enumerate}
\item $\vec u$ scatters as $t\to\I$ iff $a_+<m(a_-,\fy)$. 
\item $\vec u$ is trapped by $\cQ_1$ as $t\to\I$ iff $a_+=m(a_-,\fy)$. 
\item $\vec u$ grows up in $t>0$ iff $a_+>m(a_-,\fy)$. 
\end{enumerate}
Moreover, it is uniformly Lipschitz continuous with $m(0,0)=0$ 
\EQ{
 |m(b^0,\fy^0)-m(b^1,\fy^1)| \lec \de(|b^0-b^1|+\|\fy^0-\fy^1\|_\HH) \pq (b^j\in(-\de,\de),\ \fy^j\in\B^\perp_\de(0)).}
The same classification for $t<0$ holds with the threshold $a_-=m(a_+,\bar\fy)$. 
Moreover, there is a unique function $\ti m:\B^\perp_\de(0)\to(-\de,\de)^2$ such that 
for any $(a,\fy)\in \ti\B_\de$, 
the above $\vec u$ is trapped both as $t\to\pm\I$ iff $a=\ti m(\fy)$. We have $\ti m(0)=0$,
\EQ{
 |\ti m(\fy^0)-\ti m(\fy^1)| \lec \de\|\fy^0-\fy^1\|_\HH \pq (\fy^j\in\B^\perp_\de(0)). }
$m,\ti m$ are weakly continuous. 
Finally, there exists $0<\e\lec\de$ such that for any solution $\vec u$ in $\cH^\e$ trapped by $\cQ_1$ as $t\to\I$,  
there exist $T>0$, $a\in(-\de,\de)$ and $\fy\in\B^\perp_\de(0)$ such that 
$\vec u(T)\in \{m(a,\fy),a,\fy\}$. 
If $\vec u\in\HH^\e$ is trapped both as $t\to\pm\I$, then there exists a unique $\fy\in\B^\perp_\de(0)$ 
such that $\vec u(0)\in\{\ti m(\fy),\fy\}$. 
\end{prop}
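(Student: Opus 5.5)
Fix $(a_-,\fy)$ with $|a_-|,\|\fy\|_\HH<\de$ and vary only $a_+\in(-\de,\de)$; the data $\vec u(0)=\Phi_\te(a,\fy)$ form a continuous curve in $\HH$. The plan is first to show that for each such slice the three cases (scattering, trapped, growup) occupy the sets $a_+<m$, $a_+=m$, $a_+>m$ for a single threshold $m$. Choose $\de_M\ll R_*$ and require $\e\lec\de$ small, so that $S_Z(\vec u)<S_S(Q)+\e_*^2$ on the whole slice. By the one-pass Theorem \ref{thm:onepass} with $R\sim\de$, each solution either stays in $d_Q<R$ for all $t\ge0$ (trapped), or exits through the ejection region with a fixed sign $\fS$. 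By Proposition \ref{prop:sc}, it then scatters if $\fS=+1$ and grows up if $\fS=-1$. Lemma \ref{lem:deltaX} gives $\fS=-\sign\la_1=-\sign\la_+$ at exit.

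Next I show that both exit cases occur on the slice and that the exit sets are open. For $a_+=\pm\de/2$, the unstable mode dominates: $\la_+(0)\sim\la_1(0)$ and $\p_t d_Q\ge0$ once the energy condition \eqref{eject cond ene} holds, which needs $\e\ll\de$. So the solution is ejected with $\fS=-\sign a_+$, giving growup for $a_+>0$ and scattering for $a_+<0$. Openness follows because exiting is decided on a finite time interval where $d_Q$ reaches $\de_X$ with $\la_+\sim\pm\de_X$, and this persists under small perturbation by continuous dependence. Since the interval $(-\de,\de)$ is connected, the closed complement is nonempty, so trapped values of $a_+$ exist.

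Uniqueness of the trapped value and the Lipschitz bound come from Lemma \ref{lem:diff Lip}, through its consequence \eqref{Lip est mfd}. If two trapped solutions have data $[a^j,\fy^j]$, their modulated differences satisfy $|\diff\la_+(0)|\lec\de\|\diff\vv(0)\|_\HH$. The modulation map $\hat\Phi$ is a near-identity Lipschitz map with constant $1+O(\de)$, so $\la^j_\pm(0)=a^j_\pm+O(\de\|\cdot\|)$, and this converts the bound into $|a_+^1-a_+^0|\lec\de(|a_-^1-a_-^0|+\|\fy^1-\fy^0\|_\HH)$. In particular the trapped value on each slice is unique; call it $m(a_-,\fy)$. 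Monotonicity also follows from Lemma \ref{lem:diff Lip}: increasing $a_+$ above a trapped value makes $\diff\la_+>0$ grow exponentially, so the solution is ejected with $\la_+>0$, i.e. grows up, and symmetrically it scatters below. Since the solution $\vec Q$ itself is trapped, $m(0,0)=0$. The case $t<0$ follows by time reversal, which exchanges $a_+\leftrightarrow a_-$ and $\fy\leftrightarrow\bar\fy$. The function $\ti m$ is the unique fixed point of $(a_+,a_-)\mapsto(m(a_-,\fy),m(a_+,\bar\fy))$, which is a contraction with constant $O(\de)$.

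Weak continuity of $m$ and $\ti m$ needs a compactness argument, and this is the step I expect to be hardest. Take $\fy_n\rightharpoonup\fy$ and pass to limits of the trapped solutions using weak continuity of the flow, i.e. the uniqueness of weak solutions in the energy space (\cite{MN-uniq}). The weak limit of the trapped solutions stays in $\cB_{C\de}(Q)$ for all time, hence is trapped, and uniqueness of the trapped value then identifies the limit of $m(a_-,\fy_n)$. The final claims use Lemma \ref{lem:trap} together with Theorem \ref{thm:onepass}: a trapped solution in $\HH^\e$ eventually has $d_Q<2\e\ll\de$, so at some time $T$ its data lies in the chart $\{a,\fy\}$, and being trapped forces $a_+=m(a_-,\fy)$. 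Applied in both time directions, the same argument gives the representation through $\ti m$.
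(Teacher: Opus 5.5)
Your architecture matches the paper's: the one-pass theorem plus Proposition~\ref{prop:sc} to read off the fate from the sign of $\la_+$ at exit, openness of the two ejection sets, the Lipschitz bound on trapped data from \eqref{Lip est mfd} and the near-identity modulation map, time reversal, a contraction for $\ti m$, and weak limits for weak continuity.

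The gap is the step asserting that on \emph{every} slice $(a_-,\fy)$ both exit signs occur, because Lemma~\ref{lem:deltaX} applies at $t=0$ for $a_+=\pm\de/2$. That lemma needs $S_Z(\vec u)<S_S(Q)+d_Q(\vec u(0))^2/2$, i.e.\ $S_Z(\vec u)-S_S(Q)<\mu\la_1(0)^2$, and this fails on large parts of the chart. The energy excess $-\mu\la_+\la_-+\tf12\LR{\cL\vga|\vga}+C(\vv)$ is of order $\de^2$, the same order as $d_Q^2$. You cannot impose ``$\e\ll\de$'', since the proposition must cover all $(a,\fy)\in\ti\B_\de$; your own first step only gives $\e\lec\de$. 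For example, $(a_+,a_-,\fy)=(\de/2,-\de/2,0)$ gives $\la_1(0)=O(\de^2)$ while $S_Z-S_S(Q)\approx\mu\de^2/4$, so neither $\la_+\sim\la_1$ nor the energy condition holds. Similarly, $\|\fy\|_\HH$ close to $\de$ lets $\tf12\LR{\cL\fy|\fy}$ dominate $\mu\la_1^2$. Without both signs on each slice, connectedness of $(-\de,\de)$ does not yield a trapped value, so existence of $m(a_-,\fy)$ is not proved.

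The paper avoids this by arguing globally rather than slice by slice:
\begin{enumerate}
\item It first proves the Lipschitz bound \eqref{Lip a+} on the whole trapped set $I_0$. Together with $(0,0)\in I_0$, this confines $I_0$ to $|a_+|\lec\de(|a_-|+\|\fy\|_\HH)$.
\item Each region $\pm a_+\gg\de(|a_-|+\|\fy\|_\HH)$ of $\ti\B_\de$ is connected and misses $I_0$. It also contains points with $|a_-|\ll\pm a_+\sim\de$, $\fy=0$, where Lemma~\ref{lem:deltaX} does apply at $t=0$.
\item Since $I_\pm$ are open and disjoint, each region lies entirely in $I_\pm$, so every slice meets both.
\end{enumerate}

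Alternatively, your own monotonicity argument repairs the step if you compare with $\vec u^0=\vec Q$ instead of a trapped point on the same slice. For $a_+=\pm\de/2$, the hypothesis $C_D\de'\|\diff{\vv(0)}\|_\HH\le|\diff{\la_+(0)}|$ of Lemma~\ref{lem:diff Lip} holds with a fixed small exit level $\de'\gg\de$. Then $\la_+$ grows with the sign of $a_+$ until $d_Q$ reaches $\de'$. At that level the $O(\de^2)$ energy excess is negligible and Lemma~\ref{lem:deltaX} takes over.

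With either fix, the rest of your argument goes through.
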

\begin{proof}
The latter half of statement is a consequence once $m$ is constructed. 
Indeed, the classification for $t<0$ follows by time inversion $\vec u(t)\mapsto \ba{\vec u(-t)}$. 
Then $\ti m$ is obtained by the implicit function theorem for the simultaneous equations $a_\pm=m(a_\mp,\fy^\pm)$ 
with the small Lipschitz constant $O(\de)$. 
If $\vec u\in\cH^\e$ is trapped and $0<\e\ll\de$ is small, then Lemma \ref{lem:trap} implies that $\vec u(t)\in \B_{C\e}(Q)$ 
for large $t>0$, so that we may apply the classification by $m$.  
If $\vec u$ is trapped both as $t\to\pm\I$, then Theorem \ref{thm:onepass} implies that $\vec u(t)\in \B_{C\e}(Q)$ 
for all $t\in\R$, so that the classification may be applied at any time. 
The weak continuity follows from the weak closedness of trapping in $\B_\de(Q)$, 
as the solutions are uniformly bounded by $d_Q(\uu(t))\lec\de$ for $t\ge 0$ if $a_+=m(a_-,\fy)$, 
and for $t\in\R$ if $a=\ti m(\fy)$. 
The uniform bound is inherited by the weak limit of solutions, which means trapping of the latter, so the equation of $m$ or $\ti m$ should also be inherited. 

It remains to construct $m$, for which the ejection Lemma \ref{lem:deltaX} and the difference estimate Lemma \ref{lem:diff Lip} suffice. 
In other words, the variational, virial or dispersive estimates in Sections \ref{s:var}--\ref{s:away} are not needed. 
We now fix $\de>0$ small enough such that $\Pi(\ti\B_\de)\subset B_{\de_\star}(0)$ and 
\EQ{
 \Psi_0\circ\hat\Psi(\ti\B_\de)\subset\{\uu\in\cH^\e \mid d_Q(u)<\de_E\}, }
for some $\e<\de_X/10$ (which may be much larger than those used in the previous sections). For any $(a,\fy)\in\ti\B_\de$, let 
\EQ{
 \vec w:=\Pi(a,\fy), \pq (\la,\ga):=\hat\Phi(a,\fy)}
and let $\uu$ be the solution of \eqref{eq:Zak2} with the initial data 
\EQ{
 \uu(0)=\Phi_0(\la,\ga) = \Phi_{-\te}(a,\fy),}
where $\te:=\Te\circ\Pi(a,\fy)$. Let $d_u(t):=d_Q(\uu(t))$ and 
let $T_X>0$ be the ejection time given by Lemma \ref{lem:deltaX}, namely
\EQ{
 T_X:=\inf\{t>0 \mid d_u(t)\ge\de_X\}.}
If $T_X<\I$, then there is $T_0\in(0,T_X)$ such that $d_u(T_0)\in(2\e,\de_X/2)$ 
and $\p_td_u(T_0)\ge 0$, so that we may apply Lemma \ref{lem:deltaX} at $t=T_0$, 
which implies 
\EQ{
 d_u(t) \sim e^{\mu(t-T_X)}\de_X \sim |\la_+(t)|}
for $T_0\le t\le T_X$. 
The local wellposedness implies stability of the above estimate for small initial perturbation, 
then we may apply the lemma also to the perturbed solutions, 
which implies the stability of ejection including $\sign\la_+(T_X)$. 
In other words, the initial data sets for the ejection to the positive and negative sides, namely
\EQ{
 I_\pm := \{(a,\fy) \in \ti\B_\de \mid T_X<\I,\ \pm\la_+(T_X)>0\} }
are both open, while $T_X=\I$ in the remainder 
\EQ{
 I_0:= \ti\B_\de \setminus(I_+\cup I_-). }
If $(a^j,\fy^j)\in I_0$ for $j=0,1$, then Lemma \ref{lem:diff Lip} or \eqref{Lip est mfd} implies 
\EQ{
 |\diff\la_+| \lec \de\|\diff{\hat\Psi(\ww)}\|_\HH \lec \de\|\diff{\vec w}\|_\HH.}
Since the leading term of the modulation map does not contribute to $\la$, we have 
\EQ{
 \diff\la=\diff a + O(\|\ww\|_\HH\|\diff\ww\|_\HH),}
and hence the smallness of $\de>0$ implies 
\EQ{ \label{Lip a+}
 |\diff a_+| \lec \de(|\diff a_-|+\|\diff\fy\|_\HH).}
In particular, $(0,0)\in I_0$ (the ground state) implies
\EQ{ \label{small a+ mfd}
 |a_+| \lec \de(|a_-|+ \|\fy\|_\HH)}
for any $(a,\fy)\in I_0$. 
On the other hand, if $|a_-|\ll \pm a_+ \sim\de$, then $\la_+\sim\la_1\sim\pm\de$, 
so that we may apply the ejection Lemma \ref{lem:deltaX} at $t=0$ to deduce that $(a,\fy)\in I_\pm$. 
Since $I_\pm$ are disjoint open and the complement $I_0$ is restricted to \eqref{small a+ mfd}, 
by connectedness, we deduce that 
if $\pm a_+\gg\de\|\fy\|_\HH$ then $(a,\fy)\in I_\pm$, 
and for any fixed $(a_-,\fy)$, there is at least one $a_+\in\R$ with $|a_+|\lec\de(|a_-|+\|\fy\|_\HH)$ 
and $(a,\fy)\in I_0$. Then \eqref{Lip a+} implies its uniqueness $a_+=:m(a_-,\fy)$ and Lipschitz continuity. 
Then by the connectedness again, we have $(a,\fy)\in I_\pm$ for $\pm(a_+-m(a_0,\fy))>0$. 
\end{proof}

We are now ready to prove the main results of this paper. 
\begin{proof}[Proof of Theorem \ref{thm:resol}]
Let $0<\e\le\e_*$ for the constant $\e_*$ given by Theorem \ref{thm:onepass} and Proposition \ref{prop:sc}, and 
\EQ{
 \M^\e_{cs}(\cQ_1) \pt:= \{\vec u(0)\in\cH^\e \mid \exists T<\forall t<T^*,\ d_Q(\vec u(t))<\de_X\}.}
Henceforth $\vec u$ is any solution in $\HH$ of \eqref{eq:Zak2} with the maximal existence interval $(T_*,T^*)$. 
First, Lemma \ref{lem:trap} implies 
\EQ{
 \M^\e_{cs}(\cQ_1) = \{\vec u(0)\in\cH^\e \mid \varlimsup_{t\to\I}d_Q(\vec u(t))<2\e\}, }
and those solutions in this set are in the case (1) of the theorem, as $d_Q(\uu)\sim d_0(\uu)$. 

Next, Theorem \ref{thm:onepass} implies that for any solution $\uu$ in $\HH^\e$, 
either there exists $T_X\in(0,T^*)$ such that $d_u(t):=d_Q(\uu(t))>R_*$ for all $t\in(T_X,T^*)$, 
or $T^*=\I$ with $d_u(t)<2\e$ for large $t$. 
The latter is in $\M^\e_{cs}(\cQ_1)$ or the case (1). 
Otherwise, Proposition \ref{prop:sc} implies the scattering (case (2)) if $\fS(\uu(T_X))=+1$ 
or the growup (case (3)) if $\fS(\uu(T_X))=-1$. 
\end{proof}

\begin{proof}[Proof of Theorem \ref{thm:9set}]
All the 9 cases of $3\times 3$ behavior in $t>0$ and in $t<0$ have non-empty sets in the classification by Proposition \ref{prop:mfd}, divided by the graphs $a_\pm=m(a_\mp,\fy^\pm)$.  
Choosing $\de\gec\e$ large enough within $\de\le\de_M$ (or taking $\e$ smaller if necessary) 
ensures that for all the trapped solutions $\uu\in\M^\e_{cs}(\cQ_1)$, 
$d_Q(\uu(t))<3\e$ for large $t>0$ implies $\uu(t)\in\Phi_\te(\ti\B_\de)$ for some $\te\in\R$, and hence 
\EQ{ \label{fom T+}
 \M^\e_{cs}(\cQ_1)=\T_+=\Cu\bigl\{\cZ_-\{m(a,\fy),a,\fy\}\cap\cH^\e \bigm| a\in(-\de,\de),\ \fy\in\B^\perp_\de(0) \bigr\}, }
where $\cZ_-$ denotes the maximal backward orbit for \eqref{eq:Zak2}, namely 
\EQ{
 \cZ_-\vec u(0):=\{\vec u(t)\mid T_*<t\le 0\}. } 
Since the backward flow is a local diffeomorphism of $\HH$, 
we deduce that $\M^\e_{cs}(\cQ_1)$ is a Lipschitz manifold of codimension $1$.

Similarly, trapping for $t\to-\I$ makes the center-unstable manifold of codimension $1$:
\EQ{ 
 \M^\e_{cu}(\cQ_1) \pt:= \{\vec u(0)\in\cH^\e \mid \exists T>\forall t>T_*,\ d_Q(\vec u(t))<\de_X\}
  \pr= \{\vec u(0)\in\cH^\e \mid \varlimsup_{t\to-\I}d_Q(\vec u(t))<2\e\} = \T_-
  \pr=\Cu\bigl\{\cZ_+\{a,m(a,\bar\fy),\fy\}\cap\cH^\e \bigm| a\in(-\de,\de),\ \fy\in\B^\perp_\de(0) \bigr\},}
where $\cZ_+\vec u(0):=\{\vec u(t)\mid 0\le t<T^*\}$ denotes the maximal forward orbit.

Moreover, their intersection makes the center manifold of $\cQ_1$ with codimension $2$ as the graph of $\ti m$ in the small neighborhood of $\cQ_1$: 
\EQ{ \label{fom cmfd}
 \M^\e_c(\cQ_1) \pt:= \{\vec u(0) \in\cH^\e \mid \varlimsup_{|t|\to\I}d_Q(\vec u(t))<\de_X\}
  \pr= \{\vec u(0) \in \cH^\e \mid \sup_{t\in\R} d_Q(\vec u(t))<2\e\} = \T_+\cap\T_-
  \pr= \bigl\{ \{\ti m(\fy),\fy\} \bigm| \fy\in\B^\perp_\de(0) \bigr\} \cap \cH^\e, }
where the second expression with $\sup_{t\in\R}$ rather than $\limsup_{|t|\to\I}$ follows from the one-pass Theorem \ref{thm:onepass}, which allows the last expression without any extension by the flow. 

We also obtain the characterization of the other parts of $\T_+$, namely 
\EQ{ \label{fom T- SB+}
  \pt \T_+\cap\cS_-=\Cu \bigl\{\cZ_-\{m(a,\fy),a,\fy\}\cap\cH^\e \bigm| -\de<a<\ti m_2(\fy),\ \fy\in\B^\perp_\de(0) \bigr\}, 
  \pr \T_+\cap\B_-=\Cu \bigl\{\cZ_-\{m(a,\fy),a,\fy\}\cap\cH^\e \bigm| \ti m_2(\fy)<a<\de,\ \fy\in\B^\perp_\de(0) \bigr\},}
and similar expressions for $\T_-\cap\cS_+$ and $\T_-\cap\B_+$, where the roles of $a_\pm$ are flipped. 

Moreover, the one-pass theorem and the sign functional imply that all solutions in $\cS_+\cap\B_-$ and $\cS_-\cap\B_+$ must go through the small neighborhood of $\cQ_1$. Let $\cZ:=\cZ_+\cZ_-=\cZ_-\cZ_+$ be the maximal orbit in both forward and backward time directions. Then 
\EQ{ \label{fom S+B-}
 \cS_+\cap\B_-=\Cu \bigl\{\cZ\{a,\fy\}\cap\cH^\e \bigm| (a,\fy)\in\ti\B_\de,
  \ \pm(a_\pm-m(a_\mp,\fy^\pm))<0 \bigr\}, }
and a similar formula holds for $\cS_-\cap\B_+$, where the inequalities are reversed. Hence those sets are open. 
Moreover, all solutions $\uu$ in $\cS_+\cap\cS_-$, $\cS_+\cap\T_-$, $\cS_-\cap\T_+$ and $\T_+\cap\T_-$ 
must satisfy either $d_Q(\uu(t))<2\e$ or $\uu(t)\in\HH_{\de_S}$ with $\fS(\uu(t))=+1$ for all $t\in\R$. 
Hence, using Lemma \ref{lem:signS} for the latter, we obtain a uniform bound of them in $\HH$. 

The openness of $\cS_\pm$ follows in general and without the energy constraint, from the perturbation (stability) argument around $t=\pm\I$ of scattering solutions. The blow-up set is not generally open, but its openness under the energy constraint follows from the classification and the one-pass theorem. 
Since those getting close to $\cQ_1$ 
\EQ{
 \B_+^\de \pt:=\Cu \bigl\{\cZ\{a,\fy\}\cap\cH^\e \bigm| (a,\fy)\in\ti\B_\de,
  \ a_+>m(a_-,\fy) \bigr\}}
is open, the issue is about those staying away from $Q$. 
Suppose for contradiction that $\B_+$ is not open. Then there is a sequence of solutions $\uu_n$ such that $\uu_n(0)\in\cS_+\cup\T_+$ 
convergent to some $\uu(0)\in\B_+\setminus\B_+^\de$ in $\HH^\e$. 
After some time translation, we may assume that $d_Q(\uu(0))>R_*$ with $\fS(\uu(0))=-1$. 
Then it holds also for $\uu_n(0)$ of large $n$. 
Since $\uu_n$ is either scattering or trapped as $t\to\I$, the classification implies that $\uu_n$ must go through or stay in 
the small neighborhood of $\cQ_1$. In particular, there is a sequence $t_n>0$ such that $d_Q(\uu_n(t_n))<2\e$. 
Since $\uu(0)\not\in\B_+^\de$, we have $\inf_{t\ge 0}d_Q(\uu(t))\ge 3\e$, so $t_n\to\I$. 
The one-pass Theorem \ref{thm:onepass} (backward in time) yields a sequence $T_n\in(0,t_n)$ such that 
$d_Q(\uu_n(T_n))=R_*<d_Q(\uu_n(t))$ for $0\le t<T_n\to\I$. 
Then the same argument as for \eqref{vir in B} or \eqref{vir gup} yields 
\EQ{
 V^0_\chi(\ti u_n)(0)-V^0_\chi(\ti u_n)(T_n) \gec \sum_j \int_{I^n_j}(e^{\mu(t-t_j^n)}-C_*)d_u(t)dt + \int_{I_n'}\ka_V(\de_*)dt,}
for some disjoint intervals $\{I^n_j\}_j$ in $(0,T_n)$ and $I_n':=(0,T_n)\setminus \Cu_j I^n_j$. 
As $n\to\I$, the left side is bounded, since $\uu_n(0)\to\uu(0)$ converges in $\HH$ and $d_Q(\uu_n(T_n))=R_*$, 
while the right side is divergent as $T_n\to\I$, which is a contradiction. Hence $\B_+$ is open. 

Finally, we prove the (path-wise) connectedness. Taking $\e>0$ smaller if necessary, we may assume that $0<\e\ll\de\ll\e^{2/3}$. 
For those in $\Phi_\te(\ti\B_\de)$, we may restrict the problem to $\te=0$, because all the concerned subsets are gauge invariant. 

For any $[\ti m(\fy),\fy]\in\T_+\cap\T_-$ in the formula \eqref{fom cmfd}, 
consider the curve $\uu(b):=[\ti m(b\fy),b\fy]$ with $b:1\to 0$ connecting to $\uu(0)=\vec Q$. 
To check whether it is contained in \eqref{fom cmfd}, the only non-trivial point is the energy constraint, 
which reads in the expansion \eqref{action exp} 
\EQ{
 S_Z(\uu(b))-S_S(Q)=-\mu\la_+\la_-+\tf12\LR{\cL\ga|\ga}-C(\vv)<\e^2,}
with $\la:=\ti m(b\fy)$, $\ga:=b\fy$ and $\vv:=\Pi(\la,\ga)$. Since $\LR{\cL\ga|\ga}\sim\|\ga\|_\HH^2$ 
and $C(\vv)$ is cubic with $|C(\vv)|\lec\|\vv\|_\HH^3$, the small Lipschitz constant $O(\de)$ of $\ti m$ implies 
for $0<b^0<b^1<1$ with $b^1-b^0<b^0$, 
\EQ{
 \diff{\LR{\cL\ga|\ga}} \sim (b^0\diff b)\|\fy\|_\HH^2, 
 \pq |\diff(\la_+\la_-)|\lec \de^2b^0|\diff b|\|\fy\|_\HH^2,
 \pq |\diff C(\vv)| \lec |b^0|^2|\diff b|\|\fy\|_\HH^3, }
so $S_Z(\uu(b))$ is increasing and thus $\uu(b)\subset\T_+\cap\T_-$ for $0\le b\le 1$. 
Hence $\T_+\cap\T_-$ is connected. 

Next, let $[a,\fy]\in\cS_+\cap\B_-$ in the formula \eqref{fom S+B-}, and consider a curve in the form 
$\uu(t):=[\la(t),t\fy]$ with $t:1\to 0$, satisfying $\la(1)=a$ and 
\EQ{
 \dot\la_\pm = \CAS{\pm L\de(|\dot\la_\mp|+\|\fy\|_\HH) &(\mp\la_\pm < B\de^2)\\ 0 &(\mp\la_\pm\ge B\de^2),}}
where $L,B\sim 1$ are chosen such that $L\de$ is the Lipschitz constant of $m$ and $B\de^2=\|m\|_{L^\I}$ on $(-\de,\de)\times\B_\de^\perp(0)$. 
Then $|\dot\la_\pm|\lec \de\|\fy\|_\HH$. 
Moreover, as $t$ decreases from $1$, the conditions $\pm(\la_\pm-m(\la_\mp,\fy^\pm))<0$ are preserved along $\uu(t)$, 
as well as $\la_+,\la_-\in(-\de,\de)$, and
\EQ{ \label{ene dec deform}
 \p_t S_Z(\uu(t)) = t\LR{\cL\fy|\fy} + O(\de|\la|\|\fy\|_\HH + |\la|^2\|\fy\|_\HH + t^2\|\fy\|_\HH^3)}
implies $\p_t S_Z(\uu(t))>0$, as long as $\|t\fy\|_\HH$ is large enough compared with $\de|\la|\lec\de^2$. 
In other words, we may connect $[a,\fy]=\uu(1)$ to $\uu(t_0)$ within $\cS_+\cap\B_-$ for some $t_0\in(0,1)$ 
such that $\|t_0\fy\|_\HH\sim\de^2$. 
Thus we reduced the connectedness of $\cS_+\cap\B_-$ to the region $[a,\fy]\in\cS_+\cap\B_-$ with $\|\fy\|_\HH\lec\de^2$. 

The same reduction works for the other parts in the coordinate neighborhood. More precisely, let 
\EQ{
 \pt \cN_\s^\pm:= \{ [a,\fy] \mid (a,\fy)\in\ti\B_\de,\ \sign(a_\pm-m(a_\mp,\fy^\pm))=\s\},
 \pr \cN_{\s_+,\s_-}:=\cN_{\s_+}^+\cap\cN_{\s_-}^-, \pq \cN_{\s_+,\s_-}^\e:=\cN_{\s_+,\s_-}\cap\HH^\e,}
so that $\s_\pm=+,0,-$ correspond to $\B_\pm,\T_\pm,\cS_\pm$ respectively. 
The reduction of $\cN_{-,+}^\e\subset\cS_+\cap\B_-$ is as above, and  
the reduction of $\cN_{+,-}^\e\subset\cS_-\cap\B_+$ is similar with appropriate sign changes, as well as for $\cN_{\pm,\pm}^\e$. 
For $\T_+\cap\B_-\supset\cN_{0,+}^\e\ni[a,\fy]$, the reduction is along the curve $\uu(t):=[m(b(t),t\fy),b(t),t\fy]$ with $t:1\to 0$ satisfying $b(1)=a_-$ and 
\EQ{
 \CAS{b \ge B\de^2 \implies \dot b=0,\\ b<B\de^2 \implies \dot b = -L\de(L\de(|\dot b|+\|\fy\|_\HH)+\|\fy\|_\HH).}}
Then the curve remains in $\cN_{0,+}$. Since $|\dot b|\lec\de\|\fy\|_\HH$, the same estimate \eqref{ene dec deform} implies 
that $\uu(t)\in\HH^\e$ as long as $\|t\fy\|_\HH\gec\de^2$. 
Thus we may reduce $\cN_{0,+}^\e$ to the region $\|\fy\|_\HH\lec\de^2$, and similarly $\cN_{0,-}^\e,\cN_{\pm,0}^\e$. 

For any $[a,\fy]\in\cN_{0,+}^\e$ with $\|\fy\|_\HH\lec\de^2$, consider the curve 
$\uu(s):=[m(s,\fy),s,\fy]$ starting from $s=a_-$ decreasing to $s=\ti m_2(\fy)$. 
Then the curve is in $\cN_{0,+}$ for $a_-\ge s>\ti m_2(\fy)$, and also in $\HH^\e$ because of $|\la_+\la_-|\lec\de^3\ll\e^2$ and $\|\fy\|_\HH\lec\de^2$. 
Thus we reduce $\cN_{0,\pm}$ and $\cN_{\pm,0}$ further to arbitrarily small neighborhoods of $\cN_{0,0}^\e$. 

For any $[a,\fy]\in\cN_{-,+}^\e$ with $\|\fy\|_\HH\lec\de^2$, 
consider the curve $\uu(t):=[ta,\fy]$ with $t:1\to 0$. If $a_+a_-<0$, then $S_Z(\uu(t))$ is decreasing together with $t$, 
while if $a_+a_-\ge 0$, then $S_Z(\uu(t))-S_S(Q)\lec\de^3\ll\e^2$ for all $t\ge 0$. 
Hence the curve remains in $\HH^\e$, so it can get out of $\cN_{-,+}^\e$ only by reaching the boundary $a_\pm=m(a_\mp,\fy^\pm)$, 
namely $\cN_{0,+}^\e$, $\cN_{-,0}^\e$ or $\cN_{0,0}^\e$. 
If it reaches $\cN_{0,+}^\e$ at $t=t_0>0$, then as $t\to t_0+0$, we have $\uu(t)=[m(ta_-,\fy)-h,ta_-,\fy]$ 
with some $h(t)\to+0$. Then it may be connected to a point near $\cN_{0,0}^\e$ along the curve in $\cN_{0,+}^\e$ considered above, 
namely $[m(s,\fy)-h,s,\fy]$ with small $h>0$ from $s>t_0a_-$ decreasing to $s=\ti m_2(\fy)$. 
The case of $\cN_{-,0}^\e$ is similar. Therefore, if $\uu(t)$ gets out of $\cN_{-,+}^\e$ within $t\in[0,1]$ then it may be connected 
to arbitrarily small neighborhood of $\cN_{0,0}^\e$. 
Otherwise, it is connected to $\uu(0)=[0,\fy]$ within $\cN_{-,+}^\e$. 
In this case, consider another curve $\uu(b):=[b\ti m(\fy),\fy]$ with $b:0\to 1$, which stays in $\HH^\e$ as $|\ti m(\fy)|\lec\de\|\fy\|_\HH\lec\de^3$. 
If it gets out of $\cN_{-,+}^\e$ then the same argument works as above to connect it along $\cN_{0,+}$ or $\cN_{-,0}$ to $\cN_{0,0}$. 
Thus in any case, we may deform $\cN_{-,+}^\e$ to a small neighborhood of $\cN_{0,0}^\e$. 
The same argument applies to $\cN_{+,-}^\e$ and $\cN_{\pm,\pm}^\e$. 

Thus we have reduced the connectedness problem of $\cN_{\s_+,\s_-}^\e$ for any $\s_\pm\in\{+,0,-\}$ to 
the smaller neighborhood 
\EQ{
 \cN_*:=\{\Phi_\te(a,\fy) \mid a\in \ti m(\fy)+(-\e^2,\e^2)^2,\ \fy\in\B_{\e^{4/3}}^\perp(0)\} \subset \HH^\e,}
where the distinction among $\cN_{\s_+,\s_-}^\e$ is simply given by the signs of $a_\pm-m(a_\mp,\fy^\pm)$, 
so the (path-wise) connectedness of each $\cN_{\s_+,\s_-}\cap\cN_*$ is obvious from the small Lipschitz constant of $m$, 
as well as the one-sided sets $\cN_\s^\pm\cap\cN_*$. 
Then the flow extensions \eqref{fom T+}, \eqref{fom T- SB+} and \eqref{fom S+B-} transfer the connectedness  
to $\T_+$, $\T_-\cap\cS_+$, $\T_-\cap\B_+$ and $\cS_+\cap\B_-$, 
and similarly to the other combinations, except for $\cS_+\cap\cS_-$ and $\B_+\cap\B_-$, which are not covered by the flow extensions.  

It remains to consider $\cS_+\cap\cS_-$, $\B_+\cap\B_-$, $\cS_\pm$ and $\B_\pm$. 
Let $\uu_j\in\cS_+\cap\cS_-$ be distinct two points for $j=0,1$ and let $C(t)$ with $t:0\to 1$ be a curve connecting $\uu_j$ in $\HH^\e$. 
If it is not contained in $\cS_+\cap\cS_-$, then its boundary point must belong to 
$\cS_+\cap\T_-$, $\T_+\cap\cS_-$ or $\T_+\cap\T_-$, since $\cS_\pm$ and $\B_\pm$ are open. 
If it belongs to $\T_+$, then apply the forward flow to the parts of curve close to the boundary point and within $\cS_+\cap\cS_-$, 
and if it belongs to $\T_-$, then apply the backward flow. Then we can send the boundary point to the region $d_Q(\uu(t))<2\e$, 
and so the nearby points in $\cS_+\cap\cS_-$ as well by the local wellposedness. 
Then the latter points belong to $\cN_{-,-}^\e$, which is connected as shown above. Hence $\cS_+\cap\cS_-$ is also connected. 
The same reduction works from $\B_+\cap\B_-$ to $\cN_{+,+}^\e$, from $\cS_\pm$ to $\cN_-^\pm\cap\HH^\e$, and from $\B_\pm$ to $\cN_+^\pm\cap\HH^\e$. 
Hence all the subsets of classification are path-wise connected. 
\end{proof}

It is worth noting that the maximal extension by flow works as above, because we have the classification of global dynamics in the whole neighborhood of the local manifold as in Proposition \ref{prop:mfd}, thanks to the one-pass Theorem \ref{thm:onepass}. The local construction of manifolds works much more generally only with exponential dichotomy such as described in Lemma \ref{lem:deltaX}, but not the global dynamics of solutions off the manifold. Without the latter information, the flow extension might well destroy the manifold structure by (asymptotic) self-intersection. 
The one-pass theorem also yields the smallness of the center manifold $\M^\e_c(\cQ_1) = \M^\e_{cs}(\cQ_1)\cap\M^\e_{cu}(\cQ_1)$,  
in contrast to the unboundedness of $\M^\e_{cs}(\cQ_1)$ and $\M^\e_{cu}(\cQ_1)$ (as each of them contains grow-up in the other direction). 

\subsection{Stable and unstable manifolds} \label{ss:smfd}
Using the above difference estimate, we may identify the trapped solutions on the threshold energy as the stable manifold of $\cQ_1$, 
and similarly those for $t\to-\I$ as the unstable manifold. 
The proof is indeed easier than the center-stable manifold, but it may also be derived as a special case of the latter. 
We state the case of $t\to-\I$, namely the unstable manifold. The case of $t\to\I$ is similar with the time inversion. 
\begin{lem} \label{lem:unst mfd}
There exist unique solutions $\vec Q^\pm\in C((-\I,T^*_\pm);\cH)$ of \eqref{eq:Zak2} such that 
\begin{enumerate}
\item $d_Q(\vec Q^\pm(0))=\de_X$, $d_Q(\vec Q^\pm(t))\sim e^{\mu t}\de_X$ and increasing for $t\le 0$. 
\item  $\fy_\pm:=\vec Q^\pm(0)-\vec Q$ satisfies $\om(\fy_\pm,\vec Q_{(\la)})=0$ and $\pm\om(\fy_\pm,\g^-)\sim\de_X$. 
\end{enumerate}
Moreover, $\vec Q^+$ grows up in $t>0$, and $\vec Q^-$ scatters as $t\to\I$. 
Let $\vec Q^0(t):=\vec Q_{1,t}$ and $\vec Q^{\s,\te}:=(e^{-i\te}Q^\s_s,Q^\s_w)$. 
Then for every solution $\vec u\in C((-\I,T^*);\cH)$ of \eqref{eq:Zak2} satisfying $S_Z(\vec u)=S_Z(\vec Q)$ and $\limsup_{t\to-\I}d_Q(\vec u(t))<\de_X$, there exist $\te,T\in\R$ and $\s\in\{\pm,0\}$ such that $\vec u(t)=\vec Q^{\s,\te}(t-T)$ for all $t<T^*$. 
\end{lem}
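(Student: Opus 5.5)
I would construct $\vec Q^\pm$ as special cases of the center-unstable classification on the threshold energy surface, and then use Lemma \ref{lem:diff Lip} (in backward time) for uniqueness.

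\textbf{Existence.} Work in the backward direction, where the roles of $\la_\pm$ are swapped: $\la_-$ is the expanding mode as $t\to-\I$ is approached from $t=0$ backward, i.e.\ $\la_-(t)\sim e^{\mu t}\la_-(0)$ is decaying as $t\to-\I$. For each small $\fy\in\B^\perp_\de(0)$ and $a_-$, Proposition \ref{prop:mfd} for $t<0$ gives the threshold $a_-=m(a_+,\bar\fy)$ for backward trapping. Restrict to data with $\om(\cdot,\vec Q_{(\la)})=0$ and impose the energy constraint $S_Z=S_Z(\vec Q)$. By Lemma \ref{lem:trap} in the backward direction, any backward trapped solution with $S_Z=S_Z(\vec Q)$ satisfies $d_Q(\uu(t))\to0$ as $t\to-\I$ exponentially with $d_Q\sim|\la_+|$ in backward time (the relevant unstable coefficient), unless it is a ground state. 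I would then construct $\vec Q^\pm$ by a limiting argument: take $\uu_n$ with data $\vec Q+\s e^{-\mu n}\de_X\g^+ +(\text{correction})$ at time $-n$, corrected via the manifold function so that they are backward-trapped with threshold energy. Next, push forward to the first time $d_Q=\de_X$, which occurs at time $\approx0$ by Lemma \ref{lem:deltaX}. Finally, extract a limit using the uniform bounds \eqref{sum est unit} and the difference estimates, which make the sequence Cauchy since two such solutions differ at time $-n$ by $O(e^{-2\mu n})$ in the stable and center directions. The sign of $\la_+$ at the exit time is $\s$. Property (2) then follows from Lemma \ref{lem:deltaX}, and the growup or scattering of $\vec Q^\pm$ follows from Theorem \ref{thm:onepass} and Proposition \ref{prop:sc} via the sign $\fS=-\sign\la_1$ at exit.

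\textbf{Uniqueness/classification.} Let $\uu$ have $S_Z(\uu)=S_Z(\vec Q)$ and be backward trapped. By Lemma \ref{lem:trap} (time-reversed), either $\uu$ is a ground state $\vec Q^{0,\te}$, or there is $T$ with $d_Q(\uu(T))=\de_X$ and $d_Q(\uu(t))\sim e^{\mu(t-T)}\de_X$ for $t\le T$. After translating time and the phase, compare $\uu$ with $\vec Q^\s$ having the same sign of the unstable mode. I would apply Lemma \ref{lem:diff Lip} backward from a very negative time $T_0=-n$, where both solutions have $d_Q\lec e^{-\mu n}$. Since the energies coincide, the expansion \eqref{action exp} together with $\LR{\cL\vga|\vga}\sim\|\vga\|^2$ forces $\|\vga\|\lec|\la|^{3/2}$-type smallness, so the backward-stable part of the difference is dominated by the backward-unstable coordinate. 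The hypothesis of Lemma \ref{lem:diff Lip} thus holds unless the difference is $O(\de)$ times itself, which forces the difference to vanish once the time shift is fixed so that the leading coefficients coincide. Concretely, any nonzero difference would grow like $e^{\mu|t|}$ backward, contradicting the fact that both converge to $\cQ_1$ as $t\to-\I$. Conversely, matching the time shift $T$ removes the leading $\la$-difference.

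\textbf{Main obstacle.} The delicate step is the exact matching. One must choose $T$ and $\te$ so that $\diff\la$ at the reference time is $o(\|\diff\vv\|)$ relative to the contracting part, and show the residual difference in the center directions $\vga$ is controlled; the threshold energy identity is what removes the center directions. I would first try to bound $\|\vga(t)\|_\HH\lec d_Q(\uu(t))^2$ from the energy identity and Lemma \ref{lem:deltaX}, then run Lemma \ref{lem:diff Lip} with $\de\sim e^{-\mu n}$ and let $n\to\I$.
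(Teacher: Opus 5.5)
The gap is in the uniqueness part, at the point where you dispose of the center component $\diff{\vga}$ of the difference. The threshold identity \eqref{action exp}, together with coercivity of $\cL$, only yields \emph{individual} bounds such as $\|\vga^j\|_\HH\lec|\la^j|^{3/2}$ for each solution. Subtracting the two identities produces only terms like $\diff{(\la_+\la_-)}$ and $\LR{\cL(\vga^0+\vga^1)|\diff{\vga}}$. These say nothing about the size of $\diff{\vga}$ relative to $\diff{\la_\pm}$, since a difference can be arbitrarily small compared with the solutions themselves. So ``the backward-stable part of the difference is dominated by the backward-unstable coordinate'' does not follow.

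Likewise, it is not true that any nonzero difference grows like $e^{\mu|t|}$ backward. Differences along $\g^+$ (e.g.\ between time translates of one solution) decay as $t\to-\I$, and $\diff{\vga}$ is neutral at the linear level. The time-reversed Lemma \ref{lem:diff Lip} only rules out differences in which $\diff{\la_-}$ dominates. Its $\de$ is the supremum over the whole interval, so it cannot be taken $\sim e^{-\mu n}$ on an interval reaching the matching time. Matching $\la_+$ by a time shift makes $\diff{\la_+}$ vanish at a single time only. Nothing in your argument forces $\diff{\vga}\equiv 0$.

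What is missing is a closed difference estimate on $(-\I,T_1)$, which is how the paper argues.
Fix the time shift so that $\la_+^0(T_1)=\la_+^1(T_1)=\pm\de_1$.
Lemma \ref{lem:deltaX} started at $t\to-\I$ gives the individual bounds $|\la_-^j(t)|+\|\vga^j(t)\|_\HH\lec(\de_1e^{\mu(t-T_1)})^2$. This is where the threshold energy enters, via Lemma \ref{lem:trap} giving $d_Q(\vec u(t))\to0$. These bounds are used only to make $\diff{\la_-}(T_0)$ and $\diff{\vga}(T_0)$ vanish as $T_0\to-\I$.
Then integrate \eqref{diff la est} for $\diff{\la_+}$ backward from $T_1$, where it is zero. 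Integrate \eqref{diff la est} and \eqref{diff ga est} for $\diff{\la_-}$ and $\diff{\vga}$ forward from $T_0$.
Every source term carries the factor $\|\vv^\circ(s)\|_\HH\lec\de_1e^{\mu(s-T_1)}$, whose integral over $(-\I,T_1)$ is $\lec\de_1/\mu$. Hence $\|\diff{\vv}\|_{L^\I_t\HH(T_0,T_1)}$ absorbs into itself and is $\lec\de_1e^{\mu(T_0-T_1)}\to0$. The phases then agree up to a constant by \eqref{eq te}.

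Your existence scheme, a Cauchy sequence instead of the paper's weak limits of the data $[\pm\tf1n,m(\pm\tf1n,0),0]$ plus uniqueness of weak solutions, is a legitimate alternative, but drop the attempt to impose $S_Z=S_Z(\vec Q)$ exactly on the approximants, which presupposes the unstable manifold. The data $\vv(-n)=\s ce^{-\mu n}\g^+$ (so $\la_-=0$, $\vga=0$) have $S_Z-S_S(Q)=C(\vv)=O(e^{-3\mu n})$, and the strong limit is at threshold automatically. The Cauchy property comes from the same Gronwall argument with \eqref{ests on diff}, plus \eqref{eq te} for $\diff{\te}$. Also, $\la_-(t)\approx e^{-\mu t}\la_-(0)$ grows as $t\to-\I$, contrary to your opening sentence.
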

The constant $\de_X$ may be replaced with any one smaller.
\begin{proof}
First we prove the uniqueness modulo $\te,T$. 
Lemma \ref{lem:trap} implies $d_u(t):=d_Q(\vec u(t))\to 0$ exponentially and monotonically as $t\to-\I$, 
and the case of $d_u(t)=0$ is obvious, so we may assume $d_u(T_X)=\de_X$ for some $T_X\in\R$, 
$d_u(t)\sim |\la_+(t)| \sim e^{\mu(t-T_X)}\de_X$ and increasing for $t\le T_X$. 
Moreover, Lemma \ref{lem:deltaX} applied from $t\to-\I$ implies 
\EQ{
 \la_+(t) \sim \pm\de_X e^{\mu(t-T_X)}, \pq |\la_-(t)|+\|\vga(t)\|_\HH \lec (e^{\mu(T_X-t)}\de_X)^2.} 
If $\vec u^0$ and $\vec u^1$ are both such solutions satisfying $\la_+^0(T_1)=\la_+^1(T_1)=\pm\de_1$ 
for some $0<\de_1\lec\de_X$ at some $T_1<T_X$, 
then on any interval $(T_0,T_1)$ we have from \eqref{diff la est} and \eqref{diff ga est}, 
ignoring the exponential decay for $\la_-$, 
\EQ{
 \pt |\diff{\la_+}| \lec \int_t^{T_1} e^{\mu(t-s)}\de_1e^{\mu(s-T_1)}\|\diff{\vv(s)}\|_\HH ds \lec \de_1\|\diff{\vv}\|_{L^\I_t\HH},
 \pr |\diff{\la_-}| \lec e^{\mu(T_0-T_1)}\de_1 + \int_{T_0}^t \de_1e^{\mu(s-T_1)}\|\diff{\vv(s)}\|_\HH ds
 \lec e^{\mu(T_0-T_1)}\de_1+\tf{\de_1}{\mu}\|\diff{\vv}\|_{L^\I_t\HH},
 \pr \|\diff{\ga}\|_\HH^2 \lec e^{2\mu(T_0-T_1)}\de_1^2 + \int_{T_0}^t \de_1e^{\mu(s-T_1)}\|\diff{\vv(s)}\|_\HH^2 ds
 \pn\lec e^{2\mu(T_0-T_1)}\de_1^2 + \tf{\de_1}{\mu} \|\diff{\vv}\|_{L^\I_t\HH}^2.}
Hence if $\de_1/\mu$ is small enough for the last term to be absorbed by the left side, then  
\EQ{
 \|\diff{\vv(s)}\|_{L^\I_t\HH(T_0,T_1)} \lec \de_1 e^{\mu(T_0-T_1)} \to 0 \pq (T_0\to-\I),}
which implies $\vec v^0(t)=\vec v^1(t)$ for $t<T_1$. 
Then the modulation equation \eqref{eq te} implies $\dot\te^0(t)=\dot\te^1(t)$ for $t<T_1$, 
hence $\te^0(t)=\te^1(t)+\te$ for some $\te\in\R$ and all $t\le T_1$. 
Since $|\la_+(t)|\to 0$ monotonically as $t\to\I$, 
all such solutions $\vec v^0,\vec v^1$ with the same $\sign\la_+$ coincide after a translation in $t$, 
as well as $\te^0,\te^1$ after a shift, or in other words, $\vec u^0$ and $\vec u^1$ coincide after the time translation and the phase shift. 
The conditions $d_Q(\vec Q^\pm(0))=\de_X$, $\om(\fy_\pm,\vec Q_{(\la)})=0$ and $\pm\om(\fy_\pm,\g^-)\sim\de_X$ 
fix the time shift, the phase shift and the sign, respectively for $\vec Q^\pm$. 
The fate of $\vec Q^\pm$ for $t>0$ follows from the classification after the ejection 
with $\fS(\vec Q^\pm(t))=\pm 1$ for $0\le t<T^*_\pm$. 

For the existence of $\vec Q^\pm$, let $\vec u^\pm_n$ be the solution of \eqref{eq:Zak2}
 with $\vec u^\pm_n(0)=[\pm\tf 1n,m(\pm\tf 1n,0),0]$ for $n\in\N$. 
Then Proposition \ref{prop:mfd} implies that for large $n$, $\vec u^\pm_n$ are trapped as $t\to-\I$, 
$\vec u^+_n$ grows up in $t>0$, and $\vec u^-_n$ scatters as $t\to\I$. 
Then Lemma \ref{lem:trap} 
implies existence of $T^\pm_n>0$ such that $d_Q(\vec u^\pm_n(T^\pm_n))=\de_X$, 
and $d_Q(\vec u^\pm_n(t))\lec\max(\de_X e^{\mu(t-T_n^\pm)},1/n)$ for all $t<T^\pm_n$. 
Let $\vec u^n_\pm(t):=\vec u^\pm_n(t+T^\pm_n)$. 
Then they are solutions of \eqref{eq:Zak2} with $S_Z(\vec u^n_\pm)=S_Z(\vec Q)+O(1/n^2)$ as $n\to\I$ 
and $d_Q(\vec u^n_\pm(t))\lec \max(\de_X e^{\mu t},1/n)$ for $t\le 0$. 
Hence there are subsequences weakly convergent in $\cH$ locally uniformly on $t\le 0$. 
Let $\vec u^\I_\pm$ be the weak limits. Note that in the coordinate $\vec u(t)=\Phi_{\te(t)}(\la_+(t),\la_-(t),\ga(t))$, 
$\te$ and $\la_\pm$ are strongly convergent by Ascoli-Arzela, while $\ga$ is weakly convergent. 
Hence $\vec u^\pm$ are weak solutions of \eqref{eq:Zak2} on $t\le 0$,  
satisfying $d_Q(\vec u^\I_\pm(t))\lec \de_Xe^{\mu t}$ for all $t\le 0$ 
and $\fS(\vec u^\I_\pm(0))=\pm 1$. 
The uniqueness of weak solutions \cite{MN-uniq} implies $\vec u^\pm \in C((-\I,T^\pm);\cH)$ for some maximal $T^+>0$ 
and $T^-=\I$. 
Thus we obtain $\vec Q^\pm$ after some shifts in time and phase. 
\end{proof} 
Hence we have the unstable manifold of $\cQ_1$ of 2 dimensions:
\EQ{
 \M_u(\cQ_1) \pt:=\{\vec u(0) \mid \lim_{t\to-\I}d_Q(\vec u(t))=0\} 
 \pr=\{\vec Q^{\s,\te}(t) \mid \te\in\R,\ t<T^*_\pm,\ \s\in\{\pm,0\}\} \subset \M^\e_{cu}(\cQ_1),}
consisting of all solutions converging to $\cQ_1$ as $t\to-\I$, and the stable manifold: 
\EQ{
 \M_s(\cQ_1) \pt:=\{\vec u(0) \mid \lim_{t\to\I}d_Q(\vec u(t))=0\} 
  =\ba{\M_u(\cQ_1)} \subset \M_{cs}(\cQ_1),}
and $\M_u(\cQ_1)\cap\M_s(\cQ_1)=\cQ_1$. Thus we conclude 
\begin{proof}[Proof of Theorem \ref{thm:thres}]
First consider the solutions $\uu$ with $S_Z(\uu)\le S_S(Q)$. 
If $\uu$ is trapped as $t\to\I$, then Lemma \ref{lem:trap} (with all $\e>0$) implies $d_Q(\uu(t))\to 0$ as $t\to\I$, 
namely $\uu(t)\in\M_s(\cQ_1)$. Similarly, the trapping as $t\to-\I$ is equivalent to $\uu(t)\in\M_u(\cQ_1)$. 
Adding those information to Theorem \ref{thm:9set} implies the classification into (1)-(7) of Theorem \ref{thm:thres} 
for $S_Z(\uu)\le S_S(Q)$. 
The scaling argument as in \eqref{scale extension} extends it to the region $E_Z(\uu)M(\uu)\le E_S(Q)M(Q)$, 
adding another dimension to the (un)stable manifolds. Indeed, its tangent vector at $\vec Q$ is $\vec Q_{(\la)}$, 
which is linearly independent of $\{\vec Q_{(\te)},\g^\pm\}$. 
\end{proof}
Note however that we cannot apply scaling to $\M_u(\cQ_1)$ or $\g^\pm$ to get the corresponding objects 
in $E_Z(\uu)M(\uu)\le E_S(Q)M(Q)$, as they are affected by the wave speed $\al$. 
The scaling invariance with a fixed $\al$ applies only to the ground states (or more generally standing waves).

\end{document}